\newtheorem{lemma}{Lemma}
\newtheorem{theorem}{Theorem}
\newtheorem{corollary}{Corollary}
\newtheorem{definition}{Definition}
\newtheorem{proposition}{Proposition}
\newtheorem{problem}{Problem}
\title{An approximate solution to the decentralized two-controller infinite-horizon scalar {LQG} problem: Part I- fast dynamics}
\author{Se Yong Park (separk@eecs.berkeley.edu), Anant Sahai (sahai@eecs.berkeley.edu)
\thanks{A part of the results in this paper was presented in Conference on Decision and Control, 2012~\cite{Park_easier}. The authors are with the department of Electrical Engineering and Computer Sciences at the University of California at Berkeley.}
}
\begin{document}
\maketitle

\begin{abstract}
We consider scalar decentralized average-cost infinite-horizon LQG problems with two controllers, focusing on the fast dynamics case when the (scalar) eigenvalue of the system is large. It is shown that the best linear controllers' performance can be an arbitrary factor worse than the optimal performance. We propose a set of finite-dimensional nonlinear controllers, and prove that the proposed set contains an easy-to-find approximately optimal solution that achieves within a constant ratio of the optimal quadratic cost.
The insight for nonlinear strategies comes from revealing the relationship between information flow in control and wireless information flow.
More precisely, we discuss a close relationship between the high-SNR limit in wireless communication and fast-dynamics case in decentralized control, and justify how the proposed nonlinear control strategy can be understood as exploiting the generalized degree-of-freedom gain in wireless communication theory.
For a rigorous justification of this argument, we develop new mathematical tools and ideas. To reveal the relationship between infinite-horizon problems and  generalized MIMO Witsenhausen's counterexamples, we introduce the idea of \textit{geometric slicing}. To analyze the nonlinear strategy performance, we  introduce an approximate-comb-lattice model for the relevant random variables.
\end{abstract}

\section{Introduction}
One of the biggest successes in stochastic control theory is the LQG (linear quadratic Gaussian) problem with a single controller.
The solution of the LQG problem contributed two big ideas to classical control theory~\cite{KumarVaraiya}: The first is the optimality of linear controllers. This fact allows designers to confidently focus on finite-dimensional linear strategies without worrying about the infinite-dimensional strategy space. The second is the optimality of the Certainty-Equivalent-Controllers (CEC). Without loss of optimality, we can first estimate states and then control the system as if the estimated states were the true states. This is also called the estimation and control separation principle.

Even if the optimality results were restricted to single-controller LQG problems, their philosophical contribution was not limited to them. Lots of related but different control areas --- including nonlinear system control and adaptive control --- accepted these principles and focused on essentially linear controllers, and separated estimation from control. In this sense, the LQG problems form a conceptual foundation in control theory.

However, this beautiful result on the LQG problem with a single controller fails as soon as we introduce more than one controller. Following convention, we call a problem with a single controller a centralized problem, and one with multiple controllers a decentralized problem. The famous Witsenhausen's counterexample~\cite{Witsenhausen_Counterexample} demonstrates that nonlinear strategies outperform linear strategies even in a simple finite-horizon decentralized LQG problem. Later, Ho, Kastner, and Wong~\cite{Ho_Teams} qualitatively argued that the need for nonlinear controllers stems from ``signaling" --- we will also use the term ``implicit communication" interchangeably --- between decentralized controllers. Finding the optimal nonlinear strategy in most decentralized problems is known to be a non-convex infinite-dimensional problem~\cite{Sandell_Survey}, for which we do not have a well-developed theory.

Yet, it is still interesting to consider the average-cost infinite-horizon decentralized LQG problem, which is the natural extension of \cite[p.93]{KumarVaraiya}.
\begin{align}
&\mathbf{x}[n+1]=\mathbf{A}\mathbf{x}[n]+ \sum_i \mathbf{B_i}\mathbf{u_i}[n] + \mathbf{w}[n] \nonumber\\
&\mathbf{y_i}[n]=\mathbf{C_i}\mathbf{x}[n]+ \mathbf{v_i}[n] \nonumber
\end{align}
Here, the underlying random variables $\mathbf{x}[0]$, $\mathbf{w}[n]$, and $\mathbf{v_i}[n]$ are independent Gaussian. The objective is to minimize the asymptotic average cost:
\begin{align}
\limsup_{N \rightarrow \infty} \frac{1}{N} \sum_{0 \leq n < N}& \mathbb{E}[\mathbf{x}^*[n]\mathbf{Q} \mathbf{x}[n]]+\sum_{i} \mathbb{E}[\mathbf{u_i}^*[n]\mathbf{R_i} \mathbf{u_i}[n]] \nonumber
\end{align}
where $\mathbf{Q}\succeq \mathbf{0}$, $\mathbf{R_i}\succeq \mathbf{0}$, and each $\mathbf{u_i}[n]$ is a causal function of $\mathbf{y_i}[n]$ alone.
This paper considers the simplest toy case among these infinite-horizon
decentralized LQG problems, a scalar system with two-controllers. As should be expected, linear controllers are not optimal. The crux of decentralized LQG problems, nonconvex optimization over infinite-dimensional spaces, is still there and finding the optimal solution seems impossible. Instead of trying to solve the problem exactly, we solve it \textbf{approximately} to within a constant factor of the optimal cost.

\label{sec:review}
\begin{figure*}[htbp]
\begin{center}
\includegraphics[width=5in]{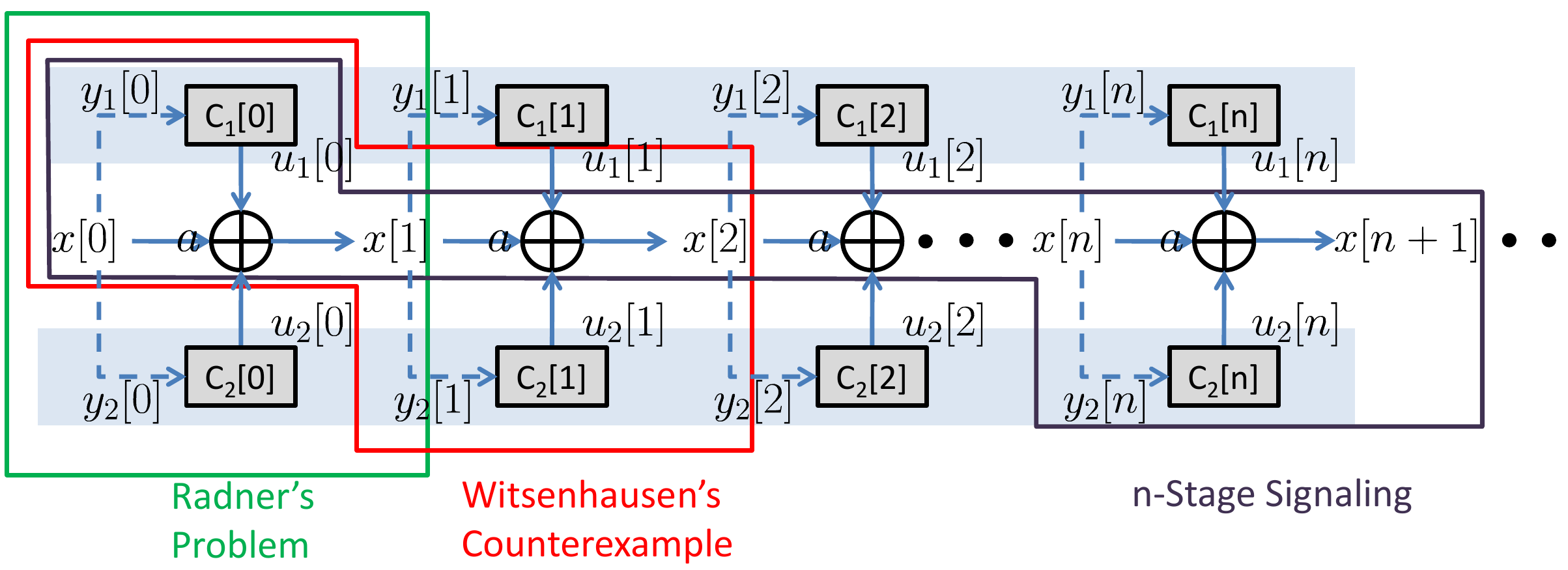}
\caption{Relationship between Radner's problem~\cite{Radner_Team}, Witsenhausen's counterexample~\cite{Witsenhausen_Counterexample}, and the infinite-horizon scalar LQG problem with two controllers}
\label{fig:dyn}
\end{center}
\end{figure*}

\subsection{Literature Review and Intellectual Context}
Until the late 40s, control and communication were considered in a unified framework under the name of \textit{cybernetics}. According to Wiener~\cite{Wiener}, cybernetics is defined as `the scientific study of control and communication in the animal and the machine.' However, Shannon's revolutionary paper detached communication problems as its own field of interest. Since then, control and communication have grown as two separated areas.

Now, control theory which successfully addressed fundamental problems in centralized control is facing the decentralized challenge (non-convex infinite-dimensional optimization problems). This challenge divided related control research in two major directions.

The first direction is finding those special cases under which a
linear strategy {\em is} optimal --- or almost equivalently, finding
cases where the problem is convex. Radner's pioneering
paper~\cite{Radner_Team} considered the case when the controllers act
simultaneously and the dynamics of the system terminates after one
time step, as shown in Fig.~\ref{fig:dyn}. Signaling is intuitively impossible by
problem construction. Therefore, linear controllers are optimal in
this case in spite of the problem being decentralized. Witsenhausen
found another sufficient condition for linear optimality called the
nested information pattern~\cite{Witsenhausen_Separation}. The condition tells if all information is shared with one step time delay by explicit communication, there is no need to implicitly communicate the information and linear strategies are optimal.
Later, this concept was generalized by Yuksel to stochastic nestedness~\cite{Yuksel_Nested}.

More recently, Rotkowitz and Lall~\cite{Rotkowitz_characterization}
proposed an algebraic condition for convexity of the problem called ``quadratic invariance." The condition finds sparsity constraints on the controller which remains convex even after Youla's parametrization~\cite{Youla}. There is a lot of on-going research in this direction~\cite{Shah_Partial,Lessard_state} that has refined our understanding and also revealed much about the structure of optimal controllers in these special cases where linear controllers are optimal. However, all of these quadratic-invariance structured problems also have no signaling incentive and the information patterns are nested~\cite{Rotkowitz_information}.\footnote{There are a few special cases when a linear controller is optimal but cannot be explained in the context of signaling incentive. Especially, in \cite{Bansal_affine}, Bansal and Basar found that when input cost and state disturbance measures match, a linear controller is optimal. Likewise, in communication theory where encoder and decoder can be thought of as distributed controllers, it is well known that linear is optimal when the source and channel distributions and cost measures match~\cite{Gastpar_Tocode}.}

On the other hand, the second direction studies general cases when linear strategies are not optimal.
\cite{Nayyar_Decentralized} discussed the structure of the optimal controllers in general decentralized problems, and \cite{Wu_Witsenhausen} found the mathematical properties of the optimal strategy for Witsenhausen's counterexample.
However, these results do not give quantifiable results, and to get such results we have to study the effect of implicit communication~\cite{Ho_Teams, Witsenhausen_Counterexample}.

Basically, most of quantifiable results focus on Witsenhausen's counterexample. As we can see in
Fig.~\ref{fig:dyn}, in Witsenhausen's counterexample the two
controllers act in different time slots and may try to
communicate. Exploiting implicit communication between the controllers makes
nonlinear strategies outperform linear ones. Mitter and
Sahai found that linear strategies can be arbitrarily bad compared to
nonlinear strategies~\cite{Sahai_Witsenhausen}. Many researchers including \cite{Ho_Witsenhausen,Baglietto_Numerical,Shamma_Learning,Karlsson_Iterative} tried computer-based exhaustive search in finding the optimal strategy.  Finally, Grover \textit{et al.}
 showed that signaling-based nonlinear strategies approximately
achieve the optimal cost to within a constant
ratio~\cite{Pulkit_Witsen}. This paper continues this approach, and
can be considered as a direct descendent of \cite{Pulkit_Witsen}. In
fact, there is a close relationship between Witsenhausen's
counterexample and the scalar infinite-horizon LQG problem considered
in this paper. We will revisit this point in
Section~\ref{sec:proof}, and see that the infinite-horizon LQG problem can be thought of as interlocking of a series of generalized Witsenhausen's counterexamples.

Another not directly but conceptually related branch of the second direction is ``Control over Communication Channels"~\cite{Tatikonda_Control,Sahai_Anytime,Yuksel_Optimal,Elia_Bode,Nair_Communication,Minero_Stabilization}, which tries to quantify explicit information flow for control. They introduce an explicit communication link and measure the amount of information flow required to control the system.
One of their main results is that in scalar systems we need at least the communication rate, (log of eigenvalue) bits, to stabilize the system~\cite{Tatikonda_Control}. Later, this concept was extended to nonlinear filtering~\cite{Martins_Feedback}. In this paper, we will see the underlying relationship to decentralized control problems.

On the other hand, communication theory (especially, wireless communication theory) has made quite a lot of quantifiable results in their network communication problems. Since communication problems are decentralized in nature, the exact characterization of the capacity has been open for most of communication networks which involve many nodes. However, they still made a progress by dividing cases based on SNR (Signal-to-Noise Ratio), bringing linear algebraic ideas and concepts to the problems, and solving the problems approximately. Especially, Avestimehr \textit{et al.} considered relay communication problem with arbitrary large number of nodes, and successfully characterize the capacity within a constant gap. At the heart of this progress, there are the concepts of \textit{generalized degree of freedom (d.o.f.)} and \textit{binary deterministic models}. In \cite{Salman_Wireless}, Avestimehr \textit{et al.} idealized bit levels as different antennas. By conceptualizing each bit level as different subspaces, they could apply linear algebraic concepts and ideas for much precise analysis. By expanding the concept of d.o.f. (essentially, the rank of linear spaces) to different bit levels, they could the capacity of wireless communication networks within a constant gap.

The main contribution of this paper is the parallelism between information flows in decentralized LQG control and those in wireless communication theory. We will see that just as wireless communication theory divides cases depending on SNR, decentralized LQG problems can be divided based on the eigenvalue of the systems. Moreover, we will find the relevant bottleneck in decentralized LQG problems using the idea of  `geometric slicing', which we believe is a proper analogy to the information-theoretic cutset bound~\cite{Cover} in a dynamic-programming context.

The rest of the paper is organized as follows: We formally state the problem and the main results in Section~\ref{sec:main}.
Section~\ref{sec:intui} gives the underlying intuitions behind the results.
In Section~\ref{sec:proof}, \ref{sec:upperbound}, \ref{sec:prooflower}, \ref{sec:proof:ratio}, we will convert these intuitions into formal proofs, and introduce proof ideas for that. Section~\ref{sec:wireless} discusses the fundamental relationship between wireless communication theory and decentralized LQG problems. Finally, Section~\ref{sec:discuss} concludes the paper.

\section{Problem Statement and Main Results}
\label{sec:main}
Throughout this paper, we will discuss the scalar infinite-horizon decentralized LQG problems with two controllers.
\begin{problem}[scalar infinite-horizon decentralized LQG problems with two controllers]
\begin{align}
& x[n+1]=ax[n]+b_1 u_1[n]+b_2 u_2[n]+w[n]  \label{eqn:system}\\
& y_1[n]=c_1 x[n]+ v_1[n]  \\
& y_2[n]=c_2 x[n]+ v_2[n]
\end{align}
Here, $u_1[n]$ and $u_2[n]$ must be causal functions of $y_1[n]$ and
$y_2[n]$ respectively,
\textit{i.e.}~$u_1[n]=f_{1,n}(y_1[0],\cdots,y_1[n])$ and
$u_2[n]=f_{2,n}(y_2[0],\cdots,y_2[n])$. Following the traditional LQG
problem formulation, the objective is minimizing an average quadratic cost:
\begin{align}
\limsup_{N \rightarrow \infty} \frac{1}{N}
\sum_{0 \leq n < N} q \mathbb{E}[x^2[n]] + r_1 \mathbb{E}[u_1^2[n]] + r_2 \mathbb{E}[u_2^2[n]]. \label{eqn:part11}
\end{align}
Here, $q \geq 0$, $r_1 \geq 0$, $r_2 \geq 0$ and the underlying random variables are independent Gaussian, \textit{i.e.}~$x[0] \sim \mathcal{N}(0, \sigma_0^2)$, $w[n] \sim \mathcal{N}(0,\sigma_w^2)$, $v_1[n] \sim \mathcal{N}(0,\sigma_{v1}^2)$ and $v_2[n] \sim \mathcal{N}(0,\sigma_{v2}^2)$.
\label{prob:lqg2}
\end{problem}
Figure~\ref{fig:dyn} shows a pictorial description of the problem by introducing duplicated nodes across different time-steps and thus unraveling the dynamics.\footnote{The idea of unraveling the system by introducing duplicated nodes across different time-steps was also used to study network information flows~\cite{Ahlswede_Network}. As in \cite{Ahlswede_Network}, we will see the unraveling of the dynamics will be helpful to find the information bottleneck of the system.}
First, without loss of generality, we put a series of assumptions on the problems.

Assumption (a): $b_1=b_2=1$.

Assumption (b): $c_1=c_2=1$.

Assumption (c): $\sigma_w^2=1$.

Assumption (d): $\sigma_{v1} \leq \sigma_{v2}$.

Assumptions (a), (b) do not lose generality since we can rescale $u_1, u_2$ and $y_1, y_2$ respectively. Assumption (c) doesn't lose generality since we can rescale the system equation by $\frac{1}{\sigma_w}$. Assumption (d) doesn't lose generality because it is simply a way of deciding which controller is $1$, and which is $2$.

Therefore, throughout this paper we will consider the following problem:
\begin{problem}[Normalized decentralized LQG problem for Problem~\ref{prob:lqg2}]
\begin{align}
& x[n+1]=ax[n]+u_1[n]+u_2[n]+w[n]  \label{eqn:systemsimple}\\
& y_1[n]=x[n]+ v_1[n]  \\
& y_2[n]=x[n]+ v_2[n]
\end{align}
where $x[0] \sim \mathcal{N}(0,\sigma_0^2)$, $w[n] \sim \mathcal{N}(0,1)$, $v_1[n] \sim \mathcal{N}(0,\sigma_{v1}^2)$, $v_2[n] \sim \mathcal{N}(0,\sigma_{v2}^2)$. The control objective is minimizing the average cost of \eqref{eqn:part11}.
\label{prob:lqg}
\end{problem}

Even though this problem is the simplest decentralized infinite-horizon LQG problem, as we will see in Proposition~\ref{prop:1}, linear strategies are not optimal and the optimization problem becomes infinite-dimensional and non-convex. Here, we follow the approximation approach of \cite{Pulkit_Witsen}, which
itself inherits from \cite{Salman_Wireless} and the spirit of approximate algorithms in computer science theory. We propose a set of
finite-dimensional function spaces that are guaranteed to contain an approximately optimal solution. Therefore, if we optimize only over the proposed finite-dimensional function spaces, the solution achieves the optimal performance within a constant ratio regardless of the problem parameters, $a$, $q$, $r_1$, $r_2$, $\sigma_0$, $\sigma_{v1}$, and $\sigma_{v2}$. In this paper, we first consider the fast-dynamics case when the single eigenvalue of the system is large ($|a| \geq 2.5$) and discuss the relationship with high-SNR in wireless communication theory. The slow-dynamics case when the single eigenvalue of the system is small ($|a| < 2.5$) is discussed in \cite{Park_Approximation_Journal_Partii}, and the relationship with low-SNR in wireless communication theory is also revealed.\footnote{Here, we did not optimize for the best ratio, and the explicit number $2.5$ is arbitrary. We could have written Theorem~\ref{thm:1}, \ref{thm:2} with any fixed number like $|a|=2,3,5,6,\cdots$ which may result in a different ratio. However, as $|a|$ increases, the ratio between linear and nonlinear strategy cost goes to infinity, and the gain by considering nonlinear strategies becomes larger.}

The first set of controllers is two naive memoryless linear strategies, which simply zero-force the state.
\begin{definition}[Linear Strategy $L_{lin,bb}$]
$L_{lin,bb}$ is the set of the following two controllers:\\
(i) $u_1[n]=-a y_1[n]$, $u_2[n]=0$. \\
(ii) $u_1[n]=0$, $u_2[n]=-ay_2[n]$.
\label{def:lin}
\end{definition}

The second set is a two-parameter $(s,d)$ nonlinear strategy set for implicit communication between two controllers.
\begin{definition}[$s$-Stage Signaling Strategy $L_{sig,s}$]
For a given $s \in \mathbb{N}$, $L_{sig,s}$ is the set of all controllers which can be written as the following form for some $d > 0$,
\begin{align}
&u_1[n]=-aR_d(y_1[n]) \label{eqn:1}\\
&u_2[n]=-a(Q_{a^s d}(y_2[n]-R_{a^s d}(\sum_{1 \leq i \leq s} a^{i-1}u_2[n-i]))+R_{a^s d}(\sum_{1 \leq i \leq s} a^{i-1}u_2[n-i])).\label{eqn:2}
\end{align}
Here, $Q_x(y):=x \lfloor \frac{y}{x} + \frac{1}{2} \rfloor$ and $R_x(y):=y-Q_x(y)$. These quantities represent the quantization level and remainder when $y$ is divided by $x$ respectively, \textit{i.e.}~let $y=q \cdot x + r$ for $q \in \mathbb{Z}$ and $r \in [-\frac{x}{2},\frac{x}{2})$. Then, $Q_x(y)=q\cdot x$ and $R_x(y)=r$. (We also put $u_1[n]=0$ and $u_2[n]=0$ for $n < 0$.)
\label{def:sig}
\end{definition}
We will give the intuition behind this strategy in Section~\ref{sec:intui}. Roughly speaking, in the strategy set $L_{sig,s}$ the first controller implicitly communicates its observation to the second controller with delay $s$ by making the state easier to estimate. This strategy is essentially a multi-stage generalization of the quantization strategy~\cite{Pulkit_Witsen} used for Witsenhausen's counterexample. Notice that the strategy requires remembering the past $s$ control inputs.

Now, we can state the main theorem of this paper, which tells us that when $|a| \geq 2.5$ the optimization only over $L_{lin,bb}$ and $L_{sig,s}$ is enough to give a constant-ratio optimal strategy among all possible strategies.

\begin{theorem}
Consider the decentralized LQG problem shown in Problem~\ref{prob:lqg}. Let $L'=L_{lin,bb} \cup \bigcup_{s \in \mathbb{N}} L_{sig,s}$ and $L$ be the set of all measurable causal strategies. There exists a constant $c \leq 1.5 \times 10^5$ such that for all $|a| \geq 2.5$, $q$, $r_1$, $r_2$, $\sigma_0$, $\sigma_{v1}$ and $\sigma_{v2}$,
\begin{align}
\frac{
\underset{u_1,u_2 \in L'}{\inf}
\underset{N \rightarrow \infty}{\limsup}
\frac{1}{N}
\underset{0 \leq n < N}{\sum}
\mathbb{E}[qx^2[n]+r_1 u_1^2[n]+r_2 u_2^2[n]]
}
{
\underset{u_1,u_2 \in L}{\inf}
\underset{N \rightarrow \infty}{\limsup}
\frac{1}{N}
\underset{0 \leq n < N}{\sum}
\mathbb{E}[qx^2[n]+r_1 u_1^2[n]+r_2 u_2^2[n]]
}
\leq c. \nonumber
\end{align}
\label{thm:1}
\end{theorem}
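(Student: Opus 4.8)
The plan is to prove Theorem~\ref{thm:1} by the standard three-part ``approximately optimal'' recipe. First, an \emph{achievability} step showing that $\inf_{u_1,u_2\in L'}(\cdot)$ is at most an explicit function $\overline{J}(a,q,r_1,r_2,\sigma_0,\sigma_{v1},\sigma_{v2})$. Second, a \emph{converse} step showing that $\inf_{u_1,u_2\in L}(\cdot)$ is at least an explicit function $\underline{J}$ of the same parameters that holds against \emph{all} measurable causal strategies. Third, a purely algebraic step verifying $\overline{J}\le c\,\underline{J}$ with $c\le 1.5\times 10^{5}$ whenever $|a|\ge 2.5$. These three steps are what Sections~\ref{sec:upperbound}, \ref{sec:prooflower}, and \ref{sec:proof:ratio} carry out, with the reduction of the infinite-horizon dynamics to interlocked generalized Witsenhausen stages done in Section~\ref{sec:proof} and the intuition in Section~\ref{sec:intui}. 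Observe first that zero-forcing by controller~$1$ always turns the closed loop into a stationary process with finite second moment, so every quantity in the ratio is finite and the $\limsup$'s are genuine limits; hence the ratio is well defined and it suffices to produce the matching bounds $\overline{J},\underline{J}$.

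For the upper bound I would compute the cost of each candidate in $L'$ and then minimize. The linear family is immediate: substituting $u_1[n]=-ay_1[n]$, $u_2[n]\equiv 0$ into \eqref{eqn:systemsimple} gives $x[n+1]=w[n]-av_1[n]$, a white process of variance $1+a^2\sigma_{v1}^2$, so its average cost is a short closed-form expression in $q,r_1,\sigma_{v1},a$, and symmetrically for the other member of $L_{lin,bb}$ with $\sigma_{v2}$. The substantive work is the signaling family $L_{sig,s}$: one shows that, for a suitable lattice spacing $d$ and stage count $s$, controller~$2$ can reconstruct the coarse quantity $Q_d(y_1[n])$ that controller~$1$ deliberately leaves in the state, since after $s$ steps that quantity has been amplified by $a^s$ while the newly injected disturbance stays $O(d)$; the residual state is then of size $O(d)$ plus the contribution of the (rare) lattice-decoding errors. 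Controlling the second moment of that residual is exactly where the approximate-comb-lattice model is needed, because a naive union bound over decoding errors produces tails that are too heavy; one instead argues the error events are geometrically rare in $\log|a|$ and contribute only a constant multiple of $d^2$. Optimizing the resulting bound over $d>0$ and $s\in\mathbb{N}$ gives $\overline{J}$.

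The crux, and the step I expect to be hardest, is the converse $\underline{J}$: a lower bound that no nonlinear, infinite-memory strategy can beat. The plan is \emph{geometric slicing} --- unravel the dynamics over time as in Fig.~\ref{fig:dyn} and localize an information bottleneck. At a high level, to keep $\mathbb{E}[x^2]$ small the controllers must at each step cancel $ax[n]$ to within the uncertainty they jointly retain about it; since controller~$i$'s view of the state is corrupted by $v_i$, a single slice yields an MMSE/entropy-power-type inequality lower bounding $\mathbb{E}[x^2[n+1]]$ in terms of the precision of $y_1,y_2$ and the control effort spent. The real difficulty --- the reason this is a \emph{generalized} Witsenhausen counterexample rather than a one-shot estimation bound --- is signaling: controller~$2$ can learn about $x$ from controller~$1$'s past implicit messages, so the single-slice bound must be combined across a geometrically spaced family of slices (scales that are powers of $a$), in the same way an information-theoretic cutset bound is optimized over a family of cuts. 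Concretely I would import the constant-factor converse for Witsenhausen's counterexample from~\cite{Pulkit_Witsen} at each slice, then show the infinitely many interlocked stages only cost an additional constant, using $|a|\ge 2.5$ to force the relevant geometric sums to converge with slack.

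Finally, for the ratio $\overline{J}\le c\,\underline{J}$ I would partition the parameter space into a bounded number of regimes according to which quantity dominates --- whether $\sigma_{v1}$ (hence controller~$1$'s zero-forcing) is small or large, whether $q$ or $r_1,r_2$ dominates the cost, and whether signaling beats both naive linear strategies --- and in each regime check that the upper and lower expressions differ by at most an absolute constant; the worst regime pins down $c$. The explicit value $1.5\times 10^{5}$ is an artifact of not optimizing any of these constants, and the threshold $|a|\ge 2.5$ rather than $|a|>1$ is used only to keep the geometric and quantization estimates comfortably convergent; pushing the threshold toward $1$ would inflate $c$, consistent with the stated fact that the linear-to-nonlinear gap diverges as $|a|\to\infty$.
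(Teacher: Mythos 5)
Your overall skeleton (achievability for the proposed strategy set, a converse against all measurable causal strategies, regime-by-regime algebraic comparison) is the same as the paper's, and your achievability sketch is on the right track: the $L_{lin,bb}$ cost is a one-line computation, and the $s$-stage signaling cost is indeed controlled by the $(d,w,o)$ approximate-comb-lattice machinery with a careful bound on the lattice-decoding-error tail (Lemma~\ref{ach:lemma9} is essentially what you describe). You also correctly note that the reformulation ultimately reduces to matching a power-disturbance-tradeoff upper bound $D_U(P_1,P_2)$ against a lower bound $D_L(\widetilde P_1,\widetilde P_2)$; the paper makes this explicit with Lemma~\ref{rat:lemma1}.

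The converse plan, however, contains a genuine gap. First, ``geometric slicing'' in this paper does \emph{not} mean combining bounds across scales that are powers of $a$; it means weighting the input-cost terms inside each finite-horizon sub-problem by a geometric sequence $\alpha^{i}$ so that $N$ overlapping sub-problems can be extracted from an $N$-step horizon without the factor-of-$s$ loss that naive truncation would incur (Lemma~\ref{lem:geo} and Figure~\ref{fig:geometric2}). Second, one cannot simply ``import the constant-factor converse for Witsenhausen's counterexample from~\cite{Pulkit_Witsen} at each slice and argue the interlocking costs only a constant.'' The finite-horizon sub-problem obtained after slicing is not Witsenhausen's counterexample; it is an $s$-stage problem with noisy observations at both controllers, multiple new disturbances per step, and (effectively) feedback from controller~2 back to controller~1. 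The paper has to carve that sub-problem into three time intervals (information-limited / MIMO Witsenhausen / power-limited), kill the control role of controller~1 and the communication role of controller~2 by Cauchy-Schwarz and genie arguments respectively, and then prove a \emph{new} cutset bound for $s$-stage MIMO state-amplification with feedback (Lemma~\ref{lem:reduced}), into which the large-deviation test-channel replacement of the last noise $v_2[k_2-1]$ is woven. None of that is implied by the two-stage Witsenhausen converse of~\cite{Pulkit_Witsen}, which also has no feedback. Finally, you omit the second half of the converse entirely: when $\sigma_{v2}^2\le\max(1,a^2\sigma_{v1}^2)$ (the weakly-degraded case), the matching lower bound comes from embedding Radner's problem, not Witsenhausen's (Lemma~\ref{cov:lemma2} and Proposition~\ref{prop:2}); without that branch the ratio is not bounded over the whole parameter range. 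These are the substantive missing pieces --- your outline identifies the right shape of the argument but not the actual lemmas that make the converse go through.
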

\begin{proof}
See Section~\ref{sec:proof:ratio}.
\end{proof}

Since measurability is the minimal condition required to even have the problem make any sense, $\inf_{u_1,u_2 \in L}$ implies a minimization over all possible control strategies. Thus, in the rest of the paper, we will simply write it as $\inf_{u_1,u_2}$.

For the proof, we give explicit and computable upper and lower bounds on the optimal cost, and prove that they are within a constant ratio. In Lemma~\ref{ach:lemmabb} of page~\pageref{ach:lemmabb}, we will see that the linear strategies $L_{lin,bb}$ give the following upper bounds on the minimal average cost.
\begin{align}
&\underset{u_1,u_2}{\inf}
\underset{N \rightarrow \infty}{\limsup}
\frac{1}{N}
\underset{0 \leq n < N}{\sum}
\mathbb{E}[qx^2[n]+r_1 u_1^2[n]+r_2 u_2^2[n]] \leq 
q(a^2 \sigma_{v1}^2 + 1)+r_1( a^4 \sigma_{v1}^2 + a^2 \sigma_{v1}^2 + a^2).\\
&\underset{u_1,u_2}{\inf}
\underset{N \rightarrow \infty}{\limsup}
\frac{1}{N}
\underset{0 \leq n < N}{\sum}
\mathbb{E}[qx^2[n]+r_1 u_1^2[n]+r_2 u_2^2[n]] \leq
q(a^2 \sigma_{v2}^2 + 1)+r_2( a^4 \sigma_{v2}^2 + a^2 \sigma_{v2}^2 + a^2).
\end{align}
In Lemma~\ref{ach:lemma9} of page~\pageref{ach:lemma9}, we will see that the signaling strategies $L_{sig,s}$ give the following upper bounds.
\begin{align}
&\underset{u_1,u_2}{\inf}
\underset{N \rightarrow \infty}{\limsup}
\frac{1}{N}
\underset{0 \leq n < N}{\sum}
\mathbb{E}[qx^2[n]+r_1 u_1^2[n]+r_2 u_2^2[n]] \\
& \leq 
\inf_{(d,w_1) \in S_{U,1}}
q D_{U,1}(d,w_1) + r_1 \frac{a^2 d^2}{4} + r_2( 8a^2 D_{U,1}(d,w_1) + \frac{7}{2} a^{2(s+1)}d^2 + 4 a^2 \sigma_{v2}^2).
\end{align}
where the definitions of $D_{U,1}(d,w_1)$ and $S_{U,1}$ are available in Lemma~\ref{ach:lemma9} of page~\pageref{ach:lemma9}.

For the lower bounds, we will see four different bounds in Lemma~\ref{cov:lemma3} of page~\pageref{cov:lemma3} and Lemma~\ref{cov:lemma2} of page~\pageref{cov:lemma2}. Thus, the optimal cost of Problem~\ref{prob:lqg} is lower bounded as follows.
\begin{align}
&\inf_{u_1,u_2} \limsup_{N \rightarrow \infty} \frac{1}{N}
\sum_{0 \leq n < N} q \mathbb{E}[x^2[n]] + r_1 \mathbb{E}[u_1^2[n]] + r_2 \mathbb{E}[u_2^2[n]]\\
& \geq \max_{1 \leq i \leq 4}\sup_{(k_1,k_2,k,\sigma_{v2}',\alpha,\Sigma) \in S_{L,i}}\inf_{\widetilde{P_1},\widetilde{P_2} \geq 0} q D_{L,i}(\widetilde{P_1},\widetilde{P_2};k_1,k_2,k,\sigma_{v2}',\alpha,\Sigma) + r_1 \widetilde{P_1} + r_2 \widetilde{P_2}.
\end{align}
Here, the definitions of $D_{L,1}$ and $S_{L,1}$ are available in Lemma~\ref{cov:lemma3} of page~\pageref{cov:lemma3}. The remaining definitions of $D_{L,i}$ and $S_{L,i}$ for $2 \leq i \leq 4$ are shown in Lemma~\ref{cov:lemma2} of page~\pageref{cov:lemma2}.

Finally, in Section~\ref{sec:proof:ratio} of page~\pageref{sec:proof:ratio} we will compare these upper and lower bounds, and prove that they are within a constant ratio.

To prove a similar result for the slow-dynamics case ($|a| < 2.5$), we need a further set of single-controller optimal strategies.
These strategies are linear strategies which can be parametrized by a single parameter $k$.
\begin{definition}
$L_{lin,kal}$ is the set of all controllers which can be written in either one of the two following forms for some $k \in \mathbb{R}$\\
(i) $u_1[n]=-k\mathbb{E}[x[n]|y_1^n,u_1^{n-1}]$, $u_2[n]=0$.\\
(ii) $u_1[n]=0$, $u_2[n]=-k\mathbb{E}[x[n]|y_2^n,u_2^{n-1}]$.
\label{def:lin2}
\end{definition}
Here, $\mathbb{E}[x[n]|y_1^n,u_1^{n-1}]$ and $\mathbb{E}[x[n]|y_2^n,u_2^{n-1}]$ can be easily computed by Kalman filtering once $k$ is fixed.\footnote{Since $u_i^{n-1}$ is known to the controller, we can compensate for the past control inputs and treat it as an open-loop system. The estimation problem in the open-loop system is well-known Kalman filtering. This concept is called the control-estimation separation principle in the control community.}

The results of \cite{Park_Approximation_Journal_Partii} reveal that when $|a| \leq 2.5$, optimization over $L_{lin,kal}$ is enough to give a constant-ratio optimal strategy among all possible strategies.
\begin{theorem}
There exists $c \leq 6 \cdot 10^6$ such that for all $|a| \leq 2.5$, $q$, $r_1$, $r_2$, $\sigma_0$, $\sigma_{v1}$ and $\sigma_{v2}$,
\begin{align}
\frac{
\underset{u_1,u_2 \in L_{lin,kal}}{\inf}
\underset{N \rightarrow \infty}{\limsup}
\frac{1}{N}
\underset{0 \leq n < N}{\sum}
\mathbb{E}[qx^2[n]+r_1 u_1^2[n]+r_2 u_2^2[n]]
}
{
\underset{u_1,u_2}{\inf}
\underset{N \rightarrow \infty}{\limsup}
\frac{1}{N}
\underset{0 \leq n < N}{\sum}
\mathbb{E}[qx^2[n]+r_1 u_1^2[n]+r_2 u_2^2[n]]
}
\leq c. \nonumber
\end{align}
\label{thm:2}
\end{theorem}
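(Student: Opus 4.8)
The plan is to bracket the optimal cost of Problem~\ref{prob:lqg} in the regime $|a|\le 2.5$ between two explicit quantities differing only by a universal constant, in exactly the style of the proof of Theorem~\ref{thm:1}, but now using $L_{lin,kal}$ as the achievable family in place of $L_{lin,bb}\cup\bigcup_s L_{sig,s}$. For the upper bound I would evaluate the steady-state average cost of the best strategy in $L_{lin,kal}$: once we fix that only controller $i$ acts (with $u_{3-i}\equiv 0$), the closed loop is an ordinary scalar \emph{centralized} LQG problem, so the optimal gain $k$ and the resulting average cost are read off from the scalar control and filtering algebraic Riccati equations. This produces a closed-form upper bound $\min(C_1,C_2)$, where each $C_i$ is a rational expression in $a^2$, $q$, $r_i$ and $\sigma_{vi}$ (the $\sigma_0$-dependent transient averages out in the $\limsup$). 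The point is that because $|a|\le 2.5$, every $a$-dependent factor appearing in $C_i$ is bounded by an absolute constant, which is what ultimately makes the final ratio free of $a$.

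The harder half is the lower bound, and the statement that must be made quantitative is that \emph{when $|a|$ is bounded, signaling/nonlinearity can help by at most a constant factor}, i.e.\ $\inf_{u_1,u_2}(\text{cost})\ge \tfrac{1}{c_1}\min(C_1',C_2')$ for explicit $C_i'$ comparable to $C_i$. I would derive this from genie-aided, geometric-slicing arguments in the spirit of Lemma~\ref{cov:lemma3} and Lemma~\ref{cov:lemma2}: grant controller $1$ a ``helper'' and lower-bound the optimal cost of the resulting, still partially decentralized, problem by a quantity governed by $\sigma_{v1}$; do the symmetric thing for controller $2$ in terms of $\sigma_{v2}$; and combine the two. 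The crucial quantitative feature is that each such genie relaxation (equivalently, each geometric slice) loses only a factor that is \emph{multiplicative in $a^2$} --- intuitively one step of the dynamics amplifies by at most $a^2$ the information that can leak across the slice and be exploited --- so for $|a|\le 2.5$ the total loss is an absolute constant. One also needs two elementary Kalman-filtering facts that keep the ``fully centralized'' relaxation within a constant of the single-controller costs: adding controller $2$'s observation to controller $1$'s can shrink the steady-state estimation error by at most a factor $2$ (since $\sigma_{v1}\le\sigma_{v2}$, so the extra information rate is at most doubled), and splitting the actuation between the two inputs can reduce the control-cost coefficient by at most a factor $2$ (as $\tfrac{r_1r_2}{r_1+r_2}\ge\tfrac12\min(r_1,r_2)$).

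Finally I would compare the upper bound $\min(C_1,C_2)$ with the lower bound by splitting into a fixed finite list of parameter regimes --- according to which of the three cost terms (the $q$-term, the $r_1$-term, the $r_2$-term) dominates, whether $|a|\lessgtr 1$, and whether each $\sigma_{vi}$ is large or small relative to the remaining parameters --- and in each regime bounding the ratio by an explicit constant; the worst case over the list gives $c\le 6\cdot 10^6$. I expect the main obstacle to be the lower bound: making the ``the nonlinear/signaling gain is $O(a^2)$'' intuition rigorous and, more delicately, uniform over \emph{all} parameter values simultaneously --- in particular handling the near-unstable case $|a|\approx 1$ and the heavily-noisy case $\sigma_{v2}\gg\sigma_{v1}$ at once, where the naive genie bounds threaten to become vacuous --- is where the real effort goes, whereas the achievability computation and the final regime-by-regime ratio bound are routine if lengthy. (These details are carried out in \cite{Park_Approximation_Journal_Partii}.)
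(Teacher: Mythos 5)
The paper does not actually prove Theorem~\ref{thm:2} in this document; it simply cites the companion paper \cite{Park_Approximation_Journal_Partii}, so there is no in-paper proof to compare your sketch against. That said, your outline is consistent with the high-level picture this paper \emph{does} give of the Part~II argument (Section~\ref{sec:wireless}): in slow dynamics the power gain from Kalman filtering across time dominates, there is little incentive for implicit communication, and a single-controller Kalman strategy $L_{lin,kal}$ suffices; achievability is a Riccati computation, and the lower bound must show that two-controller nonlinearity buys only an $|a|$-dependent factor that stays bounded when $|a|\le 2.5$.

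Where your sketch needs caution: the two ``factor of $2$'' claims are asserted, not argued, and neither is obviously uniform over all parameters. The estimation claim (adding $y_2$ to $y_1$ cuts the steady-state error by at most a factor $2$) is a single-step maximum-ratio-combining fact, but the steady-state Kalman error is a nonlinear function of the observation noise through the algebraic Riccati equation, and its sensitivity to halving that noise depends on $a$ and on the operating regime --- exactly the near-unstable $|a|\approx 1$, $\sigma_{v2}\gg\sigma_{v1}$ corner you flag as delicate. The ``splitting actuation loses at most a factor $2$'' claim via $\tfrac{r_1 r_2}{r_1+r_2}\ge \tfrac12\min(r_1,r_2)$ only addresses the input-cost coefficient, not the joint effect on the closed-loop state cost. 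Likewise, ``each geometric slice loses only a factor multiplicative in $a^2$'' is the right intuition, but it is not a theorem until you specify the slice and prove the bound; Lemma~\ref{lem:geo} here gives a lower bound by slicing, but the hard part is showing the slice is \emph{tight} up to a constant against the Kalman achievability, and you have not indicated how to do that when $|a|<1$ (where the system is stable and the zero-input strategy competes) versus $1\le|a|\le 2.5$. You acknowledge that the details live in \cite{Park_Approximation_Journal_Partii}, which matches what the paper itself does --- but it means your proposal is a plausible roadmap rather than a proof, and the concrete gaps are precisely the two quantitative lemmas you state without proof.
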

\begin{proof}
See \cite{Park_Approximation_Journal_Partii}.
\end{proof}

By Theorem~\ref{thm:1} and \ref{thm:2}, we can achieve the optimal cost to within a factor of
$6 \cdot 10^6$ by optimizing only over $L_{lin,kal}$ and $L_{sig,s}$, which only involves single and two parameter optimization problems respectively. We believe that the factor here is coming from our proof strategy and the gap will be far smaller in practice.

The optimal parameters for the proposed strategy sets in Definition~\ref{def:lin}, \ref{def:sig}, \ref{def:lin2} are not difficult to find.
The optimization over $L_{lin,kal}$ is a centralized LQG problem and it is well known that the optimal $k$ can be easily found by Riccati equations~\cite{KumarVaraiya}. In Proposition~\ref{prop:2} of page~\pageref{prop:2} and Proposition~\ref{prop:3} of page~\pageref{prop:3}, we will see that the parameter $s$ in $L_{sig,s}$ can be selected based on the problem parameters. Particularly, we can use $s =\lceil \frac{\ln \sigma_{v2}^2 - \ln ( \max (1, a^2 \sigma_{v1}^2))}{2 \ln a} \rceil$, so that $a^{2(s-1)} \max(1,a^2 \sigma_{v1}^2) < \sigma_{v2}^2 \leq a^{2s} \max(1,a^2 \sigma_{v1}^2)$. Moreover, Corollary~\ref{rat:ach} of page~\pageref{rat:ach} gives a simple analytic upper bound on the performance of $L_{sig,s}$, which has only two local optima as $d$ varies. Therefore, both optimization problems are easily solvable.

However, the true implication of Theorem~\ref{thm:1} and \ref{thm:2} is that they reveal the essential skeletons of an optimal strategy. Since the original optimization problems are infinite-dimensional and non-convex, it is not even clear how and where to start a computer-based search. By revealing the minimal strategy for approximately optimal performance, these results might give an initial point to start optimization for further performance refinements. More importantly, as we will see in later sections, the proposed strategy sets are intuitively interpretable and understandable.

\section{Intuition: Deterministic Model Interpretation}
\begin{figure*}[htbp]
\begin{center}
\includegraphics[width=6in]{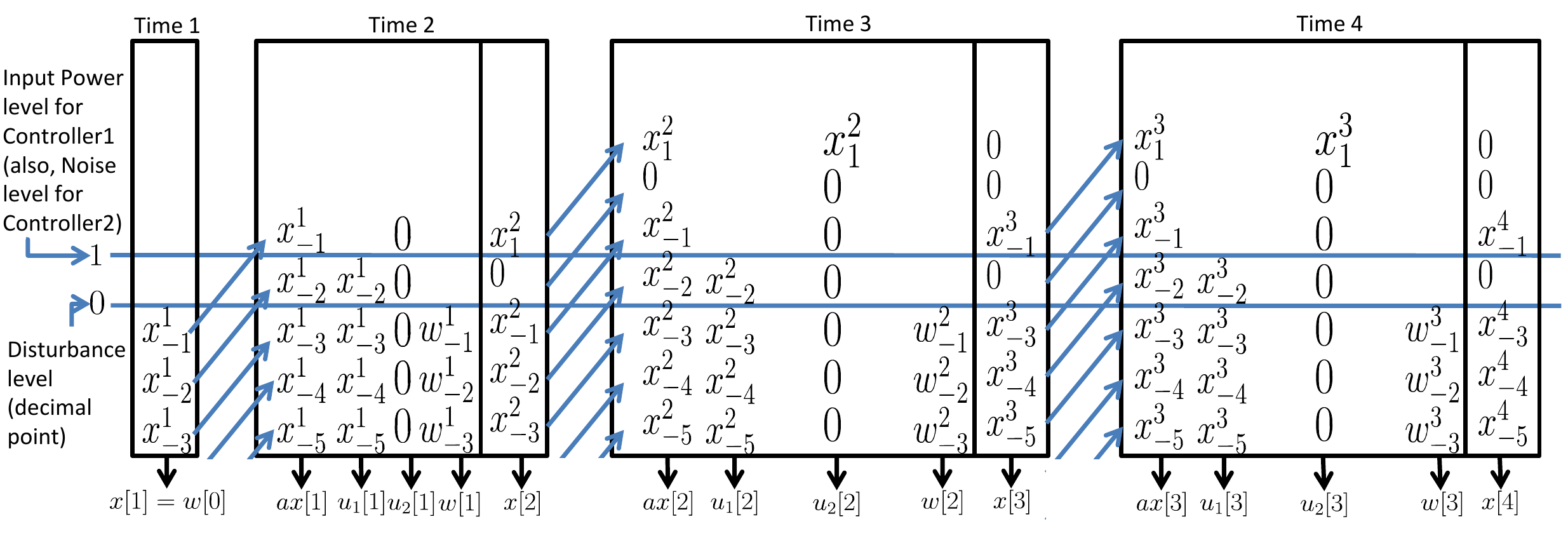}
\caption{Deterministic Model Interpretation of Nonlinear Control Strategies $L_{sig,1}$}
\label{fig:nonlin}
\end{center}
\end{figure*}

\label{sec:intui}
\begin{figure*}[htbp]
\begin{center}
\includegraphics[width=6in]{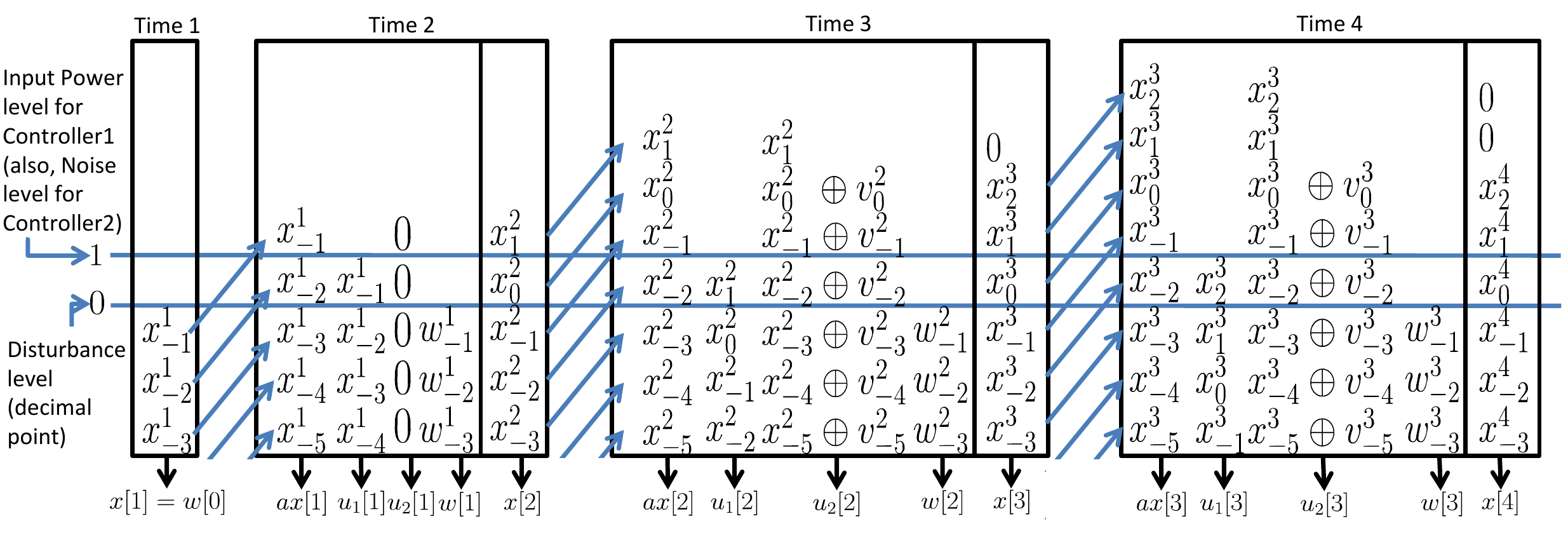}
\caption{Deterministic Model Interpretation of Linear Control Strategies $L_{lin}'$}
\label{fig:lin}
\end{center}
\end{figure*}

After reading the problem statement and the main result, readers may wonder\\
(1) Why is the proposed set of nonlinear strategies enough to achieve a constant ratio from the optimal cost?\\
(2) Why are linear strategies not enough to achieve a constant ratio from the optimal? \\

In this section, we will give an intuitive answer for these questions based on a linear deterministic model in the spirit of Avestimehr, Diggavi and Tse~\cite{Salman_Wireless}, which has already proved to be useful in understanding some control problems~\cite{Pulkit_Witsen,Ranade_implicit}.

The point of these linear deterministic models is to simplify and idealize real arithmetic, and allow us to take a linear view of nonlinearity.
The idea is to consider real numbers in binary expansion and then to simplify arithmetic by eliminating carries. For example, if we have a number $5$ we write it as  $101$ in binary expansion, and likewise we write $\frac{1}{4}$ as $0.01$. If we have a random variable $X$ which is uniform on $[0,4)$, we can write it as $b_1 b_0. b_{-1} b_{-2} \cdots $ in binary expansion where $b_i$ are i.i.d.~Bernoulli($\frac{1}{2}$) random variables on $\{ 0,1\}$.

Since uniform random variables are so simple, we idealize Gaussian random variables as uniform random variables. For a given Gaussian random variable with zero mean and variance $\sigma^2$, we caricature it as a uniform random variable on $[0,\sigma)$ and use the same deterministic model for the uniform random variable. For example, a Gaussian random variable $\mathcal{N}(0,4^2)$ is caricatured as $b_1 b_0 . b_{-1} b_{-2} \cdots$.

Then, we simplify the arithmetic on binary representations. Addition and subtraction are approximated by bitwise XOR --- thereby ignoring the carry effect. For $B'=b_1' b_0' . b_{-1}' b_{-2}' \cdots$ and $B''=b_1'' b_0'' . b_{-1}'' b_{-2}'' \cdots$, both $B'+B''$ and $B'-B''$ are approximated by $(b_1' \oplus b_1'') (b_0' \oplus b_0'') . (b_{-1}' \oplus b_{-1}'') (b_{-2}' \oplus b_{-2}'') \cdots$. Since we are modeling addition and subtraction in the same way, we ignore the sign of the numbers and consider $x$ and $-x$ to be the same in deterministic models and we will assume every number is positive from now on.

Multiplication is approximated by a bit shift. For example, $B' \times 4 $ and $B' \slash 4$ are equal to $b_1' b_0' b_{-1}' b_{-2}' . b_{-3}' \cdots$ and $0. b_1' b_0' b_{-1}' \cdots $ respectively. If we restrict multipliers and dividers to be $2^n$, this agrees with conventional multiplication and division.

For further discussion, it will be helpful to define the (binary) index and level for binary expansions. For a given binary expansion $B=\cdots b_1 b_0. b_{-1} b_{-2}$, the \textit{index} $i$ bit of $B$ indicates $b_i$. It is natural to call the bits $b_i, b_{i+1}, b_{i+2}, \cdots $ as the bits above level $i$, and the bits $b_{i-1}, b_{i-2}, b_{i-3}, \cdots$  as the bits below level $i$. To clarify this point, we define the \textit{level} $i$ as the imaginary line between two sequential bits $b_i$ and $b_{i-1}$. Thus, the decimal point corresponds to the level $0$. 

We also denote the \textit{upper-level} of $B$ as the minimum level $l$ such that all bits above the level $l$ are $0$, i.e. $b_i=0$ for all $i \geq l$. For example, the upper level of $3$ is $2$ and the upper level of $4$ is $3$. When $B$ is a random variable, we denote the \textit{upper level} $l$ of $B$ as the worst case bound, i.e. the minimum $l$ such that $b_i=0$ for all $i \geq l$ with probability $1$. Therefore, the upper level of a uniform random variable on $[0,4)$ is $2$ since $4$ is not included in the interval.

Now, we can come up with the corresponding binary deterministic counterpart for the LQG problems of Problem~\ref{prob:lqg}. To simplify the discussion, we will assume the first controller has perfect observations, and the second controller has no input cost. Like \cite{Pulkit_Witsen}, we will consider the state disturbance minimization problem for given control power constraints, instead of the weighted long-term cost minimization problem.
\begin{problem}[Binary Deterministic Model for Problem~\ref{prob:lqg}]
Let $a', \sigma_{v2}', p_1' \in \mathbb{Z}$ be the problem parameters. For time index $n \geq 0$ and binary index $i$, the deterministic system dynamics is given as follows:
\begin{align}
&x^{n+1}_{i}= x^n_{i-a'} \oplus u^n_{1,i} \oplus u^n_{2,i}  \oplus w^n_i\\
&y^{n}_{1,i}=x^{n}_i \\
&y^{n}_{2,i}=x^{n}_i \oplus v^{n}_i
\end{align}
Here, $x^0_i = 0$ for all $i$. For all $n$, $w^n_i$ are $0$ for all $i \geq 0$ and i.i.d. Bernoulli $\frac{1}{2}$ on $\{ 0,1\}$ for all $i < 0$. For all $n$, $v^n_i$ are $0$ for all $i \geq \sigma_{v2}'$ and i.i.d. Bernoulli $\frac{1}{2}$ on $\{ 0,1\}$ for all $i < \sigma_{v2}'$. The $v^n_i$ are independent from the $w^n_i$. $u^n_{1,i}$ and $u^n_{2,i}$ are causal functions of $y^n_{1,i}$ and $y^n_{2,i}$ respectively. The first controller has a ``power" limit, $u^n_{1,i}=0$ for all $n \geq 0$ and $i \geq p_1'$.

The goal of control is to minimize the upper-level $d$ on the worst state distortion, i.e. minimizing $d$ such that $x^n_i=0$ for all $i \geq d$ and $n \geq 0$ with probability $1$.
\label{prob:det}
\end{problem}
We can notice that $x^{n}_i$, $u^n_{1,i}$, $u^n_{1,i}$, $w^n_i$,  $v^n_i$ correspond to $x[n]$, $u_1[n]$, $u_2[n]$, $w[n]$, $v[n]$ of Problem~\ref{prob:lqg} respectively. Therefore, we will use the latter terms for a compact representation of the bits in Problem~\ref{prob:det}. 
Moreover, since the parameters of Problem~\ref{prob:det} are given in the binary levels of amplitude, they have the following relationship with those of Problem~\ref{prob:lqg}: $a=2^{a'}$, $\sigma_{v2}^2=2^{2 \sigma_{v2}'}$, $\mathbb{E}[u_1^2[n]] \leq 2^{2 p_1'}$. Through the rest of discussion, we will focus on the case, $a'=2, \sigma_{v2}'=\frac{a'}{2}, p_1'=\frac{a'}{2}$. Therefore, the corresponding parameters in LQG Problem~\ref{prob:lqg} are $a=2^{2}$, $\sigma_{w}^2=1$, $\sigma_{v2}^2=2^{2}$, $\mathbb{E}[u_1^2[n]] \leq 2^{2}$.

Based on this deterministic model, we will answer the first question, `why the proposed strategy is approximately optimal'. First, we can easily derive the following lower bound on the state disturbance.
\begin{proposition}
When $a'=2, \sigma_{v2}'=\frac{a'}{2}, p_1'=\frac{a'}{2}$ in Problem~\ref{prob:det}, the minimum upper-level $d$ on the state distortion level has to be at least $2$.
\label{prop:converse}
\end{proposition}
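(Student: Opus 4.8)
The plan is to exhibit a specific ``adversarial'' realization of the disturbances that defeats any causal control strategy, thereby showing $d\ge 2$. Concretely, with $a'=2$, $\sigma_{v2}'=1$, $p_1'=1$, the state update is $x^{n+1}_i = x^n_{i-2}\oplus u^n_{1,i}\oplus u^n_{2,i}\oplus w^n_i$, the process noise $w^n$ lives strictly below level $0$, the first controller can only write bits at levels $<1$ (i.e.\ at level $0$ and below), and the second controller's observation $y_2$ is corrupted below level $1$ (so $y^n_{2,0}$ and below are pure noise to controller 2). I would argue that controller 2 is effectively blind at level $0$: since $v^n_0$ is an i.i.d.\ fair coin, the bit $y^n_{2,0}$ carries no usable information about $x^n_0$, so any choice of $u^n_{2,0}$ is independent of $x^n_0$. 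Meanwhile controller 1 sees $x^n$ perfectly but its cheapest ``useful'' bit it can cancel is at level $0$ as well.

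The core step is a one-step propagation argument at level $2$. Suppose for contradiction some strategy keeps $x^n_i=0$ for all $i\ge 1$ and all $n$ (i.e.\ achieves $d\le 1$), or even just for all $i\ge 2$ persistently. Look at how bits reach level $\ge 2$ in $x^{n+1}$: the term $x^n_{i-2}$ feeds level-$i$ of $x^{n+1}$ from level $i-2$ of $x^n$, while the controls and $w^n$ contribute at their own levels. In particular $x^{n+1}_2 = x^n_0 \oplus u^n_{1,2}\oplus u^n_{2,2}\oplus w^n_2 = x^n_0 \oplus u^n_{2,2}$ because $w^n_2=0$ and $u^n_{1,2}=0$ (power limit $p_1'=1$). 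Since controller 2's bit $u^n_{2,2}$ is a function of $y^n_2$ whose level-$0$ component is independent of $x^n_0$ — and one checks inductively that controller 2's entire view is independent of the fair coin $x^n_0$ at the relevant time — $u^n_{2,2}$ cannot equal $x^n_0$ with probability $1$. Hence $\Pr[x^{n+1}_2 = 1] = \tfrac12 > 0$, contradicting $x^{n+1}_2=0$ a.s. Thus $d\ge 2$.

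To make the independence claim rigorous I would set up an induction on $n$: let $\mathcal{F}^n_2$ be the $\sigma$-field generated by controller 2's observations $y^0_2,\dots,y^n_2$, and show $x^{n}_0$ conditioned on $\mathcal{F}^{n}_2$ (and on controller 1's strategy) is still Bernoulli$(\tfrac12)$. The inflow to $x^{n}_0$ is $x^{n-1}_{-2}\oplus u^{n-1}_{1,0}\oplus u^{n-1}_{2,0}\oplus w^{n-1}_0$; the fresh fair coin $w^{n-1}_0$ (note $0<0$ is false, so actually $w^{n-1}_0=0$ — I must be careful: $w^n_i$ is a fair coin only for $i<0$) — so I should instead track a fresh coin that genuinely enters at a negative level and gets shifted up to level $0$ after one time step via the $i-2$ shift, i.e.\ $w^{n-2}_{-2}$ reaching $x^{n}_0$. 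The key observation is that this coin is independent of everything controller 2 has seen up to the time it becomes relevant, because controller 2's level-$\le 0$ observations are swamped by its own independent noise $v$. I would phrase this as: the pair (bit of $x$ at level $0$, bits of $v$ at level $0$) are jointly independent fair coins from controller 2's perspective, so its actions at level $0$ (and hence, after shifting, its actions affecting level $2$) are decoupled from the true state bit.

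The main obstacle I anticipate is handling the feedback loop cleanly: controller 2's past \emph{actions} $u^{n'}_{2,\cdot}$ do depend on its past noisy observations, which in turn depend on past states, so one must verify that this dependence never ``leaks'' information about the specific fair coin that lands at level $0$ of $x^n$ — the noise $v$ at level $0$ must be shown to perfectly mask it at every step. Formalizing this likely requires a careful joint-distribution / conditional-independence induction (or an explicit coupling argument swapping the value of the offending coin and of the matching $v$-bit together, leaving controller 2's entire observation sequence unchanged), rather than the one-line calculation that suffices once the masking lemma is granted.
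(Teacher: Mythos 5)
Your general strategy (a masking/conditional-independence argument, or equivalently an explicit coupling that swaps the offending fair coin with the matching $v$-bit) is exactly in the spirit of the paper's converse. But you have targeted the wrong bit index, and this makes the specific claim you are trying to prove false.

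You work with $x^{n+1}_2 = x^n_0 \oplus u^n_{2,2}$ and try to argue that $x^n_0$ stays a fair coin hidden from controller~$2$. The problem is that the power constraint $p_1'=1$ forbids $u^n_{1,i}$ only for $i \ge 1$, so controller~$1$ is free to write at index $0$. Concretely, since $y^{n-1}_{1,-2}=x^{n-1}_{-2}$ is observed noiselessly at time $n-1$, controller~$1$ can set $u^{n-1}_{1,0}=x^{n-1}_{-2}$, giving $x^n_0 = x^{n-1}_{-2}\oplus u^{n-1}_{1,0}\oplus u^{n-1}_{2,0} = u^{n-1}_{2,0}$, which can be made identically $0$. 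So the two-hop coin $w^{n-2}_{-2}$ you propose to track ($-2 \to 0 \to 2$) gets intercepted by controller~$1$ after the first hop, before it ever reaches level~$2$. Indeed, the achievable strategy of Proposition~\ref{prop:detoptimal} has $x^n_0=0$ and hence $x^n_2=0$ for $n\ge 2$, so $\Pr[x^{n+1}_2=1]>0$ simply is not true for every strategy; you would in effect be arguing $d\ge 3$, which contradicts the matching achievability.

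The bit you must track is at index $1$, not $2$: $x^{n+1}_1 = x^n_{-1} \oplus u^n_{1,1} \oplus u^n_{2,1} \oplus w^n_1 = x^n_{-1} \oplus u^n_{2,1}$, since $w^n_1=0$ and $u^n_{1,1}=0$ by the power limit. The fresh coin $w^{n-1}_{-1}$ lands in $x^n_{-1}$; controller~$1$ \emph{sees} it but cannot touch index~$1$ (that is exactly where its power budget runs out, one shift above where the coin sits), and controller~$2$'s observation $y^n_{2,-1}=x^n_{-1}\oplus v^n_{-1}$ is masked, so its action $u^n_{2,1}$ is conditionally independent of that coin. That is the one-time-step coin that no one can cancel, and it is why $d\ge 2$ but not $d\ge 3$. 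Once you retarget to index~$1$, your masking induction (or the coupling swapping $w^{n-1}_{-1}$ with $v^n_{-1}$) goes through essentially as you outlined it.
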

\begin{proof}
We can easily see that for $n \geq 1$, the distortion level of $x[n]$ is at least $0$ since $w[n-1]$ with upper-level $0$ is added at each time step. At time $n+1$, this distortion will be shifted by two bits up, and the upper-level of the distortion becomes $2$. However, the first controller cannot touch the bits above the level $1$ and so the first controller cannot reduce the distortion level. Moreover, at the second controller, any bits below level $1$ are is corrupted by i.i.d. Bernoulli($\frac{1}{2}$) observation noise. Therefore, the second controller's observation is independent from the unknown bits sitting below the level $1$. Consequently, there is no action it can take to draw that bit to $0$.

Neither controller can reduce the distortion bit sitting between the level $1$ and $2$, so with a positive probability this bit can be non-zero. Therefore, the upper-level of $x[n+1]$ has to be at least $2$, i.e. $d \geq 2$.
\end{proof}

In fact, the following proposition shows this lower bound is actually achievable.
\begin{proposition}
Consider Problem~\ref{prob:det} with $a'=2, \sigma_{v2}'=\frac{a'}{2}, p_1'=\frac{a'}{2}$. Given the first controller's observation $y_1[n]= \cdots y^n_{1,1} y^n_{1,0}. y^n_{1,-1} \cdots$ and the second controller's observation $y_2[n]= \cdots y^n_{2,1} y^n_{2,0}. y^n_{2,-1} \cdots$, let the first and second controller's control input be
\begin{align}
&u_1[n] =  y^n_{1,-2}. y^n_{1,-3} y^n_{1,-4}\cdots\\
&u_2[n] =  \cdots y^n_{2,2} y^n_{2,1}000.0\cdots
\end{align}
Then, this strategy can achieve the optimal upper-level on the state distortion, $d=2$.
\label{prop:detoptimal}
\end{proposition}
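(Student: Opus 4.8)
\emph{Proof idea.} The plan is to substitute the two proposed controllers directly into the deterministic dynamics of Problem~\ref{prob:det} and check, level by level, that $x^n_i=0$ for every $i\ge 2$ and every $n\ge 0$. Since Proposition~\ref{prop:converse} already shows that no strategy can guarantee a smaller upper-level than $d=2$, this simultaneously proves achievability and optimality. All the cancellations below are exact XOR identities, so the conclusion holds surely (not merely with probability $1$), and causality is immediate since each controller uses only its current observation.

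First I would rewrite the strategy bit by bit. The first controller produces $u^n_{1,i}=y^n_{1,i-2}=x^n_{i-2}$ for $i\le 0$ and $u^n_{1,i}=0$ for $i\ge 1$, which respects the power limit $p_1'=1$. The second controller produces $u^n_{2,i}=y^n_{2,i-2}=x^n_{i-2}\oplus v^n_{i-2}$ for $i\ge 3$ and $u^n_{2,i}=0$ for $i\le 2$. The point to check here is that, because $\sigma_{v2}'=1$, the noise $v^n$ is supported only on levels $i\le 0$, so $v^n_{i-2}=0$ whenever $i\ge 3$; hence $u^n_{2,i}=x^n_{i-2}$ exactly for all $i\ge 3$. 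Intuitively, the first controller cancels the natural two-level up-shift of the state at the low levels it is allowed to touch, and the second controller cancels it at the high levels, where its observation is noiseless.

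Then I would plug these into $x^{n+1}_i=x^n_{i-2}\oplus u^n_{1,i}\oplus u^n_{2,i}\oplus w^n_i$ and split on $i$. For $i\ge 3$: $u^n_{1,i}=0$, $w^n_i=0$, and $u^n_{2,i}=x^n_{i-2}$, so $x^{n+1}_i=x^n_{i-2}\oplus x^n_{i-2}=0$. For $i\le 0$: $u^n_{2,i}=0$ and $u^n_{1,i}=x^n_{i-2}$, so $x^{n+1}_i=x^n_{i-2}\oplus x^n_{i-2}\oplus w^n_i=w^n_i$; in particular $x^{n+1}_0=w^n_0=0$. For $i=1$: all of $u^n_{1,1}$, $u^n_{2,1}$, $w^n_1$ vanish, so $x^{n+1}_1=x^n_{-1}$, about which we claim nothing. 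The only remaining index is $i=2$: here $u^n_{1,2}=u^n_{2,2}=w^n_2=0$, so $x^{n+1}_2=x^n_0$, and by the $i\le 0$ case $x^n_0=0$ for every $n\ge 1$, while $x^0_0=0$ by the initial condition of Problem~\ref{prob:det}; hence $x^{n+1}_2=0$ for all $n\ge 0$. Combining the cases with $x^0_i=0$ gives $x^n_i=0$ for all $i\ge 2$ and $n\ge 0$, so the strategy attains $d=2$, which is optimal by Proposition~\ref{prop:converse}.

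I do not expect a genuinely hard step; the argument is essentially mechanical. The one place that needs care is the boundary index $i=2$: neither controller writes there, and it stays $0$ only because the first controller cleared index $0$ in the previous time step, so that case must be handled as a one-step recursion on $n$ (anchored by $x^0\equiv 0$) rather than as a pointwise identity. The other thing to verify is exactly the parameter alignment noted above — that the noise support of $v^n$ (levels $\le 0$, since $\sigma_{v2}'=\tfrac{a'}{2}=1$) is disjoint from the levels the second controller must read in order to cancel the up-shifted state at indices $i\ge 3$ — which is what makes $u^n_{2,i}=y^n_{2,i-2}$ a perfect cancellation there and leaves only the irreducible level-$1$ distortion.
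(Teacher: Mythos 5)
Your proof is correct. Every step checks out: the parameter alignment ($\sigma_{v2}'=1$ means $v^n_{i-2}=0$ for $i\ge 3$, so $u^n_{2,i}=x^n_{i-2}$ exactly; $p_1'=1$ means the first controller's strategy respects the power limit), the four index cases are exhaustive for the conclusion, and the $i=2$ case is correctly reduced to $x^n_0=0$, which follows from the $i\le 0$ case at time $n-1$ (for $n\ge 1$) together with $x^0_0=0$.

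The paper's own proof reaches the same conclusion by a time-stepping argument: it explicitly computes $x[1]$, $x[2]$, $x[3]$ from the initial condition, observes that $x[3]$ has the same form as $x[2]$, and concludes that the system is in steady state with upper-level $2$, appealing to Figure~\ref{fig:nonlin} to make the cancellations visible. Your decomposition is instead by bit index $i$, proving the identity $x^{n+1}_i=0$ for $i\ge 3$ holds pointwise for every $n$ and isolating the single genuine recursion at $i=2$. This buys you a self-contained argument that does not rest on an informal steady-state claim or on reading the figure, and it makes transparent exactly which index requires an induction anchor and why. The underlying cancellations are identical; what differs is that the paper shows the scheme works by unrolling the dynamics while you show it works by a clean case split, which is the cleaner way to make the paper's "repeating the control strategy always gives the same upper-level" step rigorous.
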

\begin{proof}
Since we already know the minimal $d \geq 2$ from Proposition~\ref{prop:converse}, it is enough to show that the proposed strategy can achieve $d=2$.

Figure~\ref{fig:nonlin} shows the resulting dynamics when we actually use this strategy. Since the initial state $x[0]=0$, at time 1 both controller's input is also $0$ and $w[0]$ is only term that contributes to $x[1]$. Thus, $x[1]$ can be represented by $0.x^1_{-1} x^1_{-2} x^1_{-3} \cdots$ in the deterministic model where each bit is i.i.d. Bernoulli $\frac{1}{2}$ in $\{0,1\}$.

At time 2, $x[1]$ is shifted two-bits up to generate $x^1_{-1} x^1_{-2} . x^1_{-3} \cdots$. Since the first controller's observation $y_1[1]$ is equal to $x_1[n]$, its control input is $ x^1_{-2} . x^1_{-3} \cdots$. After being corrupted by the noise $v_2[1]$, the second controller's observation $y_2[1]$ becomes $v^1_0. ( x^{1}_{-1} \oplus v^1_{-1}) (x^{1}_{-2} \oplus v^1_{-2}) \cdots$ which is independent from the state, and as a result $u_2[1]=0$. When all of these are added, the second bit of the state canceled by the first controller's input. As we can see in Figure~\ref{fig:nonlin} the state $x[2]$ results in $x^2_1 0 . x^2_{-1} x^2_{-2} \cdots$ where each bit is i.i.d. Bernoulli $\frac{1}{2}$ except for the $0$ in the $0$th position.

At time 3, the first controller does essentially the same operation as time 2, canceling the lower bits below the level $1$. However, the second controller's observation has larger level than before, $y_2[2]= x^2_1 (v^2_0) . (x^2_{-1} \oplus v^2_{-1}) \cdots $. Thus, $u_2[2]$ becomes $x_1^2 000.0 \cdots$.
When we are adding these values, the first bit of the state is canceled by the second controller's input and the third bit of the state is canceled by the first controller's input. As Figure~\ref{fig:nonlin} shows, the resulting state $x[3]$ is  $x^3_1 0 . x^3_{-1} x^3_{-2} \cdots$, which is essentially the same as $x[2]$.

Therefore, we arrive in steady state and repeating the control strategy always gives the state with the same upper-level $2$. This finishes the proof.
\end{proof}

So, we have an optimal scheme for the deterministic model. Let's apply the insights that we learnt from the deterministic model to the original LQG problem.
The first controller's strategy of Proposition~\ref{prop:detoptimal} can be understood as a sequence of two operations. The first operation is extracting the lower bits of $y_1[n]$ and thus generating $0.0y_{1,-2} y_{1,-3}\cdots$. To mimic this, we can simply divide $y_1[2]$ by $0.1$ (in binary) and take the remainder. Using Definition~\ref{def:sig}, this can be written as $R_d(y_1[2])$ with $d=0.1$.  The second operation is shifting the bits up to generate $u_1[2]$, which is just multiplication by a constant ($-a$ to be exact). Therefore, $u_1[2]=-a R_d(y_1[2])$.

The second controller's strategy of Proposition~\ref{prop:detoptimal} can be understood as a sequence of two operations. The first operation is extracting the higher bits of $y_2[2]$ and thus generating $y_{2,m} \cdots y_{2,1}0.0 \cdots$. For this, we can divide $y_2[2]$ by $10$ (in binary) and take the quotient (exactly speaking, quotient multiplied by divisor). Using Definition~\ref{def:sig}, this can be written as $Q_{d'}(y_2[2])$ with $d'=ad=10$. The second operation is shifting two bits up as before, which is multiplication. Therefore, $u_2[2]=-a Q_{d'}(y_2[2])$.

Compared to \eqref{eqn:1} and \eqref{eqn:2}, this strategy is essentially equivalent to $1$-stage signaling strategy except for some minor terms in $u_2[n]$. Therefore, in nonlinear strategies $L_{sig,s}$, $u_1[n]$ tries to cancel the lower bits in $ax[n]$ by exploiting its better observation and $u_2[n]$ tries to cancel higher bits in $ax[n]$ exploiting its less expensive input cost.

Now, we understand why the proposed strategy might be approximately optimal. We can move on to the next question, `why linear is not enough for constant-ratio optimality'. Let's first remind ourselves of the linear operations in binary deterministic models. Addition and subtraction correspond to bitwise XOR. Multiplication and division by a constant correspond to shifting bits up and down. Let's revisit Problem~\ref{prob:det} with these restrictions on the strategy, and understand why we cannot achieve the optimal performance with linear strategies.

\begin{proposition}
Consider Problem~\ref{prob:det} with $a'=2, \sigma_{v2}'=\frac{a'}{2}, p_1'=\frac{a'}{2}$. Let's restrict the controller strategies to be the following forms: For some $k_{i,j}, k_{i,j}' \in \mathbb{Z}$ and for all $i \in \mathbb{Z}$ and $n \in \mathbb{Z}^+$,
\begin{align}
&u^n_{1,i} = y^0_{1,i+k_{0,n}} \oplus y^1_{1,i+k_{1,n}} \oplus \cdots \oplus y^n_{1,i+k_{n,n}} \\
&u^n_{2,i} = y^0_{2,i+k_{0,n}'} \oplus y^1_{2,i+k_{1,n}'} \oplus \cdots \oplus y^n_{2,i+k_{n,n}'}
\end{align}
Under this constraint on control strategy, the minimal upper-level $d$ on the state distortion is $3$.
\label{prop:detlin}
\end{proposition}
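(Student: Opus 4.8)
The plan is to prove the two inequalities $d\le 3$ and $d\ge 3$ separately; the first is routine, while the second carries all the content and is where the contrast with the nonlinear scheme of Proposition~\ref{prop:detoptimal} shows up.

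For achievability ($d\le 3$) I would have the second controller zero-force the top two levels of the shifted state while the first controller stays essentially idle. Concretely, take $u^n_{2,i}=y^n_{2,i-2}$ for all $i$ (a single shift of the current observation, hence of the allowed form) and let $u^n_{1}$ be a harmless shift that writes only far below the active band, so the power limit $u^n_{1,i}=0$ for $i\ge p_1'=1$ is trivially met. Substituting into the dynamics gives $x^{n+1}_i=x^n_{i-2}\oplus y^n_{2,i-2}\oplus w^n_i=v^n_{i-2}\oplus w^n_i$, which is $0$ for every $i\ge 3$ because $v^n_j=0$ for $j\ge\sigma_{v2}'=1$ and $w^n_j=0$ for $j\ge 0$; since the state is reset every step, this holds for all $n$, so $d=3$ is achieved.

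For the converse I would argue that $d=2$ is impossible. The starting point is an identity valid for \emph{any} causal strategy: since the power limit forces $u^n_{1,2}=u^n_{1,1}=0$ (as $p_1'=1$) and $w^n_2=0$, one has $x^{n+1}_2=x^n_0\oplus u^n_{2,2}$, so $d=2$ would require $u^n_{2,2}=x^n_0$ a.s.\ for all $n$ --- exactly what the nonlinear scheme achieves by keeping $x^n_0\equiv 0$ at the first controller. The claim is that a linear strategy of the stated form cannot make $x^n_0$ a function of the second controller's information $\mathcal K_n:=\sigma(y_2[0],\dots,y_2[n])$. Unrolling, $x^n_0=x^{n-1}_{-2}\oplus u^{n-1}_{1,0}\oplus u^{n-1}_{2,0}$ and $x^{n-1}_{-2}=S_1\oplus T$, where $T:=w^{n-2}_{-2}$ is a fair coin independent of everything realized before it and $S_1$ is measurable with respect to that earlier randomness. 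The single-shift-per-time-step form now pins down the shift $k_{n-1,n-1}$ applied to $y_1[n-1]$: if $k_{n-1,n-1}\le -2$, then for some index $i\ge 1$ the input bit $u^{n-1}_{1,i}$ equals $x^{n-1}_{-2}\oplus(\text{strictly-earlier terms})$ or $x^{n-1}_{-1}\oplus(\text{strictly-earlier terms})$, whose fresh component ($w^{n-2}_{-2}$ or $w^{n-2}_{-1}$) cannot be canceled by the earlier terms (which depend only on the past before that coin is realized); this contradicts the power constraint $u^{n-1}_{1,i}=0$. Hence $k_{n-1,n-1}\ge -1$, which means $u^{n-1}_{1,0}$ never reads $x^{n-1}_{-2}$ and the first controller's only re-injection of $T$ into the state is at levels $\le -1$. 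Since the second controller sees $x^{n-1}_{-2}$ and $x^n_0$ only through $y^{n-1}_{2,-2}=S_1\oplus v^{n-1}_{-2}\oplus T$ and $y^n_{2,0}=x^n_0\oplus v^n_0$, masked by fresh independent noises, a bookkeeping argument of the form ``XOR with a fresh independent coin preserves independence from $T$'' shows that $\mathcal K_n$ together with the residual $Z:=x^n_0\oplus T$ is a measurable function of the past-before-$T$ and of the single combination $S_1\oplus v^{n-1}_{-2}\oplus T$, hence jointly independent of $T$. Therefore $P(x^n_0=1\mid\mathcal K_n)=\tfrac12$ a.s.\ for $n\ge 2$, so $P(x^{n+1}_2\ne 0)=P(x^n_0\ne u^n_{2,2})=\tfrac12>0$ and $d=2$ is impossible. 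Combined with achievability, $d=3$.

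The step I expect to be the main obstacle is making that last independence claim airtight in the presence of feedback: the second controller's own observation noise $v^{n-1}_{-2}$ is reused by its later actions and thereby leaks into $x^n$ and into $y_2[n]$, so one must verify carefully that $T$ nonetheless enters $\mathcal K_n$ \emph{only} through the combination $S_1\oplus v^{n-1}_{-2}\oplus T$ (which, conditioned on the relevant past, is a fresh uniform bit and hence independent of $T$). The reason to isolate the single-shift-per-time-step restriction in the first place is that it is exactly what prevents the first controller from zero-forcing the bit $x^{n-1}_{-2}$ without simultaneously depositing an uncancelable bit at level $1$ of its own input; a genuinely nonlinear first controller can scrub that bit level by level without disturbing level $1$, and that is the mechanism behind the gap between $d=2$ and $d=3$.
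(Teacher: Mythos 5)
Your two-part structure --- exhibit a $d=3$ linear scheme, then rule out $d=2$ via a fresh-coin argument --- is a genuinely different and more rigorous route than the paper's, which walks informally through Figure~\ref{fig:lin} step by step and never actually rules out alternative linear policies. The achievability half, the forced shift $k_{n-1,n-1}\ge -1$, and your closing observation that linearity forbids exactly the bit-level truncation used by the nonlinear first controller of Proposition~\ref{prop:detoptimal} are all sound.

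The gap you flag is real, and the specific claim ``$\mathcal{K}_n$ together with $Z$ is a measurable function of the past-before-$T$ and of $S_1\oplus v^{n-1}_{-2}\oplus T$'' is false as stated: $\mathcal{K}_n$ also contains $y^n_{2,0}=Z\oplus T\oplus v^n_0$, and $T$ is not determined by the proposed conditioning set, hence neither is $y^n_{2,0}$. So $T$ reaches $\mathcal{K}_n$ through at least two noisy views, not one, and the one-line $\sigma$-algebra bookkeeping breaks. The statement $T\perp(\mathcal{K}_n,Z)$ that you actually need is nevertheless true, and in this $\mathbb{F}_2$-linear setting the clean way to see it is coin by coin. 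First, every bit $y^n_{2,j}$ with $j<1$ carries a private noise $v^n_j$ that appears nowhere else in $\mathcal{K}_n$, in $Z$, or in $T$, so any $\mathbb{F}_2$-linear combination that could represent $e_T$ must assign those bits coefficient zero. Second, among all remaining observation bits $y^m_{2,j}$ (i.e.\ $m\le n-1$ arbitrary, or $m=n$ with $j\ge 1$) together with $Z$, the $\mathbb{F}_2$-coefficients of $T=w^{n-2}_{-2}$ and of $v^{n-1}_{-2}$ always coincide: both coins can only enter through the single raw bit $y^{n-1}_{2,-2}$, either directly or fed back via $u^{n-1}_{2,\cdot}$, while at the levels $j\ge 1$ of time $n$ the first controller contributes nothing by the power limit, and for $Z$ one checks using $k_{n-1,n-1}\ge -1$ that both coefficients equal the indicator of $k'_{n-1,n-1}=-2$. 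Since $e_T$ has those two coefficients differ by $1$, it lies outside the relevant row span, so $T$ is independent of $(\mathcal{K}_n,Z)$ and $x^{n+1}_2=x^n_0\oplus u^n_{2,2}$ is nonzero with probability $1/2$, closing the converse.
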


Intuitively this proposition is obvious. As we saw in Proposition~\ref{prop:converse}, the first controller is input power is strictly less than the distortion level $2$. When we restrict the strategy to be linear, the first controller cannot cancel any bits in the state. Therefore, the second controller is the only controller that can control the state. The second controller can only see the bits above level $1$, and after one time step, the distortion level will become $3$. Let's clarify this point more carefully.

Time 1 is the same as the proof of Proposition~\ref{prop:detoptimal}. However, at time 2 the first controller cannot cancel the lower bits any more. The only allowed operations are shifting the bits in each observation and taking XOR between them. As we can see in Figure~\ref{fig:lin}, within the power constraint the first controller cannot but shift at least one-level down the bits in $y_1[1]$, and may choose $u_1[1]=x^1_{-1}.x^1_{-2}x^1_{-3} \cdots$. As we discussed in Proposition~\ref{prop:detoptimal}, the second controller's observation is independent from the state and the optimal $u_2[1]$ is $0$. Therefore, as we can see in Figure~\ref{fig:lin} no bits cancel with each other, and $x[2]=x^2_{1}x^2_{0}.x^2_{-1} \cdots$ where each bits are i.i.d. Bernoulli $\frac{1}{2}$.

At time 3, due to the same reason, the best feasible input for the first controller is $u_1[2]=x^2_{1}.x^2_{0}x^2_{-1}\cdots$ and cannot cancel any bits in the state. Meanwhile, the second controller's observation with additive noise is $y_2[2]=x^2_{1} (x^2_{0} \oplus v^2_{0}) . (x^2_{-1} \oplus v^2_{-1}) \cdots $. Therefore, to cancel the first bits of the state, the second controller shifts two-bits up in $y_2[2]$ and chooses $u_2[2]=x^2_{1} ~(x^2_{0} \oplus v^2_{0}) ~(x^2_{-1} \oplus v^2_{-1}) ~(x^2_{-2} \oplus v^2_{-2}) . ~(x^2_{-3} \oplus v^2_{-3}) \cdots$. When these are added, the first bit of the state cancels and the resulting state $x[3]$ has three bits above the decimal point as we can see in Figure~\ref{fig:lin}.

By repeating this procedure, we can see that after the transient states of time step $1,2,3$, the plant and controllers stay in steady state.  From Figure~\ref{fig:lin}, in steady state, $x[n]$ has three bits above the decimal point. Therefore, compared to the optimal performance without the linear controller constraint, we can see one-bit-level performance degradation. This degradation comes from the inefficient use of the first controller input. In other words, the first controller cancels the lower bits of the state in the optimal strategy while it cannot cancel any bits in linear strategies.

Let's consider the Gaussian counterpart of the previous results. As we discussed earlier, the corresponding parameters in the original LQG problem is $\sigma_{v1}^2=0$, $\sigma_{v2}^2=\Theta (a)$, $\mathbb{E}[u_1^2[n]] \leq \Theta(a)$, $\mathbb{E}[u_2^2[n]] \leq \infty$. We will consider the minimum state distortion as $a$ goes to infinity. From Proposition~\ref{prop:detoptimal}, we can expect that the optimal state distortion is $\mathbb{E}[x^2[n]] \leq O(a^2)$ with these parameters.\footnote{Exactly speaking, the optimal state distortion is $\mathbb{E}[x^2[n]] \leq O(a^2 \log a)$ with input power constraint $\mathbb{E}[u_1^2[n]] \geq \Theta(a \log a)$. The is due to the fact that unlike uniform random variables Gaussian random variables can be arbitrary large with logarithmically decreasing probability. Later, this effect will be captured by large deviation ideas, and turns out to be crucial to get constant-ratio optimality. We will discuss more about this issue in Section~\ref{sec:caveat}.} From Proposition~\ref{prop:detlin}, we can expect that the state distortion is $\mathbb{E}[x^2[n]] \geq \Omega(a^3)$ when we restrict control strategies to be linear. Here, we can see the ratio between the optimal cost and the linear strategy cost goes to infinity as $a$ grows.

Even if the discussion so far focused on minimizing the state distortion under the power constraints, the result can be easily converted to the weighted long-term cost. Let's choose the parameters of Problem~\ref{prob:lqg} as $q=1$, $r_1=a$, $r_2=0$, $\sigma_0=0$, $\sigma_{v1}^2=0$, and $\sigma_{v2}^2=a$, i.e.
\begin{align}
\underset{u_1,u_2}{\inf}
\underset{N \rightarrow \infty}{\limsup}
\frac{1}{N}
\underset{0 \leq n < N}{\sum}
\mathbb{E}[ x^2[n]+a u_1^2[n]]\nonumber
\end{align}
If $\mathbb{E}[x^2[n]] \leq O(a^2)$ when $\mathbb{E}[u_1^2[n]]\leq \Theta(a)$ as we predicted, the optimal weighted cost has to be $O(a^2)$.
However, if we restrict the control strategies to be linear, $\mathbb{E}[x^2[n]]$ will be $\Omega(a^3)$ with the same power constraint according to our conjecture. Therefore, we need at least $\mathbb{E}[u_1^2[n]] \geq \Theta(a^2)$ to make $\mathbb{E}[x^2[n]] \leq O(a^2)$. In either case\footnote{One may wonder why we do not consider the cases between $\mathbb{E}[u_1^2[n]]=\Theta(a)$ and $\mathbb{E}[u_1^2[n]]=\Theta(a^2)$, for example $\mathbb{E}[u_1^2[n]] = \Theta(a^{\frac{3}{2}})$. The reason comes from the limitation of these bit-wise deterministic models, precision. For $a=4$, we will write $u_1[n]$ as $u_{0}^n.u_{-1}^n u_{-2}^n \cdots $ in binary when $\mathbb{E}[u_1^2[n]]=a$, and as $u_{1}^n u_{0}^n. u_{-1}^n \cdots $ when $\mathbb{E}[u_1^2[n]]=a^2$. When $\mathbb{E}[u_1^2[n]]=a^{\frac{3}{2}}$, we have to choose either one of these two. We choose the former in this paper, so we cannot resolve the difference between $\mathbb{E}[u_1^2[n]]=a$ and $\mathbb{E}[u_1^2[n]]=a^{\frac{3}{2}}$.}, the weighted cost is $\Omega(a^3)$.

Formally, the following proposition formalizes this insight and proves the ratio between the optimal cost and linear strategy cost actually diverges in Gaussian problems.
\begin{proposition}
Let $L_{lin}'$ be the set of all linear time-varying controllers which can be written in the following form:
\begin{align}
&u_1[n]= \sum_{i \leq n} k_{n,i} y_1[i], \nonumber \\
&u_2[n]= \sum_{i \leq n} k'_{n,i} y_2[i]. \nonumber
\end{align}
Consider Problem~\ref{prob:lqg} with parameters $q=1$, $r_1=a$, $r_2=0$, $\sigma_0^2=0$, $\sigma_{v1}^2=0$, and $\sigma_{v2}^2=a$. Then, we have
\begin{align}
\frac
{
\underset{u_1,u_2 \in L_{lin}'}{\inf}
\underset{N \rightarrow \infty}{\limsup}
\frac{1}{N}
\underset{0 \leq n < N}{\sum}
\mathbb{E}[q x^2[n]+r_1 u_1^2[n]]
}
{
\underset{u_1,u_2 \in L_{sig,1}}{\inf}
\underset{N \rightarrow \infty}{\limsup}
\frac{1}{N}
\underset{0 \leq n < N}{\sum}
\mathbb{E}[q x^2[n]+r_1 u_1^2[n]]
}
\rightarrow \infty \nonumber
\end{align}
as $a \rightarrow \infty$.
\label{prop:1}
\end{proposition}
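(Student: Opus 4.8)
The plan is to show that the numerator is $\Omega(a^3)$ while the denominator is $O(a^2\log a)$, so that the ratio diverges. The denominator is the easy direction: with $\sigma_{v1}=0$, $\sigma_{v2}^2=a$, $q=1$, $r_1=a$, $r_2=0$, Lemma~\ref{ach:lemma9} gives an achievable cost of the form $\inf_{(d,w_1)\in S_{U,1}} D_{U,1}(d,w_1) + \tfrac{a^3 d^2}{4}$ (the $r_2$ terms drop). Taking $d$ of order $a^{-1/2}$ (up to a logarithmic factor, as in the footnote to Proposition~\ref{prop:detoptimal} on Gaussian tails) makes the input term $\Theta(a^2\log a)$ and, by the Gaussian counterpart of Proposition~\ref{prop:detoptimal}, the residual-distortion term $D_{U,1}(d,w_1)$ is also $O(a^2\log a)$. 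Hence the denominator is $O(a^2\log a)$, which is all we need (in fact any $o(a^3)$ bound suffices).

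The substantive part is the lower bound on the numerator: every strategy in $L_{lin}'$ has cost at least $\epsilon a^3$ for an absolute constant $\epsilon>0$ and all sufficiently large $a$. Suppose for contradiction that some linear strategy achieves $\limsup_N \tfrac1N\sum_{0\le n<N}\big(\mathbb{E}[x^2[n]]+a\,\mathbb{E}[u_1^2[n]]\big)<\epsilon a^3$. Write $X_n=\mathbb{E}[x^2[n]]$, $U_n=\mathbb{E}[u_1^2[n]]$. A counting argument shows that for large $N$ the ``good'' times --- those with $X_n<100\epsilon a^3$ and $U_n<100\epsilon a^2$ --- have density more than $1-\tfrac1{50}$, so some three consecutive indices $n,n+1,n+2$ (all large) are good. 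The core is a one-step recursion for the second controller's prediction error $P_n:=\mathbb{E}\big[(x[n]-\mathbb{E}[x[n]\mid y_2^{n-1}])^2\big]$. Under linear strategies the whole process is jointly Gaussian, so $\mathrm{Var}(x[n]\mid y_2^n)=\tfrac{aP_n}{a+P_n}$; decomposing $x[n+1]=ax[n]+u_1[n]+u_2[n]+w[n]$, in which $u_2[n]$ is $y_2^n$-measurable, $w[n]$ is independent of $(x^n,y_2^n)$, and the part of $u_1[n]$ not predictable from $y_2^n$ has second moment at most $U_n$, the $L^2$ triangle inequality yields
\begin{align}
P_{n+1} \ge \left(a\sqrt{\frac{aP_n}{a+P_n}}-\sqrt{U_n}\right)_{+}^{2} + 1 . \nonumber
\end{align}
The key bottleneck is $\tfrac{aP_n}{a+P_n}\ge \tfrac12\min(P_n,a)$: one observation through noise of variance $a$ cannot drive the error below $\sim a/2$, after which the eigenvalue $a$ amplifies it. Starting from $P_n\ge\sigma_w^2=1$, the first good step gives $P_{n+1}\ge 0.36\,a^2\ge a$, and the second good step then gives $P_{n+2}\ge a^3/8$; since $X_{n+2}\ge P_{n+2}$, this contradicts $X_{n+2}<100\epsilon a^3$ once $\epsilon<\tfrac1{800}$. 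Thus the numerator is $\Omega(a^3)$, and dividing by the $O(a^2\log a)$ denominator gives a ratio that tends to infinity.

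The main obstacle is this numerator bound, and within it the one place that genuinely uses the decentralized structure: the helper term $u_1[n]$ that the first controller can inject is \emph{not} available to the second controller's predictor, so it can only shrink $P_{n+1}$ by an amount controlled by $\sqrt{U_n}$. This forces the dichotomy ``either $U_n$ is large for many $n$ (so $r_1 U_n=aU_n$ already makes the cost $\Omega(a^3)$), or the prediction-error recursion blows up to $\Omega(a^3)$.'' The remaining work --- verifying joint Gaussianity legitimizes the exact filtering-variance update, establishing $P_n\ge\sigma_w^2$, and the passage ``density of good times $\Rightarrow$ three consecutive good times $\Rightarrow$ contradiction'' with a single choice of $\epsilon$ valid for all large $a$ --- is routine but requires care with the constants.
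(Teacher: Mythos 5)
Your proof is correct, and it takes a genuinely different route from the paper's. The paper's approach is to geometrically slice down to a $3$-step finite-horizon problem (via Lemma~\ref{lem:geo} and the genie-aided reduction of Proposition~\ref{prop:genie} to kill $w[1],w[2]$), then explicitly parametrize the linear gains $k_{11},k_{12},k_{13},\ldots$, compute the resulting MMSE of $w[0]$ at time $3$ in closed form, and split into cases on whether $\mathbb{E}[u_1^2[1]]+\mathbb{E}[u_1^2[2]]$ is above or below $\tfrac{1}{16}a^2$. You instead stay in the infinite-horizon setting and track the second controller's one-step prediction error $P_n$ through the exact Gaussian filtering update $\mathrm{Var}(x[n]\mid y_2^n)=\tfrac{aP_n}{a+P_n}$, peel off the part of $u_1[n]$ predictable from $y_2^n$ via a Cauchy--Schwarz (reverse triangle) step, and then let the recursion blow up over two consecutive low-cost steps; the contradiction is closed by a density/pigeonhole argument that forces three consecutive ``good'' times to exist. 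What each approach buys: the paper's is self-contained within the machinery it has already built (geometric slicing, genie arguments), and the explicit parametrization sidesteps any need to reason about conditional variances along an infinite trajectory. Your route avoids slicing entirely, isolates the informational bottleneck more transparently (one noisy look at $x[n]$ through noise of variance $a$ leaves error $\gtrsim a$, which the eigenvalue then squares), and makes the role of the ``unpredictable part of $u_1[n]$'' --- the one genuinely decentralized ingredient --- structurally visible; the price is that you must verify joint Gaussianity under $L_{lin}'$, establish $P_n\ge 1$ for $n\ge1$, and carry the density argument carefully to get a single $\epsilon$ uniform over all large $a$. Your denominator bound via Lemma~\ref{ach:lemma9} with $d\sim\sqrt{\log a/a}$ matches the paper's use of Corollary~\ref{rat:ach} with $P=\tfrac{a\log a}{25}$, so that side is essentially the same.
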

\begin{proof}
See Appendix~\ref{app:proposition}.
\end{proof}

From the discussion above, we can see that the first controller with better observations is ``signaling" to the second controller (with worse observations) through the control actions. However, the notion of communication here is different from the conventional one. In conventional communication problems,
the transmitter has access to a source (but cannot change it) and reduces the uncertainty about the source at the destination by explicitly sending information about the source.

However, in control systems the source is the state, and the ``transmitter"(which is a controller) can change the source itself by control action. Therefore, it can reduce the uncertainty of the source and make the source easier to estimate at the destination. Then, the destination will have a better idea about the source even without receiving any explicit information.  This generalized notion of communication is the one happening between the first and the second controller.

Moreover, we can also see the delay of the communication is crucial in control problems, while this is usually ignored in traditional information theory.
In Figure~\ref{fig:nonlin}, the second controller has to wait until the disturbance is amplified above its observation noise level, which causes a $1$-step delay between two controllers. However, as we increase the observation noise level of the second controller, the second controller has to wait longer until the disturbance is amplified enough and this will result in a longer ``delay" between the two controller's actions.

In Section~\ref{sec:upperbound}, we will explore this point by relating the infinite-horizon LQG problem to control problems with different time horizons.
As we saw in Figure~\ref{fig:dyn}, Radner's problem~\cite{Radner_Team} and Witsenhasuen's counterexample~\cite{Witsenhausen_Counterexample} are sub-blocks of the infinite-horizon LQG problem. We will see later in Section~\ref{sec:upperbound} that the scheme discussed here is a $1$-step-delay implicit communication scheme which essentially solves Witsenhausen's counterexample. In general, we may need up to an $s$-step-delay implicit communication to solve $s$-stage MIMO Witsenhausen's counterexamples.

\subsection{Caveat: Deterministic Model does not work for Radner's Problem}
\label{sec:caveat}

\begin{figure*}[htbp]
\begin{center}
        \begin{subfigure}[b]{0.3\textwidth}
                \centering
                \includegraphics[width=\textwidth]{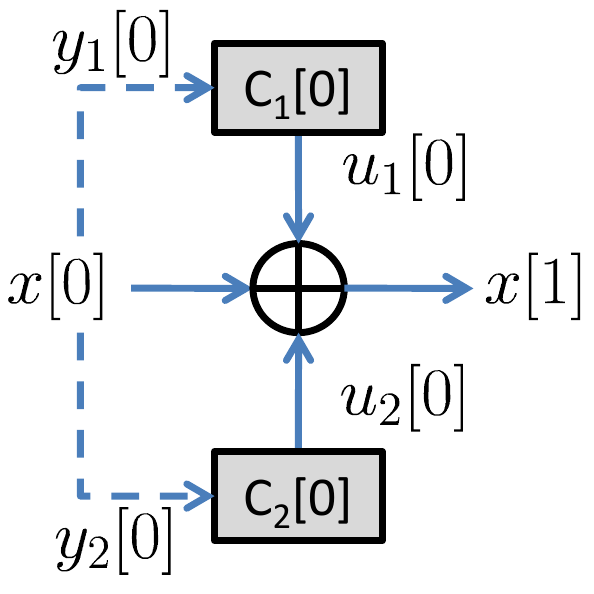}
                \caption{}
                \label{fig:radner1}
        \end{subfigure}
        \begin{subfigure}[b]{0.4\textwidth}
                \centering
                \includegraphics[width=\textwidth]{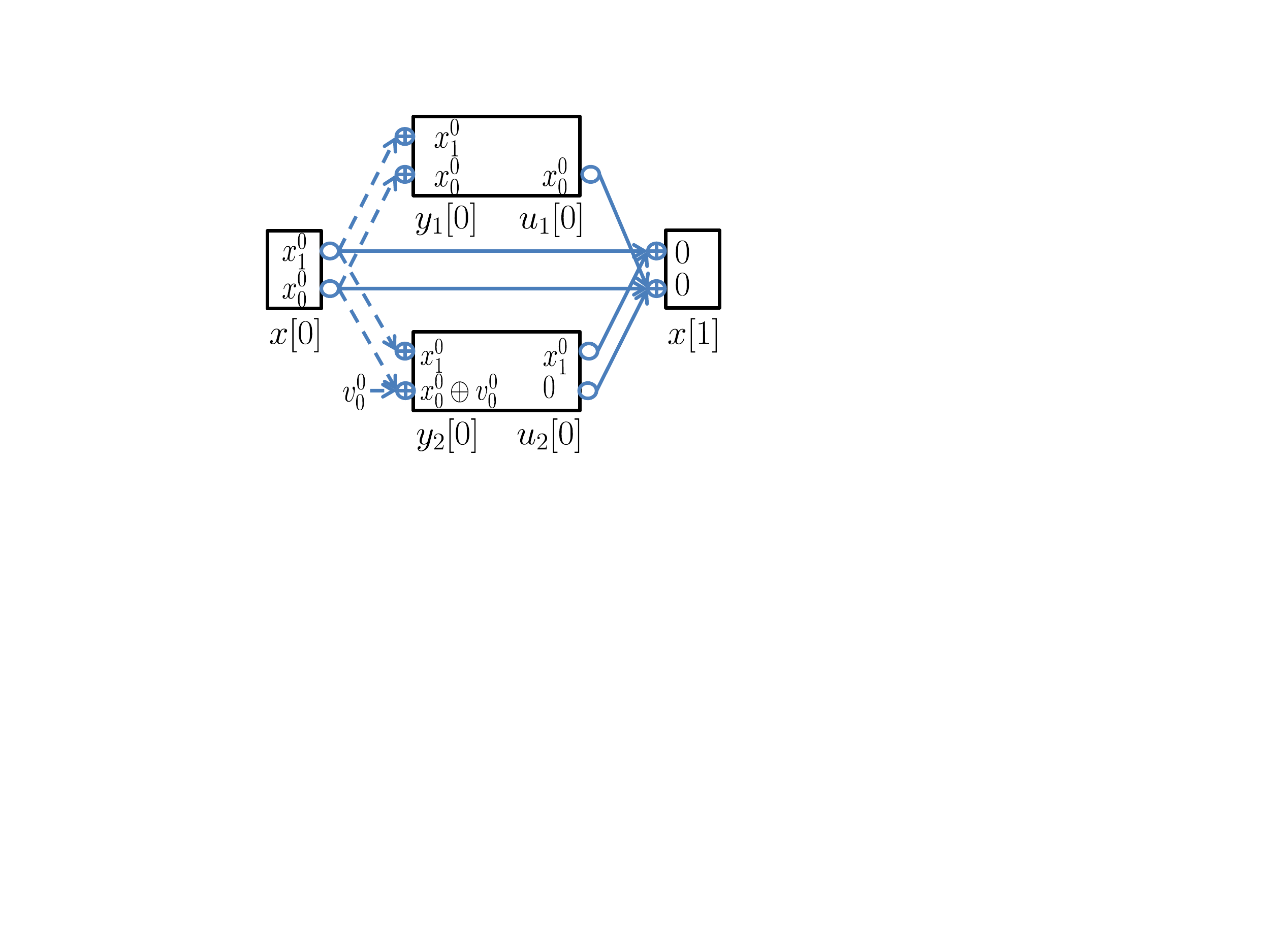}
                \caption{}
                \label{fig:radner2}
        \end{subfigure}
\caption{(a) Radner's Problem  and  (b) the corresponding binary deterministic model. Here, the binary deterministic model can fail to correctly predict the optimal strategy and the optimal performance.}
\label{fig:radner}
\end{center}
\end{figure*}

\begin{figure*}[htbp]
\begin{center}
        \begin{subfigure}[b]{0.35\textwidth}
                \centering
                \includegraphics[width=\textwidth]{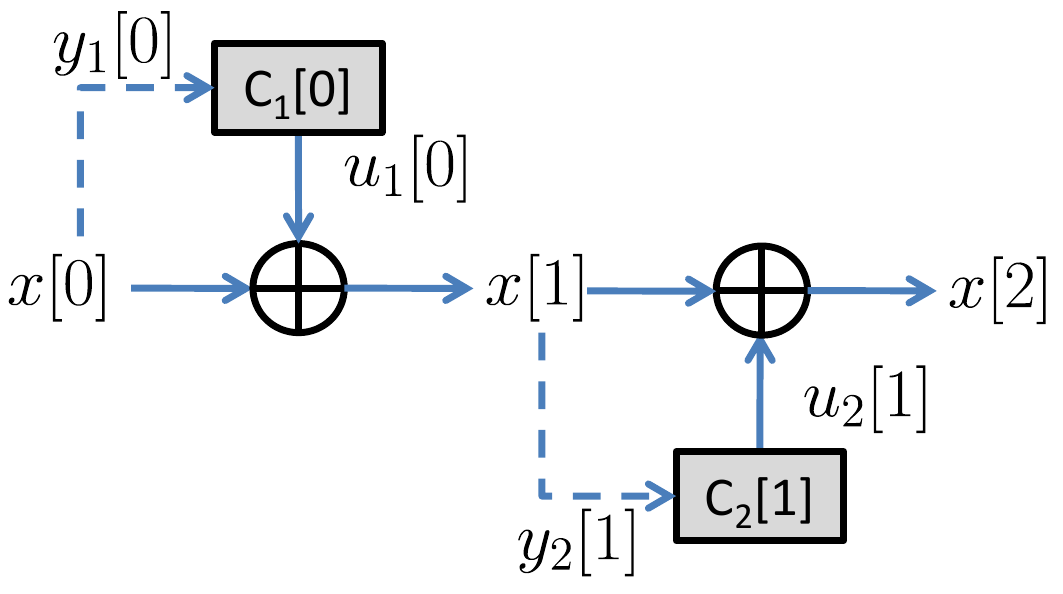}
                \caption{}
                \label{fig:witsen1}
        \end{subfigure}
        \begin{subfigure}[b]{0.45\textwidth}
                \centering
                \includegraphics[width=\textwidth]{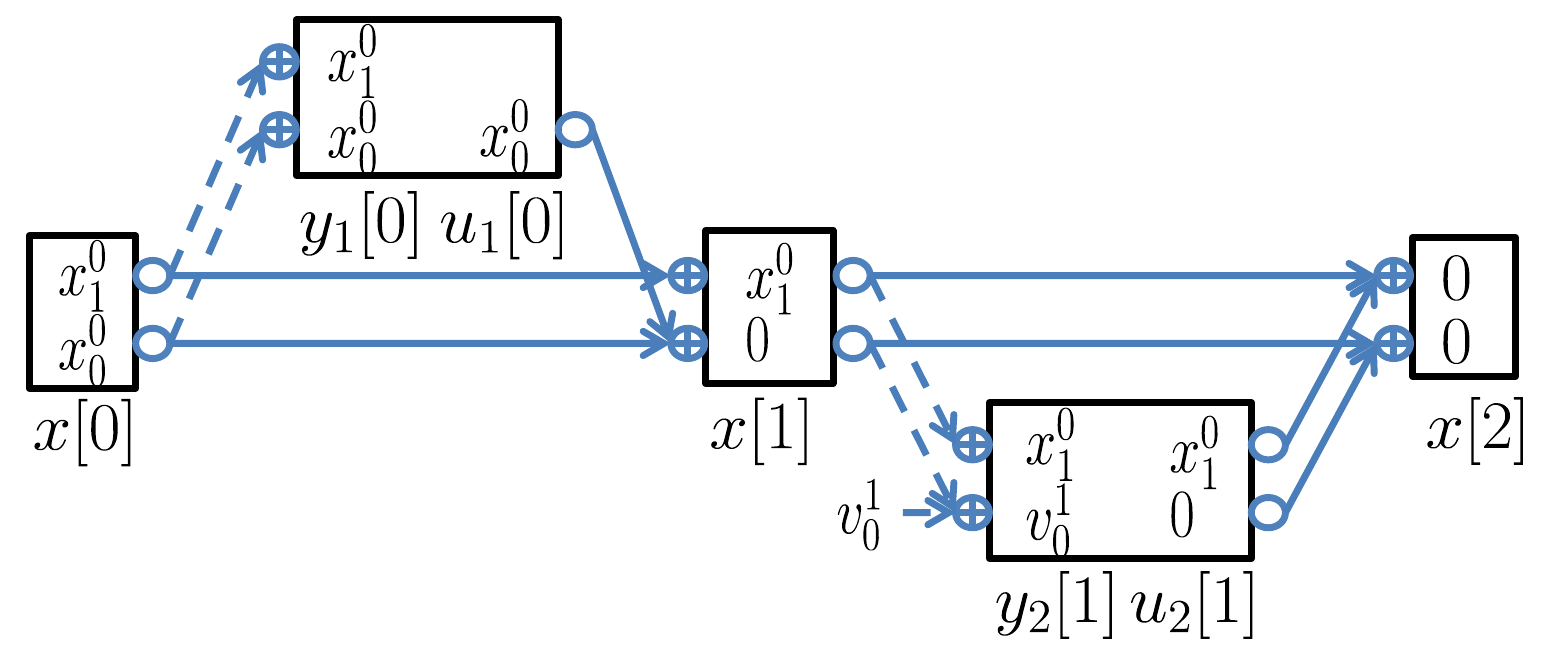}
                \caption{}
                \label{fig:witsen2}
        \end{subfigure}
\caption{(a) Witsenhausen's Counterexample  and  (b) the corresponding binary deterministic model . Here, the binary deterministic model does approximately predict the optimal strategy and the optimal performance.}
\label{fig:witsen}
\end{center}
\end{figure*}

Even though we explained the result based on the binary deterministic model, it is just a simplified model for intuition and we should not naively believe that the same results always hold in Gaussian models as well. In fact, we will show that in Radner's problem~\cite{Radner_Team} the deterministic model fails to correctly predict the behavior of Gaussian problems.

In \cite{Radner_Team}, Radner considered the following problem of Figure~\ref{fig:radner1}:
$x[0]$, $v_1[0]$, $v_2[0]$ are independent Gaussian random variables with zero mean and variance $\sigma_{0}^2$, $\sigma_{v1}^2$, $\sigma_{v2}^2$ respectively. Let $y_1[0]:=x[0] + v_1[0]$, $y_2[0]:=x[0] + v_2[0]$, $u_1[0] := f_1(y_1[0])$, $u_2[0] := f_2(y_2[0])$ and $x[1] := x[0] + u_1[0] + u_2[0]$. The control objective is minimizing $\mathbb{E}[q  x[1]^2+r_1  u_1[0]^2 + r_2  u_2[0]^2]$. And he proved that a linear controller is optimal.

Later, Witsenhausen found that if we shift the second controller by one time-step, the problem is fundamentally different and the optimal controller is not linear~\cite{Witsenhausen_Counterexample}. Figure~\ref{fig:witsen1} shows the counterexample: $x[0]$, $v_1[0]$, $y_1[0]$, $u_1[0]$ are the same as Radner's problem. However, $x[1] := x[0] + u_1[0]$, the second controller observes $y_2[1]:=x[1]+v_2[1]$ where $v_2[1]$ is Gaussian with zero mean and variance $\sigma_{v2}^2$, and $u_2[1] := f_2(y_2[1])$, $x[2]=x[1]+u_2[1]$. The control objective is minimizing $\mathbb{E}[q x[2]^2+r_1  u_1[0]^2 + r_2  u_2[1]^2]$.

At a high level, this difference can be understood in terms of implicit communication. Radner's problem is a single-stage problem. Even if one controller sends some information, it is impossible for the other controller to receive the information at the next time step. Therefore, implicit communication between the controllers is impossible, and it is widely believed that if this is the case, then linear is optimal~\cite{Witsenhausen_Separation,Yuksel_Nested,Shah_Partial,Lessard_state, Rotkowitz_information}. However, Witsenhausen's counterexample is a two-stage problem. If the first controller sends some information, the second controller can receive this information at the next time step. Therefore, implicit communication is possible, and nonlinear strategies which are good at this implicit communication can outperform linear strategies.

Let's revisit these problems using the binary deterministic models. Like in Section~\ref{sec:intui}, we will give a perfect observation to the first controller and allow unbounded input power for the second controller. The goal of control is minimizing the state disturbance for a given input power constraint.

Binary deterministic model counterparts of Radner's problem and Witsenhausen's counterexample, shown in Figure~\ref{fig:radner2} and \ref{fig:witsen2} respectively, are formulated as follows.
\begin{problem}[Binary Deterministic Model for Radner's Problem]
For binary level index $i$, the deterministic system dynamics is given as follows:
\begin{align}
& x^1_{i} = x^0_{i} \oplus u^0_{1,i} \oplus u^0_{2,i},\\
& y^0_{1,i}=x^0_{i}, \\
& y^0_{2,i}=x^0_{i} \oplus v^0_{i}.
\end{align}
Here, $x^0_{i}$ are $0$ for all $i \geq 2$ and Bernoulli $\frac{1}{2}$ on $\{0,1 \}$ for all $i < 2$. $v^0_{i}$ are $0$ for all $i \geq 1$ and Bernoulli $\frac{1}{2}$ on $\{0,1\}$ for all $i < 1$. $u^0_{1,i}$ and $u^0_{2,i}$ are functions of $y^0_{1,i}$ and $y^0_{2,i}$ respectively. The first controller has a power limit,  $u^0_{1,i}=0$ for all $i \geq 1$.

The goal of the control is to minimize the final state distortion level $d$, i.e. minimizing $d$ such that $x^1_i=0$ for all $i \geq d$.
%
\label{prob:detrad}
\end{problem}
Here, we can easily notice that $x_i^0, x_i^1, u_{1,i}^0, u_{2,i}^0, y_{1,i}^0, y_{2,i}^0, v_i^0$ correspond to $x[0], x[1], u_1[0], u_2[0], y_1[0], y_2[0], v_2[0]$ in the original Radner's problem.

\begin{problem}[Binary Deterministic Model for Witsenhausen's Counterexample]
For binary level index $i$, the deterministic system dynamics is given as follows:
\begin{align}
&x^1_i = x^0_i \oplus u^0_{1,i} \\
&x^2_i = x^1_i \oplus u^1_{2,i} \\
&y^0_{1,i} = x^0_i \\
&y^1_{2,i} = x^1_i \oplus v^1_{i}
\end{align}
Here, $x^0_i$ are $0$ for all $i \geq 2$ and Bernoulli $\frac{1}{2}$ on $\{0,1\}$ for all $i < 2$. $v^1_i$ are $0$ for all $i \geq 1$ and Bernoulli $\frac{1}{2}$ on $\{0,1 \}$ for all $i < 1$. $u^0_{1,i}$ and $u^1_{2,i}$ are functions of $y^0_{1,i}$ and $y^1_{2,i}$ respectively. The first controller has a power limit, $u^0_{1,i}=0$ for all $i \geq 1$.

The goal of the control is to minimize the final state distortion level $d$, i.e. minimizing $d$ such that $x^2_i=0$ for all $i \geq d$.
%
\label{prob:detwit}
\end{problem}
Here, we can easily notice that $x_i^0, x_i^1, x_i^2, u_{1,i}^0, u_{2,i}^1, y_{1,i}^0, y_{2,i}^1, v_i^0$ correspond to $x[0], x[1], x[2], u_1[0], u_2[1], y_1[0], y_2[1], v_2[1]$ in the original Witsenhausen's problem.

As we can see in Figure~\ref{fig:radner2} and Figure~\ref{fig:witsen2}, essentially the same scheme that we discussed in Section~\ref{sec:intui} can be also used in both deterministic problems to give the optimal cost. The first controller cancels the lower bits $x_{20}$ and the second controller cancels the higher bits $x_{10}$ at the next time step.
\begin{proposition}
At time $n$, let the first controller's observation be $y_1[n]= \cdots y_{1,1}^n y_{1,0}^n. y_{1,-1}^n \cdots$ in binary expansion. Likewise, the second controller's observation is $y_2[n]=\cdots y_{2,1}^n y_{2,0}^n. y_{2,-1}^n \cdots$ in binary expansion. Then, the following control strategy achieves $d=-\infty$ (i.e. the final state is zero.) in both Problem~\ref{prob:detrad} and \ref{prob:detwit} and is optimal in both problems.
\begin{align}
&u_1[n] = y_{1,0}^n. y_{1,-1}^n \cdots \\
&u_2[n] =\cdots y_{2,2}^n y_{2,1}^n 0. 00 \cdots
\end{align}
\label{prop:witsencost}
\end{proposition}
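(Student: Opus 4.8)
The plan is to verify the two claims --- achievability of $d=-\infty$ and optimality --- separately for each of Problem~\ref{prob:detrad} and Problem~\ref{prob:detwit}, exploiting that the deterministic dynamics unrolled over one or two time steps are identical in the two problems as far as the relevant bits are concerned. First I would trace through the dynamics under the proposed strategy. Since $x^0$ has upper level $2$, write it as $x^0 = x_{1}^0 x_{0}^0 . x_{-1}^0 x_{-2}^0 \cdots$. The first controller sees $y_1[0]=x^0$ exactly, so $u_1[0] = x_0^0 . x_{-1}^0 \cdots$, which by the XOR dynamics cancels every bit of $x^0$ at level $0$ and below, leaving $x^1 = x_1^0 0 . 0 0 \cdots$ (one bit, at index $1$). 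In Radner's model the second controller's observation is $y_2[0] = x^0 \oplus v^0$ with $v^0$ i.i.d.\ Bernoulli below level $1$; hence the bits of $y_2[0]$ at level $\geq 1$ equal $x_1^0$ (and zeros above), and $u_2[0] = \cdots 0\, x_1^0\, 0.00\cdots$ cancels the surviving bit $x_1^0$ of $x^1$. (Note the XOR on the bits at level $0$ and below of $x^0$ between $u_1[0]$ and $u_2[0]$ is harmless: $u_2[0]$ is zero there.) Hence $x^1 = 0$, i.e.\ $d=-\infty$. In Witsenhausen's model the bookkeeping is the same: $u_1[0]$ produces $x^1 = x_1^0 0.00\cdots$, and then at time $1$ the second controller sees $y_2[1] = x^1 \oplus v^1$ whose bits at level $\geq 1$ again equal $x_1^1 = x_1^0$, so $u_2[1]$ cancels it and $x^2 = 0$.

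Next I would argue optimality, which here is immediate once achievability gives final state exactly zero: the distortion level $d$ is bounded below by $-\infty$ trivially, so any strategy attaining $x=0$ almost surely is optimal. Thus the only real content is the achievability computation above; there is no converse to prove. The one subtlety I would be careful about is the interaction of the two control inputs on the low-order bits in Radner's problem --- one must check that $u_1[0]$'s low bits are not re-corrupted by $u_2[0]$, which holds because the proposed $u_2[0]$ is supported strictly above level $0$. I would also note explicitly that the proposed $u_1$ respects the power constraint $u_{1,i}=0$ for $i\geq 1$ (it only uses bits at index $0$ and below) and that $u_2$ is a legitimate function of $y_2$ (it reads off only the bits of $y_2$ at level $\geq 1$, which are noise-free copies of the corresponding state bits).

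The main obstacle, such as it is, is purely notational: keeping the binary-index bookkeeping consistent across the two problems and making sure the ``$0.00\cdots$'' truncations line up with the level conventions (level $0$ = the decimal point, upper level $2$ meaning bits at indices $1$ and below). I expect the write-up to be a short case analysis with a reference to Figure~\ref{fig:radner2} and Figure~\ref{fig:witsen2} for the picture, essentially mirroring the argument already given for Proposition~\ref{prop:detoptimal} but with the dynamics terminating after one (resp.\ two) steps so that the steady-state recursion is unnecessary.
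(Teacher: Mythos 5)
Your proof is correct and is essentially the paper's own argument made explicit: the paper simply cites Figures~\ref{fig:radner2} and~\ref{fig:witsen2}, which encode exactly the bit-level cancellation you trace out (the first controller zeroes indices $\leq 0$, the second zeroes indices $\geq 1$ using the noise-free high bits of $y_2$, and the two supports are disjoint so the simultaneous XOR in Radner's problem causes no interference). Your observation that optimality is automatic once $d=-\infty$ is achieved is also the correct and complete converse.
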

\begin{proof}
Immediately follows from Figure~\ref{fig:radner2} and Figure~\ref{fig:witsen2}.
\end{proof}

As we discussed in Section~\ref{sec:intui} the corresponding strategy in the reals is a nonlinear strategy. However, linear is optimal in Radner's problem. How can this be? Clearly, the nonlinear strategy is not even approximately achieving the cost that the binary deterministic model promises. The binary deterministic model \textbf{fails to predict} the optimal control strategy and the optimal cost of the real Gaussian Radner's problem. The reason for this is the binary deterministic model ignores the carry-over in addition and subtraction which is actually happening in real Gaussian problems. In fact, we can see the difference between $y_2[0]$ of Figure~\ref{fig:radner2} and $y_2[1]$ of Figure~\ref{fig:witsen2}. The second bit of $y_2[0]$ of Figure~\ref{fig:radner2} is $x_{20} \oplus v_{10}$ which causes carry-over in the reals, while the second bit of $y_2[1]$ of Figure~\ref{fig:witsen2} is $v_{10}$ is just $v_{10}$. Therefore, the bitwise separation ignoring the carry-over results in an overly optimistic conclusion in binary deterministic models. A linear view of nonlinearity is too simplified in this case. To clarify this point, in \cite{Park_Thesis} we propose a simple deterministic model ---which we named a `ring model'--- that takes into account of the carry-over effect, and succeeds in capturing the behavior of the optimal costs in Radner's problem. We refer to \cite{Park_Thesis} for further details on this point.

In fact, even in Witsenhausen's counterexample there is a small gap between the predicted cost and actual LQG cost, even though the deterministic model correctly predicts the approximately optimal strategy. As we can see in Proposition~\ref{prop:witsencost}, in the deterministic model the final state is $0$ as long as the first controller's input power is greater than the second controller's noise level. In the corresponding LQG problem, the final cost turns out to be only an exponentially decreasing function of the first controller's input power. However, the underlying reason for this gap is different from that in Radner's problem. This gap in Witsenhausen's counterexample comes from the tail of Gaussian random variables and the finite-dimensionality of the problem.\footnote{
In infinite-dimensional problems, the laws of large numbers guarantee that Gaussian random variables behave typically and the probability that they can be arbitrary large asymptotically goes to zero. Therefore, we can drive the final cost to $0$ with bounded first controller's input power, and the cost predicted by the deterministic model is actually achievable. To capture the finite-dimentionality of the problem, we have to use large deviation ideas. We refer to \cite{Grover_distributed, Choudhuri_witsenhausen} for further details.}
While all disturbances are bounded with probability $1$ in  deterministic models, in LQG problems Gaussian random variables can be arbitrary large with an exponentially decreasing probability. This results in a logarithmic gap between the costs in Witsenhausen's counterexample. However, unlike in Radner's problem this gap is only logarithmic and the insights that we gain from the deterministic models are still useful in the original LQG problems.

Therefore, we can rightfully say that deterministic models predict the essential behavior of Witsenhausen's counterexample, while it fails to predict for Radner's problem.

\begin{figure*}[htbp]
\begin{center}
\includegraphics[width=3in]{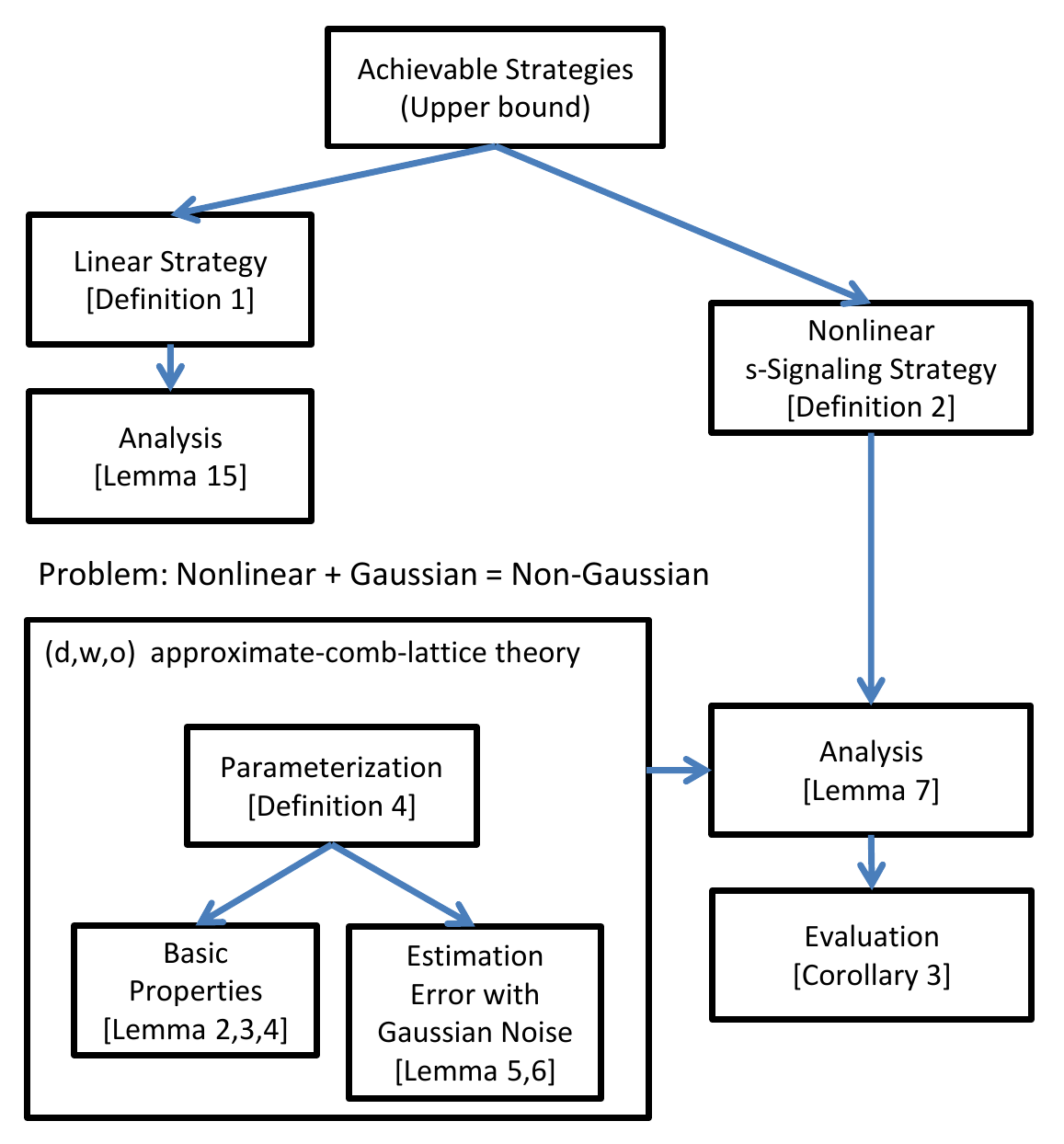}
\caption{Flow diagram of the ideas for the upper bound on the control performance}
\label{fig:proofflow}
\end{center}
\end{figure*}

\begin{figure*}[htbp]
\begin{center}
\includegraphics[width=6in]{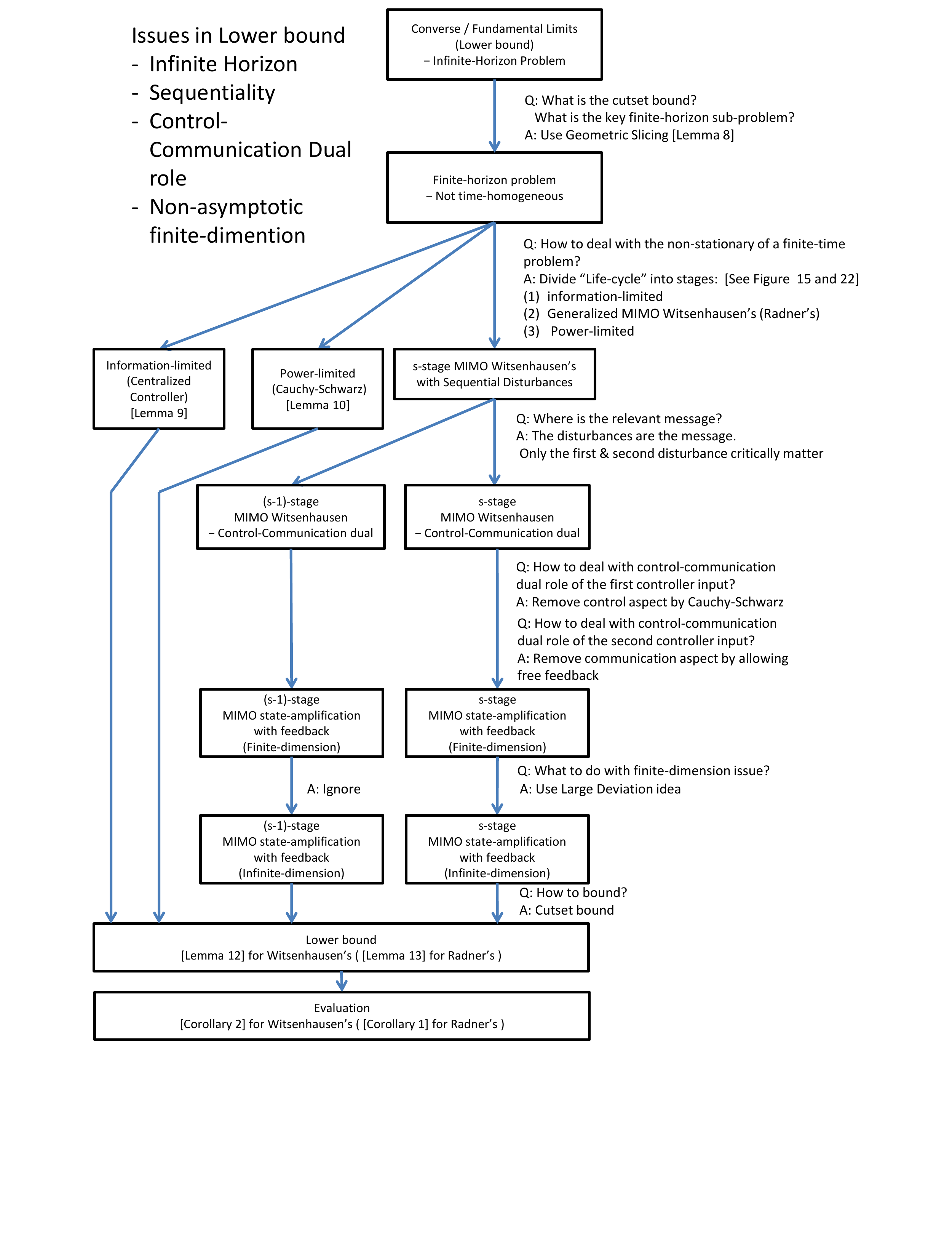}
\caption{Flow diagram of the ideas for the lower bound on the control performance}
\label{fig:proofflow2}
\end{center}
\end{figure*}

\section{Proofs and Proof Ideas: Outline}
\label{sec:proof}
The formal proof of the main result is separated into three parts. We will give upper and lower bounds on the optimal cost, and then compare them to show that they are within a constant ratio.

Figure~\ref{fig:proofflow} shows the proof idea flow for the upper bound\footnote{This corresponds to achievability arguments in information theory.} on the optimal cost. This is done by analyzing specific control strategies. First, it is easy to analyze linear strategies by simply tracking mean and variance. For nonlinear strategies, it can be tricky since mean and variance do not characterize non-Gaussian random variables. Therefore, in Section~\ref{sec:dwo}, we will introduce a mini theory to analyze quantization-based strategies, which we call $(d,w,o)$-approximate-comb-lattice theory. Section~\ref{sec:dwoanalysis} will actually analyze the nonlinear strategy performance based on this theory.

To show that we cannot do much better, we also have to find a lower bound\footnote{This corresponds to converse arguments in information theory.} on the cost. Figure~\ref{fig:proofflow2} shows the flow of ideas in the proof for the lower bound. The key idea is identifying the informational bottleneck of a problem and figuring out the information relaying between the controllers. 
In information theory, to figure out the informational bottleneck of the system, we partition the nodes and apply cutset bounds~\cite{Salman_Wireless,Cover}. However, here rather than simply partitioning the nodes, we expand the system in time and must divide the infinite-horizon problem into finite-horizon ones. The geometric slicing idea (Figure~\ref{fig:geometric2}) is introduced for this.

Now, we have a finite-horizon problem. However, unlike infinite-horizon problems where the effect of transients can be amortized over infinitely many stationary states, the transient states are the essence of a finite-horizon problem and therefore the problem is non-stationary. To handle this issue, we divide the resulting finite-horizon problem into three sub time-intervals --- childhood, youth and old age, so to speak. Figure~\ref{fig:division} (or Figure~\ref{fig:division2}) shows the division of time intervals. In ``childhood", we do not have enough information about the state, so we will call this interval information-limited. In ``old age", we do not have enough power to control the state too well, so we will call this interval power-limited.
Between these two ---in ``youth"--- something interesting is happening and we will call this interval a MIMO Witsenhaussen's (or Radner's for Figure~\ref{fig:division2}) interval.

In this interesting interval, the first controller is power-limited and the second controller is information-limited, which is essentially the same issue as in Witsenhausen's counterexample. In fact, we will relate this interval to an $s$-stage MIMO Witsenhausen's counterexample where a new disturbance is added at each time step. Then, the question becomes what are the critical disturbances among these? We will see that only first and second disturbances matter, and we can relax to simpler problems which are $s$-stage and $(s-1)$-stage MIMO Witsenhausen's with only one disturbance. However, still these problems are difficult due to the dual role of controller actions. The controller actions can be used to control the states, but at the same time they can be used to communicate some information to the other controller. This control-communication dual role of controller actions makes the problem hard.

To tackle this issue, we remove the control role from the first controller, and thereby the first controller will behave like a transmitter in communication problems. On the other hand, we remove the communication role from the second controller by allowing free feedback, and thereby the second controller will behave like a receiver in communication problems. In this way, we can reduce the problem to MIMO state-amplification with feedback, which generalizes the problem shown in \cite{Kim_amplification}. However, the resulting problem is finite dimensional, and information-theoretic results for infinite-dimensional problems can possibly give loose bounds~\cite{Pulkit_Witsen}. In fact, we have to adapt large deviation ideas to the $s$-stage MIMO state-amplification problem for this reason.\footnote{This is the same issue and idea that we discussed in Section~\ref{sec:caveat} for Witsenhausen's counterexample.} Now, we can apply simple information-theoretic cutset bounds to the final communication problems and derive lower bounds.

Before we discuss the proof details, we first convert the weighted long-term average cost optimization problem to an optimization problem with average power constraints as we did in Section~\ref{sec:intui}. The original control objective is minimizing the weighted cost of the state disturbance and the controller input powers. However, it will be useful to consider minimizing the state given an average bound on the input powers. Formally, the problem is written as follows.

\begin{problem}[Decentralized LQG problem with average power constraints]
Consider the same dynamics as Problem~\ref{prob:lqg}. But, now the control objective is minimizing the state disturbance $D(P_1,P_2)$ for given input power constraints $P_1, P_2 \in \mathbb{R}^+$. We will say the power-disturbance tradeoff, $D(P_1,P_2)$ is achievable if and only if there exist causal control strategies $u_1[n], u_2[n]$ such that
\begin{align}
&\limsup_{N \rightarrow \infty}\frac{1}{N}\sum_{n=1}^{N} \mathbb{E}[x^2[n]] \leq D(P_1,P_2),  \\
&\limsup_{N \rightarrow \infty}\frac{1}{N}\sum_{n=1}^{N} \mathbb{E}[u_1^2[n]] \leq P_1, \\
&\limsup_{N \rightarrow \infty}\frac{1}{N}\sum_{n=1}^{N} \mathbb{E}[u_2^2[n]] \leq P_2.
\end{align}
\label{prob:power}
\end{problem}
Lemma~\ref{rat:lemma1} will relate the weighted-cost problem, Problem~\ref{prob:lqg}, and the power-constraints problem, Problem~\ref{prob:power}, telling us that if we can approximately solve the latter we can also approximately solve the former. To characterize $D(P_1,P_2)$ approximately, we will come up with lower and upper bounds on $D(P_1,P_2)$. Since we are only aiming for an approximate solution, in the discussion for intuitions and interpretations we will focus on the scaling and ignore the constants.

The following Cauchy-Schwarz style inequalities will be helpful to get bounds.
\begin{lemma}
For arbitrary correlated random variables $X_1,\cdots,X_n$, the following inequality holds:
\begin{align}
(\sqrt{\mathbb{E}[X_1^2]} - \sqrt{\mathbb{E}[X_2^2]} \cdots - \sqrt{\mathbb{E}[X_n^2]})_+^2 \leq \mathbb{E}[(X_1 + \cdots + X_n)^2] \leq ( \sqrt{\mathbb{E}[X_1^2]} + \cdots + \sqrt{\mathbb{E}[X_n^2]} )^2
\leq n (\mathbb{E}[X_1^2]+ \cdots + \mathbb{E}[X_n^2])
\end{align}
\label{ach:lemmacauchy}
\end{lemma}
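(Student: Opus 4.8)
The plan is to pass everything through the $L^2$ norm $\|X\|_2 := \sqrt{\mathbb{E}[X^2]}$ and reduce the three inequalities to the triangle inequality in $L^2$ together with a single use of Cauchy--Schwarz. First I would dispatch the rightmost inequality, which does not involve the $X_i$ jointly at all: applying Cauchy--Schwarz to the vectors $(\|X_1\|_2,\dots,\|X_n\|_2)$ and $(1,\dots,1)$ gives $\big(\sum_i \|X_i\|_2\big)^2 \le n \sum_i \|X_i\|_2^2$, which is exactly $\big(\sqrt{\mathbb{E}[X_1^2]}+\cdots+\sqrt{\mathbb{E}[X_n^2]}\big)^2 \le n\big(\mathbb{E}[X_1^2]+\cdots+\mathbb{E}[X_n^2]\big)$.

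Next I would prove the middle (upper) inequality, i.e.\ the $L^2$ triangle inequality $\|X_1+\cdots+X_n\|_2 \le \|X_1\|_2+\cdots+\|X_n\|_2$, by induction on $n$. The base case $n=2$ expands $\mathbb{E}[(X_1+X_2)^2] = \mathbb{E}[X_1^2] + 2\,\mathbb{E}[X_1 X_2] + \mathbb{E}[X_2^2]$ and bounds the cross term by Cauchy--Schwarz, $\mathbb{E}[X_1 X_2]\le \|X_1\|_2\|X_2\|_2$, so the right-hand side is at most $(\|X_1\|_2+\|X_2\|_2)^2$. The inductive step writes $X_1+\cdots+X_n = (X_1+\cdots+X_{n-1}) + X_n$ and combines the $n=2$ case with the induction hypothesis. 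Squaring the resulting norm inequality gives the claimed bound on $\mathbb{E}[(X_1+\cdots+X_n)^2]$.

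Finally, for the leftmost (lower) inequality I would set $S := X_1+\cdots+X_n$ and observe $X_1 = S - X_2 - \cdots - X_n$, so the triangle inequality just established yields $\|X_1\|_2 \le \|S\|_2 + \|X_2\|_2 + \cdots + \|X_n\|_2$, hence $\|S\|_2 \ge \|X_1\|_2 - \|X_2\|_2 - \cdots - \|X_n\|_2$. Since $\|S\|_2\ge 0$ always, the right-hand side may be replaced by its positive part, and because both sides of $\|S\|_2 \ge \big(\|X_1\|_2 - \|X_2\|_2 - \cdots - \|X_n\|_2\big)_+$ are nonnegative, squaring preserves the inequality, giving $\mathbb{E}[(X_1+\cdots+X_n)^2] \ge \big(\sqrt{\mathbb{E}[X_1^2]} - \sqrt{\mathbb{E}[X_2^2]} - \cdots - \sqrt{\mathbb{E}[X_n^2]}\big)_+^2$. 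There is no genuine obstacle; the only points needing a line of care are the direction of the squaring step in the lower bound --- legitimate precisely because of the positive-part truncation --- and the harmless assumption that each $\|X_i\|_2$ is finite, since otherwise the bounds are either trivial or vacuous.
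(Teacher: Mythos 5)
Your proposal is correct and uses the same underlying tools as the paper --- Cauchy--Schwarz and the resulting $L^2$ triangle inequality --- just packaged slightly differently: you prove the triangle inequality by induction on $n$ and derive the lower bound by rearranging $X_1 = S - X_2 - \cdots - X_n$, whereas the paper expands all cross terms of $\mathbb{E}[(X_1+\cdots+X_n)^2]$ at once and, for the lower bound, applies Cauchy--Schwarz with a minus sign to $\mathbb{E}[X_1(X_2+\cdots+X_n)]$ before invoking the already-proven upper bound on the remainder. The steps you flag as needing care (the positive-part truncation before squaring, and finiteness of the norms) are handled correctly.
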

\begin{proof}
\begin{align}
&\mathbb{E}[(X_1 + \cdots + X_n)^2] \\
&=\mathbb{E}[X_1^2] + \cdots + \mathbb{E}[X_n^2] + 2 \mathbb{E}[X_1 X_2] + \cdots + 2 \mathbb{E}[ X_{n-1} X_n]\\
&\leq \mathbb{E}[X_1^2] + \cdots + \mathbb{E}[X_n^2] + 2 \sqrt{\mathbb{E}[X_1^2] \mathbb{E}[X_1^2]} + \cdots + 2 \sqrt{\mathbb{E}[X_1^2] \mathbb{E}[X_n^2]} \\
&= (\sqrt{\mathbb{E}[X_1^2]} + \cdots + \sqrt{\mathbb{E}[X_n^2]})^2 \\
&\leq n (\mathbb{E}[X_1^2]+ \cdots + \mathbb{E}[X_n^2])
\end{align}
where all inequalities follow from Cauchy-Schwarz.
\begin{align}
&\mathbb{E}[(X_1 + \cdots + X_n)^2] \\
&= \mathbb{E}[X_1^2] + 2 \mathbb{E}[X_1(X_2 + \cdots + X_n)] + \mathbb{E}[(X_2+ \cdots + X_n)^2] \\
&\geq \mathbb{E}[X_1^2] - 2 \sqrt{\mathbb{E}[X_1^2] \mathbb{E}[(X_2 + \cdots + X_n)^2]} + \mathbb{E}[(X_2+ \cdots + X_n)^2] \\
&= (\sqrt{\mathbb{E}[X_1^2]}-\sqrt{\mathbb{E}[(X_2 + \cdots + X_n)^2]})^2 \\
&\geq (\sqrt{\mathbb{E}[X_1^2]}-\sqrt{\mathbb{E}[X_2^2]}  \cdots - \sqrt{ \mathbb{E}[X_n^2]})_+^2
\end{align}
where all inequalities follow from Cauchy-Schwarz.
\end{proof}

\begin{figure}[htbp]
\begin{center}
\includegraphics[width=3in]{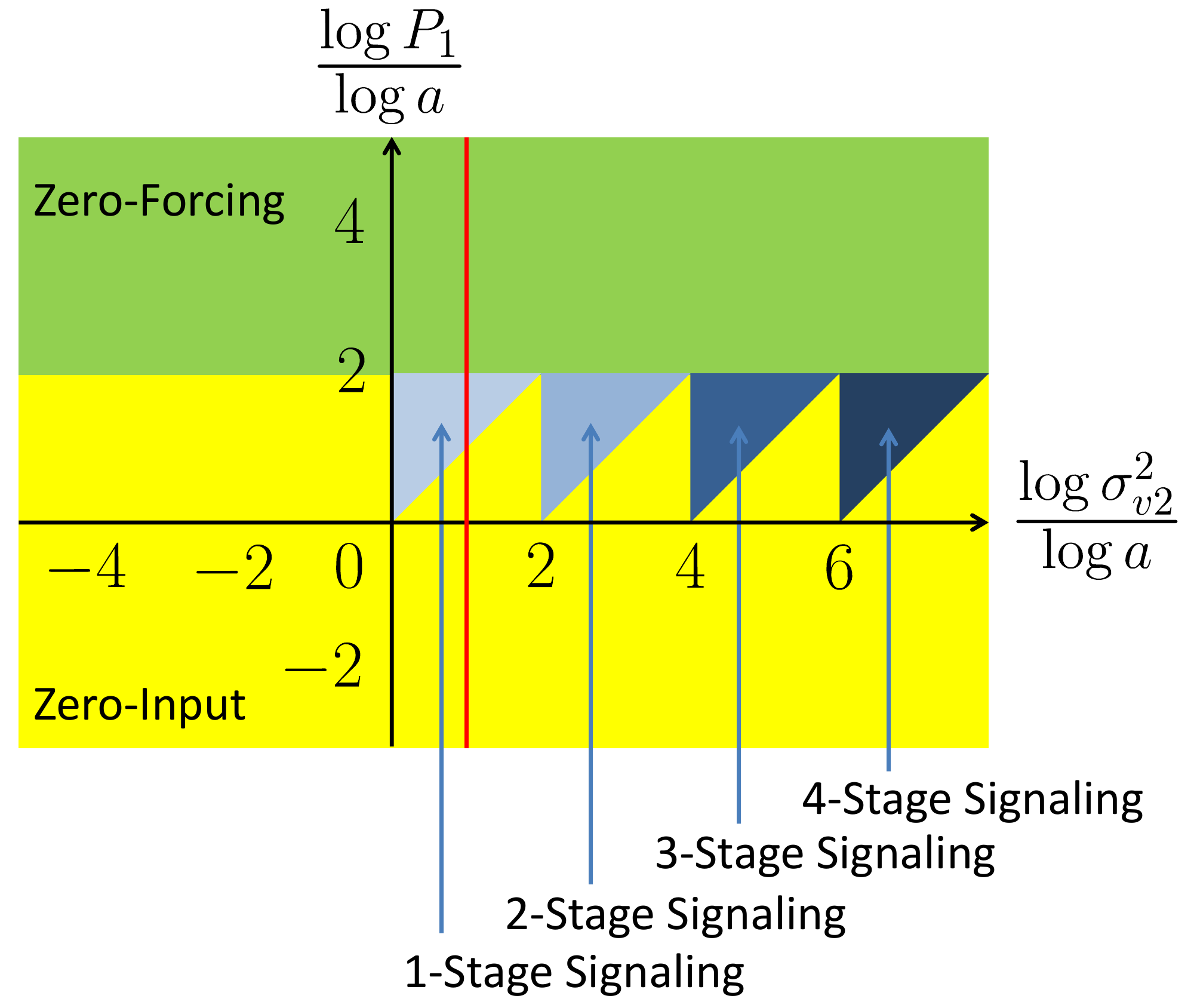}
\caption{Approximately optimal strategies for given $P_1$ and $\sigma_{v2}^2$ when $\sigma_{v1}^2=0$ and $P_2 = \infty$.}
\label{fig:region}
\end{center}
\end{figure}

\begin{figure}[htbp]
\begin{center}
\includegraphics[width=3in]{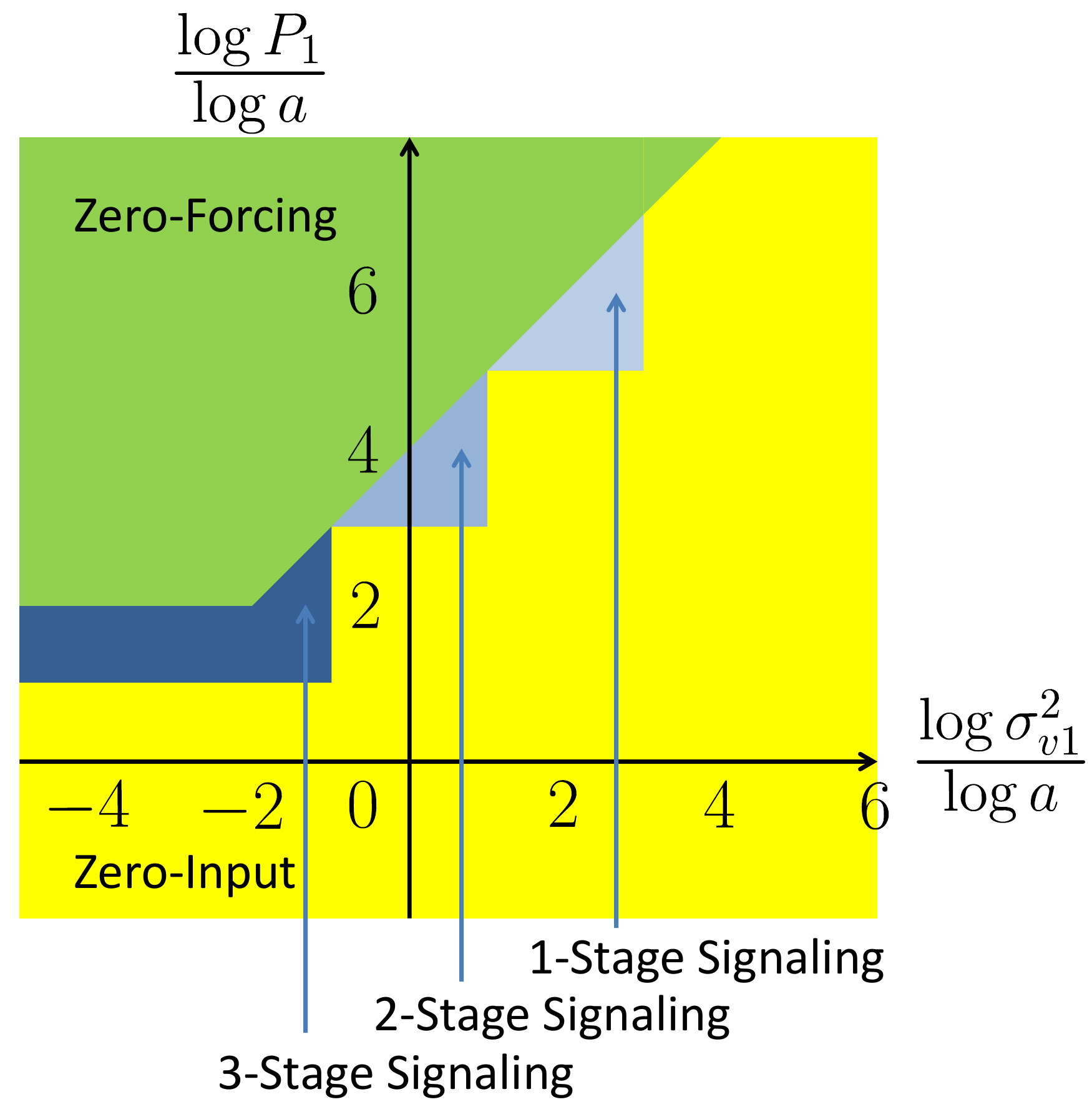}
\caption{Approximately optimal strategies for given $P_1$ and $\sigma_{v1}^2$ when $\sigma_{v2}^2 = a^5$ and $P_2 =\infty$.}
\label{fig:region2}
\end{center}
\end{figure}

\begin{figure}[htbp]
\begin{center}
\includegraphics[width=3in]{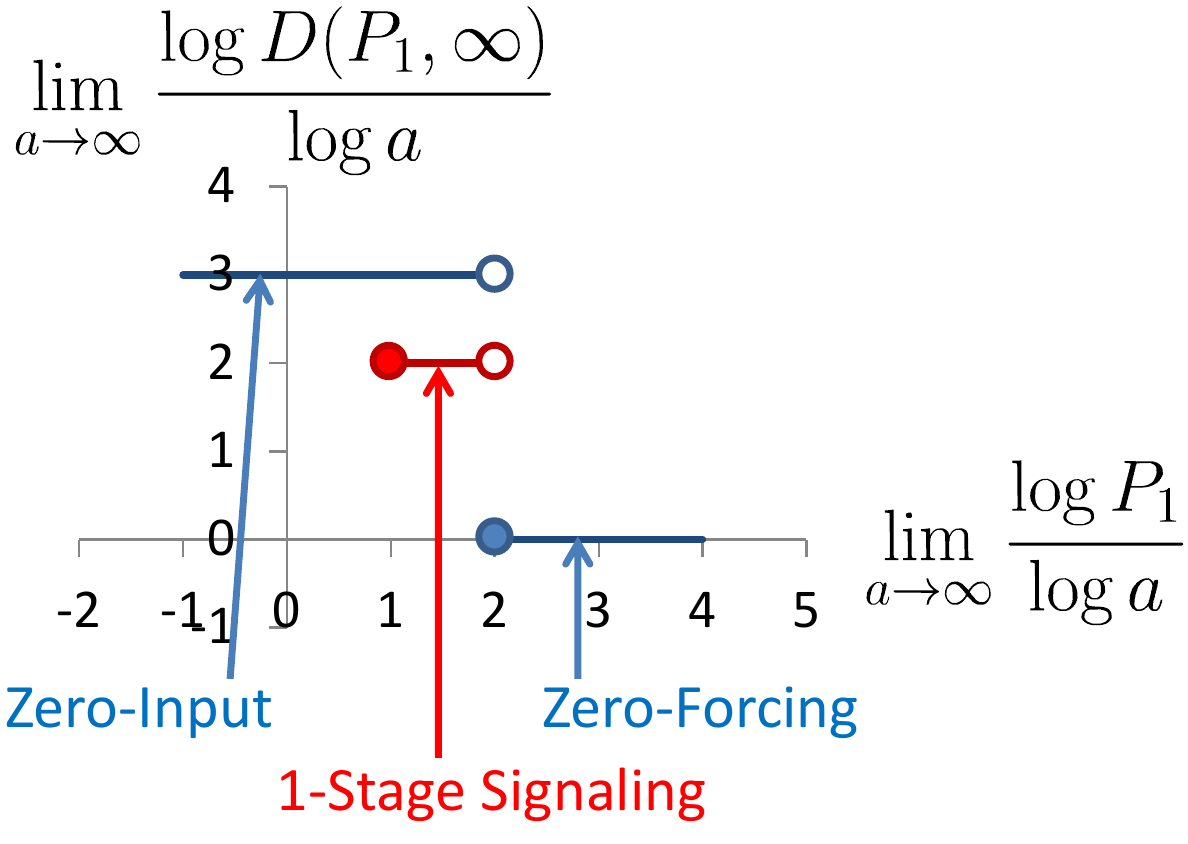}
\caption{The minimum state disturbance $D(P_1,P_2)$ when $\sigma_{v1}^2=0$, $\sigma_{v2}=a$ and $P_2=\infty$ as a function of $P_1$. The red line indicates the cost achievable by the $1$-stage signaling strategy. The blue line indicates the cost achievable by linear strategies. As we can see this performance plot corresponds to that of the red line in Figure~\ref{fig:region}.}
\label{fig:dof2}
\end{center}
\end{figure}

\begin{figure}[htbp]
\begin{center}
\includegraphics[width=3in]{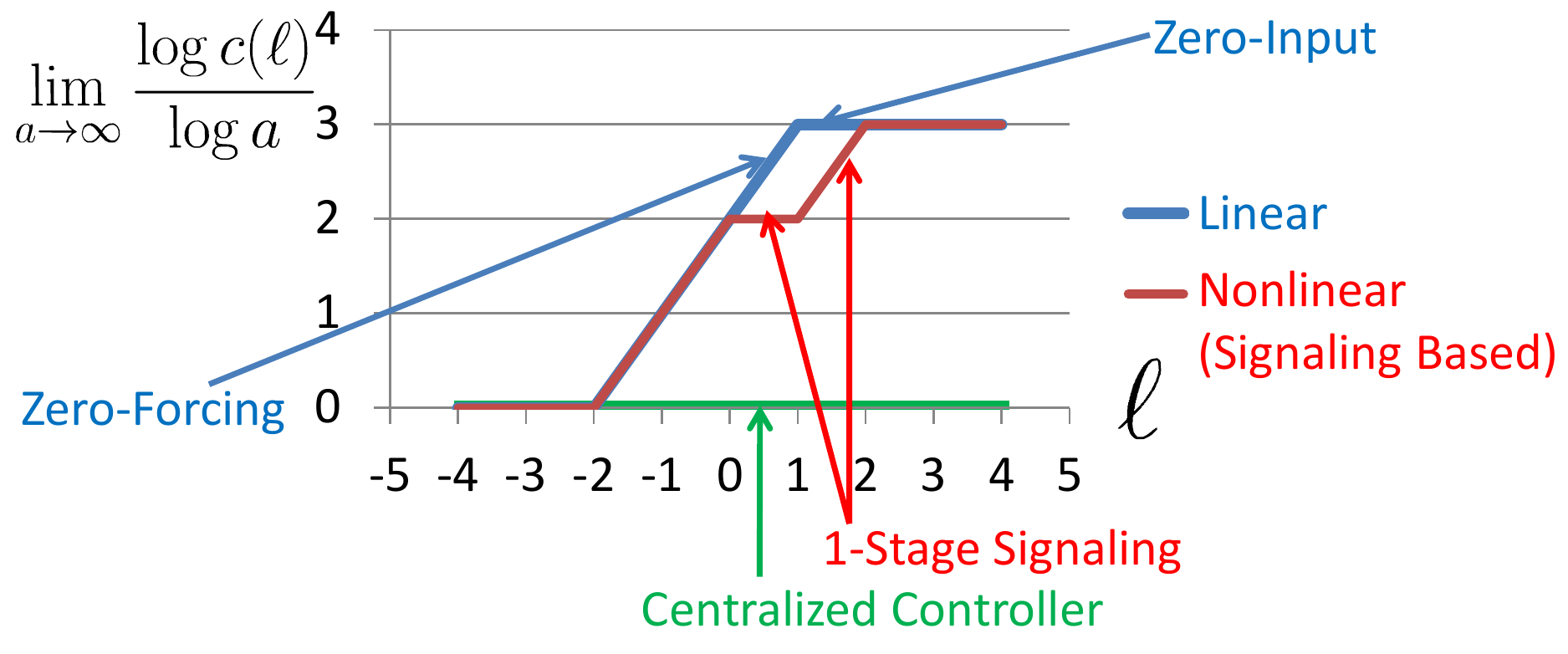}
\caption{The optimal weighted average cost for $\sigma_{v1}^2=0$, $\sigma_{v2}^2=a$, $q=1$, $r_1=a^l$, $r_2=0$. The red line indicates the optimal cost among all possible strategies. The blue line indicates the optimal cost among only linear strategies. The green line indicates the cost of the centralized controller which has both observations and can control both inputs. As $l$ varies, the optimal strategy traverses the red line of Figure~\ref{fig:region}.}
\label{fig:dof}
\end{center}
\end{figure}

\section{Proofs and Proof Ideas: Upper bound on the optimal cost}
\label{sec:upperbound}
To come up with an upper bound on $D(P_1,P_2)$, we should propose enough achievable control strategies for approximate optimality and analyze their performances.

As we discussed in Section~\ref{sec:caveat}, a $1$-stage signaling strategy ($L_{sig,1}$) for the infinite-horizon problem (shown in Figure~\ref{fig:nonlin}) and the nonlinear strategy for Witsenhausen's counterexample (shown in Figure~\ref{fig:witsen}) are essentially equivalent. The first controller implicitly communicates its observation to the second controller by forcing the lower bits to be zero. This point can be visually understood in Figure~\ref{fig:dyn} by noticing that Witsenhausen's counterexample is indeed a sub-block of the infinite-horizon problem.

However, there is a significant difference between these two problems --- the time-horizon. Witsenhausen's counterexample terminates after $2$-time steps, while the system keeps running in infinite-horizon problems. Therefore, more issues arise when we are designing controllers for infinite-horizon problems.

First, since the system keeps running in infinite horizon problems, the implicit communication also has to keep happening. In Figure~\ref{fig:dyn}, $C_1[1]$ communicates to $C_2[2]$, $C_1[2]$ communicates to $C_2[3]$, and so on. In other words, an infinite-horizon problem can be thought as a series of Witsenhausen counterexamples. Because of this interlocking of Witsenhausen's blocks, the effect of one problem can propagate to subsequent ones. To handle this interference between interlocked problems, we introduced the $R_{a^s d}(\sum_{1 \leq i \leq s} a^{i-1}u_2[n-i])$ terms in the $s$-stage signaling policy in Definition~\ref{def:sig}.

The second difference is that since we have a longer time horizon, $C_1[0]$ does not have to communicate to $C_2[1]$ of the next time step. It can communicate with longer delay to $C_2[2]$, $C_2[3]$, $\cdots$. In general, $C_{1}[0]$ can communicate to $C_2[s]$ as we can see in Figure~\ref{fig:dyn}.
In fact, the $s$-stage signaling strategy of Definition~\ref{def:sig}, $L_{sig,s}$, enables $C_{1}[0]$ to communicate with $C_2[s]$, and the infinite-horizon problem is decomposed into a series of interlocked `$s$-stage MIMO Witsenhausen's counterexamples'.

Let's take a careful look at these signaling strategies, and understand which strategy has to be used for certain parameters of Problem~\ref{prob:lqg}.
For simplicity, we first consider the extreme case when the first controller has a perfect observation and the second controller has no power constraint just like Section~\ref{sec:intui}. In other words, $\sigma_{v1}^2=0$ and $P_2=\infty$. Here, we will be making references to the binary deterministic perspective on the problem.\\

\subsubsection{When $\sigma_{v1}^2=0$ and $P_2 = \infty$}

Figure~\ref{fig:region} summarizes which strategy has to be used for a given $\sigma_2^2$ and $P_1$. 
First, we can notice that if the first controller has enough power then it does not need any help from the second controller. At each time step the disturbance $w[n]$ is added, it is observable at the next time step $n+1$ by the first controller when its power is amplified by $a^2$. Therefore, if $P_1 \geq a^2$ the first controller can remove the disturbance by itself by choosing $u_1[n]=-ay_1[n]$. We will call this a zero-forcing strategy from the first controller's point of view. On the other hand, at each time a new state disturbance $w[n]$ with variance $1$ is added. Therefore, when $P_1 \leq 1$ most of the first controller's input will be masked by the additional disturbance $w[n]$. Therefore, in this case $u_1[n]=0$ is approximately optimal, and we will call this a zero-input strategy from the first controller's point of view.

Therefore, the question is ``what should the first controller do when $P_1$ is between these two extreme values?" As we discussed before, the first controller can implicitly communicate its perfect observation to the second controller by canceling the bits which are not observable by the second controller. This idea can be implemented when the bits below the second controller's noise level are observed by the first controller at previous time steps. For example, in Figure~\ref{fig:nonlin}, $x^{1}_{-2}$ of time step $2$, the bit below the noise level of the second controller, is observed by the first controller at time step $1$, one time step before.

Then, what is the condition for the first controller to observe the disturbance one time step before in the original LQG problems? We can notice that at each time the disturbance $w[n]$ is amplified by $a$ and its variance becomes $a^2$ after one time step. Therefore, when $1 \leq \sigma_{v2}^2 \leq a^2$ the bits below the second controller's noise level are observed by the first controller at $1$ time step before.

What is the minimum power required for the first controller to cancel all the bits below the second controller's noise level $\sigma_{v2}^2$? As we can guess\footnote{We can also conjecture this from the deterministic model in Figure~\ref{fig:nonlin}. In Figure~\ref{fig:nonlin}, the first controller's input power level and the second controller's noise level is the same.}, the answer is $\sigma_{v2}^2$.
In sum, for $1$-stage signaling to be actually useful, the parameters of the LQG problems have to be $1 \leq \sigma_{v2}^2 \leq a^2$ and $\sigma_{v2}^2 \leq P_1 \leq a^2$. When $P_1 \geq a^2$, zero-forcing is approximately optimal, and when $0 \leq P_1 \leq \sigma_{v2}^2$, zero-input is approximately optimal.

In general, when $a^{2(s-1)} \leq \sigma_{v2}^2 \leq a^{2s}$ for some $s \in \mathbb{N}$, the bits below the second controller's noise level can be ``previewed" by the first controller at $s$ time steps before, and the first controller's power has to be larger than $\frac{\sigma_{v2}^2}{a^{2(s-1)}}$ to actually cancel those bits. Therefore, in this case when $P_1 \geq a^2$, zero-forcing is approximately optimal, when $\frac{\sigma_{v2}^2}{a^{2(s-1)}} \leq P_1 \leq a^2$, $s$-stage signaling is approximately optimal, and when $0 \leq P_1 \leq \frac{\sigma_{v2}^2}{a^{2(s-1)}}$, zero-input is approximately optimal.

On the other hand, when $\sigma_{v2}^2 \leq 1$, it corresponds to dividing the infinite-horizon problem to a series of Radner's problems.\footnote{In Section~\ref{sec:proof:ratio}, we will name this case as the weakly-degraded-observation case, while the remaining case is named as the strongly-degraded-observation case.} The first controller will observe the bits below the second controller's noise level without any delay, so it gets no preview. As we discussed in Section~\ref{sec:caveat}, we cannot expect a significant gain from nonlinear strategies when two controllers are acting simultaneously on essentially the same quality of observations. Therefore, in this case, a linear strategy is enough to achieve constant-ratio optimality. We will revisit this point when we are discussing lower bounds in Section~\ref{sec:prooflower}\\

\subsubsection{When $\sigma_{v1}^2>0$}
So far, we limited ourselves to $\sigma_{v1}^2=0$ and $P_2=\infty$. Let's first consider the case when $\sigma_{v1}^2 > 0$. 

If we take a careful look at the previous case of $\sigma_{v1}^2 =0$, the bits that the first controller actually uses are those between the power level $1$ and $a^{-2}$. The bits below $a^{-2}$ are useless since at the next time step, they will be masked by the new disturbance. Therefore, as long as $\sigma_{v1}^2 \leq a^{-2}$, the first controller can observe all its useful bits and the previous argument does not change.

Then, what is happening in the case when $\sigma_{v1}^2 \geq a^{-2}$? First, let's ask what is the minimum power $P_1$ for the first controller to zero-force the state. The disturbance is amplified by $a^2$ at each time step, and the bits below $\sigma_{v1}^2$ are not observable by the first controller. Therefore, by the time the first controller observes the effect of the disturbance, the state's variance becomes $a^2 \sigma_{v1}^2$. To actually cancel it at the next time step, the first controller's power has to be greater than $a^4 \sigma_{v1}^2$, i.e. $P_1 \geq a^4 \sigma_{v1}^2$.

When the input power is smaller than $a^4 \sigma_{v1}^2$, it has to use signaling strategies. So when can we use the $s$-stage signaling strategy? To use the $s$-stage signaling strategy, the first controller has to observe the bits below the second controller's noise level at least $s$ time steps before. Therefore, $\sigma_{v1}^2$ has to be less than $\frac{\sigma_{v2}^2}{a^{2s}}$. Since a longer stage signaling requires smaller power, we will use an $s$-stage signaling strategy when $\frac{\sigma_{v2}^2}{a^{2(s+1)}} < \sigma_{v1}^2 \leq \frac{\sigma_{v2}^2}{a^{2s}}$. Then, what is the minimum power to use $s$-stage signaling strategy? Since the first controller has to cancel the bits below $\frac{\sigma_{v2}^2}{a^{2s}}$ at the next time step, $P_1$ has to be greater than $\frac{a^2 \sigma_{v2}^2}{a^{2s}}$. When $P_1$ is less than this, the first controller uses the zero-input strategy.

Summarizing the conclusions so far, let $s = \lceil \frac{\ln \sigma_{v2}^2 - \ln ( \max (1, a^2 \sigma_{v1}^2))}{2 \ln a} \rceil $ so that
\begin{align}
a^{2(s-1)}\max(1,a^2 \sigma_{v1}^2) < \sigma_{v2}^2 \leq a^{2s} \max(1, a^2 \sigma_{v1}^2).
\end{align} Then 
(i) When $P_1 \geq \max(a^2, a^4 \sigma_{v1}^2)$, the zero-forcing strategy\\
(ii) When $\frac{\sigma_{v2}^2}{a^{2(s-1)}} \leq P_1 \leq \max(a^2, a^4 \sigma_{v1}^2)$, the $s$-stage signaling strategy\\
(iii) When $P_1 \leq \frac{\sigma_{v2}^2}{a^{2(s-1)}}$, the zero-input strategy\\
are approximately optimal respectively.\\

\subsubsection{When $P_2 < \infty$}
Let's consider when the second controller also has a power constraint $P_2$. When the first controller is zero-forcing the state, the second controller does not have to control and the power constraint $P_2$ does not change the result. When the first controller is either applying signaling or zero input strategy, the second controller has to stabilize the system. By the definition, $D(P_1, \infty)$ is the smallest state disturbance we can expect. Therefore, $P_2$ has to be essentially greater than $a^2 D(P_1, \infty)$ to cancel the state at the next time step and stabilize the system. In fact, this turns out to be sufficient, too.

\subsection{Generalized d.o.f. Performance}
\label{sec:generalizeddof}
Now, we have approximately optimal strategies. In this section, we will see how their performances scales as the problem parameters vary. More precisely, we will increase the various problem parameters in different scales, and see how the control cost scales as a function of the problem parameters. In spirit, this measure of the performance corresponds to the generalized d.o.f. in wireless communication~\cite{Etkin_Interference,Salman_Wireless} where the SNRs of different antennas are allowed to scale differently. The more fundamental connection with wireless communication theory will be discussed in Section~\ref{sec:wireless}.

Figure~\ref{fig:dof2} shows how the minimum state disturbance of the proposed strategies scales as $a$ goes to infinity. Precisely, in Problem~\ref{prob:power} we fix $a=a$, $\sigma_{v1}^2=0$, $\sigma_{v2}^2=a$, $P_2=\infty$, and explore how $D(P_1,P_2)$ scales in $a$ when $P_1$ scales differently in $a$. From the problem parameters, we can easily see this cost plot corresponds to the cost of the red line ($\sigma_{v2}^2=a$) in Figure~\ref{fig:region}. As we discussed before, between zero-forcing and zero-input linear strategies, the nonlinear $1$-stage signaling strategy performs better.

So far the discussion is from the power-disturbance point of view. However, the original weighted cost problem is essentially the same since the optimal strategy will have some corresponding control input powers. Let's consider the system equation \eqref{eqn:systemsimple} with $a=a, \sigma_{v1}^2=0, \sigma_{v2}^2=a$ and the average cost \eqref{eqn:part11}
\begin{align}
&c(l)=\inf_{u_1,u_2} \limsup_{N \rightarrow \infty}\frac{1}{N} \sum_{0 \leq n < N} \mathbb{E}[x^2[n]+a^l u_1^2[n]]
\nonumber
\end{align}
Figure~\ref{fig:dof} shows how the average cost scales as $a$ goes to infinity for different values of $l$. As we change $l$, the optimal solution follows the red line ($\sigma_{v2}^2=a$) in Figure~\ref{fig:region}. \\
(i) When $l$ is small ($l \leq 0$), the input cost of the first controller is inexpensive and the zero-forcing strategy is optimal up to scaling.\\
(ii) When $l$ is large ($l \geq 2$), the input cost is expensive and the zero-input strategy is approximately optimal. \\
(iii) Between these two extremes ($0 \leq 1 \leq 2$), we need nonlinear $1$-stage signaling strategy and it is approximately optimal. \\
As we can see in Figure~\ref{fig:dof}, the average cost of linear and optimal nonlinear strategy scales differently in $a$. Therefore, the performance ratio between these two diverges to infinity, which was formally stated in Proposition~\ref{prop:1}. Moreover, in Figure~\ref{fig:dof} we can also see a naive lower bound on the cost derived by allowing a centralized controller is too loose to give a constant ratio optimality. Thus, we have to improve both upper and lower bound to prove a constant ratio optimality.

It is worth to mention that figuring out this generalized d.o.f. cost is not enough to guarantee constant-ratio optimality, since the logarithmic scaling (caused by the tail of the Gaussian random variables) in $a$ does not appear in the generalized d.o.f. cost. For example, the first term shown in the lower bound given by (c) of Corollary~\ref{rat:lemma3} cannot be captured in the generalized d.o.f. cost.

\subsection{(d,w,o) Approximate-Comb-Lattice Theory}
\label{sec:dwo}
\begin{figure}[htbp]
\begin{center}
\includegraphics[width=3in]{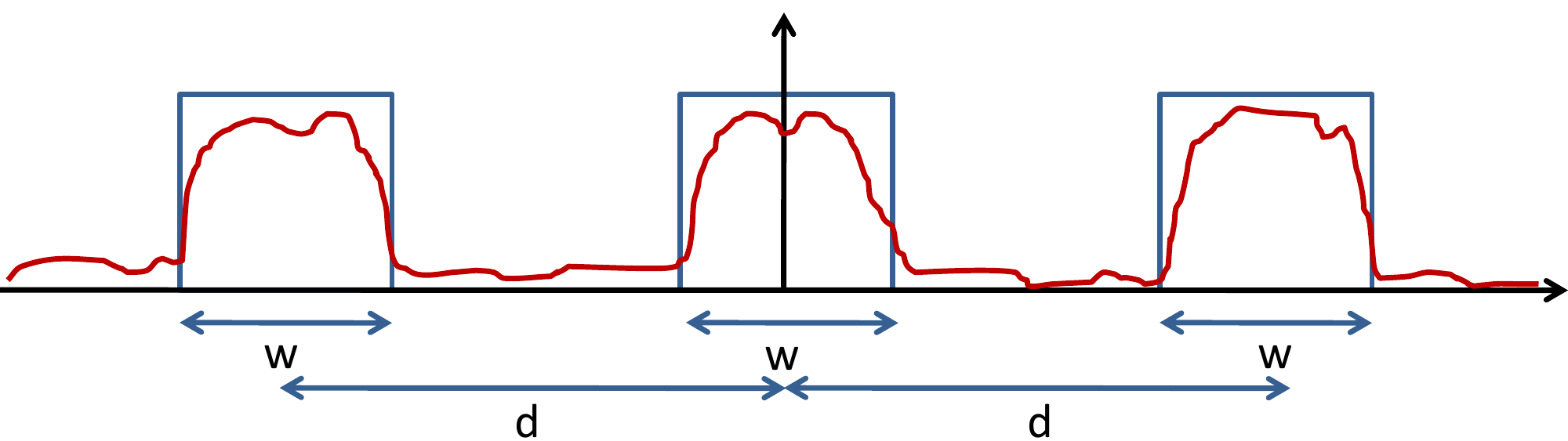}
\caption{Pictorial description of Definition~\ref{def:box}}
\label{fig:ach}
\end{center}
\end{figure}

So far, we understand the approximately optimal strategies and intuitively why they have to be used for given problem parameters. Now, we have to analyze their performances. Unlike linear strategies, nonlinear strategies make the random variables (the state, observations and inputs) non-Gaussian. Thus, the mean and variance is not enough to describe the distribution of random variables, and the exact description of the distribution requires a potentially infinite number of parameters. Therefore, we have to come up with an approximate description involving only a finite number of parameters. To this end, we propose the following definition which will turn out to be useful in analyzing quantization-based signaling strategies.
\begin{definition}
Let $X$ be a random variable, $d$ be nonzero, and $w,o$ be nonnegative reals with $|d| > w$. We say $X \leq_{df} (d,w,o)$ if
\begin{align}
\mathbb{P}\{ X \not\in \bigcup_{i \in \mathbb{Z}} [i \cdot d- \frac{w}{2},i \cdot d+ \frac{w}{2}] \} \leq o.
\end{align}
\label{def:box}
\end{definition}
Figure~\ref{fig:ach} pictorially shows this definition. When a random variable stays in one of the boxes with width $w$, the event will be considered typical. When a random variable falls outside the boxes, the event will be considered atypical and measured by outage probability $o$. Notice that after quantization, the probability mass will concentrate in a sequence of boxes. When $d=\infty$, we have only one box centered at $0$.

Let's study properties of this definition. The first lemma tells what happens when we add two random variables.
\begin{lemma}
Let $X_1$, $X_2$ and $X_3$ be arbitrary correlated random variables. If $X_1 \leq_{df} (d_1,w_1,o_1)$, $X_2 \leq_{df} (\infty,w_2,o_2)$ and $X_3 \leq_{df} (d_1,w_3,o_3)$ then
\begin{align}
&X_1 + X_2 \leq_{df} (d_1,w_1+w_2,o_1+o_2)\\
&X_1 + X_3 \leq_{df} (d_1,w_1+w_3,o_1+o_3)
\end{align}
\label{ach:lemma1}
\end{lemma}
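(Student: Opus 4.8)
The plan is to unpack Definition~\ref{def:box} directly in terms of the "good" events, and then use a union bound on the complementary "bad" events. For the first claim, write $G_1$ for the event $\{X_1 \in \bigcup_{i} [i d_1 - \frac{w_1}{2}, i d_1 + \frac{w_1}{2}]\}$ and $G_2$ for the event $\{X_2 \in [-\frac{w_2}{2}, \frac{w_2}{2}]\}$ (recall $X_2 \leq_{df}(\infty,w_2,o_2)$ means there is a single box centred at $0$). By hypothesis $\mathbb{P}(G_1^c) \leq o_1$ and $\mathbb{P}(G_2^c) \leq o_2$. On the intersection $G_1 \cap G_2$ we have $X_1 = i d_1 + \delta_1$ with $|\delta_1| \leq \frac{w_1}{2}$ for some $i \in \mathbb{Z}$, and $|X_2| \leq \frac{w_2}{2}$, so $X_1 + X_2 = i d_1 + (\delta_1 + X_2)$ with $|\delta_1 + X_2| \leq \frac{w_1 + w_2}{2}$. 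Hence on $G_1 \cap G_2$ the sum lies in $\bigcup_i [i d_1 - \frac{w_1+w_2}{2}, i d_1 + \frac{w_1+w_2}{2}]$, i.e. the "bad" event for $X_1 + X_2$ is contained in $G_1^c \cup G_2^c$, whose probability is at most $o_1 + o_2$ by the union bound. This is exactly $X_1 + X_2 \leq_{df}(d_1, w_1 + w_2, o_1 + o_2)$. One should also check the side condition $|d_1| > w_1 + w_2$ is what the statement implicitly needs for the conclusion to be a legitimate instance of the definition — but since the lemma is stated as an implication producing that relation, I would simply note that the combed intervals may overlap if this fails and the statement still holds in the harmless sense that the probability bound is valid; alternatively one assumes $|d_1| > w_1 + w_2$ as part of the reading of Definition~\ref{def:box}.

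For the second claim the argument is identical except that $X_3$ has the \emph{same} comb spacing $d_1$ as $X_1$ rather than being supported near a single box. On the good event $G_1 \cap G_3$ we have $X_1 = i d_1 + \delta_1$ and $X_3 = j d_1 + \delta_3$ with $|\delta_1| \leq \frac{w_1}{2}$, $|\delta_3| \leq \frac{w_3}{2}$, and integers $i,j$. Then $X_1 + X_3 = (i+j) d_1 + (\delta_1 + \delta_3)$, and since $i + j \in \mathbb{Z}$ and $|\delta_1 + \delta_3| \leq \frac{w_1 + w_3}{2}$, the sum again lands in the comb with spacing $d_1$ and width $w_1 + w_3$. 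The bad event is contained in $G_1^c \cup G_3^c$, of probability at most $o_1 + o_3$, giving $X_1 + X_3 \leq_{df}(d_1, w_1 + w_3, o_1 + o_3)$.

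There is essentially no hard step here; the lemma is a bookkeeping fact about how comb-lattice descriptions compose under addition, and the only subtlety is the distinction between the "$d = \infty$" case (a single centred box, which can be absorbed into any other box by widening it) and the "matching spacing $d_1$" case (where integer teeth add to integer teeth). The main thing to be careful about is to phrase everything in terms of containment of the complementary events so that the union bound is applied to the outage probabilities and nothing else, and to make sure that in the first claim the widened box $[i d_1 - \frac{w_1+w_2}{2}, i d_1 + \frac{w_1+w_2}{2}]$ genuinely contains $i d_1 + \delta_1 + X_2$ for \emph{every} realisation in $G_1 \cap G_2$, which is immediate from the triangle inequality $|\delta_1 + X_2| \leq |\delta_1| + |X_2|$.
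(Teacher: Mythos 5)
Your proof is correct and is essentially the paper's argument: the paper's chain of inequalities is exactly the union-bound decomposition you describe, splitting on the good event for $X_2$ (respectively $X_3$) and observing that on the intersection of good events the sum lands in the widened comb, while the second claim uses that integer teeth add to integer teeth. Your remark on the side condition $|d_1|>w_1+w_2$ is a reasonable observation that the paper leaves implicit, but it does not change the argument.
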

\begin{proof}
\begin{align}
&\mathbb{P}\{ X_1 + X_2 \not\in \bigcup_{i \in \mathbb{Z}}[i \cdot d_1 - \frac{w_1 + w_2}{2}, i \cdot d_1 + \frac{w_1 + w_2}{2}] \}\\
&\leq \mathbb{P}\{ X_1 + X_2 \not\in \bigcup_{i \in \mathbb{Z}}[i \cdot d_1 - \frac{w_1 + w_2}{2}, i \cdot d_1 + \frac{w_1 + w_2}{2}] , X_2 \in [- \frac{w_2}{2},\frac{w_2}{2}]\}+  \mathbb{P}\{X_2 \not\in [- \frac{w_2}{2},\frac{w_2}{2}] \} \\
&\leq  \mathbb{P}\{ X_1 \not\in \bigcup_{i \in \mathbb{Z}}[i \cdot d_1 - \frac{w_1 }{2}, i \cdot d_1 + \frac{w_1 }{2}] \}+
o_2 \\
&\leq o_1 + o_2
\end{align}
The second part follows similarly since when we add two points from the lattice points spaced by $d$, the resulting point is also in that lattice.
\end{proof}
The second lemma tells what happens when we multiply a random variable by a constant.
\begin{lemma}
Let $X \leq_{df} (d,w,o)$ and $k>0$. Then,
\begin{align}
k X \leq_{df} (k d, k w, o).
\end{align}
\label{ach:lemma2}
\end{lemma}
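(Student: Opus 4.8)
The plan is to reduce the claim for $kX$ directly to the hypothesis on $X$, using that multiplication by a fixed $k>0$ is an increasing bijection of $\mathbb{R}$ which carries the ``comb'' of intervals appearing in Definition~\ref{def:box} for the parameters $(d,w)$ exactly onto the comb for the parameters $(kd,kw)$. First I would check that $(kd,kw,o)$ is an admissible triple: $k>0$ and $|d|>w$ give $|kd|=k|d|>kw\ge 0$, so Definition~\ref{def:box} applies to $kX$ with these parameters (and in the degenerate case $d=\infty$ we simply have the single box $[-\tfrac{w}{2},\tfrac{w}{2}]$ mapping to $[-\tfrac{kw}{2},\tfrac{kw}{2}]$).

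The core step is the pointwise equivalence: for every fixed $i\in\mathbb{Z}$,
\[
kX \in \Bigl[\,i\cdot kd-\tfrac{kw}{2},\; i\cdot kd+\tfrac{kw}{2}\,\Bigr]
\quad\Longleftrightarrow\quad
X \in \Bigl[\,i\cdot d-\tfrac{w}{2},\; i\cdot d+\tfrac{w}{2}\,\Bigr],
\]
since dividing the chain of inequalities by $k>0$ preserves their direction. Taking the union over $i$, the event $\{\,kX\in\bigcup_i[ikd-\tfrac{kw}{2},ikd+\tfrac{kw}{2}]\,\}$ coincides as a subset of the sample space with $\{\,X\in\bigcup_i[id-\tfrac{w}{2},id+\tfrac{w}{2}]\,\}$, hence so do their complements.

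Consequently $\mathbb{P}\{\,kX\notin\bigcup_i[ikd-\tfrac{kw}{2},ikd+\tfrac{kw}{2}]\,\}=\mathbb{P}\{\,X\notin\bigcup_i[id-\tfrac{w}{2},id+\tfrac{w}{2}]\,\}\le o$ by the hypothesis $X\leq_{df}(d,w,o)$, which is precisely $kX\leq_{df}(kd,kw,o)$. There is essentially no obstacle; the only point requiring care is the sign of $k$ — the positivity $k>0$ is what keeps the interval endpoints ordered so that the bijection maps boxes to boxes, and this is exactly why the lemma is stated for $k>0$ rather than all nonzero $k$ (for $k<0$ one would also have to negate $d$).
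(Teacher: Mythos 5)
Your proof is correct and follows essentially the same route as the paper's: you show the event $\{kX\in\bigcup_i[ikd-\tfrac{kw}{2},ikd+\tfrac{kw}{2}]\}$ equals $\{X\in\bigcup_i[id-\tfrac{w}{2},id+\tfrac{w}{2}]\}$, so the outage probabilities coincide and the bound $\leq o$ carries over. The paper states this as a one-line equality of probabilities; you merely spell out the pointwise equivalence and admissibility checks more explicitly.
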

\begin{proof}
\begin{align}
&\mathbb{P}\{ k X \not\in \bigcup_{i \in \mathbb{Z}}[i \cdot kd - \frac{kw}{2}, i \cdot kd + \frac{kw}{2} ] \} \\
&=\mathbb{P}\{ X \not\in \bigcup_{i \in \mathbb{Z}}[i \cdot d - \frac{w}{2}, i \cdot d + \frac{w}{2} ] \} \\
&\leq o.
\end{align}
\end{proof}

The next lemma tells the variance of remainder is only smaller than the original random variable.
\begin{lemma}
For all random variable $X$ and nonzero $d$, we have
\begin{align}
\mathbb{E}[R_d(X)^2] = \mathbb{E}[(X-Q_d(X))^2] \leq \mathbb{E}[X^2].
\end{align}
\label{ach:lemma3}
\end{lemma}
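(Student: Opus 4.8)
The plan is to reduce the statement to a pointwise inequality and then integrate. The middle equality $\mathbb{E}[R_d(X)^2] = \mathbb{E}[(X-Q_d(X))^2]$ is immediate from the definition $R_d(X) := X - Q_d(X)$, so the only content is the inequality $\mathbb{E}[R_d(X)^2] \le \mathbb{E}[X^2]$. I would obtain this from the deterministic fact that $R_d(x)^2 \le x^2$ for every real $x$, after which monotonicity of expectation finishes the argument (and if $\mathbb{E}[X^2] = \infty$ there is nothing to prove).

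For the pointwise bound, the key point is that $Q_d(x)$ is a \emph{nearest} integer multiple of $d$ to $x$. Writing $x = q\cdot d + r$ with $q \in \mathbb{Z}$ and $r \in [-\tfrac{|d|}{2}, \tfrac{|d|}{2})$ as in Definition~\ref{def:sig}, one checks straight from the formula $Q_d(x) = d\lfloor x/d + \tfrac12\rfloor$ that $Q_d(x) = q\cdot d$ and $R_d(x) = r$, so $|R_d(x)| \le \tfrac{|d|}{2}$. Since $0 = 0\cdot d$ is itself a multiple of $d$, and $q\cdot d$ is at least as close to $x$ as any other multiple of $d$, we get $|R_d(x)| = |x - q\cdot d| \le |x - 0\cdot d| = |x|$, hence $R_d(x)^2 \le x^2$. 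Taking expectations gives $\mathbb{E}[R_d(X)^2] \le \mathbb{E}[X^2]$.

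There is essentially no obstacle; the only care needed is verifying that $q\cdot d$ really minimizes the distance to $x$ over all multiples of $d$ (including the tie-breaking at $r = -\tfrac{|d|}{2}$), which follows since any other multiple $q'\cdot d$ satisfies $|x - q'\cdot d| \ge |q-q'|\,|d| - |r| \ge |d| - \tfrac{|d|}{2} = \tfrac{|d|}{2} \ge |r|$. This argument is insensitive to the sign of $d$, since the set of integer multiples of $d$ equals the set of integer multiples of $|d|$, so the same reasoning covers all nonzero $d$.
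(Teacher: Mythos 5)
Your proof is correct, and it reaches the same pointwise inequality $R_d(x)^2 \le x^2$ that the paper's proof also relies on, followed by taking expectations. The difference is in how the pointwise bound is verified: the paper expands $x^2 = (qd+r)^2 = |qd|\bigl(|qd| + 2\,\mathrm{sgn}(qd)\,r\bigr) + r^2$ and bounds the cross term using $q \in \mathbb{Z}$ and $|r| \le |d|/2$, whereas you observe that $Q_d(x)$ is a nearest multiple of $d$ to $x$ and that $0 = 0\cdot d$ is itself such a multiple, so $|R_d(x)| = \mathrm{dist}(x, d\mathbb{Z}) \le |x - 0| = |x|$. Your projection-style argument is a bit more conceptual and makes the role of $0 \in d\mathbb{Z}$ explicit (and your explicit handling of the tie-break at $r = -|d|/2$ is correct), while the paper's is a direct algebraic calculation; both are valid and roughly equal in effort for this lemma.
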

\begin{proof}
For a real $x$, let $x=q \cdot d +r $ for $q \in \mathbb{Z}$ and $r \in [-\frac{d}{2},\frac{d}{2})$. Then,
\begin{align}
x^2&=(q \cdot d  +r )^2 = q^2 d^2 + 2 qdr +r^2 \\
&=|qd| (|qd|+2 sgn(qd) \cdot r) + r^2
\end{align}
When $q=0$, $x^2 = r^2$.\\
When $q \neq 0$, since $q \in \mathbb{Z}$ we have $x^2  \geq |qd|(|d| -  2 |r|) + r^2 \geq r^2$.

Therefore,
\begin{align}
\mathbb{E}[X^2] \geq \mathbb{E}[R_d(X)^2]. 
\end{align}
\end{proof}

Since all underlying random variables of interest are Gaussian, we will relate Gaussian distributions with our parameterization.
\begin{lemma}
Let $Q(x):=\frac{1}{\sqrt{2 \pi}} \int^{\infty}_{x} \exp(-\frac{u^2}{2}) du$.
Then, $Q(x) \sim \frac{1}{\sqrt{ 2 \pi } x} \exp(-\frac{x^2}{2})$. More precisely, for $\forall x >0$
\begin{align}
\frac{1}{\sqrt{2\pi}}\left( \frac{1}{x} -\frac{1}{x^3} \right) \exp\left( - \frac{x^2}{2} \right) \leq Q(x) \leq \frac{1}{\sqrt{2\pi}x} \exp\left( -\frac{x^2}{2} \right).
\end{align}
Moreover, when $X$ is Gaussian with zero mean and variance smaller than $\sigma^2$, for all $w \geq 0$
\begin{align}
X \leq_{df} (\infty, w, 2 \cdot Q(\frac{w}{2 \sigma})).
\end{align}
\label{ach:lemma4}
\label{ach:lemma5}
\end{lemma}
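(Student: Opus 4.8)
The plan is to split the statement into its two independent halves: the classical Gaussian tail bounds on $Q(x)$, and the claim that a zero-mean Gaussian with variance at most $\sigma^2$ satisfies $X \leq_{df}(\infty, w, 2Q(\frac{w}{2\sigma}))$. The second half is a direct consequence of the first together with Definition~\ref{def:box}, so the real content is the tail estimate.

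For the tail bounds, I would proceed in the standard way. The upper bound $Q(x) \leq \frac{1}{\sqrt{2\pi}x}\exp(-x^2/2)$ follows from the inequality $u/x \geq 1$ on the range $u \geq x > 0$: write $Q(x) = \frac{1}{\sqrt{2\pi}}\int_x^\infty \exp(-u^2/2)\,du \leq \frac{1}{\sqrt{2\pi}}\int_x^\infty \frac{u}{x}\exp(-u^2/2)\,du$, and evaluate the last integral in closed form as $\frac{1}{x}\exp(-x^2/2)$. For the lower bound, the cleanest route is to verify that the function $g(x) := Q(x) - \frac{1}{\sqrt{2\pi}}\left(\frac{1}{x} - \frac{1}{x^3}\right)\exp(-x^2/2)$ satisfies $g(x) \to 0$ as $x \to \infty$ and $g'(x) \leq 0$ for all $x > 0$ (equivalently $g'(x) \geq 0$ depending on sign conventions); differentiating, $Q'(x) = -\frac{1}{\sqrt{2\pi}}\exp(-x^2/2)$, and the derivative of the bracketed term produces, after collecting the $\exp(-x^2/2)$ factor, a rational expression in $x$ whose sign is easy to read off. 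This gives the two-sided bound, and the asymptotic equivalence $Q(x)\sim \frac{1}{\sqrt{2\pi}x}\exp(-x^2/2)$ is then immediate by squeezing.

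For the $\leq_{df}$ claim: with $d=\infty$, Definition~\ref{def:box} reduces to $\mathbb{P}\{X \notin [-\frac{w}{2},\frac{w}{2}]\} \leq o$. If $X$ is zero-mean Gaussian with variance $\tau^2 \leq \sigma^2$, then $\mathbb{P}\{|X| > \frac{w}{2}\} = 2Q(\frac{w}{2\tau})$, and since $Q$ is decreasing and $\tau \leq \sigma$ we have $2Q(\frac{w}{2\tau}) \leq 2Q(\frac{w}{2\sigma})$, which is exactly the required bound with $o = 2Q(\frac{w}{2\sigma})$. The degenerate case $w=0$ is handled separately but trivially, since then $Q(0)=\frac12$ and the bound $o=1$ is vacuous.

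The only mildly delicate step is the lower bound on $Q(x)$, where one must be careful that the claimed inequality $\frac{1}{x}-\frac{1}{x^3} \geq 0$ requires $x \geq 1$; for $0 < x < 1$ the left side is negative and the lower bound holds trivially since $Q(x) \geq 0$. So the monotonicity argument for $g$ really only needs to be run on $x \geq 1$, which is where I expect to spend the bulk of the (still routine) effort; everything else is bookkeeping.
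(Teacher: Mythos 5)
Your proposal is correct. The paper's own proof simply cites Feller for the two-sided Gaussian tail bound and states that the $\leq_{df}$ claim ``directly follows from the definition''; you have supplied exactly the standard argument that reference contains (the $u/x\geq 1$ trick for the upper bound, and the monotonicity check on $g(x)=Q(x)-\frac{1}{\sqrt{2\pi}}(\frac{1}{x}-\frac{1}{x^3})\exp(-x^2/2)$, whose derivative collapses to $-\frac{3}{\sqrt{2\pi}x^4}\exp(-x^2/2)<0$, giving $g>0$ everywhere since $g(\infty)=0$), and your handling of the $\leq_{df}$ part via $\mathbb{P}\{|X|>w/2\}=2Q(\frac{w}{2\tau})\leq 2Q(\frac{w}{2\sigma})$ is precisely what the paper means by ``follows from the definition.'' One minor simplification: your special-casing of $x<1$ is unnecessary, since $g'(x)<0$ holds for all $x>0$ and already gives the lower bound uniformly.
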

\begin{proof}
For the first part, see \cite{Feller}. The second part directly follows from the definition.
\end{proof}

The next lemma bounds the MMSE error of a quantized random variable when it is corrupted by Gaussian observation noise.
\begin{lemma}
Let $X$ and $V$ be independent random variable where $X \leq_{df} (d,w,o)$ with $|d|> w$ and $V$ is a Gaussian random variable with zero-mean and variance $\sigma^2$. Then,
\begin{align}
&\mathbb{E}[(X-Q_d(X+V))^2] \\
&\leq \mathbb{E}[(X-Q_d(X))^2]+
\sum_{1 \leq i \leq \infty}
(i |d|+\frac{w}{2})^2 \cdot 2Q(\frac{(2i-1)|d|-w}{2\sigma})\\
&+
o\cdot
\left((d+\frac{d}{2})^2  + \sum_{2 \leq i \leq \infty} (i \cdot d+\frac{d}{2})^2 \cdot 2Q_d(\frac{(i-1)|d|}{\sigma}) \right).
\end{align}
\label{ach:lemma8}
\end{lemma}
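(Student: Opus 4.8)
The plan is to decompose the MMSE-type error $\mathbb{E}[(X-Q_d(X+V))^2]$ according to how far the quantizer $Q_d(X+V)$ lands from $Q_d(X)$, i.e.\ according to the integer ``jump'' $J := \frac{1}{d}(Q_d(X+V)-Q_d(X)) \in \mathbb{Z}$ caused by the additive Gaussian noise $V$. Writing $X - Q_d(X+V) = (X - Q_d(X)) - dJ$ and using $(a-b)^2 \le 2a^2 + 2b^2$ — or, more carefully, conditioning on $J$ and bounding term by term — reduces the problem to (i) the noiseless quantization error $\mathbb{E}[(X-Q_d(X))^2]$, which is the leading term already present on the right-hand side, and (ii) a sum over $i = |J| \ge 1$ of the contribution $\mathbb{E}[(dJ)^2 \mathbf{1}\{|J|=i\}]$ together with cross terms. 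The key probabilistic estimate is: a jump of size $\ge i$ requires $|V|$ to be large enough to push $X+V$ across at least $i$ quantization cell boundaries, and since $X \le_{df}(d,w,o)$ concentrates near the lattice points $jd$, in the typical event $\{X \in \bigcup_j [jd - w/2, jd+w/2]\}$ the distance from $X$ to the nearest cell boundary is at least $\frac{|d|-w}{2}$, so a jump of magnitude $\ge i$ forces $|V| \ge \frac{(2i-1)|d|-w}{2}$, an event of probability at most $2Q\!\left(\frac{(2i-1)|d|-w}{2\sigma}\right)$ by Lemma~\ref{ach:lemma4}.

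Concretely, I would first split on the typical/atypical event for $X$. On the atypical event (probability $\le o$), I bound $X$ crudely and absorb the whole contribution into the last line: when $X$ is atypical we still have $Q_d(X+V)$ equal to some lattice multiple, and the squared error in the $i$-th shell is at most $(id + d/2)^2$; the probability that the noise additionally produces such a shift is controlled by $2Q_d\!\left(\frac{(i-1)|d|}{\sigma}\right)$ for $i \ge 2$, while the $i \in \{0,1\}$ shells are handled by the bare $(d + d/2)^2$ bound — this reproduces the third line of the claimed inequality. On the typical event, I expand $\mathbb{E}[(X-Q_d(X+V))^2\mathbf{1}_{\text{typ}}]$ by conditioning on $|J| = i$: the $i=0$ term is bounded by $\mathbb{E}[(X-Q_d(X))^2]$ (noiseless error), and for $i \ge 1$ the error magnitude $|X - Q_d(X+V)| \le |X - Q_d(X)| + i|d| \le \frac{w}{2} + i|d|$ on the typical event, so the $i$-th term is at most $(i|d| + \frac{w}{2})^2 \cdot \mathbb{P}\{|J| \ge i,\ \text{typ}\} \le (i|d|+\frac{w}{2})^2 \cdot 2Q\!\left(\frac{(2i-1)|d|-w}{2\sigma}\right)$, matching the middle line. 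Summing the three pieces gives the stated bound.

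The main obstacle is bookkeeping the boundary/cross terms correctly: when $|J| = i$ the error is $(X - Q_d(X)) - id$, not simply $id$, so one must be careful that $(i|d| + \frac{w}{2})^2$ genuinely dominates $\mathbb{E}[((X-Q_d(X)) - id)^2 \mid |J|=i, \text{typ}]$ — this is where the hypothesis $|d| > w$ and the typical-event bound $|X - Q_d(X)| \le w/2$ are essential, since they guarantee the sign of $X - Q_d(X)$ cannot conspire with $id$ to exceed $i|d| + w/2$ in magnitude. A secondary subtlety is getting the exact threshold $\frac{(2i-1)|d|-w}{2}$ for the noise: the first boundary crossing needs $|V| \gtrsim \frac{|d|-w}{2}$ and each subsequent one an additional $|d|$, so the $i$-th crossing needs $|V| \ge \frac{|d|-w}{2} + (i-1)|d| = \frac{(2i-1)|d| - w}{2}$; one should double-check this against the worst placement of $X$ within its box. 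Everything else is a routine application of Lemma~\ref{ach:lemma4}'s Gaussian tail bound and a union bound over shells.
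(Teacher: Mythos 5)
Your decomposition — split on the typical event $\{X \in \mathcal{T}_{d,w}\}$, condition on the jump $J$ of the quantizer, bound the shell-$i$ error by $(i|d|+\tfrac{w}{2})^2$ on the typical event and by $(i|d|+\tfrac{|d|}{2})^2$ on the atypical one, and control $\mathbb{P}(|J|\ge i)$ via the Gaussian tail — is exactly the paper's argument, including the thresholds $\tfrac{(2i-1)|d|-w}{2}$ and $(i-1)|d|$. The proposal is correct and essentially identical to the published proof.
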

\begin{proof}
For convenience, let $d > 0$. $d<0$ can be proved by replacing $d$ with $|d|$.
Denote $\mathcal{T}_{d,w}=\bigcup_{i \in \mathbb{Z}}[i \cdot d -\frac{w}{2}, i \cdot d + \frac{w}{2}]$.
\begin{align}
\mathbb{E}[(X-Q_d(X+V))^2] & = \mathbb{E}[(X-Q_d(X+V))^2| X \in \mathcal{T}_{d,w} ]
\mathbb{P}\{ X \in \mathcal{T}_{d,w} \} \\
&+ \mathbb{E}[(X-Q_d(X+V))^2|X \in \mathcal{T}_{d,w}^c]
\mathbb{P}\{ X \in \mathcal{T}_{d,w}^c \}\\
&\leq \mathbb{E}[(X-Q_d(X+V))^2| X \in \mathcal{T}_{d,w} ] \\
& + \mathbb{E}[(X-Q_d(X+V))^2|X \in \mathcal{T}_{d,w}^c] \cdot o
\end{align}
Notice that when $X \in \mathcal{T}_{d,w}$ and $|V| < \frac{d-w}{2}$, $Q_d(X)=Q_d(X+V)$. When $X \in \mathcal{T}_{d,w}$ and $|V| < d+\frac{d-w}{2}$, $Q_d(X)=Q_d(X+V) \pm d$ and so on.
Therefore,
\begin{align}
&\mathbb{E}[(X-Q_d(X+V))^2|X \in \mathcal{T}_{d,w}]=\mathbb{E}[(Q_d(X)+R_d(X)-Q_d(X+V))^2|X \in \mathcal{T}_{d,w}] \\
&\leq \mathbb{E}[(X-Q_d(X))^2] +(d+\frac{w}{2})^2 \cdot 2Q(\frac{d-w}{2 \sigma})+(2d+\frac{w}{2})^2 \cdot 2Q(\frac{3d-w}{2 \sigma}) + \cdots
\end{align}

Moreover, for all $x$ when $|V| < d$, $Q_d(x)-Q_d(x+V) = -d , 0, d$. When $|V| < 2d$, $Q_d(x)-Q_d(x+V) = -2d ,-d , 0, d, 2d$ and so on. Therefore, since $|R_d(\cdot)| \leq \frac{d}{2}$,
\begin{align}
&\mathbb{E}[(X-Q_d(X+V))^2|X \in \mathcal{T}_{d,w}^c]
=\mathbb{E}[(Q_d(X)-Q_d(X+V)+R_d(X))^2 |X \in \mathcal{T}_{d,w}^c]\\
&\leq (d+\frac{d}{2})^2  + (2d+\frac{d}{2})^2 \cdot 2Q_d(\frac{d}{\sigma}) + (3d+\frac{d}{2})^2 \cdot 2Q_d(\frac{2d}{\sigma}) + \cdots
\end{align}

Therefore,
\begin{align}
&\mathbb{E}[(X-Q_d(X+V))^2] \\
&\leq \mathbb{E}[(X-Q_d(X))^2]+(d+\frac{w}{2})^2 \cdot 2Q(\frac{d-w}{2 \sigma})+(2d+\frac{w}{2})^2 \cdot 2Q(\frac{3d-w}{2\sigma}) + \cdots \\
&+
o\cdot
((d+\frac{d}{2})^2  + (2d+\frac{d}{2})^2 \cdot 2Q_d(\frac{d}{\sigma}) + (3d+\frac{d}{2})^2 \cdot 2Q_d(\frac{2d}{\sigma}) + \cdots).
\end{align}
\end{proof}

\subsection{Analysis of Signaling Strategies}
\label{sec:dwoanalysis}
Now, we are ready to analyze the performance of the $s$-stage signaling strategy. In the $s$-stage signaling strategy, the first controller imposes a lattice structure on $x[n]$, but the second controller's action can possibly break this lattice structure. However, the second controller knows all its past inputs, so it can exploit the imposed lattice structure by compensating for its past inputs. More precisely, we will see that $x[n]-R_{a^s d}(\sum_{1 \leq i \leq s}a^{i-1}u_2[n-i])$ ---with the compensation term, $R_{a^s d}(\sum_{1 \leq i \leq s}a^{i-1}u_2[n-i])$--- has a lattice structure, and the second controller will observe this quantized state with an observation noise $v_2[n]$. In spirit, the idea and analysis in this section is similar to that in \cite{Park_constant}.

Before we state the lemma, we introduce a definition to compare multiple numbers. For $a_1, \cdots a_n, b_1, \cdots, b_n \in \mathbb{R}$, we say $(a_1, \cdots, a_n) \leq (b_1, \cdots, b_n)$ if and only if $a_1 \leq b_1$, $\cdots$, $a_n \leq b_n$.

\begin{lemma}
For a given $s \in \mathbb{N}$, let $S_{U,1}$ be the set of $(d,w_1)$ such that 
\begin{align}
&d >0, w_1 >0,\\
&|a|^s d - (|a|^{s-1}d \frac{|a|}{|a|-1} + w_1) > 0. 
\end{align}
The bound $D_{U,1}(d,w_1)$ is defined as 
\begin{align}
D_{U,1}(d,w_1)&:=
2 a^{2s}(2 (\frac{d}{2})^2 (\frac{1}{1-\frac{1}{|a|}})^2 + 2 (\frac{1}{1-\frac{1}{a^2}}) +2 a^2 \sigma_{v1}^2) \\
&+ \sum_{1 \leq i \leq \infty} 4 a^2 ( i |a|^s d + \frac{|a|^{s-1}d \frac{|a|}{|a|-1}+w_1}{2})^2 Q(\frac{(2i-1)|a|^s d - (|a|^{s-1}d \frac{|a|}{|a|-1}+w_1)}{2\sigma_{v2}}) \\
&+8 a^2 Q(\frac{w_1}{2 \sqrt{a^{2(s-1)}\frac{a^2}{a^2-1}+a^{2s}\sigma_{v1}^2}})
\sum_{1 \leq i \leq \infty} (i |a|^s d + \frac{|a|^s d}{2})^2 Q(\frac{(i-1) |a|^s d}{\sigma_{v2}})\\
& + 2(a^2 (\frac{d}{2})^2)+1. \label{eqn:lemma9:2}
\end{align}
Let $|a|>1$. Then, for all $s$ and $(d,w_1) \in S_{U,1}$, the $s$-stage signaling strategy of Definition~\ref{def:sig} can achieve the following Power-Disturbance tradeoff of Problem~\ref{prob:power}.
\begin{align}
(D(P_1,P_2),P_1,P_2) \leq ( D_{U,1}(d,w_1) , \frac{a^2 d^2}{4} , 8a^2 D_{U,1}(d,w_1) + \frac{7}{2} a^{2(s+1)}d^2 + 4 a^2 \sigma_{v2}^2)
\end{align}
\label{ach:lemma9}
\end{lemma}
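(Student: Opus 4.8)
The plan is to run the $s$-stage signaling strategy of Definition~\ref{def:sig} and track the relevant random variables through the $(d,w,o)$-approximate-comb-lattice formalism developed in Section~\ref{sec:dwo}, then convert the lattice parameters and outage probabilities into the second-moment bounds $D_{U,1}(d,w_1)$, $P_1$, $P_2$ claimed in the statement. The key structural observation is that the compensated quantity $\widetilde{x}[n] := x[n] - R_{a^s d}\!\left(\sum_{1 \le i \le s} a^{i-1} u_2[n-i]\right)$ should be, up to a small-width box, a point of the lattice $a^s d \cdot \mathbb{Z}$; the second controller observes $y_2[n] = \widetilde{x}[n] + (\text{stuff it can compensate}) + v_2[n]$ and applies $Q_{a^s d}(\cdot)$, so its residual error is exactly the kind of quantity bounded by Lemma~\ref{ach:lemma8}.

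First I would set up the steady-state recursion. Because $u_1[n] = -a R_d(y_1[n]) = -a R_d(x[n] + v_1[n])$ and the first controller is zero-forcing the low bits, I expect to show by induction that $x[n]$ (after the first controller acts, shifted by $a$) can be written as $a \cdot Q_d(x[n]+v_1[n]) + a v_1[n]$-type terms plus the accumulated fresh noise $\sum_{j} a^{j-1} w[n-j]$; the geometric sums $\frac{1}{1-1/a^2}$ and $\frac{|a|}{|a|-1}$ appearing in $D_{U,1}$ and in the definition of $S_{U,1}$ are exactly the variances/widths of these truncated geometric series of amplified disturbances. Concretely: (i) establish that $x[n]$ lies in a comb lattice with spacing $a^s d$ and width $w = |a|^{s-1} d \frac{|a|}{|a|-1} + w_1$ off a low-probability event, using Lemmas~\ref{ach:lemma1} and \ref{ach:lemma2} to propagate the $\leq_{df}$ relation through the XOR-like additions and the multiplications by $a$; (ii) bound the first controller's power $P_1 = \mathbb{E}[u_1^2[n]] = a^2 \mathbb{E}[R_d(\cdot)^2] \le a^2 (d/2)^2$ directly from $|R_d| \le d/2$, giving the claimed $\frac{a^2 d^2}{4}$; (iii) bound the state disturbance $\mathbb{E}[x^2[n]]$ by splitting into the typical lattice contribution (the first line of $D_{U,1}$, coming from the bounded residual of size $a^2(d/2)^2$-scale geometric sums plus $a^2\sigma_{v1}^2$ and the fresh-noise variance) and the atypical tail contributions, which are the infinite sums of $Q(\cdot)$ terms — these come from Lemma~\ref{ach:lemma8} applied with $\sigma = \sigma_{v2}$ for the second controller's quantization error and with the $v_1$-related variance $a^{2(s-1)}\frac{a^2}{a^2-1} + a^{2s}\sigma_{v1}^2$ for the event that the lattice structure itself is broken; (iv) bound $P_2$ by $\mathbb{E}[u_2^2[n]] \le$ a Cauchy–Schwarz expansion (Lemma~\ref{ach:lemmacauchy}) of the three terms in \eqref{eqn:2}, yielding the $8a^2 D_{U,1} + \frac{7}{2} a^{2(s+1)} d^2 + 4 a^2 \sigma_{v2}^2$ form, where $8a^2 D_{U,1}$ is the state-cancellation cost, $a^{2(s+1)}d^2$-scale is the residual-compensation cost, and $4a^2\sigma_{v2}^2$ is the observation-noise pass-through.

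The main obstacle I anticipate is step (i) together with the atypical-error bookkeeping in step (iii): one must carefully verify that the condition $|a|^s d - \left(|a|^{s-1} d \frac{|a|}{|a|-1} + w_1\right) > 0$ defining $S_{U,1}$ is precisely what guarantees the boxes in the comb lattice do not overlap after amplification, so that $Q_{a^s d}$ at the second controller recovers the correct lattice point whenever $v_2[n]$ is not too large — this is the hypothesis $|d|>w$ needed to invoke Lemma~\ref{ach:lemma8}. The delicate part is that the ``past inputs'' $u_2[n-i]$ the second controller compensates for are themselves quantized-plus-residual, so the residual $R_{a^s d}(\sum a^{i-1} u_2[n-i])$ must be shown to be exactly the offset that realigns $x[n]$ to the lattice; getting the widths and the geometric constants to match $D_{U,1}$ exactly (rather than up to a constant) requires care but is mechanical once the lattice invariant is correctly stated. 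Finally, I would note that all three $\limsup$-average constraints in Problem~\ref{prob:power} follow because the bounds are uniform in $n$ once the system reaches the steady-state comb structure (with $x[0]$, $u_i[n]$ for $n<0$ set to zero as in Definition~\ref{def:sig}), so the per-step bounds immediately give the averaged bounds.
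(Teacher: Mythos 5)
Your proposal takes essentially the same route as the paper: you correctly identify the compensated state $X[n] := x[n] - R_{a^s d}(\sum_{1\le i\le s}a^{i-1}u_2[n-i])$ as the lattice-structured quantity, propagate the $(d,w,o)$-comb parameters through Lemmas~\ref{ach:lemma1}, \ref{ach:lemma2}, \ref{ach:lemma5}, invoke Lemma~\ref{ach:lemma8} with $\sigma=\sigma_{v2}$ for the quantized-MMSE bound (noting that the $S_{U,1}$ condition is exactly the $|d|>w$ hypothesis), bound $P_1$ directly via $|R_d|\le d/2$, and bound $P_2$ via Cauchy--Schwarz on the three additive pieces of $u_2[n]$ — all of which matches the paper's argument. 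The one small presentational difference is that the paper does not actually use induction: it unrolls the recursion $s$ steps back in a single explicit substitution, rewriting $X[n]$ in terms of $y_1[n-s]$, the past $u_2$'s, the accumulated $w$'s, and $v_1[n-s]$, from which the lattice/bounded/Gaussian decomposition is read off directly.
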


\begin{proof}
For notational simplicity, we only consider $a > 1$. The proof for $a < -1$ can be obtained by replacing $a$ with $|a|$.

By the definition of $s$-stage signaling strategies,
\begin{align}
&u_1[n]=-aR_d(y_1[n])\\
&u_2[n]=-a(Q_{a^s d}(y_2[n]-R_{a^s d}(\sum_{1 \leq i \leq s} a^{i-1}u_2[n-i]))+R_{a^s d}(\sum_{1 \leq i \leq s} a^{i-1}u_2[n-i]))
\end{align}
Therefore, for all $n$ we have
\begin{align}
x[n+1]&=ax[n]+u_1[n]+u_2[n]+w[n] \\
&=ax[n]-a(Q_{a^s d}(y_2[n]-R_{a^s d}(\sum_{1 \leq i \leq s}a^{i-1}u_2[n-i]))+R_{a^s d}(\sum_{1 \leq i \leq s} a^{i-1}u_2[n-i]))+u_1[n]+w[n]\\
&=a(
\underbrace{x[n]-R_{a^s d}(\sum_{1 \leq i \leq s}a^{i-1}u_2[n-i])}_{:=X[n]}
- Q_{a^s d}(
\underbrace{y_2[n]-R_{a^s d}(\sum_{1 \leq i \leq s}a^{i-1}u_2[n-i]))}_{:=Y_2[n]} )+u_1[n]+w[n] \label{eqn:ageq43}
\end{align}
First, we will prove that for all $n \geq s$, $X[n]$ has a lattice structure. Then, $Y_2[n]$ is $X[n]+v_2[n]$, so we can use Lemma~\ref{ach:lemma8} that analyzes the estimation error of quantized random variables.

For $n \geq s$, we have
\begin{align}
&X[n]\\
&=x[n]-R_{a^s d}(\sum_{1 \leq i \leq s}a^{i-1}u_2[n-i]) \\
&= a^s x[n-s] + \sum_{1 \leq i \leq s} a^{i-1} u_1[n-i] + \sum_{1 \leq i \leq s} a^{i-1}u_2[n-i] + \sum_{1 \leq i \leq s} a^{i-1} w[n-i]-R_{a^s d}(\sum_{1 \leq i \leq s}a^{i-1}u_2[n-i])\\
&= (a^s x[n-s] + a^{s-1}u_1[n-s])+\sum_{1 \leq i \leq s} a^{i-1}u_2[n-i]-R_{a^s d}(\sum_{1 \leq i \leq s}a^{i-1}u_2[n-i])\\
&\quad +\sum_{1 \leq i \leq s-1} a^{i-1} u_1[n-i] + \sum_{1 \leq i \leq s} a^{i-1} w[n-i]\\
&= (a^s x[n-s] - a^s R_d(y_1[n-s]))+\sum_{1 \leq i \leq s} a^{i-1}u_2[n-i]-R_{a^s d}(\sum_{1 \leq i \leq s}a^{i-1}u_2[n-i])\\
&\quad +\sum_{1 \leq i \leq s-1} a^{i-1} u_1[n-i] + \sum_{1 \leq i \leq s} a^{i-1} w[n-i]\\
&= (a^s (x[n-s]+v_1[n-s]-v_1[n-s]) - a^s R_d(y_1[n-s]))+\sum_{1 \leq i \leq s} a^{i-1}u_2[n-i]-R_{a^s d}(\sum_{1 \leq i \leq s}a^{i-1}u_2[n-i])\\
&\quad +\sum_{1 \leq i \leq s-1} a^{i-1} u_1[n-i] + \sum_{1 \leq i \leq s} a^{i-1} w[n-i] \\
&=
(a^s y_1[n-s] - a^s R_d(y_1[n-s]))+\sum_{1 \leq i \leq s} a^{i-1}u_2[n-i]-R_{a^s d}(\sum_{1 \leq i \leq s}a^{i-1}u_2[n-i])\\
&\quad +\sum_{1 \leq i \leq s-1} a^{i-1} u_1[n-i] + \sum_{1 \leq i \leq s} a^{i-1} w[n-i]
-a^s v_1[n-s].
\end{align}
Here, by Lemmas~\ref{ach:lemma1} and \ref{ach:lemma2} we have
\begin{align}
&a^s y_1[n-s] - a^s R_d(y_1[n-s]) \leq_{df} (a^s d, 0, 0), \label{eqn:ageq41}\\
&\sum_{1 \leq i \leq s} a^{i-1}u_2[n-i]-R_{a^s d}(\sum_{1 \leq i \leq s}a^{i-1}u_2[n-i]) \leq_{df} (a^s d, 0, 0),\label{eqn:ageq42}\\
&\sum_{1 \leq i \leq s} a^{i-1}w[n-i] - a^s v_1[n-s] \sim \mathcal{N}(0, \sum_{1 \leq i \leq s} a^{2(i-1)}+a^{2s}\sigma_{v1}^2), \\
&\sum_{1 \leq i \leq s-1} a^{i-1}u_1[n-i] \leq_{df} ( \infty, ad + a^2 d+ \cdots + a^{s-1}d,0).
\end{align}
The first and second term have a lattice structure. The third and fourth term can be thought as bounded disturbances.

Since
\begin{align}
& \sum_{1 \leq i \leq s} a^{2(i-1)}+a^{2s}\sigma_{v1}^2 \leq a^{2(s-1)}\frac{a^2}{a^2-1}+a^{2s}\sigma_{v1}^2 \\
& ad + a^2 d+ \cdots + a^{s-1}d = a^{s-1}d(1+\frac{1}{a}+ \cdots + \frac{1}{a^{s-2}})  \leq a^{s-1}d \frac{a}{a-1}
\end{align}
by Lemma~\ref{ach:lemma1}, Lemma~\ref{ach:lemma5} we conclude for all $w_1 \geq 0$
\begin{align}
\sum_{1 \leq i \leq s-1} a^{i-1}u_1[n-i]+\sum_{1 \leq i \leq s} a^{i-1}w[n-i] \leq (\infty, a^{s-1}d \frac{a}{a-1}+w_1, 2 \cdot Q(\frac{w_1}{2 \sqrt{a^{2(s-1)}\frac{a^2}{a^2-1}+a^{2s}\sigma_{v1}^2}})) \label{eqn:ageq430}
\end{align}
Applying Lemma~\ref{ach:lemma1} to \eqref{eqn:ageq41}, \eqref{eqn:ageq42}, \eqref{eqn:ageq430} gives
\begin{align}
X[n] \leq (a^s d, a^{s-1}d \frac{a}{a-1}+w_1, 2 \cdot Q(\frac{w_1}{2 \sqrt{a^{2(s-1)}\frac{a^2}{a^2-1}+a^{2s}\sigma_{v1}^2}})).
\end{align}
Therefore, we can see that $X[n]$ $(n \geq s)$ has a lattice structure. Then, we will analyze the performance of the estimator of $X[n]$ using Lemma~\ref{ach:lemma8}.

First, for $n \geq s$ we have the following inequality.
\begin{align}
&\mathbb{E}[(X[n]-Q_{a^s d}(X[n]))^2]\\
&=\mathbb{E}[(x[n]-R_{a^s d}(\sum_{1 \leq i \leq s}a^{i-1}u_2[n-i])-Q_{a^s d}(x[n]-R_{a^s d}(\sum_{1 \leq i \leq s}a^{i-1}u_2[n-i])))^2]\\
&=\mathbb{E}[(x[n]-R_{a^s d}(\sum_{1 \leq i \leq s}a^{i-1}u_2[n-i])-Q_{a^s d}(\sum_{1 \leq i \leq s}a^{i-1}u_2[n-i])\\
&\quad -Q_{a^s d}(x[n]-R_{a^s d}(\sum_{1 \leq i \leq s}a^{i-1}u_2[n-i])-Q_{a^s d}(\sum_{1 \leq i \leq s}a^{i-1}u_2[n-i])))^2]\\
&=\mathbb{E}[(x[n]-\sum_{1 \leq i \leq s}a^{i-1}u_2[n-i]-Q_{a^s d}(x[n]-\sum_{1 \leq i \leq s}a^{i-1}u_2[n-i]))^2]\\
&= \mathbb{E}[(\sum_{1 \leq i \leq s-1} a^{i-1}u_1[n-i]+ \sum_{1 \leq i \leq s} a^{i-1}w[n-i]-a^s v_1[n-s] \\
&\quad -Q_{a^s d}(\sum_{1 \leq i \leq s-1} a^{i-1}u_1[n-i]+ \sum_{1 \leq i \leq s} a^{i-1}w[n-i]-a^s v_1[n-s])
)^2]\\
&\leq \mathbb{E}[(\sum_{1 \leq i \leq s-1} a^{i-1}u_1[n-i]+ \sum_{1 \leq i \leq s} a^{i-1}w[n-i]-a^s v_1[n-s])^2]\\
&(\because Lemma~\ref{ach:lemma3})\\
&\leq 2\mathbb{E}[(\sum_{1 \leq i \leq s-1} a^{i-1}u_1[n-i])^2]+ 2\mathbb{E}[(\sum_{1 \leq i \leq s} a^{i-1}w[n-i]-a^s v_1[n-s])^2]\\
&(\because Lemma~\ref{ach:lemmacauchy})\\
&\leq 2(\sqrt{\mathbb{E}[u_1^2[n-1]]}+\cdots+\sqrt{a^{2(s-2)}\mathbb{E}[u_1^2[n-s+1]]})^2 + 2\mathbb{E}[(\sum_{1 \leq i \leq s} a^{i-1}w[n-1]-a^s v_1[n-s])^2]\\
&(\because Lemma~\ref{ach:lemmacauchy})\\
&\leq 2(\frac{ad}{2})^2 a^{2(s-2)} (\frac{1}{1-\frac{1}{a}})^2 + 2a^{2(s-1)}(\frac{1}{1-\frac{1}{a^2}})+2 a^{2s}\sigma_{v1}^2\\
&(\because \mbox{Definition of $u_1[n]$})\\
&= a^{2(s-1)}(2 (\frac{d}{2})^2 (\frac{1}{1-\frac{1}{a}})^2 + 2 (\frac{1}{1-\frac{1}{a^2}})+2a^2 \sigma_{v1}^2 )
\end{align}
Therefore, by Lemma~\ref{ach:lemma8} we can bound the estimation error as follows.
\begin{align}
&\mathbb{E}[(X[n]-Q_{a^sd}(Y_2[n]))^2]\\
&=\mathbb{E}[(X[n]-Q_{a^sd}(X[n]+v_2[n]))^2]\\
&\leq a^{2(s-1)}(2 (\frac{d}{2})^2 (\frac{1}{1-\frac{1}{a}})^2 + 2 (\frac{1}{1-\frac{1}{a^2}})+2a^2 \sigma_{v1}^2 ) \\
&+(a^s d + \frac{a^{s-1}d \frac{a}{a-1}+w_1}{2})^2 2Q(\frac{a^s d - (a^{s-1}d \frac{a}{a-1}+w_1)}{2\sigma_{v2}}) \\
&+(2 a^s d + \frac{a^{s-1}d \frac{a}{a-1}+w_1}{2})^2 2Q(\frac{3 a^s d - (a^{s-1}d \frac{a}{a-1}+w_1)}{2\sigma_{v2}}) + \cdots \\
&+2 Q(\frac{w_1}{2 \sqrt{a^{2(s-1)}\frac{a^2}{a^2-1}+a^{2s}\sigma_{v1}^2}}) (
(a^s d + \frac{a^s d}{2})^2 + (2a^s d + \frac{a^s d}{2})^2 2 Q(\frac{a^s d}{\sigma_{v2}})+ (3a^s d + \frac{a^s d}{2})^2 2 Q(\frac{2a^s d}{\sigma_{v2}}) +\cdots
)
\end{align}
Finally, by plugging the above equation into \eqref{eqn:ageq43} we conclude for all $n \geq s$,
\begin{align}
\mathbb{E}[x^2[n+1]] &= \mathbb{E}[ (
a(X[n]-Q_{a^sd}(Y_2[n]))+u_1[n]+w[n])^2] \\
& \leq
2 \mathbb{E}[ (
a(X[n]-Q_{a^sd}(Y_2[n])))^2] + 2 \mathbb{E}[u_1^2[n]] + \mathbb{E}[w^2[n]] \\
& \leq 2 a^{2s}(2 (\frac{d}{2})^2 (\frac{1}{1-\frac{1}{a}})^2 + 2 (\frac{1}{1-\frac{1}{a^2}}) +2 a^2 \sigma_{v1}^2) \\
&+ 2 a^2 (a^s d + \frac{a^{s-1}d \frac{a}{a-1}+w_1}{2})^2 2Q(\frac{a^s d - (a^{s-1}d \frac{a}{a-1}+w_1)}{2\sigma_{v2}}) \\
&+ 2 a^2 (2 a^s d + \frac{a^{s-1}d \frac{a}{a-1}+w_1}{2})^2 2Q(\frac{3 a^s d - (a^{s-1}d \frac{a}{a-1}+w_1)}{2\sigma_{v2}}) + \cdots \\
&+4 a^2 Q(\frac{w_1}{2 \sqrt{a^{2(s-1)}\frac{a^2}{a^2-1}+a^{2s}\sigma_{v1}^2}}) (
 (a^s d + \frac{a^s d}{2})^2 + (2a^s d + \frac{a^s d}{2})^2 2 Q(\frac{a^s d}{\sigma_{v2}})+ (3a^s d + \frac{a^s d}{2})^2 2 Q(\frac{2a^s d}{\sigma_{v2}}) +\cdots
)\\
& + 2(a^2 (\frac{d}{2})^2)+1 \label{eqn:ageq431}
\end{align}
Moreover, by \eqref{eqn:ageq43} $\mathbb{E}[x^2[n]]$ is bounded for any $n < s$. Therefore, the L.H.S. of \eqref{eqn:ageq431} is an upper bound on $D(P_1,P_2)$.

For all $n$, we also have
\begin{align}
\mathbb{E}[u_1^2[n]] \leq \frac{a^2 d^2 }{4} \label{eqn:ageq432}
\end{align}
which is an upper bound on $P_1$.

Before we bound $\mathbb{E}[u_2^2[n]]$, we first notice that by Lemma~\ref{ach:lemmacauchy}
\begin{align}
&\mathbb{E}[(Q_{a^s d}(y_2[n]-R_{a^s d}(\sum_{1 \leq i \leq s}a^{i-1}u_2[n-1])))^2] \\
&=\mathbb{E}[(y_2[n]-R_{a^s d}(\sum_{1 \leq i \leq s}a^{i-1}u_2[n-1]-R_{a^s d}(y_2[n]-R_{a^s d}(\sum_{1 \leq i \leq s}a^{i-1}u_2[n-1])))^2] \\
&\leq 2\mathbb{E}[(y_2[n]-R_{a^s d}(\sum_{1 \leq i \leq s}a^{i-1}u_2[n-1]))^2] + 2(\frac{a^s d}{2})^2 \\
&= 2\mathbb{E}[(x[n]-R_{a^s d}(\sum_{1 \leq i \leq s}a^{i-1}u_2[n-1]))^2]+2 \sigma_{v2}^2 + 2(\frac{a^s d}{2})^2 \\
&\leq 4 \mathbb{E}[x^2[n]]+4(\frac{a^s d}{2})^2  +2 \sigma_{v2}^2+ 2(\frac{a^s d}{2})^2
\end{align}
Therefore, for all $n$
\begin{align}
\mathbb{E}[u_2^2[n]] &= a^2 \mathbb{E}[(Q_{a^s d}(y_2[n]-R_{a^s d}(\sum_{1 \leq i \leq s}a^{i-1}u_2[n-1]))+R_{a^s d}(\sum_{1 \leq i \leq s}a^{i-1}u_2[n-i]))^2] \\
&\leq a^2(2 \mathbb{E}[(Q_{a^s d}(y_2[n]-R_{a^s d}(\sum_{1 \leq i \leq s}a^{i-1}u_2[n-1])))^2] + 2 (\frac{a^s d}{2})^2) \\
&\leq a^2( 8 \mathbb{E}[x^2[n]]+8(\frac{a^s d}{2})^2  +4 \sigma_{v2}^2+ 4(\frac{a^s d}{2})^2 + 2(\frac{a^s d}{2})^2 ) \\
&\leq 8a^2 \mathbb{E}[x^2[n]] + \frac{7}{2} a^{2(s+1)}d^2 + 4 a^2 \sigma_{v2}^2 \label{eqn:ageq433}
\end{align}
which gives an upper bound on $P_2$.
Therefore, by \eqref{eqn:ageq431}, \eqref{eqn:ageq432}, \eqref{eqn:ageq433} the lemma is proved.
\end{proof}

\section{Proofs and Proof Ideas: Lower bound on the optimal cost}
\label{sec:prooflower}
\begin{figure}[htbp]
\begin{center}
\includegraphics[width=2in]{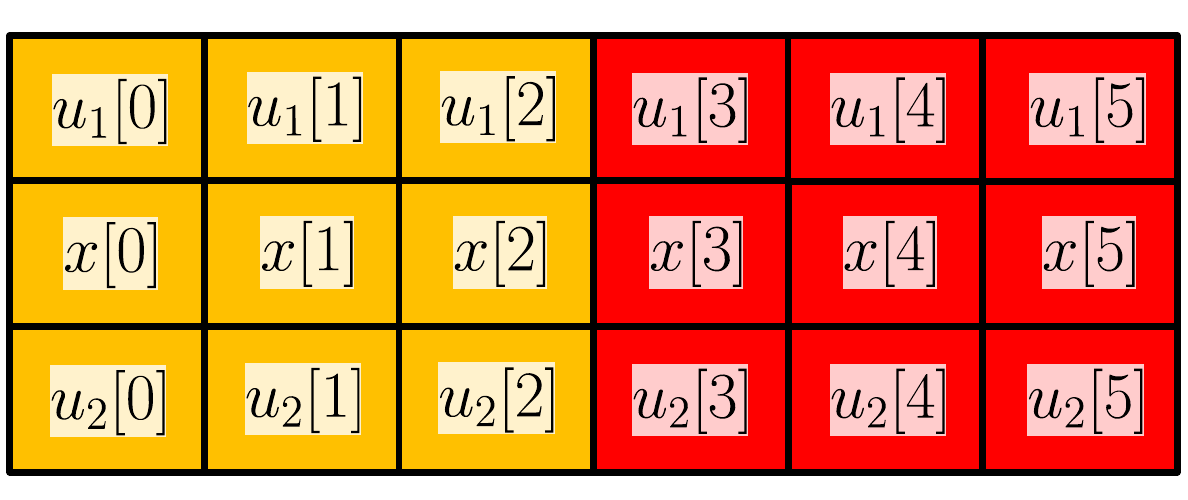}
\caption{Naive truncation idea to divide an infinite-horizon problem to finite-horizon sub-problems. This idea fails to give a constant-ratio lower bound.}
\label{fig:geometric}
\end{center}
\end{figure}

\begin{figure}[htbp]
\begin{center}
\includegraphics[width=2in]{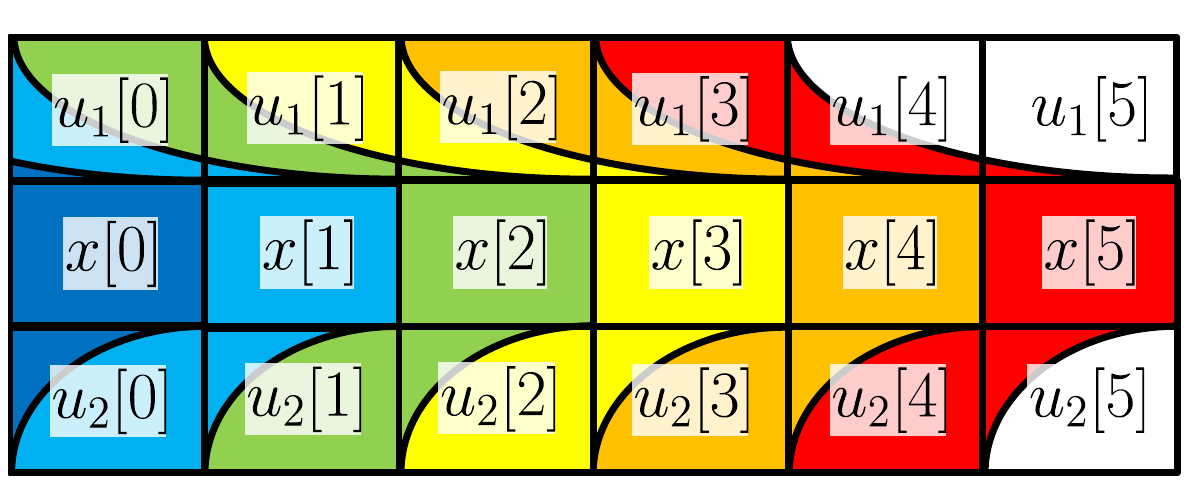}
\caption{Geometric Slicing idea to divide an infinite-horizon problem to finite-horizon sub-problems. This idea successfully gives a lower bound tight to within a constant ratio.}
\label{fig:geometric2}
\end{center}
\end{figure}

In this section, we will study the lower bound on the optimal cost and understand why it is impossible to outperform the proposed strategies by an arbitrary factor. In Section~\ref{sec:upperbound}, we discussed the relationship between the infinite-horizon problem of this paper and finite-horizon problems. The first idea for the lower bound is to make this idea formal, i.e. dividing the infinite-horizon problem into a sequence of finite-horizon problems.

\subsection{Geometric Slicing of Infinite-Horizon Problems} 
\label{sec:lower:geo}
Let's say we want to divide the infinite-horizon problem into sub-problems with time-horizon $3$. 
A naive way of dividing the problem is truncation, which is pictorially described in Figure~\ref{fig:geometric}.
The total cost $\sum_{i=1}^N q\mathbb{E}[x^2[n]]+ r_1 \mathbb{E}[u_1^2[n]] + r_2 \mathbb{E}[u_2^2[n]]$ can be divided into $3$-time-horizon problems. The first problem is minimizing $q\mathbb{E}[x^2[0]+x^2[1]+x^2[2]]+r_1 \mathbb{E}[u_1^2[0]+u_1^2[1]+u_1^2[2]]+r_2 \mathbb{E}[u_2^2[0]+u_2^2[1]+u_2^2[2]]$. The second problem is minimizing $q\mathbb{E}[x^2[3]+x^2[4]+x^2[5]]+r_1 \mathbb{E}[u_1^2[3]+u_1^2[4]+u_1^2[5]]+r_2 \mathbb{E}[u_2^2[3]+u_2^2[4]+u_2^2[5]]$, and so on. However, in this approach we can find only $\frac{N}{3}$ sub-problems out of $N$ times, which turns out not to be enough to prove constant-ratio optimality.

The main reason why the truncation idea gives too loose a bound is that in order to decouple the sub-problems from each other, we have to start each sub-problem with initial state $0$ because that is the best possible initial state. But then, we have to wait long enough until the state disturbance $w[n]$ is amplified enough. So, in each sub-problem, the state cost at the final time step is the only one that is large enough. We end up penalizing the state only for $\frac{N}{3}$-time steps, while the actual cost penalizes the state for $N$-time steps. In general, if we truncate the problem to $s$-time-horizon problems, the resulting bound will be loose by a factor of $s$. In fact, to find a matching lower bound for the $s$-stage signaling strategy, we have to divide the infinite-horizon problem into $s$-time-horizon problems. Therefore, the lower bounds based on the truncation idea will be too loose as $s$ goes to infinity.

The next idea of `geometric slicing' solves this by introducing interlocking sub-problems and penalizing the state at every time step.
Figure~\ref{fig:geometric2} shows the idea pictorially. For example, we can slice the problem to $3$-time horizon problems as follows. The first problem is minimizing $q\mathbb{E}[x^2[2]]+r_1 \mathbb{E}[\frac{1}{2}u_1^2[0]+\frac{1}{4}u_1^2[1]+\frac{1}{8}u_1^2[2]] + r_2 \mathbb{E}[\frac{1}{2}u_2^2[1]+\frac{1}{4}u_2^2[2]]$. The second problem is minimizing $q\mathbb{E}[x^2[3]]+r_1 \mathbb{E}[\frac{1}{2}u_1^2[1]+\frac{1}{4}u_1^2[2]+\frac{1}{8}u_1^2[3]] + r_2 \mathbb{E}[\frac{1}{2}u_2^2[2]+\frac{1}{4}u_2^2[3]]$, and so on. Here, notice that $u_1^2[1]$ shows up in both problems but it does not cause any difficulty since the weights form a geometric sequence and the sum is less than $1$. Therefore, we are slicing the problem using geometric sequences, and that is where the name of the idea come from. In this way, we can extract $N$ sub-problems out of an $N$-time-horizon problem. The sub-problems can be formally written as follows.

\begin{problem}[Geometrically-Sliced Finite-horizon LQG problem for Problem~\ref{prob:lqg}]
Let the system equations, the problem parameters, the underlying random variables, and the restrictions on the controllers be given exactly the same as Problem~\ref{prob:lqg}. However, now the control objective is for given $0 < \alpha < 1$, $k,k_1,k_2 \in \mathbb{N}$ $(k_1 \leq k, k_2 \leq k)$, minimizing the finite-horizon cost
\begin{align}
\inf_{u_1,u_2} q \mathbb{E}[x^2[k]] + r_1(1-\alpha)(\sum_{k_1 \leq  i \leq k-1} \alpha^{i-k_1}\mathbb{E}[u_1^2[i]]) + r_2(1-\alpha)(\sum_{k_2 \leq  i \leq k-1} \alpha^{i-k_2}\mathbb{E}[u_2^2[i]]).
\end{align}
\label{prob:slice}
\end{problem}
Even if the system can run for infinite time, the cost terminates after the time step $k$. Therefore, this problem is effectively a finite-horizon problem. 
The next lemma shows the cost of this finite-horizon problem is a lower bound to the original infinite-horizon cost of Problem~\ref{prob:lqg}.

\begin{lemma}
Let the system equations, the problem parameters, the underlying random variables, and the restrictions on the controllers be given as in Problem~\ref{prob:lqg}. When $\sigma_0^2=0$, for all $0<\alpha<1$, $k,k_1,k_2 \in \mathbb{N}$ $(k_1 \leq k, k_2  \leq k)$, the infinite-horizon cost of Problem~\ref{prob:lqg} is lower bounded by the finite-horizon cost of Problem~\ref{prob:slice}, i.e.
\begin{align}
&\inf_{u_1,u_2} \limsup_{N \rightarrow \infty} \frac{1}{N}
\sum_{0 \leq n \leq N-1} (q \mathbb{E}[x^2[n]]+r_1\mathbb{E}[u_1^2[n]]+r_2\mathbb{E}[u_2^2[n]])\\
&\geq \inf_{u_1,u_2} q \mathbb{E}[x^2[k]] + r_1(1-\alpha)(\sum_{k_1 \leq  i \leq k-1} \alpha^{i-k_1}\mathbb{E}[u_1^2[i]]) + r_2(1-\alpha)(\sum_{k_2 \leq  i \leq k-1} \alpha^{i-k_2}\mathbb{E}[u_2^2[i]]). \label{eqn:geo:1}
\end{align}
Here, when $k_1=k$ or $k_2=k$ the second or third term in the lower bound vanishes.

Furthermore, both costs are increasing functions of $\sigma_0^2$ and when $\sigma_0^2=0$, $u_1[0]=0$ and $u_2[0]=0$ are optimal for both.
\label{lem:geo}
\end{lemma}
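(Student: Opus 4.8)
The statement has three parts: the slicing inequality~\eqref{eqn:geo:1}, monotonicity of both optimal costs in $\sigma_0^2$, and optimality of $u_1[0]=u_2[0]=0$ when $\sigma_0^2=0$. I would handle the two auxiliary claims first since they feed the main one. When $\sigma_0^2=0$ one has $x[0]=0$, so $y_1[0]=v_1[0]$ and $y_2[0]=v_2[0]$ are independent of $x[0]$ and of every disturbance $w[n],v_1[n],v_2[n]$ ($n\ge 0$) entering the dynamics; hence any admissible $u_i[0]=f_{i,0}(y_i[0])$ is a zero-mean variable independent of everything relevant, and replacing it by $0$ only removes a nonnegative independent contribution to $\operatorname{Var}(x[1])$ — hence, via $x[n+1]=ax[n]+\cdots$, to the variance of every later state — and removes the term $r_i\mathbb{E}[u_i^2[0]]\ge 0$; so $u_i[0]\equiv 0$ weakly dominates for both objectives. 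Monotonicity in $\sigma_0^2$ follows by a coupling/domination argument (the larger initial variance is the smaller one plus an independent Gaussian offset, which can only increase attainable second moments); for the infinite-horizon average cost it is in fact an equality, since $|a|\ge 2.5$ lets $x[0]$ be cancelled in finitely many steps at total extra cost $O(\sigma_0^2)$, which vanishes after dividing by $N$.

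For~\eqref{eqn:geo:1}, fix any admissible strategy for Problem~\ref{prob:lqg} with $\sigma_0^2=0$, and for each integer $m\ge k$ put
\[
S_m:=q\,\mathbb{E}[x^2[m]]+r_1(1-\alpha)\!\!\sum_{k_1\le i\le k-1}\!\!\alpha^{\,i-k_1}\,\mathbb{E}[u_1^2[i+m-k]]+r_2(1-\alpha)\!\!\sum_{k_2\le i\le k-1}\!\!\alpha^{\,i-k_2}\,\mathbb{E}[u_2^2[i+m-k]],
\]
so $S_k$ is precisely the objective of Problem~\ref{prob:slice} at this strategy. The geometric weights make summation lossless: for a fixed time index $t$, $\mathbb{E}[u_1^2[t]]$ occurs over the $S_m$ with $k\le m\le N-1$ with total coefficient at most $r_1(1-\alpha)\sum_{j\ge 0}\alpha^j=r_1$, and likewise $\mathbb{E}[u_2^2[t]]$ with total coefficient at most $r_2$, while $\mathbb{E}[x^2[t]]$ appears in exactly one $S_m$ (namely $m=t$) with coefficient $q$; moreover every index that occurs lies in $\{0,\dots,N-1\}$. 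Hence $\sum_{m=k}^{N-1}S_m\le\sum_{n=0}^{N-1}\!\big(q\mathbb{E}[x^2[n]]+r_1\mathbb{E}[u_1^2[n]]+r_2\mathbb{E}[u_2^2[n]]\big)$. Thus, once $S_m\ge V$ is shown for all $m\ge k$ (with $V$ the optimal value of Problem~\ref{prob:slice}), we get $\tfrac1N\sum_{n=0}^{N-1}(\cdots)\ge\tfrac{N-k}{N}V$, and letting $N\to\infty$, then taking the infimum over strategies, gives~\eqref{eqn:geo:1}.

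It remains to prove $S_m\ge V$. For $m=k$ this is immediate, as $S_k$ is the objective of Problem~\ref{prob:slice}. For $m>k$, condition on the $\sigma$-algebra $\mathcal{G}_m$ generated by the prehistory $y_1[0],\dots,y_1[m-k-1],\,y_2[0],\dots,y_2[m-k-1]$ together with $x[m-k]$; since all disturbances at times $\ge m-k$ are independent of $\mathcal{G}_m$, conditioned on $\mathcal{G}_m$ the sub-trajectory $(x[m-k+n])_{0\le n\le k}$ obeys exactly the dynamics of a length-$k$ copy of Problem~\ref{prob:slice} started from the \emph{deterministic} state $x[m-k]$, and the inputs $u_i[m-k+n]$ become deterministic causal functions of the sub-horizon observations alone. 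Therefore the conditional expectation of the $S_m$-integrand is at least $V_\star(x[m-k])$, the optimal value of that deterministic-initial-state sliced problem, so $S_m\ge\mathbb{E}\!\left[V_\star(x[m-k])\right]$. Finally $V_\star(s)\ge V_\star(0)=V$ for every $s$: beginning from a \emph{known} displaced state can be no easier than beginning from $0$ — this is the deterministic-shift specialization of the monotonicity argument — so $S_m\ge V$, completing the plan.

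\medskip
\noindent\textbf{Anticipated main obstacle.} The one genuinely delicate step is this last comparison $V_\star(s)\ge V$ (equivalently, the $\sigma_0^2$-monotonicity it specializes): a strategy \emph{restarted mid-stream} from a nontrivial — and, since admissible controllers are nonlinear, non-Gaussian — intermediate state cannot undercut the idealized clean-start value $V$. One cannot settle it by tracking variances alone; the safe route is the conditioning reduction above to a deterministic-initial-state subproblem followed by the pointwise bound $V_\star(s)\ge V$, and it is exactly to license this chain that the lemma also records the monotonicity and the $u_i[0]=0$ normalization. Checking the geometric-weight bookkeeping (no index over-counted, boundary slices lossless) is comparatively routine.
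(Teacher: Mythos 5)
Your overall strategy is the same as the paper's: slice the infinite-horizon cost into overlapping finite-horizon pieces with geometrically decaying input weights, observe that the total weight on each $\mathbb{E}[u_i^2[t]]$ is at most $r_i$, and then argue that each shifted slice costs at least the clean slice started from zero. Your geometric-weight bookkeeping is fine and your reduction of $S_m$ to a deterministic-initial-state subproblem by conditioning on the prehistory $\mathcal{G}_m$ is a legitimate cosmetic variant of how the paper phrases the same reduction (the paper handles the random shift $x''[0]=x[m-k]$ directly via its Proposition~\ref{prop:genie}, where you first disintegrate to deterministic $s$ and then need the pointwise bound $V_\star(s)\ge V_\star(0)$).

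The gap is exactly where you flag it, and it is not filled: neither the claimed $\sigma_0^2$-monotonicity (``the larger initial variance is the smaller one plus an independent Gaussian offset, which can only increase attainable second moments'') nor the pointwise bound $V_\star(s)\ge V_\star(0)$ is actually proved in your plan. You cannot get this by a bare coupling because when the initial state changes the controllers' \emph{observations} change: the optimal strategy for the $s$-initialized problem is not directly implementable in the $0$-initialized one, so you cannot simply transplant it and compare trajectories. This is precisely what the paper's Proposition~\ref{prop:genie} is built to overcome. Its proof (i) grants $x''[0]$ as side information to enlarge the strategy class, (ii) decomposes the controls as $u_i'[n]=u_i[n]-\mathbb{E}[u_i[n]\mid x''[0]]$ plus the orthogonal part, (iii) uses orthogonality of the resulting cost terms to drop a nonnegative piece, and, crucially, (iv) runs an induction (step (E) of the proof) to show that $y_i[n]=\bar y_i[n]+g_n(x''[0])$ and hence $u_i'[n]$ \emph{is} realizable in the shifted system from its own observations plus the side information. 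Step (iv) is the non-obvious content, and your write-up, by saying ``the safe route is \dots followed by the pointwise bound $V_\star(s)\ge V$, and it is exactly to license this chain that the lemma also records the monotonicity,'' implicitly \emph{assumes} the monotonicity conclusion of the lemma to prove the lemma's main inequality, which is circular. Until you supply an argument with the force of Proposition~\ref{prop:genie} (side information, orthogonal projection, and the realizability induction), the proof is incomplete. A minor nit: your sentence asserting $y_1[0]=v_1[0]$ is ``independent of \dots $v_1[n]$ ($n\ge 0$)'' is literally false (take $n=0$); what you mean is independent of everything that actually enters the cost and dynamics from time $1$ onward, which is also, incidentally, more a sketch of the ``$u_i[0]=0$ optimal'' claim than a proof, since future controls can still depend on $y_i[0]$ directly even though the state does not.
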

\begin{proof}
Let's first prove that for all finite-horizon and infinite horizon problem, the average cost is an increasing function in $\sigma_0^2$.
\begin{proposition}
Let $x'[0]$ and $x''[0]$ be independent random variables, and $x'[0]$ has zero mean. Consider two systems where the system equations are given by Problem~\ref{prob:lqg}. However, the initial state of the first system is $x'[0]+x''[0]$ while the initial state of the second system is $x'[n]$. Except for the initial states, both systems have the same underlying random variables $w[n]$, $v_1[n]$, $v_2[n]$ as those in Problem~\ref{prob:lqg}. We denote the variables of the first system as $x[n]$, $u_i[n]$, $y_i[n]$, and those of the second system as $\bar{x}[n]$, $\bar{u}_i[n]$, $\bar{y}_i[n]$. Then, the following inequality is true.
\begin{align}
&\inf_{u_1, u_2} \frac{1}{N} \sum_{0 \leq n \leq N-1} (q \mathbb{E}[x^2[n]] + r_1 \mathbb{E}[u_1^2[n]] + r_2 \mathbb{E}[u_2^2[n]])\geq \inf_{\bar{u}_1, \bar{u}_2}
\frac{1}{N} \sum_{0 \leq n \leq N-1} 
(q \mathbb{E}[\bar{x}^2[n]]+ r_1 \mathbb{E}[\bar{u}_1^2[n]]+ r_2 \mathbb{E}[\bar{u}_2^2[n]]).
\end{align}
\label{prop:genie}
\end{proposition}
\begin{proof}
Since both systems are coupled with each other except the initial state, we will reduce the first system to the second system by giving $x''[0]$ as side-information. Define $L_g$ as the set of strategies for the first system which depend on its own observations and $x''[0]$, i.e. $L_g:= \{(u_1[n], u_2[n]) : u_1[n]=f_{1,n}(y_1[0],\cdots,y_1[n], x''[0]), u_2[n]=f_{2,n}(y_2[0],\cdots,y_2[n], x''[0])\}$.  Likewise, define $L_g'$ as the set of strategies for the second system which depend on its own observations and $x''[0]$, i.e. $L_g' := \{(\bar{u}_1[n], \bar{u}_2[n]) : \bar{u}_1[n]=f_{1,n}'(\bar{y}_1[0], \cdots ,  \bar{y}_1[n], x''[0]), f_{2,n}'(\bar{y}_1[0], \cdots ,  \bar{y}_1[n], x''[0]) \}$.

Further, define $u_i'[n]:=u_i[n]-\mathbb{E}[u_i[n]|x''[0]]$ and $u_i''[n]:=\mathbb{E}[u_i[n]|x''[0]]$. Then, we can lower bound the average cost as follows.
\begin{align}
&\inf_{u_1, u_2} \frac{1}{N} \sum_{0 \leq n \leq N-1} (q \mathbb{E}[x^2[n]] + r_1 \mathbb{E}[u_1^2[n]] + r_2 \mathbb{E}[u_2^2[n]])\\
&
\overset{(A)}{\geq} \inf_{u_1, u_2 \in L_{g}} \frac{1}{N} \sum_{0 \leq n \leq N-1} (q \mathbb{E}[x^2[n]] + r_1 \mathbb{E}[u_1^2[n]] + r_2 \mathbb{E}[u_2^2[n]])\\
&
\overset{(B)}{=} \inf_{u_1, u_2 \in L_{g}} \frac{1}{N} \sum_{0 \leq n \leq N-1} 
(q \mathbb{E}[(a^n x[0]+ a^{n-1}w[0] + \cdots + w[n-1]\\
&+a^{n-1}u_1[0]+ \cdots + u_1[n-1] + a^{n-1}u_2[0]+ \cdots + u_2[n-1])^2] \\
&+ r_1 \mathbb{E}[u_1^2[n]] + r_2 \mathbb{E}[u_2^2[n]])\\
&
\overset{(C)}{=} \inf_{u_1, u_2 \in L_{g}} \frac{1}{N} \sum_{0 \leq n \leq N-1} 
q \mathbb{E}[(a^n x'[0]+ a^{n-1}w[0] + \cdots + w[n-1]\\
&+a^{n-1}u_1'[0]+ \cdots + u_1'[n-1] + a^{n-1}u_2'[0]+ \cdots + u_2'[n-1]\\
&+ a^n x''[0] +a^{n-1}u_1''[0]+ \cdots + u_1''[n-1] + a^{n-1}u_2''[0]+ \cdots + u_2''[n-1])^2]\\
&+ r_1 \mathbb{E}[(u_1'[n]+u_1''[n])^2] + r_2 \mathbb{E}[(u_2'[n]+u_2''[n])^2]\\
&
\overset{(D)}{=} \inf_{u_1, u_2 \in L_{g}} \frac{1}{N} \sum_{0 \leq n \leq N-1} 
q \mathbb{E}[(a^n x'[0]+ a^{n-1}w[0] + \cdots + w[n-1]\\
&+a^{n-1}u_1'[0]+ \cdots + u_1'[n-1] + a^{n-1}u_2'[0]+ \cdots + u_2'[n-1])^2]\\
&+ q\mathbb{E}[(a^n x''[0] +a^{n-1}u_1''[0]+ \cdots + u_1''[n-1] + a^{n-1}u_2''[0]+ \cdots + u_2''[n-1])^2]\\
&+ r_1 \mathbb{E}[u_1'[n]^2]+r_1\mathbb{E}[u_1''[n]^2] + r_2 \mathbb{E}[u_2'[n]^2]+r_2\mathbb{E}[u_2''[n]^2]\\
&\geq \inf_{u_1, u_2 \in L_{g}} \frac{1}{N} \sum_{0 \leq n \leq N-1} 
q \mathbb{E}[(a^n x'[0]+ a^{n-1}w[0] + \cdots + w[n-1]\\
&+a^{n-1}u_1'[0]+ \cdots + u_1'[n-1] + a^{n-1}u_2'[0]+ \cdots + u_2'[n-1])^2]\\
&+ r_1 \mathbb{E}[u_1'[n]^2]+ r_2 \mathbb{E}[u_2'[n]^2] \\
&\overset{(E)}{\geq} \inf_{\bar{u}_1, \bar{u}_2 \in L_g'}
\frac{1}{N} \sum_{0 \leq n \leq N-1} 
(q \mathbb{E}[\bar{x}^2[n]]+ r_1 \mathbb{E}[\bar{u}_1^2[n]]+ r_2 \mathbb{E}[\bar{u}_2^2[n]])\\
&= \inf_{\bar{u}_1, \bar{u}_2 \in L_g'}
\frac{1}{N}
\mathbb{E}[
 \sum_{0 \leq n \leq N-1} 
(q \mathbb{E}[\bar{x}^2[n]]+ r_1 \mathbb{E}[\bar{u}_1^2[n]]+ r_2 \mathbb{E}[\bar{u}_2^2[n]])
| x''[0]]\\
&\geq \inf_{x'} 
\inf_{\bar{u}_1, \bar{u}_2 \in L_g'}
\frac{1}{N}
\mathbb{E}[
 \sum_{0 \leq n \leq N-1} 
(q \mathbb{E}[\bar{x}^2[n]]+ r_1 \mathbb{E}[\bar{u}_1^2[n]]+ r_2 \mathbb{E}[\bar{u}_2^2[n]])
| x''[0]=x']\\
&\overset{(F)}{=} \inf_{\bar{u}_1, \bar{u}_2 \in L}
\frac{1}{N} \sum_{0 \leq n \leq N-1} 
(q \mathbb{E}[\bar{x}^2[n]]+ r_1 \mathbb{E}[\bar{u}_1^2[n]]+ r_2 \mathbb{E}[\bar{u}_2^2[n]])
\end{align}
(A): $L \subseteq L_g$.\\
(B): By the system dynamics of \eqref{eqn:systemsimple}.\\
(C): Definitions of $x'[0]$, $x''[0]$, $u_i'[n]$, $u_i''[n]$.\\
(D): Since $x'[0], w[n]$ are zero mean and independent from $x''[0]$, they are orthogonal to $x''[0]$, $u_1''[n]$, $u_2''[n]$. Moreover, by definition, $u_1'[n]$, $u_2'[n]$ are orthogonal to $x''[0]$, $u_1''[n]$, $u_2''[n]$.\\
(E): To justify this, we will show that for all $n$, $y_i[0], \cdots y_i[n], x''[0]$ are functions of $\bar{y}_i[0], \cdots \bar{y}_i[n], x''[0]$. Therefore, there exists $\bar{u}_1[n], \bar{u}_2[n]$ such that $\bar{u}_1[n]=u_1'[n]$, $\bar{u}_2[n]=u_2'[n]$. 

First, when $n=1$, the claim is obvious since $y_i[0]=\bar{y}_i[0]+x''[0]$. Thus, $y_i[0], x''[0]$ are functions of $\bar{y}_i[0], x''[0]$. Moreover, since $u_i'[n]$ are functions of $y_i[0]$ and $x''[0]$, we can find $\bar{u}_i[n]$ such that $\bar{u}_i[n] = u_i'[n]$.

Let's say the claim holds until $n-1$. Then, we have 
\begin{align}
y_i[n] =& a^n x'[0] + a^{n-1} w[0]+ \cdots + w[n-1] \\
&+ a^{n-1} u_1'[0] + \cdots + u_1'[n-1] \\
&+ a^{n-1} u_2'[0] + \cdots + u_2'[n-1] + v_i[n]+ g(x''[0]) \\
=& a^n x'[0] + a^{n-1} w[0]+ \cdots + w[n-1] \\
&+ a^{n-1} \bar{u}_1[0] + \cdots + \bar{u}_1[n-1] \\
&+ a^{n-1} \bar{u}_2[0] + \cdots + \bar{u}_2[n-1] + v_i[n]+ g(x''[0]) \\
=& \bar{y}_i[n] +g(x''[0]) 
\end{align}
where $g(x''[0]):=a^n x''[0]+a^{n-1} \mathbb{E}[u_1[0]|x''[0]] + \cdots + \mathbb{E}[u_1[n-1]|x''[0]]+a^{n-1} \mathbb{E}[u_2[0]|x''[0]] + \cdots + \mathbb{E}[u_2[n-1]|x''[0]]$, and
the send equality comes from the induction hypothesis. Therefore, $y_i[n]$ is a function of $\bar{y}_i[n], x''[0]$, and we can find $\bar{u}_i[n]$ such that $\bar{u}_i[n]=u_i'[n]$. This proves the claim by induction.\\

(F): Since in $L_g'$ the strategies can depend on $x''[0]$.

Therefore, the proposition is true.
\end{proof}

Here, we can notice that the proof holds for all quadratic costs. Therefore, by setting $x'[0] \sim \mathcal{N}(0,\sigma_0'^2)$ and $x''[0] \sim \mathcal{N}(0,\sigma_0''^2)$, we can prove the costs in \eqref{eqn:geo:1} are increasing functions on $\sigma_{0}^2$. We can also easily justify that when $x_0[0]=0$, $u_1[0]=u_2[0]=0$ is the optimal input by symmetry.

Then, let's prove the inequality of \eqref{eqn:geo:1}.
\begin{align}
&\inf_{u_1,u_2} \limsup_{N \rightarrow \infty} \frac{1}{N}
\sum_{0 \leq n \leq N-1} (q \mathbb{E}[x^2[n]]+r_1\mathbb{E}[u_1^2[n]]+r_2\mathbb{E}[u_2^2[n]])\\
&\overset{(a)}{\geq} \limsup_{N \rightarrow \infty} \inf_{u_1,u_2}
\frac{1}{N}
\sum_{0 \leq n \leq N-1} (q \mathbb{E}[x^2[n]]+r_1\mathbb{E}[u_1^2[n]]+r_2\mathbb{E}[u_2^2[n]])\\
&\overset{(b)}{\geq} \limsup_{N \rightarrow \infty}
\inf_{u_1, u_2} \frac{1}{N}(
q \mathbb{E}[x^2[k]] + r_1(1-\alpha)(\sum_{k_1 \leq  i \leq k-1} \alpha^{i-k_1}\mathbb{E}[u_1^2[i]]) + r_2(1-\alpha)(\sum_{k_2 \leq  i \leq k-1} \alpha^{i-k_2}\mathbb{E}[u_2^2[i]])\\
&\quad+ q \mathbb{E}[x^2[k+1]] + r_1(1-\alpha)(\sum_{k_1 \leq  i \leq k-1} \alpha^{i-k_1}\mathbb{E}[u_1^2[i+1]]) + r_2(1-\alpha)(\sum_{k_2 \leq  i \leq k-1} \alpha^{i-k_2}\mathbb{E}[u_2^2[i+1]])+ \cdots \\
&\quad+ q \mathbb{E}[x^2[N-1]] + r_1(1-\alpha)(\sum_{k_1 \leq  i \leq k-1} \alpha^{i-k_1}\mathbb{E}[u_1^2[i+N-k-1]]) + r_2(1-\alpha)(\sum_{k_2 \leq  i \leq k-1} \alpha^{i-k_2}\mathbb{E}[u_2^2[i+N-k-1]]) \\
&\overset{(c)}{\geq} \limsup_{N \rightarrow \infty}
 \frac{1}{N}( \inf_{u_1, u_2}
q \mathbb{E}[x^2[k]] + r_1(1-\alpha)(\sum_{k_1 \leq  i \leq k-1} \alpha^{i-k_1}\mathbb{E}[u_1^2[i]]) + r_2(1-\alpha)(\sum_{k_2 \leq  i \leq k-1} \alpha^{i-k_2}\mathbb{E}[u_2^2[i]])\\
&\quad+ \inf_{u_1, u_2}q \mathbb{E}[x^2[k+1]] + r_1(1-\alpha)(\sum_{k_1 \leq  i \leq k-1} \alpha^{i-k_1}\mathbb{E}[u_1^2[i+1]]) + r_2(1-\alpha)(\sum_{k_2 \leq  i \leq k-1} \alpha^{i-k_2}\mathbb{E}[u_2^2[i+1]])+ \cdots \\
&\quad+ \inf_{u_1, u_2} q \mathbb{E}[x^2[N-1]] + r_1(1-\alpha)(\sum_{k_1 \leq  i \leq k-1} \alpha^{i-k_1}\mathbb{E}[u_1^2[i+N-k-1]]) + r_2(1-\alpha)(\sum_{k_2 \leq  i \leq k-1} \alpha^{i-k_2}\mathbb{E}[u_2^2[i+N-k-1]]) \label{eqn:slicing:1}\\
&\overset{(d)}{\geq} \limsup_{N \rightarrow \infty}
\frac{N-k}{N}(\inf_{u_1, u_2}
q \mathbb{E}[x^2[k]] + r_1(1-\alpha)(\sum_{k_1 \leq  i \leq k-1} \alpha^{i-k_1}\mathbb{E}[u_1^2[i]]) + r_2(1-\alpha)(\sum_{k_2 \leq  i \leq k-1} \alpha^{i-k_2}\mathbb{E}[u_2^2[i]])\\
&\overset{(e)}{=} \inf_{u_1, u_2}q \mathbb{E}[x^2[k]] + r_1(1-\alpha)(\sum_{k_1 \leq  i \leq k-1} \alpha^{i-k_1}\mathbb{E}[u_1^2[i]]) + r_2(1-\alpha)(\sum_{k_2 \leq  i \leq k-1} \alpha^{i-k_2}\mathbb{E}[u_2^2[i]]).
\end{align}
(a): $\inf \sup \geq \sup \inf$. \\
(b): We can easily check that the sum of the weight for each input cost, $\mathbb{E}[u_1^2[n]]$ or $\mathbb{E}[u_2^2[n]]$ is less than $(1-\alpha)(1+\alpha + \alpha^2 + \cdots)$ which is $1$.\\
(c): $\inf_{x} f(x) + g(x) \geq \inf_{x} f(x) + \inf_{x'} g(x')$.\\
(d): The second minimization problem in \eqref{eqn:slicing:1} can be thought as a one-time-step shift of the first minimization problem, i.e. $x[1]$ of the second problem corresponds to the initial state $x[0]=0$ of the first problem. Therefore, by putting $x'[0]=0$ and $x''[0]=x[1]$ in Proposition~\ref{prop:genie}, the first problem's cost is smaller than the second problem's cost. Likewise, we can prove that the first problem's cost is a lower bound for all other problems' cost.\\
(e): $\limsup_{N \rightarrow \infty} \frac{N-k}{N} = 1$.
\end{proof}

Conceptually, this idea of geometric slicing can be thought of as an interesting variant on how discounted dynamic programming~\cite{Bertsekas} is used to study average-cost dynamic programming. 

\begin{figure}[htbp]
\begin{center}
\includegraphics[width=4.6in]{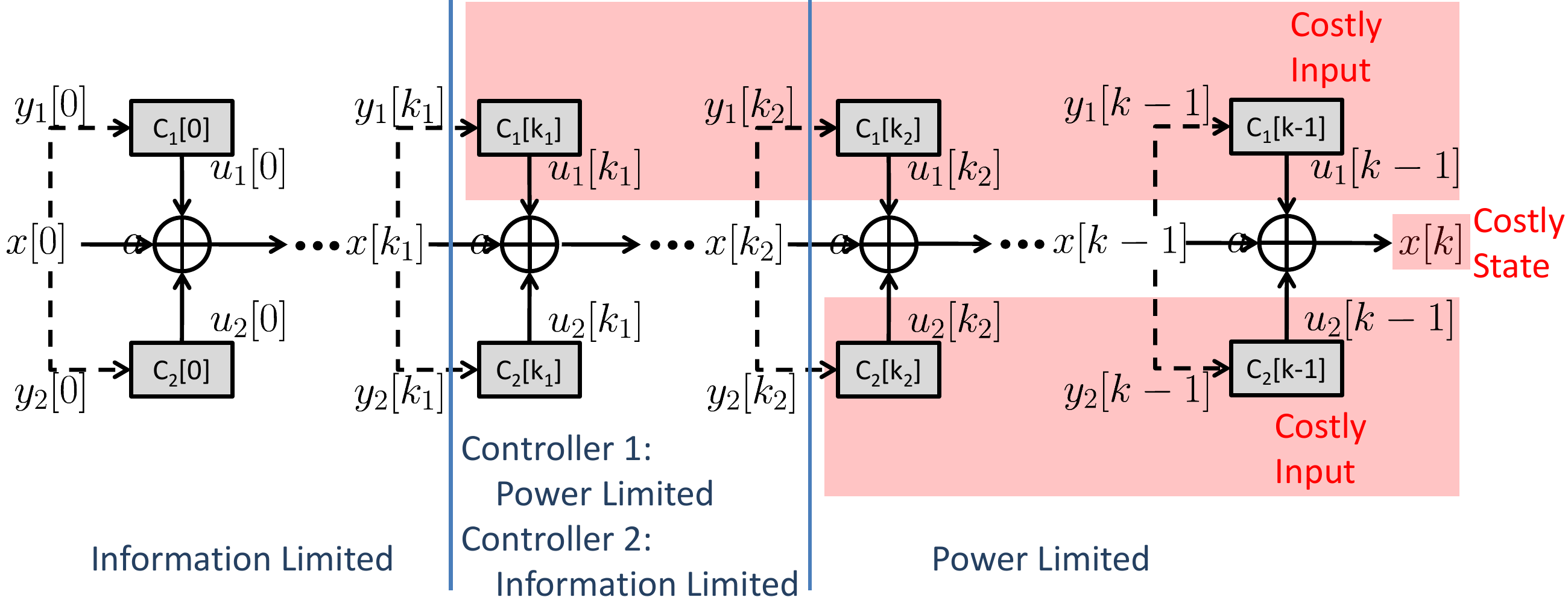}
\caption{The general finite-horizon problem structure which can give a lower bound for $s$-stage signaling strategies. The problem consists of three time intervals. In the first time interval, both controllers are information-limited. In the second time interval, the first contoller is power-limited and the second controller is information-limited. In the third time interval, both controllers are power-limited.}
\label{fig:division}
\end{center}
\end{figure}

\subsection{Finite-Horizon LQG Problems: Three-Stage Division}
\label{sec:lower:three}
Now, we can divide the infinite-horizon problem to finite-horizon problems. Figure~\ref{fig:division} shows the finite-horizon problem that gives a lower bound approximately matching with $s$-stage signaling strategies. As we discussed in Figure~\ref{fig:proofflow2}, the resulting problem is not stationary and to tackle this issue we will divide the time-horizon into three intervals: (1) information-limited interval, (2) MIMO Witsenhausen's interval, (3) power-limited interval.

Let's first state the power-distortion tradeoff version of the finite-horizon problem of Problem~\ref{prob:slice}.
\begin{problem}[Finite-Horizon LQG problem with discounted power constraints] Let's consider the same system and parameters as Problem~\ref{prob:slice}. But, now the control objective is minimizing the final state disturbance $D_F(P_1,P_2)$ for given input power constraints  $P_1, P_2 \in \mathbb{R}^+$. In other words, we solve
\begin{align}
D_F(P_1,P_2) &=\inf_{u_1 ,u_2} \mathbb{E}[x^2[k]]\\
s.t. & \sum_{k_1 \leq  i \leq k-1} \alpha^{i-k_1}\mathbb{E}[u_1^2[i]] \leq P_1 \\
& \sum_{k_2 \leq  i \leq k-1} \alpha^{i-k_2}\mathbb{E}[u_2^2[i]] \leq P_2.
\end{align}
\label{prob:slicepower}
\end{problem}

Here we can see four parameters that characterize the performance of controllers, $\sigma_{v1}^2$, $P_1$, $\sigma_{v2}^2$, $P_2$. The importance of these parameters becomes different depending on which interval they lie in.

The information-limited interval ---which corresponds to the time steps between $0$ and $k_1$ in Problem~\ref{prob:slicepower} and Figure~\ref{fig:division}--- is introduced to handle the case when $\sigma_{v1}^2$ is large. Since $\sigma_{v2}^2 \geq \sigma_{v1}^2$, in this interval both controllers have very noisy observations and we can allow arbitrarily large power to both controllers. 
In fact, in Figure~\ref{fig:division} we can see in this interval both controllers do not have any input costs. Therefore, the important parameters are $\sigma_{v1}^2$ and $\sigma_{v2}^2$. Furthermore, even the cost of the centralized controller (with access to both observations $y_1[n]$ and $y_2[n]$) gives a reasonable bound. Essentially, what this interval is doing is waiting until the variance of the state disturbances grows enough --- till $\sigma_{v1}^2$ up to scaling.

On the other hand, the power-limited interval --- which corresponds to the time steps between $k_2$ and $k$ in Problem~\ref{prob:slicepower} and Figure~\ref{fig:division} --- is introduced to handle the case when both controllers do not have enough power to stabilize the system. Therefore, in this interval the important parameters are $P_1$ and $P_2$. We will even give a perfect observation of $x[n]$ to both controllers by setting $\sigma_{v1}^2=0$ and $\sigma_{v2}^2=0$. In this interval, we will keep running the system by making $k$ arbitrarily large, and prove that $\mathbb{E}[x^2[k]]$ must diverge to infinity given that the previous interval ended up with a too large $x[k_2]$.

Between these two intervals --- the time steps between $k_1$ and $k_2$ in Problem~\ref{prob:slicepower} and Figure~\ref{fig:division} --- each controller faces a different situation. The first controller has enough information about the state but it does not have enough power. The second controller has enough power but it does not have enough information. Therefore, the important parameters of this interval are $P_1$ and $\sigma_{v2}^2$. So, we will allow a perfect observation to the first controller by setting $\sigma_{v1}^2=0$ and infinite power to the second controller by setting $P_2 = \infty$. In other words, the first controller is power limited and the second controller is information limited. This situation is exactly the same as that of Witsenhausen's counterexample which we discussed in Section~\ref{sec:caveat}. Therefore, we will call this interval an $s$-stage MIMO Witsenhausen's interval and discuss it in Section~\ref{sec:mimowitsen} in more detail.

Let's convert these ideas into formal proofs. As we mentioned, we will bound the cost in the information-limited interval by analyzing a centralized controller with both observations $y_1[n]$ and $y_2[n]$ when there is only initial disturbance $w[0]$.
\begin{lemma}
Let $w[0] \sim \mathcal{N}(0,1)$, $v_1[n] \sim \mathcal{N}(0,\sigma_{v1}^2)$, $v_2[n] \sim \mathcal{N}(0,\sigma_{v2}^2)$ be independent Gaussian random variables. Let
\begin{align}
&y_1[n]=a^{n-1}w[0]+v_1[n],\\
&y_2[n]=a^{n-1}w[0]+v_2[n].
\end{align}
Then,
\begin{align}
\mathbb{E}[(a^{k-1}w[0]-\mathbb{E}[a^{k-1}w[0]|y_1[1:k_1],y_2[1:k_1]])^2]=\frac{a^{2(k-1)}\sigma_{v1}^2}{(1+\frac{\sigma_{v1}^2}{\sigma_{v2}^2})(\frac{a^{2(k_1-1)}(1-a^{-2 k_1})}{1-a^{-2}})+\sigma_{v1}^2}.
\end{align}
\label{cov:lemma1}
\end{lemma}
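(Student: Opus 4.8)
The plan is to treat this as a textbook scalar Gaussian Bayesian estimation problem. Here $w[0]$ is the unknown scalar with prior variance $1$, and the $2k_1$ quantities $y_1[1],\dots,y_1[k_1],y_2[1],\dots,y_2[k_1]$ are noisy linear observations of it with known gains $a^{n-1}$ and mutually independent Gaussian noises (independent also of $w[0]$). Since everything is jointly Gaussian and zero-mean, $\mathbb{E}[a^{k-1}w[0]\mid y_1[1{:}k_1],y_2[1{:}k_1]]$ is the linear MMSE estimator, and the quantity we want equals $a^{2(k-1)}\,\mathrm{Var}(w[0]\mid y_1[1{:}k_1],y_2[1{:}k_1])$. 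So it suffices to compute the posterior variance of $w[0]$.

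For that I would use the information (precision) form. Conditioned on $w[0]$, the observations $y_i[n]$ are mutually independent, so the posterior density factors as $p(w[0])\prod_{n=1}^{k_1}p(y_1[n]\mid w[0])\prod_{n=1}^{k_1}p(y_2[n]\mid w[0])$; each factor is Gaussian in $w[0]$ and the precisions add, giving
\[
\frac{1}{\mathrm{Var}(w[0]\mid y_1[1{:}k_1],y_2[1{:}k_1])}=1+\sum_{n=1}^{k_1}\frac{a^{2(n-1)}}{\sigma_{v1}^2}+\sum_{n=1}^{k_1}\frac{a^{2(n-1)}}{\sigma_{v2}^2}=1+\Bigl(\frac{1}{\sigma_{v1}^2}+\frac{1}{\sigma_{v2}^2}\Bigr)\sum_{j=0}^{k_1-1}a^{2j}.
\]
Then I would evaluate the geometric sum $\sum_{j=0}^{k_1-1}a^{2j}=\frac{a^{2k_1}-1}{a^2-1}$, invert, and multiply by $a^{2(k-1)}$, obtaining the error $a^{2(k-1)}\big/\bigl(1+(\sigma_{v1}^{-2}+\sigma_{v2}^{-2})\tfrac{a^{2k_1}-1}{a^2-1}\bigr)$. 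Finally, multiplying numerator and denominator by $\sigma_{v1}^2$ and rewriting $\tfrac{a^{2k_1}-1}{a^2-1}=\tfrac{a^{2(k_1-1)}(1-a^{-2k_1})}{1-a^{-2}}$ (divide top and bottom by $a^2$) puts the expression in exactly the claimed form.

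There is essentially no conceptual obstacle here; the two things to be careful about are bookkeeping the observation index range (the measurements run $n=1,\dots,k_1$, so the gains are $a^0,\dots,a^{k_1-1}$, not $a^1,\dots,a^{k_1}$) and the elementary identity relating the two ways of writing the geometric series. If one prefers not to invoke the precision-addition fact as a black box, the same answer follows by a routine induction on $k_1$ using the scalar Kalman update $P^{-1}\mapsto P^{-1}+a^{2(n-1)}(\sigma_{v1}^{-2}+\sigma_{v2}^{-2})$ at step $n$ (processing $y_1[n]$ and $y_2[n]$), starting from the prior $P=1$; I would mention this as the equivalent alternative derivation.
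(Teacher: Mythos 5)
Your proof is correct and reaches exactly the claimed expression, but the algebraic route is somewhat different from the paper's. You add precisions directly: prior precision $1$ plus observation precisions $a^{2(n-1)}/\sigma_{v1}^2$ and $a^{2(n-1)}/\sigma_{v2}^2$, so
\begin{align}
\mathrm{Var}\bigl(w[0]\mid y_1[1{:}k_1],y_2[1{:}k_1]\bigr)^{-1}=1+\Bigl(\tfrac{1}{\sigma_{v1}^2}+\tfrac{1}{\sigma_{v2}^2}\Bigr)S,\qquad S:=\sum_{j=0}^{k_1-1}a^{2j},
\end{align}
then multiply through by $\sigma_{v1}^2$ and use $S=\tfrac{a^{2k_1}-1}{a^2-1}=\tfrac{a^{2(k_1-1)}(1-a^{-2k_1})}{1-a^{-2}}$. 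The paper instead first collapses the $2k_1$ observations into a single scalar sufficient statistic $y_s$ by maximum-ratio combining, then applies the standard joint-Gaussian conditional-variance formula $\mathbb{E}[X^2]-\mathbb{E}[XY]^2/\mathbb{E}[Y^2]$ to the pair $(a^{k-1}w[0],y_s)$. The two routes are computationally equivalent here (both reduce to the same geometric sum and the same cleanup step), but yours is a little more economical because the precision-addition identity is applicable immediately once you observe conditional independence given $w[0]$, whereas the paper does the MRC construction explicitly and then carries a somewhat heavier denominator through the covariance formula before it telescopes. Either way the bookkeeping point you flag --- the gains are $a^0,\dots,a^{k_1-1}$, not $a^1,\dots,a^{k_1}$ --- is the only place to slip, and you handle it correctly.
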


\begin{proof}
Notice that
\begin{align}
&y_1[n]=a^{n-1}w[0]+v_1[n]\\
&\frac{\sigma_{v1}}{\sigma_{v2}}y_2[n]=\frac{\sigma_{v1}}{\sigma_{v2}}a^{n-1}w[0]+\frac{\sigma_{v1}}{\sigma_{v2}}v_2[n]
\end{align}
Since maximum-ratio combining is a sufficient statistic (See \cite{Tse} for instance), the sufficient statistic $y_s$ of $y_1[1:k_1-1], y_2$ for estimating $w[0]$ is given as:
\begin{align}
y_s &= \sum_{1 \leq n \leq k_1} a^{n-1}y_1[n] + \sum_{1\leq n \leq k_1} \frac{\sigma_{v1}}{\sigma_{v2}} a^{n-1} ( \frac{\sigma_{v1}}{\sigma_{v2}} y_2[n])\\
&=\sum_{1 \leq n \leq k_1} a^{n-1}(a^{n-1}w[0]+v_1[n])+ \sum_{1 \leq n \leq k_1} \frac{\sigma_{v1}}{\sigma_{v2}}a^{n-1}
(\frac{\sigma_{v1}}{\sigma_{v2}}a^{n-1}w[0]+\frac{\sigma_{v1}}{\sigma_{v2}}v_2[n] )\\
&=(\sum_{1 \leq n \leq k_1}(a^{2(n-1)}+\frac{\sigma_{v1}^2}{\sigma_{v2}^2}a^{2(n-1)}) )w[0]+(\sum_{1 \leq n \leq k_1}a^{n-1}v_1[n]+\sum_{1 \leq n \leq k_1} \frac{\sigma_{v1}^2}{\sigma_{v2}^2}a^{n-1}v_2[n])
\end{align}
The estimation error for $a^{k-1}w[0]$ is
\begin{align}
&\mathbb{E}[(a^{k-1}w[0]-\mathbb{E}[a^{k-1}w[0]|y_1[1:k_1],y_2[1:k_1]])^2]\\
&=\mathbb{E}[(a^{k-1}w[0]-\mathbb{E}[a^{k-1}w[0]|y_s])^2]\\
&=\mathbb{E}[(a^{k-1}w[0])^2]- \mathbb{E}[a^{k-1}w[0] y_s](\mathbb{E}[y_s^2])^{-1}\mathbb{E}[a^{k-1}w[0] y_s]\\
&=\frac{\mathbb{E}[(a^{k-1}w[0])^2]\mathbb{E}[y_s^2]-\mathbb{E}[a^{k-1}w[0] y_s]^2 }{\mathbb{E}[y_s^2]}\\
&=\frac{a^{2(k-1)}((\sum_{1 \leq n \leq k_1}(a^{2(n-1)}+\frac{\sigma_{v1}^2}{\sigma_{v2}^2}a^{2(n-1)}))^2 + \sum_{1\leq n \leq k_1}a^{2(n-1)}\sigma_{v1}^2 + \sum_{1\leq n \leq k_1}(\frac{\sigma_{v1}^2}{\sigma_{v2}^2})^2 a^{2(n-1)}\sigma_{v2}^2)}
{(\sum_{1 \leq n \leq k_1}(a^{2(n-1)}+\frac{\sigma_{v1}^2}{\sigma_{v2}^2}a^{2(n-1)}))^2 + \sum_{1\leq n \leq k_1}a^{2(n-1)}\sigma_{v1}^2 + \sum_{1\leq n \leq k_1}(\frac{\sigma_{v1}^2}{\sigma_{v2}^2})^2 a^{2(n-1)}\sigma_{v2}^2}\\
&-
\frac{a^{2(k-1)}(\sum_{1 \leq n \leq k_1}(a^{2(n-1)}+\frac{\sigma_{v1}^2}{\sigma_{v2}^2}a^{2(n-1)}))^2}
{(\sum_{1 \leq n \leq k_1}(a^{2(n-1)}+\frac{\sigma_{v1}^2}{\sigma_{v2}^2}a^{2(n-1)}))^2 + \sum_{1\leq n \leq k_1}a^{2(n-1)}\sigma_{v1}^2 + \sum_{1\leq n \leq k_1}(\frac{\sigma_{v1}^2}{\sigma_{v2}^2})^2 a^{2(n-1)}\sigma_{v2}^2}\\
&=
\frac{a^{2(k-1)}( \sum_{1\leq n \leq k_1}a^{2(n-1)}\sigma_{v1}^2 + \sum_{1\leq n \leq k_1}(\frac{\sigma_{v1}^2}{\sigma_{v2}^2})^2 a^{2(n-1)}\sigma_{v2}^2)}
{(\sum_{1 \leq n \leq k_1}(a^{2(n-1)}+\frac{\sigma_{v1}^2}{\sigma_{v2}^2}a^{2(n-1)}))^2 + \sum_{1\leq n \leq k_1}a^{2(n-1)}\sigma_{v1}^2 + \sum_{1\leq n \leq k_1}(\frac{\sigma_{v1}^2}{\sigma_{v2}^2})^2 a^{2(n-1)}\sigma_{v2}^2}\\
&=
\frac{a^{2(k-1)}(\sigma_{v1}^2+\frac{\sigma_{v1}^4}{\sigma_{v2}^2})(\frac{a^{2(k_1-1)}(1-a^{-2k_1}) }{1-a^{-2}}) }
{(1+\frac{\sigma_{v1}^2}{\sigma_{v2}^2})^2 (\frac{a^{2(k_1-1)}(1-a^{-2 k_1})}{1-a^{-2}})^2 + (\sigma_{v1}^2+\frac{\sigma_{v1}^4}{\sigma_{v2}^2})\frac{a^{2(k_1-1)}(1-a^{-2k_1})}{1-a^{-2}} }\\
&=
\frac{a^{2(k-1)}\sigma_{v1}^2}{(1+\frac{\sigma_{v1}^2}{\sigma_{v2}^2})(\frac{a^{2(k_1-1)}(1-a^{-2k_1})}{1-a^{-2}})+\sigma_{v1}^2}.
\end{align}
\end{proof}

To bound the performance in the power-limited interval, we have to bound the influence of control inputs on the state with respect to their power constraints. By expanding $x[n]$ using the system equation~\eqref{eqn:systemsimple}, we can see $x[n]=\sum_{0 \leq  i  \leq n-1}  a^{n-1-i} w[i] + a^{n-1-i}u_1[i] + a^{n-1-i}u_2[i]$. Thus, the terms $\sum_{0 \leq i \leq n-1}a^{n-1-i}u_j[i]$ can be thought as the influence of control inputs to the state.  The following Cauchy-Schwarz style inequality bounds the variance of $\sum_{0 \leq i \leq n-1}a^{n-1-i}u_j[i]$ by the power constraint $\sum_{0 \leq i \leq n-1} \alpha^{i} \mathbb{E}[u_j^2[i]]$ imposed in Problem~\ref{prob:slicepower}.
\begin{lemma}
For arbitrary random variables $X_i$, $n \in \mathbb{N}$, $a,b \in \mathbb{R}$ $(|\frac{1}{a^2 b}| < 1)$, we have
\begin{align}
\mathbb{E}[(a^{n-1} X_0 + a^{n-2} X_1+ \cdots + X_{n-1} )^2] \leq
\frac{a^{2(n-1)}(1-(\frac{1}{a^2 b})^n)}{1-\frac{1}{a^2 b}}(\mathbb{E}[X_0^2] + b\mathbb{E}[X_1^2] + \cdots + b^{n-1}\mathbb{E}[X_{n-1}^2])
\end{align}
\label{lem:power}
\end{lemma}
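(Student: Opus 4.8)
The plan is to recognize this as a weighted Cauchy--Schwarz estimate applied \emph{pointwise} (i.e.\ for each sample point, before taking expectations), so that the correlations among the $X_i$ never have to be handled directly. Working in the relevant regime $b>0$ (needed for $b^{\pm i/2}$ to make sense and for the right-hand side to be the intended quantity), I would write each summand as
\begin{align}
a^{n-1-i}X_i = \frac{a^{n-1-i}}{b^{i/2}}\cdot b^{i/2}X_i,
\end{align}
and apply the elementary inequality $\bigl(\sum_i c_i d_i\bigr)^2 \le \bigl(\sum_i c_i^2\bigr)\bigl(\sum_i d_i^2\bigr)$ with $c_i = a^{n-1-i}b^{-i/2}$ and $d_i = b^{i/2}X_i$. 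This gives, for every sample point,
\begin{align}
\Bigl(\sum_{i=0}^{n-1} a^{n-1-i}X_i\Bigr)^2 \le \Bigl(\sum_{i=0}^{n-1}\frac{a^{2(n-1-i)}}{b^i}\Bigr)\Bigl(\sum_{i=0}^{n-1} b^i X_i^2\Bigr).
\end{align}

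The second step is to evaluate the deterministic geometric sum in the first factor: $\sum_{i=0}^{n-1} a^{2(n-1-i)}b^{-i} = a^{2(n-1)}\sum_{i=0}^{n-1}(a^2b)^{-i} = a^{2(n-1)}\frac{1-(a^2b)^{-n}}{1-(a^2b)^{-1}}$, where the last equality is the finite geometric-series formula and is legitimate precisely because $\bigl|\tfrac{1}{a^2b}\bigr|<1$ (in particular $a^2b\neq 1$). Taking expectations of the pointwise bound and using linearity then yields
\begin{align}
\mathbb{E}\Bigl[\Bigl(\sum_{i=0}^{n-1} a^{n-1-i}X_i\Bigr)^2\Bigr] \le a^{2(n-1)}\frac{1-(a^2b)^{-n}}{1-(a^2b)^{-1}}\Bigl(\mathbb{E}[X_0^2]+b\,\mathbb{E}[X_1^2]+\cdots+b^{n-1}\mathbb{E}[X_{n-1}^2]\Bigr),
\end{align}
which is the claimed inequality after matching indices (the coefficient of $X_i$ on the left is $a^{n-1-i}$, and the matching weight on the right is $b^i$).

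There is essentially no hard step here: the only points requiring care are (i) that $b>0$, so that $b^{\pm i/2}$ is real and the weighted sum on the right is the intended one, and (ii) that the hypothesis $\bigl|\tfrac{1}{a^2b}\bigr|<1$ is invoked exactly where the geometric-sum identity is used. One could alternatively route the argument through Lemma~\ref{ach:lemmacauchy} --- first bounding $\mathbb{E}[(\sum_i a^{n-1-i}X_i)^2]$ by $(\sum_i |a|^{n-1-i}\sqrt{\mathbb{E}[X_i^2]})^2$ and then applying the deterministic Cauchy--Schwarz split to that sum --- but the pointwise version above is shorter and makes the role of the hypothesis on $a^2b$ transparent.
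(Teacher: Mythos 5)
Your proof is correct, and it accomplishes the same estimate the paper does via the same underlying tool (Cauchy--Schwarz with the geometric weights $b^{\pm i}$). The only difference is cosmetic: you apply a single weighted Cauchy--Schwarz \emph{pointwise} and then take expectations, while the paper first applies Lemma~\ref{ach:lemmacauchy} to pass from $\mathbb{E}[(\sum_i a^{n-1-i}X_i)^2]$ to $(\sum_i |a|^{n-1-i}\sqrt{\mathbb{E}[X_i^2]})^2$, and only then does the deterministic $b$-weighted split --- exactly the ``alternative route'' you flag at the end. Your version is slightly more compact, whereas the paper's two-step version reuses a lemma it already has on hand. Your remark that $b>0$ is tacitly required (for the $b^{i/2}$ factors, and because otherwise the alternating-sign right-hand side would not be a meaningful upper bound) is accurate; the paper's proof makes the same implicit assumption by taking square roots of $a^{2(n-1-i)}/b^i$, and it is satisfied everywhere the lemma is invoked (e.g.\ $b=2.5^{-1}$).
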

\begin{proof}
\begin{align}
&\mathbb{E}[(a^{n-1}X_0 + a^{n-2} X_1 + \cdots + X_{n-1})^2] \\
&\leq (\sqrt{a^{2(n-1)}\mathbb{E}[X_0^2]} + \sqrt{a^{2(n-2)}\mathbb{E}[X_1^2]} + \cdots + \sqrt{\mathbb{E}[X_{n-1}^2]})^2\\
&\leq (a^{2(n-1)}+\frac{a^{2(n-2)}}{b}+\cdots+(\frac{1}{b})^{n-1}) (\mathbb{E}[X_0^2]+b\mathbb{E}[X_1^2]+\cdots+b^{n-1}\mathbb{E}[X_{n-1}^2]) \\
&=\frac{a^{2(n-1)}(1-(\frac{1}{a^2 b})^n)}{1-\frac{1}{a^2 b}}(\mathbb{E}[X_0^2]+b\mathbb{E}[X_1^2]+\cdots+b^{n-1}\mathbb{E}[X_{n-1}^2])
\end{align}
where all inequalities follow from Cauchy-Schwarz.
\end{proof}

\begin{figure}[htbp]
\begin{center}
\includegraphics[width=3in]{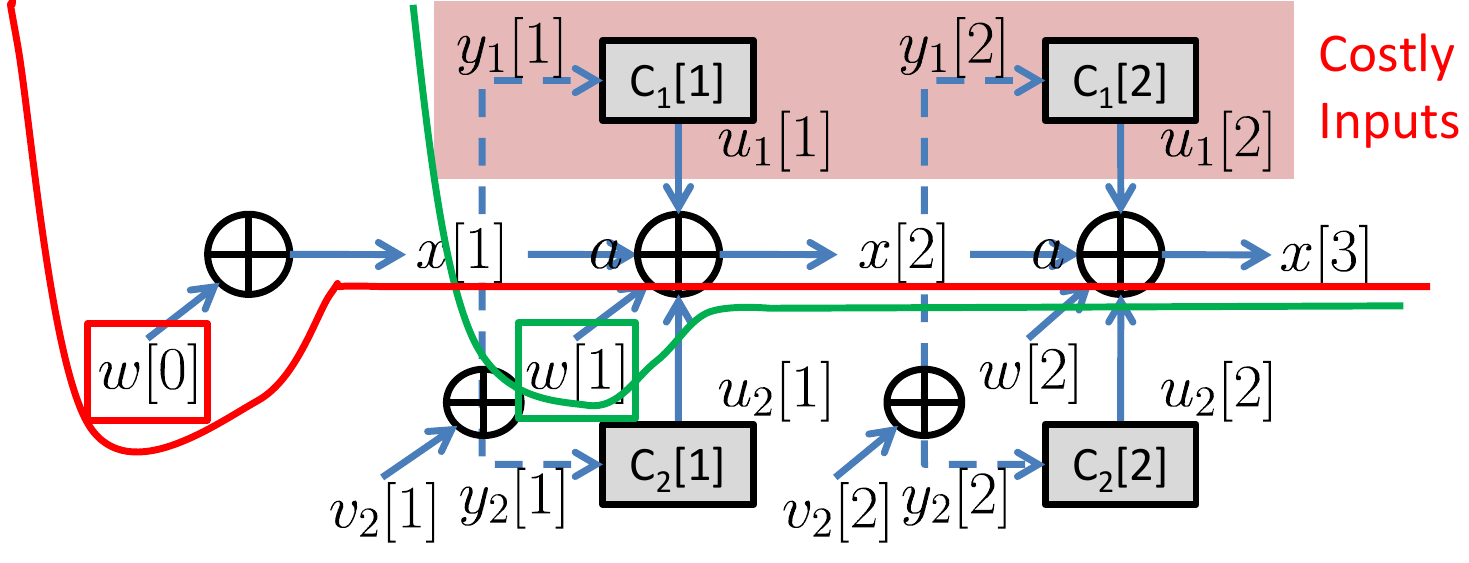}
\caption{Finite-horizon generalized MIMO Witsenhausen's counterexample. This problem gives the matching lower bound to $1$-stage signaling.}
\label{fig:finite1}
\end{center}
\end{figure}

\begin{figure}[htbp]
\begin{center}
\includegraphics[width=2.5in]{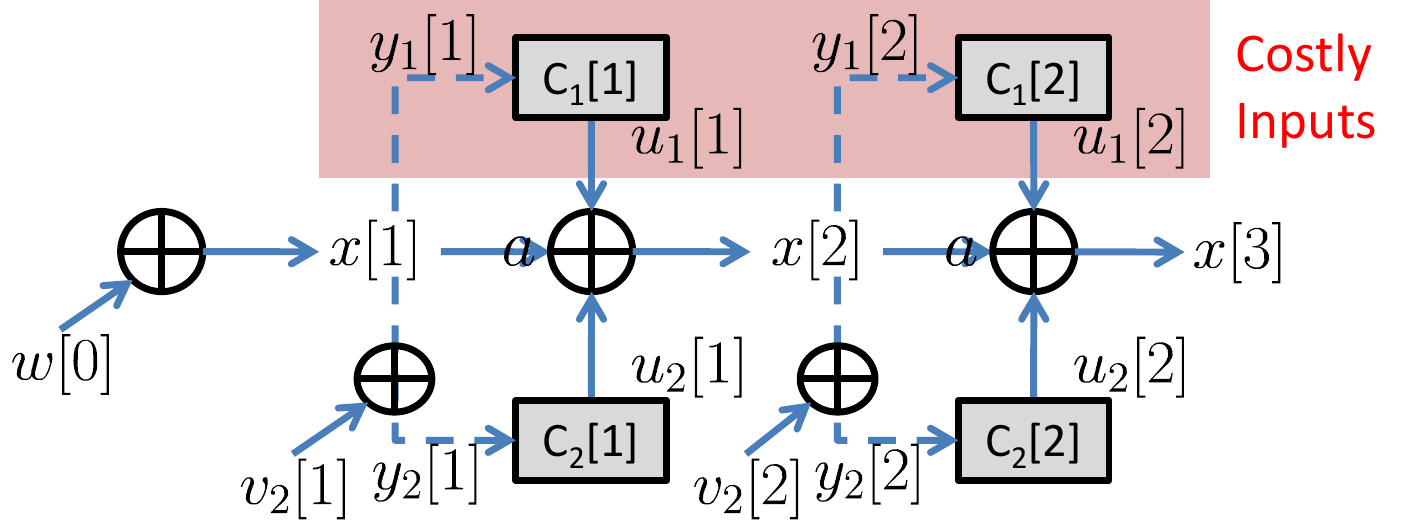}
\caption{The simplified problem that results from Figure~\ref{fig:finite1} by cutting the problem across the red line. Unlike the original problem, $w[0]$ is the only disturbance.}
\label{fig:finite2}
\end{center}
\end{figure}

\begin{figure}[htbp]
\begin{center}
\includegraphics[width=2in]{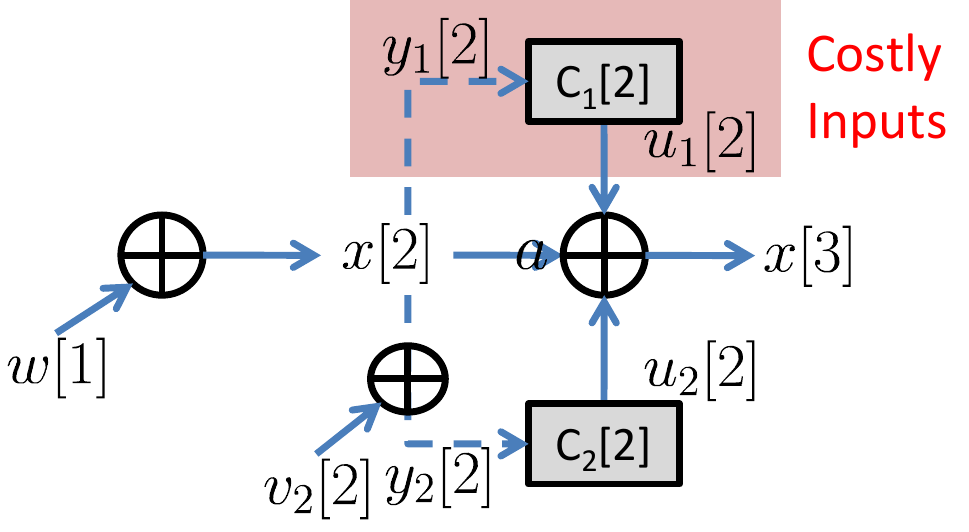}
\caption{The simplified problem that results from Figure~\ref{fig:finite1} by cutting the problem across the blue line. Unlike the original problem, $w[1]$ is the only disturbance.}
\label{fig:finite3}
\end{center}
\end{figure}

\begin{figure}[htbp]
\begin{center}
\includegraphics[width=2.5in]{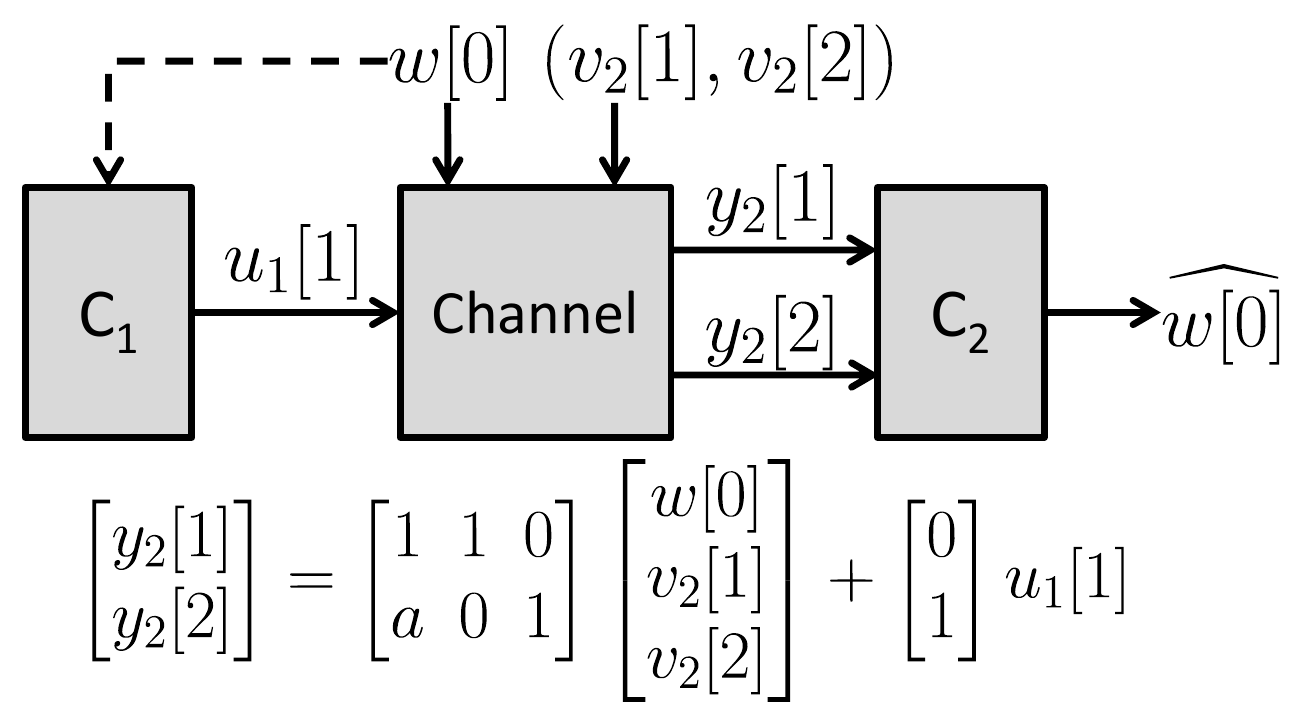}
\caption{A further simplified MIMO communication problem of Figure~\ref{fig:finite2}. This problem reflects the communication aspect of Figure~\ref{fig:finite2}.}
\label{fig:finite4}
\end{center}
\end{figure}


\subsection{$s$-stage MIMO Witsenhausen's interval: From multiple disturbances to a single disturbance}
\label{sec:mimowitsen}

Understanding the MIMO Witsenhausen's interval is necessary to find a matching lower bound to $s$-stage signaling strategies. Let's explicitly consider Problem~\ref{prob:power} with parameters $\sigma_{v1}^2 = 0$ and $P_2 = \infty$ and find the lower bound on $D(P_1,P_2)$ that approximately matches to the $1$-stage signaling strategy.

By selecting the parameters $k=3, k_1=1, k_2=3, \alpha=\frac{1}{2}$ in Problem~\ref{prob:slicepower}, we have the problem of minimizing $D_F(P_1,P_2)=\mathbb{E}[x^2[3]]$ with the power constraint $( \frac{1}{2}\mathbb{E}[u_1^2[1]] + \frac{1}{4}\mathbb{E}[u_1^2[2]]) \leq P_1$. This is a lower bound on $D(P_1,P_2)$.

Figure~\ref{fig:finite1} shows the resulting $2$-stage finite-horizon problem. As we can see the problem looks similar to Witsenhausen's one in Figure~\ref{fig:witsen1}. In fact, it can be thought as a multi-stage MIMO (multiple-input multiple-output) Witsenhausen's counterexample. Compared to the original Witsenhausen's counterexample, both controllers have observations and control inputs at every time step, and a new state disturbance $w[n]$ is added at every time step. Since the second controller's input cost is free, it can be considered as the receiver in a communication problem. From this perspective, the observation $y_2[1]$ can be considered as side-information at the receiver, and the input $u_2[1]$ can be imagined to be feedback from receiver to transmitter.

The first question that we have to answer to take this communication perspective is ``What is the relevant message in this communication problem?" Since the only uncertainty of the system is the state disturbance $w[n]$, the answer has to be the disturbance. However, since a new $w[n]$ is added at every time step, we have to find the critically relevant disturbance among them.

To understanding this issue, let's revisit the binary deterministic model of Section~\ref{sec:intui}. In Figure~\ref{fig:nonlin}, we can see $x[3]$ corresponds to  $00x_{-1}^{3}0.x_{-3}^3 x_{-4}^3\cdots$ in the binary deterministic model. We will divide this binary number into three parts. The first part is the first two bits $00$, the second part is the next two bits $x_{-1}^{3}0$, and the third part is the remaining bits $x_{-3}^3 x_{-4}^3\cdots$. If we track back the arrows of Figure~\ref{fig:nonlin}, we can see that these three parts originated from the different disturbances $w[0]$, $w[1]$, $w[2]$ respectively. Therefore, we can see that $w[2]$ is not a dominating disturbance since its bit level is much smaller than the other parts, and the dominant disturbances for $x[3]$ are $w[0]$ and $w[1]$. We will separate these two disturbances using the cutset idea in information theory.

The first cut gives every disturbance except $w[0]$ as side information to the second controller, i.e. we give $w[1], w[2]$ as side information. Figure~\ref{fig:finite2} shows the resulting problem, which is a $2$-stage MIMO Witsenhausen's counterexample with only one disturbance at the beginning. Likewise, the second cut gives $w[0], w[2]$ and reserves $w[1]$ inside the cut. Figure~\ref{fig:finite3} shows the resulting problem, which is a $1$-stage Radner's problem. Both problems are relaxations of the original problem, and any convex sum of their cost is also a lower bound to the cost of the original problem.

We already know how to solve Radner's problem in Figure~\ref{fig:finite3}. However, the problem in Figure~\ref{fig:finite2} is a generalized MIMO Witsenhausen's problem, which is even harder than the original one. The crux of the problem is the dual role of controllers' inputs. The input signals $u_1[n]$ and $u_2[n]$ can be used to cancel the state (control role) and at the same time to send information about their observations (communication role). Therefore, we will simplify the problem by removing the less important role.

The first controller has a perfect observation while its input cost is expensive. Therefore, it is better to use the control inputs to send information about the state. We will essentially remove the control role of the first controller input by using the Cauchy-Schwarz inequality. Meanwhile, the second controller has free input cost but blurry observations. Therefore, it is better to focus on the control role. We will remove the communication role of the second controller input by allowing free freeback from the second controller to the first. Therefore, the first controller reduces to a transmitter and the second controller reduces to a receiver.

Figure~\ref{fig:finite4} shows the pure MIMO communication problem we will get after removing the dual roles of the controllers from the problem of Figure~\ref{fig:finite3}. The first controller knows the exact state $w[0]$ and sends information through the input $u_1[1]$. Thus, the first controller is the transmitter and $u_1[1]$ is the transmitted signal.\footnote{Here, $u_1[2]$ cannot send any information to the second controller since communication requires at least one step delay from the transmitter to the receiver.} The second controller estimates the state $w[0]$ based on its observation $y_2[1], y_2[2]$. Therefore, the second controller is the receiver and $y_2[1], y_2[2]$ are the received signals.\footnote{The second controller can also feedback its observation through $u_2[1]$. However, this effect of feedback is negligible in this case, since the causal feedback information can only affect $u_2[2]$ at the transmitter. However, we will see the effect of feedback later in the more generalized problem of Figure~\ref{fig:reduced}.} We will use a simple information-theoretic cutset bound to bound the performance of this communication system, and eventually derive a lower bound approximately matching to the $1$-stage signaling strategy.

At this point, one may wonder why we need the lower bound of Figure~\ref{fig:finite2} and Figure~\ref{fig:finite4} which correspond to zeros in the binary deterministic model. It is because it is not zero in Gaussian real models. Binary deterministic models simplify Gaussian random variables as bounded uniform distributions. This simplification can be justified in an infinite-dimensional relaxation. However, in finite dimensions the simplification only approximately holds and the zeros in the binary deterministic model are actually exponentially decreasing small quantities in a Gaussian model. As shown in \cite{Pulkit_Witsen}, we will replace $v_2[n]$ of Figure~\ref{fig:finite4} by a test channel, adapting ideas of large-deviation theory. The problem of Figure~\ref{fig:finite4} gives a non-trivial lower bound that captures the exponentially decreasing small quantities that must occur because of the finite-dimensionality.

In general, we will see an $s$-stage MIMO Witsenhausen's counterexample in the second time interval of Figure~\ref{fig:division}. Following the same steps as above, we will reduce the problem to a pair of pure communication problems, $s$-stage and $(s-1)$-stage MIMO state-amplification with feedback.

\begin{figure*}[htbp]
\begin{center}
\includegraphics[width=3in]{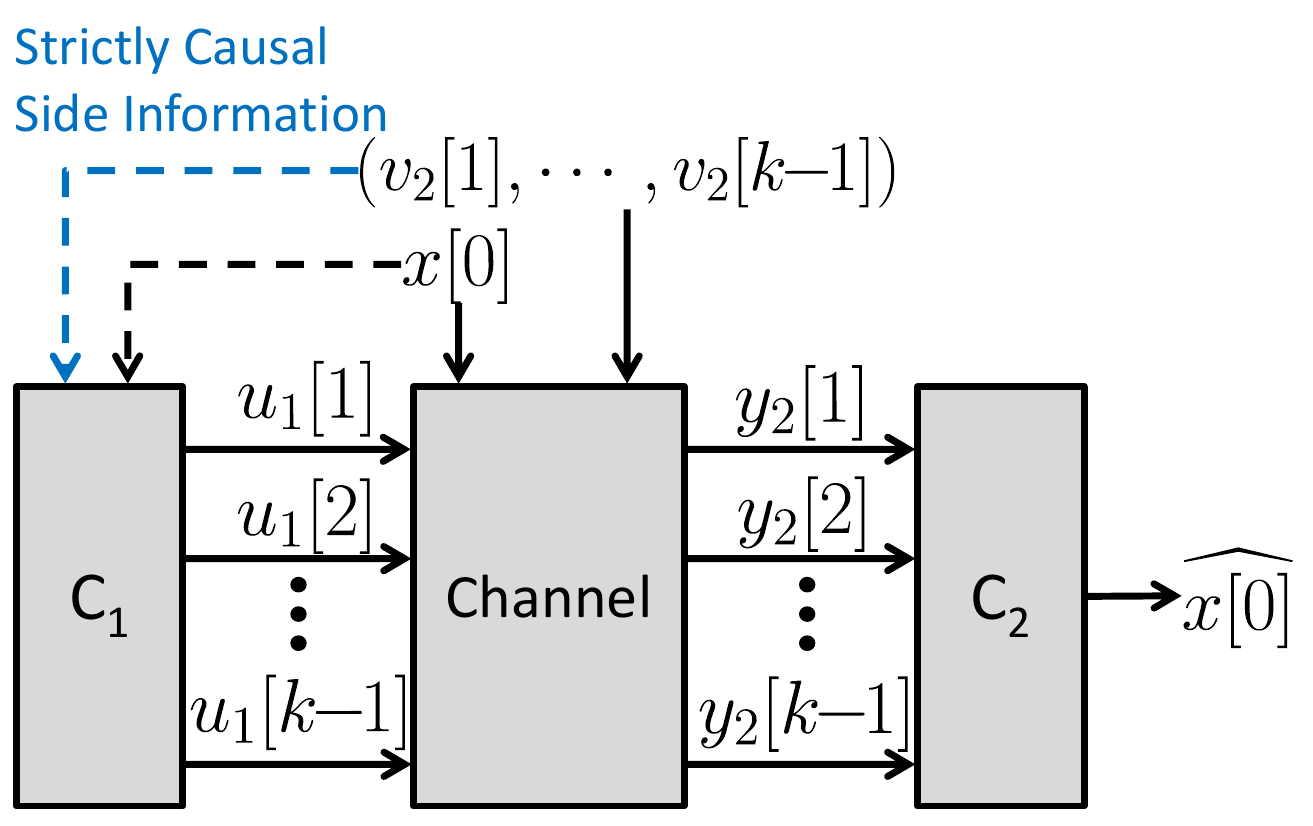}
\caption{$s$-stage MIMO state-amplification with feedback. This problem reflects the implicit communication aspect in the MIMO Witsenhausen's interval of Figure~\ref{fig:division}.}
\label{fig:reduced}
\end{center}
\end{figure*}

\begin{figure*}[htbp]
\begin{center}
\includegraphics[width=3in]{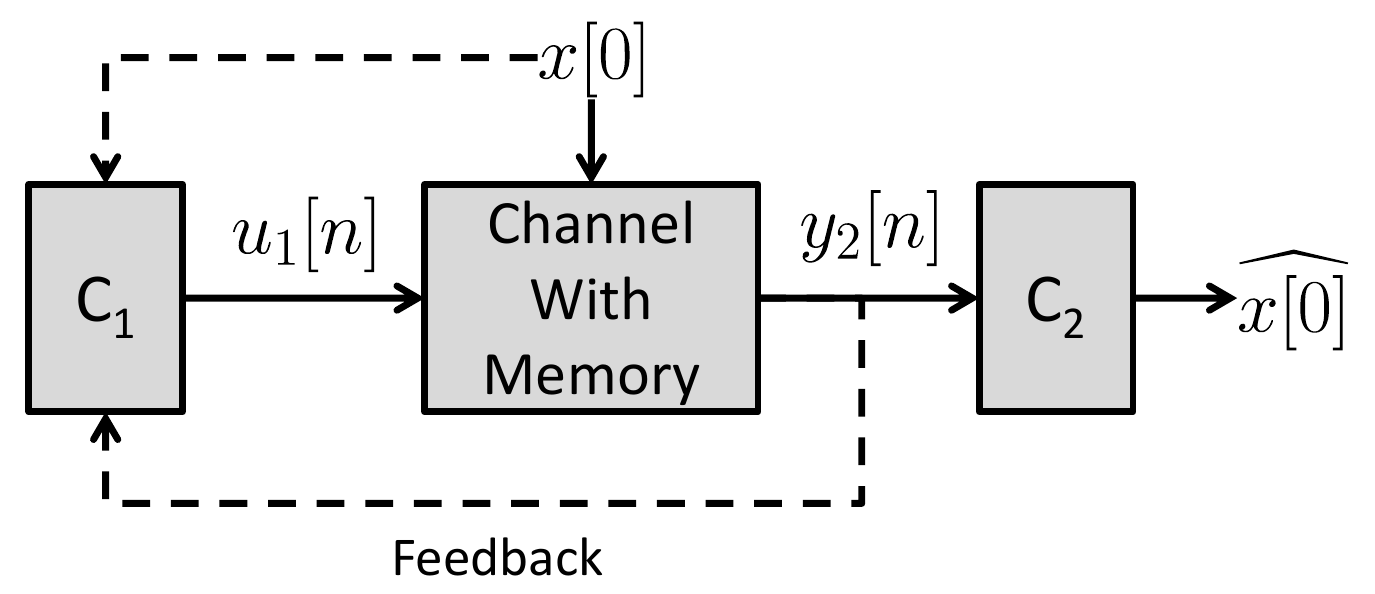}
\caption{An equivalent representation of $s$-stage MIMO state-amplification with feedback in Figure~\ref{fig:reduced}. The MIMO channel of Figure~\ref{fig:reduced} can be thought as a channel with memory.}
\label{fig:reduced2}
\end{center}
\end{figure*}

\subsection{$s$-stage MIMO state-amplification with feedback}
\label{sec:lower:amp}
Figure~\ref{fig:reduced} shows the $s$-stage MIMO state-amplification problem. As we discussed before, the first controller $C_1$ is the transmitter, and the second controller $C_2$ is the receiver. The transmitter knows the state $x[0]$ at the initial time and learns the channel noise $v_2[n]$ by causal feedback. The goal of communication is minimizing the estimation error of the state $x[0]$ at the receiver.

Let's formally state the $s$-stage MIMO state-amplification with feedback problem.
\begin{problem}[$s$-stage MIMO state-amplification with feedback]
Let the underlying random variables $x[0] \sim \mathcal{N}(0,\sigma_0^2)$ and $v_2[n] \sim \mathcal{N}(0,\sigma_{v}^2)$ be all independent.  These are the source and observation noise respectively. The transmitter's input $u_1[n]$ is a function on $x[0]$ and $v_2[1], \cdots,v_2[n-1]$, i.e.
\begin{align}
&u_1[1]=f_1(x[0])\\
&u_1[2]=f_2(x[0],v_2[1])\\
&\vdots\\
&u_1[k-1]=f_{k-1}(x[0],v_2[1:k-2])
\end{align}
The receiver's observations $y_2[n]$ are given as follows.
\begin{align}
&y_2[0]=x[0]+v_2[0]\\
&y_2[1]=ax[0]+u_1[0]+v_2[1]\\
&y_2[2]=a^2x[0]+au_1[0]+u_1[1]+v_2[2]\\
&\vdots\\
&y_2[k-1]=a^{k-1}x[0]+a^{k-2}u_1[0]+\cdots +u_1[k-2]+v_2[k-1]
\end{align}
The receiver generates an estimate $\widehat{x[0]}$ of the state $x[0]$ based on its received signal $y_2[1:k-1]$, i.e. $\widehat{x[0]}=g(y_2[1:k-1])$. The objective of the system is minimizing the quadratic estimation error, $\mathbb{E}[(x[0]-\widehat{x[0]})^2]$.
\end{problem}

This problem can be more compactly represented as Figure~\ref{fig:reduced2} by thinking of the MIMO channel as a channel with memory. As shown in \cite{Cover_feedback}, feedback only increases the capacity at most a half bit per time step. However, in this problem we are using the channels for $k$ time steps, so we still have to justify that the feedback does not increase the capacity too much. The following lemma explicitly computes an information-theoretic cutset bound for this communication problem and gives a reasonable bound on the rate-distortion tradeoff.
\begin{lemma}
Let's consider the problem of Figure~\ref{fig:reduced}.\\
(i) Let $x[0] \sim \mathcal{N}(0,\sigma_0^2)$ and $v_2[n] \sim \mathcal{N}(0,\sigma_{v}^2)$.
Let $w \in \mathbb{R}$ satisfy $|\frac{1}{a^2 w}|<1$ and the input power constraint is
\begin{align}
(1-w)\mathbb{E}[u_1^2[0]]+ (1-w)w\mathbb{E}[u_1^2[1]]+ \cdots + (1-w)w^{k-2}\mathbb{E}[u_1^2[k-2]] \leq P
\end{align}
Then, the estimation error of $x[0]$ based on $y_2[0:k-1]$ is lower bounded by
\begin{align}
\mathbb{E}[(x[0]-\widehat{x[0]})^2] \geq \frac{\sigma_0^2}{2^{2I_k}}
\end{align}
where
\begin{align}
I_k = \frac{k}{2} \log( 1 + \frac{1}{k\sigma_v^2}(\frac{2a^{2(k-1)} \sigma_0^2 }{1-a^{-2}}  + \frac{2 a^{k-2} }{1-a^{-2}} \frac{P}{(1-\frac{1}{ a^2 w})(1-w)})).
\end{align}
(ii) Consider the same problem as (i) except that $v_2[k-1] \sim \mathcal{N}(0,\sigma_v'^2)$, i.e. only the last observation noise variance is different. Then, the estimation error based on $y_2[0:k-1]$ is lower bounded by
\begin{align}
\mathbb{E}[(x[0]-\widehat{x[0]})^2] \geq \frac{\sigma_0^2}{2^{2I_k'}}
\end{align}
where
\begin{align}
I_k'=I_{k-1}
+\frac{1}{2}\log(1 + \frac{1}{\sigma_{v}'^2}(
2 a^{2(k-1)} \sigma_0^2 + 2\frac{a^{2(k-2)}}{1-\frac{1}{a^2 w}} \frac{P}{1-w})).
\end{align}
(iii) Consider the same problem as (ii) except that $v_2[k-1] \sim Unif[-\sigma_{v}',\sigma_{v}']$, i.e. the last observation is a uniform random variable. Then, the estimation error based on $y_2[0:k-1]$ is lower bounded by
\begin{align}
\mathbb{E}[(x[0]-\widehat{x[0]})^2] \geq \frac{\sigma_0^2}{2^{2I_k''}}
\end{align}
where
\begin{align}
I_k''=I_k'+\frac{1}{2}\log(\frac{\pi e }{2}).
\end{align}
\label{lem:reduced}
\end{lemma}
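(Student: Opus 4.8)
The plan is a standard information-theoretic converse carried out in three moves: (1) reduce the estimation error to a mutual information via the Gaussian rate--distortion bound and data processing; (2) bound that mutual information through the feedback channel by an entropy chain rule that makes precise the intuition that ``feedback does not help much here''; and (3) control the resulting sum of log-variances using the Cauchy--Schwarz inequalities of Lemma~\ref{ach:lemmacauchy} and the geometric-weighted power bound of Lemma~\ref{lem:power}.

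For part (i): since $x[0]\sim\mathcal N(0,\sigma_0^2)$ and $\widehat{x[0]}$ is a measurable function of $y_2[0:k-1]$, the Gaussian rate--distortion function together with data processing gives $\mathbb E[(x[0]-\widehat{x[0]})^2]\ge \sigma_0^2\,2^{-2I(x[0];y_2[0:k-1])}$, so it suffices to show $I(x[0];y_2[0:k-1])\le I_k$. I expand by the chain rule, $I(x[0];y_2[0:k-1])=\sum_n\big(h(y_2[n]\mid y_2[0:n-1])-h(y_2[n]\mid y_2[0:n-1],x[0])\big)$. The key observation is that $u_1[n]$ is a deterministic function of $x[0]$ and $v_2[0:n-1]$, and $v_2[0:n-1]$ is itself recoverable from $x[0]$ and $y_2[0:n-1]$ by solving the channel equations recursively; hence conditioned on $x[0]$ and $y_2[0:n-1]$ the output $y_2[n]$ is a constant plus the fresh noise $v_2[n]$, which is independent of that conditioning, so $h(y_2[n]\mid y_2[0:n-1],x[0])=h(v_2[n])=\tfrac12\log(2\pi e\sigma_v^2)$. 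Upper-bounding $h(y_2[n]\mid y_2[0:n-1])\le\tfrac12\log(2\pi e\,\mathbb E[y_2^2[n]])$ (Gaussian maximizes entropy at fixed second moment) and then applying concavity of $\log$ across the $k$ terms yields $I(x[0];y_2[0:k-1])\le\tfrac k2\log\big(1+\tfrac1{k\sigma_v^2}\sum_n\mathbb E[(\mathrm{signal}_n)^2]\big)$, where $\mathrm{signal}_n=a^nx[0]+\sum_j a^{n-1-j}u_1[j]$. Finally $\mathbb E[(\mathrm{signal}_n)^2]\le 2a^{2n}\sigma_0^2+2\,\mathbb E[(\sum_j a^{n-1-j}u_1[j])^2]$ by Lemma~\ref{ach:lemmacauchy}, and Lemma~\ref{lem:power} with ratio $b=w$ bounds the second term by $\tfrac{a^{2(n-1)}}{1-1/(a^2w)}\cdot\tfrac{P}{1-w}$ using the stated power constraint; summing the geometric series over $n$ produces exactly the bracketed quantity in the definition of $I_k$.

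For part (ii): write $I(x[0];y_2[0:k-1])=I(x[0];y_2[0:k-2])+I(x[0];y_2[k-1]\mid y_2[0:k-2])$. The first term is bounded by $I_{k-1}$ by repeating (i) over horizon $k-1$, all of whose noises still have variance $\sigma_v^2$. For the second term the same entropy calculation gives $I(x[0];y_2[k-1]\mid y_2[0:k-2])\le\tfrac12\log(1+\mathbb E[(\mathrm{signal}_{k-1})^2]/\sigma_v'^2)$ with $\mathbb E[(\mathrm{signal}_{k-1})^2]\le 2a^{2(k-1)}\sigma_0^2+2\tfrac{a^{2(k-2)}}{1-1/(a^2w)}\tfrac{P}{1-w}$, which is precisely the term added in $I_k'$. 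For part (iii): the \emph{only} change is that $h(y_2[k-1]\mid y_2[0:k-2],x[0])=h(v_2[k-1])$ is now the differential entropy of $\mathrm{Unif}[-\sigma_v',\sigma_v']$, namely $\log(2\sigma_v')$, while the upper bound $h(y_2[k-1]\mid y_2[0:k-2])\le\tfrac12\log(2\pi e\,\mathbb E[y_2^2[k-1]])$ still holds by max-entropy; since the uniform noise has variance $\sigma_v'^2/3\le\sigma_v'^2$, comparing the two expressions shows the conditional mutual information grows by at most $\tfrac12\log(\pi e/2)$ over part (ii), i.e. $I_k''=I_k'+\tfrac12\log(\pi e/2)$. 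This is the step that implements, following \cite{Pulkit_Witsen} and large-deviation ideas, the replacement of a deterministic-model ``zero'' by a genuinely small test-channel quantity.

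The main obstacle is step~(2): handling the feedback correctly so that the naive ``feedback is useless'' heuristic becomes a rigorous bound. Everything hinges on the deterministic invertibility --- given $x[0]$ and the past outputs one recovers the past noises and hence all past inputs --- which is what collapses $h(y_2[n]\mid y_2[0:n-1],x[0])$ to $h(v_2[n])$; one must also check that $v_2[n]$ is independent of $(x[0],y_2[0:n-1])$, which holds because the transmitter at time $n$ has seen only $v_2[0:n-1]$. The remaining work --- the Jensen step, the two Cauchy--Schwarz reductions, and the geometric-sum bookkeeping --- is routine, and the constant $\pi e/2$ in (iii) is merely the entropy gap between a uniform and the worst-case Gaussian of matched support width.
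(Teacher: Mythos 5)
Your proposal is correct and matches the paper's argument: you chain-rule the mutual information and handle each term by the combination of (a) the causal invertibility $v_2[0:n-1]\leftrightarrow y_2[0:n-1]$ given $x[0]$ to pin $h(y_2[n]\mid y_2[0:n-1],x[0])=h(v_2[n])$, (b) the Gaussian max-entropy bound on $h(y_2[n]\mid y_2[0:n-1])$, (c) AM--GM/Jensen across the $k$ terms, and (d) Lemma~\ref{ach:lemmacauchy} plus Lemma~\ref{lem:power} to bound $\mathbb{E}[y_2^2[n]]$; the paper's proof bounds $h(y_2[0:k-1])-h(y_2[0:k-1]\mid x[0])$ in aggregate rather than term by term, but the two decompositions are identical after the chain rule. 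Parts (ii) and (iii) are handled exactly as in the paper, by isolating the final observation (replacing the last AM--GM term by a standalone $\tfrac12\log(\mathbb{E}[y_2^2[k-1]]/\sigma_v'^2)$) and then trading the Gaussian for the uniform entropy, $\tfrac12\log(2\pi e\sigma_v'^2)-\log(2\sigma_v')=\tfrac12\log(\pi e/2)$.
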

\begin{proof}
(i) First, we can lower bound the estimation error as follows:
\begin{align}
&\frac{1}{2}\log( 2 \pi e \mathbb{E}[(x[0] - \widehat{x[0]})^2])\\
&\geq h(x[0]-\widehat{x[0]}| y_2[0:k-1])\\
&=h(x[0]| y_2[0:k-1])\\
&=h(x[0])- I(x[0]; y_2[0:k-1]) \\
&\geq \frac{1}{2}\log(2 \pi e \sigma_0^2) - I(x[0]; y_2[0:k-1]). \label{eqn:reduced1}
\end{align}
We will upper bound the mutual information. Let's first upper bound the received signal power. Since $u_1[n]$ is a strictly causal function of $v_2[n]$,
\begin{align}
&\mathbb{E}[y_2^2[n]] \leq 2 \mathbb{E}[(a^n x[0])^2] + 2 \mathbb{E}[(a^{k-2}u_1[0]+u_1[n-1])^2] + \mathbb{E}[v_2^2[n]].\\
\end{align}
By Lemma~\ref{lem:power}, we have
\begin{align}
&\mathbb{E}[(a^{n-1}u_1[0]+\cdots+u_1[n-1])^2]\\
&\leq \frac{a^{2(n-1)}}{1 - \frac{1}{a^2 w}} \frac{P}{1-w}.
\end{align}
Therefore, the received signal power is upper bounded as
\begin{align}
&\mathbb{E}[y_2^2[n]] \leq 2 a^{2n} \sigma_0^2 + 2 \frac{a^{2(n-1)}}{1 - \frac{1}{a^2 w}} \frac{P}{1-w} + \mathbb{E}[v_2^2[n]].
\end{align}
Thus, we can conclude
\begin{align}
&\sum_{0 \leq n \leq k-1} \mathbb{E}[y_2^2[n]] \\
&\leq \sum_{0 \leq n \leq k-1} 2a^{2n} \sigma_0^2 + 2 \frac{a^{2(n-1)}}{1 - \frac{1}{a^2 w}} \frac{P}{1-w} + \sigma_v^2\\
&= 2( 1+ \cdots +a^{2(k-1)}) \sigma_0^2 + \sum_{0 \leq n \leq k-1}2 \frac{a^{2(n-1)}}{1 - \frac{1}{a^2 w}} \frac{P}{1-w} + k\sigma_v^2\\
&= 2a^{2(k-1)} \frac{1-a^{-2k}}{1-a^{-2}} \sigma_0^2 + 2 a^{k-2} \frac{1-a^{-2k}}{1-a^{-2}} \frac{P}{(1-\frac{1}{a^2 w})(1-w)} + k\sigma_v^2\\
&\leq   \frac{2a^{2(k-1)} \sigma_0^2 }{1-a^{-2}}  + \frac{2 a^{k-2} }{1-a^{-2}} \frac{P}{(1-\frac{1}{a^2 w})(1-w)} + k\sigma_v^2.
\end{align}
Using this, we can upper bound the mutual information.
\begin{align}
&I(x[0]; y_2[0:k-1]) \\
&\leq h(y_2[0:k-1]) - h(y_2[0:k-1]| x[0]) \\
&\leq \sum_{0 \leq n \leq k-1} h(y_2[n]) - \sum_{0 \leq n \leq k-1} h(v_2[n])\\
&\leq \sum_{0 \leq n \leq k-1} \frac{1}{2}\log( 2 \pi e \mathbb{E}[y_2[n]^2]) - \frac{k-1}{2} \log(2 \pi e \sigma_v^2) \\
&=\frac{1}{2} \log( \prod_{0 \leq n \leq k-1} \frac{\mathbb{E}[y_2[n]^2]}{\sigma_v^2})\\
&\leq \frac{1}{2} \log\left( ( \frac{1}{k} \sum_{0 \leq n \leq k-1} \frac{\mathbb{E}[y_2[n]^2]}{\sigma_v^2})^{k-1}\right) \\
&(\because \mbox{geometric mean and arithmetic mean})\\
&\leq \frac{1}{2} \log\left(( 1 + \frac{1}{k\sigma_v^2}(\frac{2a^{2(k-1)} \sigma_0^2 }{1-a^{-2}}  + \frac{2 a^{k-2} }{1-a^{-2}} \frac{P}{(1-\frac{1}{a^2 w})(1-w)}))^{k}\right) \label{eqn:reduced2}
\end{align}
The last term is $I_k$. By plugging \eqref{eqn:reduced2} into \eqref{eqn:reduced1}, we get
\begin{align}
\mathbb{E}[(x[0] - \widehat{x[0]})^2] \geq \frac{\sigma_0^2}{2^{2I_k}}
\end{align}
which finishes the proof.

(ii) We have
\begin{align}
&I( x[0]; y_2[0:k-1]) \\
&\leq h(y_2[0:k-1]) - h(y_2[0:k-1]| x[0]) \\
&\leq \sum_{0 \leq n \leq k-1} h(y_2[n]) - \sum_{0 \leq n \leq k-1} h(v_2[n])\\
&\leq \sum_{0 \leq n \leq k-1} \frac{1}{2}\log( 2 \pi e \mathbb{E}[y_2[n]^2]) - \frac{k-1}{2} \log(2 \pi e \sigma_v^2)
- \frac{1}{2} \log(2 \pi e \sigma_v'^2) \\
&=\frac{1}{2} \log( \prod_{0 \leq n \leq k-2} \frac{\mathbb{E}[y_2[n]^2]}{\sigma_v^2}) + \frac{1}{2}\log( \frac{\mathbb{E}[y_2[k-1]^2]}{\sigma_v'^2})\\
&\leq \frac{1}{2} \log( \frac{1}{k-1} \sum_{0 \leq n \leq k-2} \frac{\mathbb{E}[y_2[n]^2]}{\sigma_v^2})^{k-1} + \frac{1}{2}\log( \frac{\mathbb{E}[y_2[k-1]^2]}{\sigma_v'^2})\\
&(\because \mbox{geometric mean and arithmetic mean})\\
&\leq \frac{1}{2} \log( 1 + \frac{1}{(k-1)\sigma_v^2}(\frac{2a^{2(k-2)} \sigma_0^2 }{1-a^{-2}}  + \frac{2 a^{k-3} }{1-a^{-2}} \frac{P}{(1-\frac{1}{a^2 w})(1-w)}))^{k-1} \\
&+\frac{1}{2}\log(1 + \frac{1}{\sigma_{v}'^2}(
2 a^{2(k-1)} \sigma_0^2 + 2\frac{a^{2(k-2)}}{1-\frac{1}{a^2 w}} \frac{P}{1-w}
)). \label{eqn:reduced3}
\end{align}
The last term is $I_k'$. By plugging \eqref{eqn:reduced3} into \eqref{eqn:reduced1}, we get
\begin{align}
\mathbb{E}[(x[0] - \widehat{x[0]})^2] \geq \frac{\sigma_0^2}{2^{2I_k'}}
\end{align}
which finishes the proof.

(iii) We can repeat the proof of (ii) replacing the distribution of $v_2[k-1]$ by uniform.
\end{proof}
In this lemma, the bound of (ii) is tighter than that of (i) since it excludes the last observation in the arithmetic-geometric inequality, but it is harder to compute. We also allow the variance of the last observation noise to be different from the other ones, since we will replace it with another distribution to adapt large deviation ideas.\footnote{Even though large deviation ideas usually introduce a sequence of atypical noise, here the SNR of the last observation dominates the SNR of all the other observations. Thus, it is enough to introduce atypically large noise only to the last observation.}

\subsection{Lower bound on the optimal cost based on Witsenhausen's counterexample}

Now, we can combine the previous results to derive a lower bound that will approximately match with $s$-stage signaling strategies. We will derive a lower bound on the weighted average cost of Problem~\ref{prob:lqg}, i.e. we will find functions $D_{L,i}(\widetilde{P_1},\widetilde{P_2})$ such that
\begin{align}
\inf_{u_1,u_2} \limsup_{N \rightarrow \infty} \frac{1}{N}
\sum_{0 \leq n < N} q \mathbb{E}[x^2[n]] + r_1 \mathbb{E}[u_1^2[n]] + r_2 \mathbb{E}[u_2^2[n]] \geq \min_{\widetilde{P_1},\widetilde{P_2} \geq 0} q D_{L,i}(\widetilde{P_1},\widetilde{P_2}) + r_1 \widetilde{P_1} + r_2 \widetilde{P_2}.\end{align}

Here, the lower bounds $D_{L,i}(\widetilde{P_1},\widetilde{P_2})$ can be thought as a lower bound on $D(P_1,P_2)$, the power-disturbance tradeoff, of Problem~\ref{prob:power}. The first bound $D_{L,1}$ is given in the following lemma, and the rest will be given in Lemma~\ref{cov:lemma2} of page~\pageref{cov:lemma2}.


\begin{lemma}
Define $S_{L,1}$ as the set of $(k_1,k_2,k,\sigma_{v2}',\alpha,\Sigma)$ such that
\begin{align}
&k_1,k_2,k \in \mathbb{N}, \sigma_{v2}',\alpha,\Sigma \in \mathbb{R}_+, \\
&k_1 \geq 1, k_2 - k_1 -1 \geq 0, k \geq k_2,\\
&\sigma_{v2}' \geq 0, 0 \leq \alpha \leq 1,\\
&0 \leq \Sigma \leq 
\left\{\begin{array}{ll}
1 &\mbox{when }k_1 =1\\
\frac{a^{2(k_1-1)}\sigma_{v1}^2}{(1+\frac{\sigma_{v1}^2}{\sigma_{v2}^2})(\frac{a^{2(k_1-2)}(1-a^{-2(k_1-1)})}{1-a^{-2}})+\sigma_{v1}^2} & \mbox{when } k_1 \geq 2
\end{array}
\right.
\end{align}
We also define $D_{L,1}(\widetilde{P_1},\widetilde{P_2};k_1,k_2,k,\sigma_{v2}',\alpha,\Sigma )$ as follows:
\begin{align}
&D_{L,1}(\widetilde{P_1},\widetilde{P_2};k_1,k_2,k,\sigma_{v2}',\alpha,\Sigma) \\
&:=
\alpha (\sqrt{c \frac{a^{2(k-k_1)}\Sigma}{2^{2I'(\widetilde{P_1})}}}
- \sqrt{ c \frac{a^{2(k-k_1-1)}(1-(2.5a^{-2})^{k_2-k_1})}{1-2.5a^{-2}} \frac{\widetilde{P_1}}{1-2.5^{-1}} } \\
&- \sqrt{ \frac{a^{2(k-k_2-1)}(1-(2.5a^{-2})^{k-k_2})}{1-2.5a^{-2}} \frac{ 2.5^{k_2-k_1} \widetilde{P_1}}{1-2.5^{-1}} }
- \sqrt{ \frac{a^{2(k-k_2-1)}(1-(2.5a^{-2})^{k-k_2})}{1-2.5a^{-2}} \frac{\widetilde{P_2}}{1-2.5^{-1}} })_+^2 \label{eqn:witsenlower1} \\
&+(1-\alpha) ( \sqrt{\frac{a^{2(k-k_1-1)}\Sigma}{2^{2I''(\widetilde{P_1})}}}
-\sqrt{\frac{a^{2(k-k_1-2)}(1-(2.5a^{-2})^{k-k_1-1})}{1-2.5a^{-2}} \frac{2.5 \widetilde{P_1}}{1-2.5^{-1}}}\label{eqn:witsenlower2} \\
&-\sqrt{\frac{a^{2(k-k_2-1)}(1-(2.5a^{-2})^{k-k_2})}{1-2.5a^{-2}} \frac{\widetilde{P_2}}{1-2.5^{-1}}} )_+^2
+1 \label{eqn:witsenlower3}
\end{align}
where
\begin{align}
&I''(\widetilde{P_1})= \left \{
\begin{array}{ll}
\frac{{k_2-k_1-1}}{2} \log(1+\frac{1}{(k_2-k_1-1)\sigma_{v2}^2}(\frac{2a^{2(k_2-2-k_1)}}{1-a^{-2}}\Sigma + \frac{2a^{2(k_2-3-k_1)} }{1-a^{-2}} \frac{2.5\widetilde{P_1}}{(1-2.5a^{-2})(1-2.5^{-1})}))  & \mbox{if } k_2-k_1-1>0\\
0 & \mbox{if } k_2-k_1-1=0
\end{array} \right.
\\
&I'(\widetilde{P_1})= I''(\widetilde{P_1})+\frac{1}{2} \log(1+ \frac{1}{\sigma_{v2}'^2}(2a^{2(k_2-1-k_1)}\Sigma + 2 \frac{a^{2(k_2-2-k_1)}\widetilde{P_1}}{(1-2.5a^{-2})(1-2.5^{-1})})) + \frac{1}{2} \log( \frac{2 \pi e}{4} ) \mathbf{1}(\sigma_{v2} \neq \sigma_{v2}')\\
&c=\left\{
\begin{array}{ll}
\frac{2\sigma_{v2}'}{\sqrt{2\pi}\sigma_{v2}}\exp(-\frac{\sigma_{v2}'^2}{2 \sigma_{v2}^2}) & \mbox{if }\sigma_{v2} \neq \sigma_{v2}' \\
1 & \mbox{if } \sigma_{v2}= \sigma_{v2}'\\
\end{array}
\right. 
\end{align}
Let $|a| \geq 2.5$. Then, for all $q, r_1, r_2 \geq 0$, the minimum cost \eqref{eqn:part11} of Problem~\ref{prob:lqg} is lower bounded as follows:
\begin{align}
&\inf_{u_1,u_2} \limsup_{N \rightarrow \infty} \frac{1}{N}
\sum_{0 \leq n < N} q \mathbb{E}[x^2[n]] + r_1 \mathbb{E}[u_1^2[n]] + r_2 \mathbb{E}[u_2^2[n]]\\
& \geq \sup_{(k_1,k_2,k,\sigma_{v2}',\alpha,\Sigma) \in S_{L,1}}\min_{\widetilde{P_1},\widetilde{P_2} \geq 0} q D_{L,1}(\widetilde{P_1},\widetilde{P_2};k_1,k_2,k,\sigma_{v2}',\alpha,\Sigma) + r_1 \widetilde{P_1} + r_2 \widetilde{P_2}.
\end{align}
\label{cov:lemma3}
\end{lemma}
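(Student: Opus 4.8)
The plan is to chain together three ingredients already established in this section: the geometric-slicing reduction (Lemma~\ref{lem:geo}), the three-stage decomposition of the resulting finite-horizon problem, and the finite-dimensional cutset bound for MIMO state-amplification (Lemma~\ref{lem:reduced}), gluing them with the Cauchy--Schwarz inequalities of Lemma~\ref{ach:lemmacauchy} and Lemma~\ref{lem:power}. By the monotonicity in $\sigma_0^2$ stated in Lemma~\ref{lem:geo}, it suffices to prove the bound for $\sigma_0^2=0$, so that $x[0]=0$ and $u_1[0]=u_2[0]=0$ are optimal. Applying Lemma~\ref{lem:geo} with discount $\frac25$ (legitimate because $|a|\ge 2.5$ makes $|\tfrac{1}{a^2\cdot 2/5}|<1$ and all the geometric series below converge) and the indices $k_1\le k_2\le k$ drawn from $S_{L,1}$ reduces the infinite-horizon weighted cost to the finite-horizon cost of Problem~\ref{prob:slice}. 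Passing to its power-constrained form: for any causal strategy, if $\widetilde P_1,\widetilde P_2$ denote the $\frac25$-discounted control powers it spends, the sliced cost is at least $q\,D_F(\widetilde P_1,\widetilde P_2)+r_1\widetilde P_1+r_2\widetilde P_2$ (Problem~\ref{prob:slicepower}, with the budget rescaled by $1-\frac25=1-2.5^{-1}$), so it is enough to show $D_F(\widetilde P_1,\widetilde P_2)\ge D_{L,1}(\widetilde P_1,\widetilde P_2;k_1,k_2,k,\sigma_{v2}',\alpha,\Sigma)$. We then split $[0,k]$ into the information-limited interval $[0,k_1]$, the MIMO Witsenhausen interval $[k_1,k_2]$, and the power-limited interval $[k_2,k]$, relaxing separately on each while keeping every relaxation valid.

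On the information-limited interval we discard all control actions up to $k_1$ and hand both controllers the joint observations $y_1[1:k_1-1],y_2[1:k_1-1]$; Lemma~\ref{cov:lemma1} then shows that the conditional variance of the earliest relevant disturbance's contribution $a^{k_1-1}w[0]$ to $x[k_1]$ is at least the expression bounding $\Sigma$ in the definition of $S_{L,1}$ (and exactly $1$ when $k_1=1$). Hence, entering the MIMO Witsenhausen interval, there is an ``unknowable'' residual of variance at least $\Sigma$ which plays the role of the source in a communication problem: $C_1$ is a transmitter (perfect state knowledge there, expensive input), $C_2$ a receiver (noisy observation, cheap input). Following the cutset argument of Section~\ref{sec:mimowitsen} we obtain two relaxations --- retaining, respectively, the disturbance entering one step apart near the boundary of the information-limited interval --- which accounts for the $a^{2(k-k_1)}$ versus $a^{2(k-k_1-1)}$ amplification and for the one-stage difference between $I'$ (built on $k_2-k_1$ stages, plus the last-observation term) and $I''$ (built on $k_2-k_1-1$ stages). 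On each relaxation Lemma~\ref{lem:reduced} yields the MMSE lower bound $\sigma_0^2/2^{2I}$ on the receiver's estimate; the substitution of $v_2[k-1]$ first by $\mathcal N(0,\sigma_{v2}'^2)$ and then by a uniform (items (ii), (iii) of Lemma~\ref{lem:reduced}) is the large-deviation step of \cite{Pulkit_Witsen}, and the resulting change-of-measure weight --- the density of $\mathcal N(0,\sigma_{v2}^2)$ at $\sigma_{v2}'$ times the width $2\sigma_{v2}'$ --- is exactly the constant $c$, while the $\tfrac12\log\frac{\pi e}{2}$ terms absorb the Gaussian-versus-uniform entropy gap.

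It remains to convert each MMSE bound into a bound on $\mathbb E[x^2[k]]$ and to pay for the controls. Write $x[k]$ as the sum of the contribution of the retained residual disturbance, the aggregate influence of $u_1$ over $[k_1,k_2]$, and the aggregate influences of $u_1$ and $u_2$ over $[k_2,k]$ (the power-limited interval, where both controllers are now given perfect observations, so only power matters). By Lemma~\ref{ach:lemmacauchy}, $\sqrt{\mathbb E[x^2[k]]}$ is at least $\sqrt{\mathbb E[(\text{residual part})^2]}$ minus the square roots of the expected squares of those control-influence terms; the residual part is lower bounded by the communication argument (producing the leading radicals $\sqrt{c\,a^{2(k-k_1)}\Sigma/2^{2I'}}$ and $\sqrt{a^{2(k-k_1-1)}\Sigma/2^{2I''}}$), while each control-influence term is upper bounded by Lemma~\ref{lem:power} in terms of its discounted power budget, yielding the subtracted radicals with the $(1-(2.5a^{-2})^{\cdot})/(1-2.5a^{-2})$ and $2.5^{\cdot}/(1-2.5^{-1})$ geometric factors --- the factor $2.5^{k_2-k_1}$ on the second $\widetilde P_1$-radical being the conversion of a $k_1$-origin discounted budget into a $k_2$-origin one. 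Squaring and truncating at zero gives the two $(\cdot)_+^2$ expressions; since the two cuts are independently valid lower bounds, the convex combination with weights $\alpha$ and $1-\alpha$ is again a lower bound, and the final disturbance $w[k-1]$, uncontrollable and unamplified, contributes the additive $+1$. Taking $\inf$ over $\widetilde P_1,\widetilde P_2$ and $\sup$ over $(k_1,k_2,k,\sigma_{v2}',\alpha,\Sigma)\in S_{L,1}$ finishes the argument.

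The main obstacle is the MIMO Witsenhausen interval: one must decouple the dual control/communication role of $u_1$ and $u_2$ --- stripping the control role from $C_1$ via the Cauchy--Schwarz split of $x[k]$ and the communication role from $C_2$ by granting free feedback --- so that the residual-source estimation error is genuinely isolated, and then make the finite-dimensional cutset bound nontrivial through the atypical-noise substitution (without it the binary-deterministic intuition of Section~\ref{sec:caveat} would predict a vacuous zero). Relative to that, the remaining difficulty --- keeping the discount, amplification, and budget-rescaling bookkeeping consistent across the three intervals --- is mechanical.
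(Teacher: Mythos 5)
Your proposal is correct and follows essentially the same route as the paper's proof: geometric slicing with discount $2.5^{-1}$, the three-interval decomposition, Lemma~\ref{cov:lemma1} for the information-limited residual $\Sigma$, Lemma~\ref{lem:reduced} (parts (iii) and (i) respectively) for the two cuts retaining $w[0]$ and $w[1]$ with the change-of-measure factor $c$, Cauchy--Schwarz and Lemma~\ref{lem:power} to peel off and bound the power-limited control influences, and the $\alpha$-convex combination plus the $+1$ from $w[k-1]$. The only place your description is slightly looser than the paper is the orthogonality/causality bookkeeping (the paper carefully decomposes $W = W'+W''$ and $U_{22}=U_{22}'+U_{22}''$ and argues that conditioning on $C=1$ does not affect $u_1[1\!:\!k_2-1]$), but the mechanism you describe is the same.
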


\begin{proof}
For simplicity, we assume $a\geq 2.5$, $k_1 \geq 2$, $k_2-k_1-1 > 0$, $k > k_2$, $\sigma_{v2} \neq \sigma_{v2}'$. The remaining cases when $a \leq -2.5$ or $k_1 = 1$ or $k_2 -k_1 -1 =0$ or $k=k_2$ or $\sigma_{v2} = \sigma_{v2}'$ easily follow with minor modifications.

$\bullet$ Geometric Slicing: We first apply the geometric slicing idea of Section~\ref{sec:lower:geo} to get a finite-horizon problem. By setting $\alpha=2.5^{-1}$ in Lemma~\ref{lem:geo}, the average cost is lower bounded by
\begin{align}
&\inf_{u_1, u_2}(
q \mathbb{E}[x^2[k]] \\
&+r_1 \underbrace{((1-2.5^{-1}) \mathbb{E}[u_1^2[k_1]] + (1-2.5^{-1})2.5^{-1}\mathbb{E}[u_1^2[k_1+1]]+ \cdots+(1-2.5^{-1}) 2.5^{-k+1+k_1} \mathbb{E}[u_1^2[k-1]] )}_{:=\widetilde{P_1}}\\
&+r_2 \underbrace{((1-2.5^{-1}) \mathbb{E}[u_2^2[k_2]] + (1-2.5^{-1})2.5^{-1}\mathbb{E}[u_2^2[k_2+1]]+ \cdots+(1-2.5^{-1}) 2.5^{-k+1+k_2} \mathbb{E}[u_2^2[k-1]] )}_{:=\widetilde{P_2}})
\end{align}
Here, we denote the second and the third terms as $\widetilde{P_1}$ and $\widetilde{P_2}$ respectively. As we mentioned in Figure~\ref{fig:finite1}, \ref{fig:finite2} and \ref{fig:finite3}, we will relax the problem in two different ways --- one with state disturbance $w[0]$ and the other one with $w[1]$. Let's start with the former.

$\bullet$ Large deviation idea: As mentioned in Section~\ref{sec:mimowitsen}, we will apply large deviation ideas\footnote{As mentioned before, large deviation theory replaces whole noise sequence as atypical one. However, for the simplicity of computation, we will only replace the last observation noise. Precisely, the Gaussian observation noise $v_2[k_2-1]$ will behave like a uniform observation noise with larger variance with a certain probability. Thus, we can replace $v_2[k_2-1]$ with a uniform random variable with larger variance by multiplying by the corresponding probability. See \cite{Pulkit_Witsen} for the details of the idea.} to $v_2[k_2-1]$. For this, we write $v_2[k_2-1]$ as a mixture of two independent random variables:
\begin{align}
v_2[k_2-1]=C \cdot v_2'[k_2-1] + (1-C) v_2''[k_2-1]
\end{align}
where $C, v_2'[k_2-1], v_2''[k_2-1]$ are independent random variables whose distributions are given as follows:
\begin{align}
&v_2'[k_2-1] \sim Unif[-\sigma_{v2}',\sigma_{v2}']\\
&f_{v_2''[k_2-1]}(v) = \left \{ \begin{array}{ll}
\frac{1}{1-c} \frac{1}{\sqrt{2\pi} \sigma_{v2}} \exp(- \frac{v^2}{2 \sigma_{v2}^2}) & \mbox{for }|v| > \sigma_{v2}' \\
\frac{1}{1-c} \frac{1}{\sqrt{2\pi} \sigma_{v2}} (\exp(- \frac{v^2}{2 \sigma_{v2}^2})-\exp(- \frac{\sigma_{v2}'^2}{2 \sigma_{v2}^2}) ) &\mbox{for }|v| \leq \sigma_{v2}'
\end{array} \right. \\
&C=
\left \{ \begin{array}{ll}
1 &\mbox{w.p. } c \\
0 &\mbox{w.p. } 1-c
\end{array} \right.
\end{align}
where $c = \frac{2\sigma_{v2}'}{\sqrt{2 \pi} \sigma_{v2}} \exp(- \frac{\sigma_{v2}'^2}{2 \sigma_{v2}^2})$.

$\bullet$ Three stage division: As mentioned in Section~\ref{sec:lower:three}, we will divide the finite-horizon problem into three time intervals. 
The following definitions of  $U_{ij}$ correspond to the first and second controller's input in these three intervals shown in Figure~\ref{fig:division}, where the index $i$ and $j$ represent the controllers and time intervals respectively.
\begin{align}
&W:=aw[k-2]+\cdots + a^{k-1}w[0]\\
&U_{11}:=a^{k-2} u_1[1] + \cdots + a^{k-k_1} u_1[k_1-1]\\
&U_{12}:=a^{k-k_1-1} u_1[k_1]+ \cdots + a^{k-k_2} u_1[k_2-1]\\
&U_{13} := a^{k-k_2-1} u_1[k_2]+ \cdots + u_1[k-1] \\
&U_{21}:=a^{k-2} u_2[1] + \cdots + a^{k-k_1} u_2[k_1-1]\\
&U_{22}:=a^{k-k_1-1} u_2[k_1]+ \cdots + a^{k-k_2} u_2[k_2-1]\\
&U_{23} := a^{k-k_2-1} u_2[k_2]+ \cdots + u_2[k-1] \\
&\overline{W} := (w[k-1],w[k-2],\cdots,w[1])
\end{align}
The goal in this proof is grouping control inputs into $U_{ij}$, where each $U_{ij}$ can be thought as either power-limited or information-limited inputs. By expanding $x[n]$, we reveal the effects of the controller inputs on the state, and then isolate (and bound) their effects according to their characteristics.

$\bullet$ Power-Limited Interval: Let's first handle the third power-limited interval using Cauchy-Schwarz inequalities.
Notice that
\begin{align}
x[k]&=w[k-1]+aw[k-2]+\cdots+a^{k-1}w[0]\\
&+ u_1[k-1]+a u_1[k-2] + \cdots + a^{k-2}u_1[1] \\
&+ u_2[k-1]+a u_2[k-2] + \cdots + a^{k-2}u_2[1]
\end{align}
Therefore, by Lemma~\ref{ach:lemmacauchy}
\begin{align}
&\mathbb{E}[x^2[k]]=\mathbb{E}[(W+U_{11}+U_{12}+U_{13}+U_{21}+U_{22}+U_{23})^2]+\mathbf{E}[w^2[k-1]]\\
&\geq (\sqrt{\mathbb{E}[(W+U_{11}+U_{12}+U_{21}+U_{22})^2]}-\sqrt{\mathbb{E}[U_{13}^2]}-\sqrt{\mathbb{E}[U_{23}^2]})^2_+ +1
\label{eqn:ageq412}
\end{align}
Here, we can notice that $\mathbb{E}[(W+U_{11}+U_{12}+U_{21}+U_{22})^2]$ is not affected by the controllers' inputs of the third interval.

$\bullet$ First controller's input in Witsenhausen's interval: We will also separate out the effect of the power-limited (first controller's) input in the second interval, $U_{12}$, and introduce large deviation ideas.
\begin{align}
&\mathbb{E}[(W+U_{11}+U_{12}+U_{21}+U_{22})^2]\\
&=\mathbb{E}[\mathbb{E}[(W+U_{11}+U_{12}+U_{21}+U_{22})^2|C]]\\
&=\mathbb{E}[(W+U_{11}+U_{12}+U_{21}+U_{22})^2|C=1]\mathbb{P}(C=1)+ \mathbb{E}[(W+U_{11}+U_{12}+U_{21}+U_{22})^2|C=0]\mathbb{P}(C=0)\\
&\geq \mathbb{E}[(W+U_{11}+U_{12}+U_{21}+U_{22})^2|C=1]\mathbb{P}(C=1) \\
&= c  \cdot \mathbb{E}[(W+U_{11}+U_{12}+U_{21}+U_{22})^2|C=1]\\
&\geq c ( \sqrt{\mathbb{E}[(W+U_{11}+U_{21}+U_{22})^2|C=1]} - \sqrt{\mathbb{E}[U_{12}^2|C=1]})_+^2 \label{eqn:ageq422}
\end{align}
Here, we can notice that by the causality of the system, $C$ only affects the inputs from $u_2[k_2-1]$ and $u_1[k_2]$. Thus, $u_2[1:k_2-2]$ and $u_1[1:k_2-1]$ are independent of $C$. We can also notice $\mathbb{E}[(W+U_{11}+U_{21}+U_{22})^2|C=1]$ has only information-limited inputs.

$\bullet$ Information-Limited Interval: Using Lemma~\ref{cov:lemma1}, we will bound the remaining uncertainty of the state after the information-limited interval. Since we will grant all disturbances except $w[0]$ as side-information, we denote the relevant observations as $y_1'[n]$ and $y_2'[n]$. Formally, let $y_1'[n]$, $y_2'[n]$, $W'$, $W''$, $U_{22}'$, $U_{22}''$ be as follows:
\begin{align}
&y_1'[n]:=a^{n-1}w[0]+v_1[n]\\
&y_2'[n]:=a^{n-1}w[0]+v_2[n]\\
&W' := W - \mathbb{E}[W |y_1'[1:k_1-1],y_2'[1:k_1-1], \overline{W}, C=1 ] \\
&W'' := \mathbb{E}[W |y_1'[1:k_1-1],y_2'[1:k_1-1], \overline{W}, C=1 ]\\
&U_{22}' := U_{22} - \mathbb{E}[U_{22}| y_1'[1:k_1-1],y_2'[1:k_1-1], \overline{W}, C=1]\\
&U_{22}'' := \mathbb{E}[U_{22}| y_1'[1:k_1-1],y_2'[1:k_1-1], \overline{W}, C=1]
\end{align}
Here we can notice $W, y_1'[1:k_1-1], y_2'[1:k_1-1], \overline{W}$ are independent of $C$ and
\begin{align}
W'=a^{k-1}w[0] - \mathbb{E}[a^{k-1}w[0] |y_1'[1:k_1-1],y_2'[1:k_1-1] ].
\end{align}
Since $w[0], y_1'[1:k_1-1], y_2'[1:k_1-1], \overline{W}$ are jointly Gaussian, $W'$ is independent from $y_1'[1:k_1-1], y_2'[1:k_1-1], \overline{W}$.
By Lemma~\ref{cov:lemma1} we have
\begin{align}
&\mathbb{E}[W'^2|C=1]\\
&=\mathbb{E}[(a^{k-1}w[0] - \mathbb{E}[a^{k-1}w[0] |y_1'[1:k_1-1],y_2'[1:k_1-1] ])^2]\\
&= \frac{a^{2(k-1)}\sigma_{v1}^2}{(1+\frac{\sigma_{v1}^2}{\sigma_{v2}^2})(\frac{a^{2(k_1-2)}(1-a^{-2(k_1-1)})}{1-a^{-2}})+\sigma_{v1}^2} \label{eqn:ageq423}
\end{align}
This lower bounds the uncertainty in the state due to $w[0]$ after the state has been observed through $y_1'[1:k_1-1]$ and $y_2'[1:k_1-1]$.

Note that $y_1[1:k_1-1],y_2[1:k_1-1],\overline{W}$  are functions of $y_1'[1:k_1-1],y_2'[1:k_1-1],\overline{W}$. Therefore, $U_{11}$ and $U_{21}$ are also functions of $y_1'[1:k_1-1],y_2'[1:k_1-1],\overline{W}$. 
Since $(W',U_{22}')$ are orthogonal to all functions of $(y_1'[1:k_1-1],y_2'[1:k_1-1],\overline{W})$, $(W',U_{22}')$ are also orthogonal to $(W'', U_{11}, U_{21}, U_{22}'')$. Moreover, since $\mathbb{E}[W'+U_{22}']=0$ and the conditioning on $C=1$ can be ignored due to causality, we can conclude
\begin{align}
&\mathbb{E}[(W+U_{11}+U_{21}+U_{22})^2|C=1]\\
&=\mathbb{E}[(W'+W''+U_{11}+U_{21}+U_{22}'+U_{22}'')^2 | C=1]\\
&=\mathbb{E}[(W' + U_{22}')^2| C=1] + \mathbb{E}[(W''+U_{11}+U_{21}+U_{22}'')^2 | C=1] \\
&\geq \mathbb{E}[(W' + U_{22}')^2| C=1].
\end{align}
In the last term, the effect of the information-limited interval inputs is separated out.

$\bullet$ Second controller's input in Witsenhausen's interval: We will bound the remaining uncertainty of the state   after it has been estimated by the second controller in the second time interval. For this, we will reduce the problem to the state amplification problem of Section~\ref{sec:lower:amp}, and apply Lemma~\ref{lem:reduced}.

$U_{22}'$ is a function of $y_2[1:k_2-1], y_1'[1:k_1-1], y_2'[1:k_1-1], \overline{W}$. Here, $y_1'[1:k_1-1], y_2'[1:k_1-1], \overline{W}$  are independent from $W'$ and $y_2[1:k_1-1]$ is a function of $y_1'[1:k_1-1], y_2'[1:k_1-1], \overline{W}$. Therefore, only $y_2[k_1:k_2-1]$ are dependent on $W'$. Moreover, $y_1[1:k_1-1]$ ---and therefore, $u_1[1:k_1-1]$--- is a function of $y_1'[1:k_1-1], y_2'[1:k_1-1], \overline{W}$, so they are also independent from $W'$. 

Now, we can subtract the independent part from $W'$ from the observation $y_2[k_1,k_2-1]$ without losing information about the state. First, consider $y_2[k_1]$.
\begin{align}
&y_2[k_1]-(w[k_1-1]+aw[k_1-2]+\cdots+a^{k_1-2}w[1])-\mathbb{E}[a^{k_1-1}w[0]|y_1'[1:k_1-1],y_2'[1:k_1-]]\\
&-(u_1[k_1-1]+au_1[k_1-2]+ \cdots + a^{k_1-2}u_1[1])\\
&-(u_1[k_1-1]+au_1[k_1-2]+ \cdots + a^{k_1-2}u_1[1])\\
&=a^{k_1-1}w[0]-\mathbb{E}[a^{k_1-1}w[0]|y_1'[1:k_1-1], y_2'[1:k_1-1]]+v_2[k_1] \\
&=a^{k_1-k}W' +  v_2[k_1]
\end{align}
Likewise, we can subtract the independent (from $W'$) part from $y_2[k_1+1]$. Furthermore, $u_2[k_1]$ can also be subtracted from $y_2[k_1+1]$ without losing information since the second controller already knows about $u_2[k_1]$. Thus, the information about $W'$ in $y_2[k_1+1]$ is
\begin{align}
&a^{k_1}w[0]-\mathbb{E}[a^{k_1}w[0]|y_1'[1:k_1-1], y_2'[1:k_1-1]]+u_1[k_1] + v_2[k_1+1]\\
&=a^{k_1-k+1}W' + u_1[k_1] + v_2[k_1+1].
\end{align}
In the same way, we can extract the relevant information about $W'$  from the observations $y_2[n]$.
It is worth to mention that conditioned on $C=1$, $v_2[k_2-1]$ is replaced by $v_2'[k_2-1]$, and thus the information about $W'$ in $y_2[k_2-1]$ is 
\begin{align}
&a^{k_2-2}w[0]-\mathbb{E}[a^{k_2-2}w[0]|y_1'[1:k_1-1],y_2'[1:k_1-1]]+u_1[k_2-1]+ au_1[k_2-2]+ \cdots +a^{k_2-k_1-1}u_1[k_1]+v_2'[k_2-1]\\
&=a^{k_2-k-1}W' + u_1[k_2-1]+ au_1[k_2-2]+ \cdots +a^{k_2-k_1-1}u_1[k_1]+v_2'[k_2-1].
\end{align}
Moreover, as we mentioned, the conditioning $C=1$ does not affect $u_1[k_1:k_2-1]$ by causality. We have
\begin{align}
\mathbb{E}[(1-2.5^{-1})u_1^2[k_1]+(1-2.5^{-1})2.5^{-1}u_1^2[k_1+1]+\cdots+(1-2.5^{-1})2.5^{-k_2+k_1+1}u_1^2[k_2-1]|C=1] \leq \widetilde{P_1} \leq 2.5 \widetilde{P_1}.
\end{align}
Therefore, we can see that after removing the independent (from $W'$) part from $y_2[k_1:k_2-1]$ the problem reduces to the state amplification problem of Section~\ref{sec:lower:amp}. By plugging $x[0]=a^{k_1-1}w[0]$, $\sigma_v=\sigma_{v2}$, $\sigma_{v}'=\sigma_{v2}'$, $k=k_2-k_1$, $w=2.5^{-1}$, $P=2.5 \widetilde{P_1}$ and $\sigma_0^2 =\Sigma$ (which comes from \eqref{eqn:ageq423}) in Lemma~\ref{lem:reduced} (iii), we have\footnote{Here, we have to use (iii) of Lemma~\ref{lem:reduced} instead of (i) since in the last observation the SNR (Signal-to-Noise ratio) is too big to apply an arithmetic-geometric inequality together with the previous observations.}
\begin{align}
\mathbb{E}[(W' + U_{22}')^2| C=1] \geq \frac{a^{2(k-k_1)}\Sigma}{2^{2I'(\widetilde{P_1})}}. \label{eqn:ageq414}
\end{align}

$\bullet$ Power-Limited Inputs: As mentioned before, causality implies $C$ is independent from $y_1[1:k_2-1]$ and thus $U_{12}$. Then, we can upper bound the power of the power-limited inputs.
\begin{align}
&\mathbb{E}[U_{12}^2|C=1]=\mathbb{E}[U_{12}^2] \\
&= \mathbb{E}[(a^{k-k_1-1}u_1[k_1]+\cdots + a^{k-k_2}u_1[k_2-1])^2]\\
&= a^{2(k-k_2)}\mathbb{E}[(a^{k_2-k_1-1}u_1[k_1]+\cdots + u_1[k_2-1])^2]\\
&\leq \frac{a^{2(k-k_1-1)}(1-(2.5a^{-2})^{k_2-k_1})}{1-2.5a^{-2}} (\mathbb{E}[u_1^2[k_1]]+\cdots+2.5^{-(k-k_1-1)}\mathbb{E}[u_1^2[k_2-1]])\\
&\leq \frac{a^{2(k-k_1-1)}(1-(2.5a^{-2})^{k_2-k_1})}{1-2.5a^{-2}} \frac{\widetilde{P_1}}{1-2.5^{-1}}  \label{eqn:ageq421}
\end{align}
where the first inequality comes from Lemma~\ref{lem:power} with parameters $a=a$ and $b=2.5^{-1}$. Likewise, by applying Lemma~\ref{lem:power} with paramters $a=a$ and $b=2.5^{-1}$, we have
\begin{align}
&\mathbb{E}[U_{13}^2]=\mathbb{E}[(a^{k-k_1-1}u_1[k_1]+\cdots+u_1[k-1])^2]\\
&\leq \frac{a^{2(k-k_2-1)}(1-(2.5a^{-2})^{k-k_2})}{1-2.5a^{-2}} (\mathbb{E}[u_1^2[k_2]]+ \cdots + 2.5^{-(k-k_2-1)}\mathbb{E}[u_1^2[k-1]])\\
&\leq \frac{a^{2(k-k_2-1)}(1-(2.5a^{-2})^{k-k_2})}{1-2.5a^{-2}} \frac{ 2.5^{k_2-k_1} \widetilde{P_1}}{1-2.5^{-1}} \label{eqn:ageq415}
\end{align}
and
\begin{align}
&\mathbb{E}[U_{23}^2]=\mathbb{E}[(a^{k-k_2-1}u_2[k_2]+\cdots+u_2[k-1])^2]\\
&\leq \frac{a^{2(k-k_2-1)}(1-(2.5a^{-2})^{k-k_2})}{1-2.5a^{-2}} (\mathbb{E}[u_2^2[k_2]]+\cdots+2.5^{-(k-k_2-1)}\mathbb{E}[u_2^2[k-1]])\\
&\leq \frac{a^{2(k-k_2-1)}(1-(2.5a^{-2})^{k-k_2})}{1-2.5a^{-2}} \frac{\widetilde{P_2}}{1-2.5^{-1}}. \label{eqn:ageq416}
\end{align}

$\bullet$ Lower bound from $w[0]$: Finally, by plugging \eqref{eqn:ageq422}, \eqref{eqn:ageq414}, \eqref{eqn:ageq421}, \eqref{eqn:ageq415}, \eqref{eqn:ageq416} into \eqref{eqn:ageq412}
\begin{align}
\mathbb{E}[x^2[k]]& \geq (\sqrt{c \frac{a^{2(k-k_1)}\Sigma}{2^{2I'(\widetilde{P_1})}}}
- \sqrt{ c \frac{a^{2(k-k_1-1)}(1-(2.5a^{-2})^{k_2-k_1})}{1-2.5a^{-2}} \frac{\widetilde{P_1}}{1-2.5^{-1}} } \\
&- \sqrt{ \frac{a^{2(k-k_2-1)}(1-(2.5a^{-2})^{k-k_2})}{1-2.5a^{-2}} \frac{ 2.5^{k_2-k_1} \widetilde{P_1}}{1-2.5^{-1}} }
- \sqrt{ \frac{a^{2(k-k_2-1)}(1-(2.5a^{-2})^{k-k_2})}{1-2.5a^{-2}} \frac{\widetilde{P_2}}{1-2.5^{-1}} })_+^2 +1 \label{eqn:ageq417}
\end{align}

$\bullet$ Lower bound from $w[1]$: As we mentioned in Figure~\ref{fig:finite1}, \ref{fig:finite2} and \ref{fig:finite3}, we will repeat the above derivation for $w[1]$ instead of $w[0]$.

Let's denote
\begin{align}
&\widetilde{U}_{11} := a^{k-2}u_1[1]+\cdots+a^{k-k_1-1}u_1[k_1]\\
&\widetilde{U}_{12} := a^{k-k_1-2}u_1[k_1+1]+\cdots+u_1[k-1]\\
&\widetilde{U}_{21} := a^{k-2}u_2[1]+\cdots+a^{k-k_1-1}u_2[k_1]\\
&\widetilde{U}_{22} := a^{k-k_1-2}u_2[k_1+1]+\cdots+a^{k-k_2}u_2[k_2-1] \\
&\widetilde{W} := (w[k-1], w[k-2], \cdots, w[2], w[0])
\end{align}
Compared with the previous case, $\widetilde{U}_{11}$ and $\widetilde{U}_{21}$ include extra input signals $u_1[k_1]$ and $u_2[k_1]$ since $w[1]$ is generated one time-step later than $w[0]$. $\widetilde{U}_{12}$ includes all power-limited inputs of the first controller.

Like before, $\widetilde{U}_{ij}$ groups the controller inputs into either information-limited or power-limited ones. Then, we will isolate the effect of the inputs $\widetilde{U}_{ij}$ to the state $x[n]$ according to their categories.

$\bullet$ Power-Limited Inputs: Like the previous case, we first isolate the power-limited inputs. However, unlike the previous case, we do not need to introduce any large deviation ideas. By Lemma~\ref{ach:lemmacauchy},
\begin{align}
\mathbb{E}[x^2[k]]&=\mathbb{E}[(W+\widetilde{U}_{11}+\widetilde{U}_{12}+\widetilde{U}_{21}+\widetilde{U}_{22}+U_{23})^2]+1 \\
&\geq (\sqrt{\mathbb{E}[(W+\widetilde{U}_{11}+\widetilde{U}_{21}+\widetilde{U}_{22})^2]} - \sqrt{\mathbb{E}[\widetilde{U}_{12}^2]} - \sqrt{\mathbb{E}[U_{23}^2]})_+^2 + 1  \label{eqn:ageq420}
\end{align}
Now, the resulting $\mathbb{E}[(W+\widetilde{U}_{11}+\widetilde{U}_{21}+\widetilde{U}_{22})^2]$ has only information-limited inputs.

$\bullet$ Information-Limited Interval: Like before, we will bound the remaining uncertainty of the state after the information-limited interval using Lemma~\ref{cov:lemma1}. Denote $\widetilde{y}_1[n]$ and $\widetilde{y}_2[n]$ as follows:
\begin{align}
&\widetilde{y}_1[1] := v_1[1]\\
&\widetilde{y}_2[1] := v_2[1]\\
&\mbox{For }n \geq 2 \\
&\widetilde{y}_1[n] := a^{n-2}w[1]+v_1[n]\\
&\widetilde{y}_2[n] := a^{n-2}w[1]+v_2[n]\\
&W_1' := W - \mathbb{E}[W|\widetilde{y}_1[1:k_1],\widetilde{y}_2[1:k_1],\widetilde{W}]\\
&W_1'' := \mathbb{E}[W|\widetilde{y}_1[1:k_1],\widetilde{y}_2[1:k_1],\widetilde{W}]\\
&\widetilde{U}_{22}' = \widetilde{U} - \mathbb{E}[\widetilde{U}_{22}| \widetilde{y}_1[1:k_1],\widetilde{y}_2[1:k_1],\widetilde{W}]\\
&\widetilde{U}_{22}'' =  \mathbb{E}[\widetilde{U}_{22}| \widetilde{y}_1[1:k_1],\widetilde{y}_2[1:k_1],\widetilde{W}]
\end{align}
Here we can notice
\begin{align}
W_1'= a^{k-2}w[1] - \mathbb{E}[a^{k-2}w[1]|\widetilde{y}_1[1:k_1],\widetilde{y}_2[1:k_1]]
\end{align}
Since $w[1], \widetilde{y}_1[1:k_1], \widetilde{y}_2'[1:k_1],\widetilde{W}$ are jointly Gaussian, $W_1'$ is independent from $\widetilde{y}_1[1:k_1], \widetilde{y}_2'[1:k_1],\widetilde{W}$. By Lemma~\ref{cov:lemma1} we have
\begin{align}
&\mathbb{E}[W_1'^2]\\
&= \mathbb{E}[(a^{k-2}w[1] - \mathbb{E}[a^{k-2}w[1]|\widetilde{y}_1[1:k_1],\widetilde{y}_2[1:k_1]])^2]\\
&= \frac{a^{2(k-2)} \sigma_{v2}^2}
{(1+\frac{\sigma_{v1}^2}{\sigma_{v2}^2})(\frac{a^{2(k_1-2)}(1-a^{-2(k_1-1)})}{1-a^{-2}})+\sigma_{v1}^2} \label{eqn:ageq424}
\end{align}
This lower bounds the remaining state disturbance due to $w[1]$ after it is observed by $\widetilde{y}_1[1:k_1]$ and $\widetilde{y}_2[1:k_1]$.

Note that $y_1[1:k_1], y_2[1:k_1], \widetilde{W}$ are functions of $\widetilde{y}_1[1:k_1], \widetilde{y}_2[1:k_1], \widetilde{W}$. Therefore, $\widetilde{U}_{11}$ and $\widetilde{U}_{21}$ are also functions of $\widetilde{y}_1[1:k_1], \widetilde{y}_2[1:k_1], \widetilde{W}$. By repeating the previous argument, we can conclude
\begin{align}
&\mathbb{E}[(W+\widetilde{U}_{11}+\widetilde{U}_{21}+\widetilde{U}_{22})^2]\\
&=\mathbb{E}[(W_1'+W_1''+ \widetilde{U}_{11}+\widetilde{U}_{21}+\widetilde{U}_{22}'+\widetilde{U}_{22}'' )^2]\\
&=\mathbb{E}[(W_1'+\widetilde{U}_{22}')^2] + \mathbb{E}[(W_1''+ \widetilde{U}_{11}+\widetilde{U}_{21} + \widetilde{U}_{22}'')^2]\\
&\geq \mathbb{E}[(W_1'+\widetilde{U}_{22}')^2].
\end{align}
In the last term, the effect of the information-limited inputs is separated out.

$\bullet$ Second controller's input in Witsenhausen's interval: Like before, we will reduce the problem to the state amplification problem of Section~\ref{sec:lower:amp}, and apply Lemma~\ref{lem:reduced}. Only the observations $y_2[k_1+1:k_2-1]$ are relevant to $W_1'$. Here, we also have the power constraint on $u_1$
\begin{align}
\mathbb{E}[(1-2.5^{-1})u_1^2[k_1+1]+ \cdots + 2.5^{-k_2+k_1+2}u_1^2[k_2-1] ] \leq 2.5 \widetilde{P_1}
\end{align}
Like before, after removing the independent (from $W_1'$) part from the observations $y_2[k_1+1:k_2-1]$, the problem reduces to the state amplification problem of Section~\ref{sec:lower:amp}. By plugging $x[0]=a^{k_1-1}w[1], \sigma_v=\sigma_{v2}, k=k_2-k_1-1, w=2.5^{-1}, P= 2.5 \widetilde{P_1}$ and $\sigma_0^2 = \Sigma$ (which comes from \eqref{eqn:ageq424}) to Lemma~\ref{lem:reduced} (i), we have\footnote{Unlike the previous part, we apply (i) of Lemma~\ref{lem:reduced} instead of (iii) since SNR is small enough for all observations.}
\begin{align}
\mathbb{E}[(W_1'+\widetilde{U}_{22}')^2] \geq \frac{a^{2(k-k_1-1)}\Sigma}{2^{2I''(\widetilde{P_1})}}. \label{eqn:ageq410}
\end{align}

$\bullet$ Lower bound from $w[1]$: By applying Lemma~\ref{lem:power} with the parameters $a=a$ and $b=2.5^{-1}$, we can upper bound the power of the power-limited inputs.
\begin{align}
&\mathbb{E}[\widetilde{U}_{12}^2]= \mathbb{E}[(a^{k-k_1-2}u_1[k_1+1]+\cdots+u_1[k-1])^2] \\
&\leq \frac{a^{2(k-k_1-2)}(1-(2.5a^{-2})^{k-k_1-1})}{1-2.5a^{-2}} \frac{2.5 \widetilde{P_1}}{1-2.5^{-1}} \label{eqn:ageq419}
\end{align}
Therefore, by plugging \eqref{eqn:ageq410}, \eqref{eqn:ageq419}, \eqref{eqn:ageq416} into \eqref{eqn:ageq420} we get
\begin{align}
\mathbb{E}[x^2[k]] &\geq ( \sqrt{\frac{a^{2(k-k_1-1)}\Sigma}{2^{2I''(\widetilde{P_1})}}}
-\sqrt{\frac{a^{2(k-k_1-2)}(1-(2.5a^{-2})^{k-k_1-1})}{1-2.5a^{-2}} \frac{2.5 \widetilde{P_1}}{1-2.5^{-1}}}\\
&-\sqrt{\frac{a^{2(k-k_2-1)}(1-(2.5a^{-2})^{k-k_2})}{1-2.5a^{-2}} \frac{\widetilde{P_2}}{1-2.5^{-1}}} )_+^2 +1 \label{eqn:ageq418}
\end{align}

$\bullet$ Final Lower bound: By \eqref{eqn:ageq417} and \eqref{eqn:ageq418}, for all $0 \leq \alpha \leq 1$
\begin{align}
&\mathbb{E}[x^2[k]] \\
&\geq
\alpha (\sqrt{c \frac{a^{2(k-k_1)}\Sigma}{2^{2I'(\widetilde{P_1})}}}
- \sqrt{ c \frac{a^{2(k-k_1-1)}(1-(2.5a^{-2})^{k_2-k_1})}{1-2.5a^{-2}} \frac{\widetilde{P_1}}{1-2.5^{-1}} } \\
&- \sqrt{ \frac{a^{2(k-k_2-1)}(1-(2.5a^{-2})^{k-k_2})}{1-2.5a^{-2}} \frac{ 2.5^{k_2-k_1} \widetilde{P_1}}{1-2.5^{-1}} }
- \sqrt{ \frac{a^{2(k-k_2-1)}(1-(2.5a^{-2})^{k-k_2})}{1-2.5a^{-2}} \frac{\widetilde{P_2}}{1-2.5^{-1}} })_+^2\\
&+(1-\alpha) ( \sqrt{\frac{a^{2(k-k_1-1)}\Sigma}{2^{2I''(\widetilde{P_1})}}}
-\sqrt{\frac{a^{2(k-k_1-2)}(1-(2.5a^{-2})^{k-k_1-1})}{1-2.5a^{-2}} \frac{2.5 \widetilde{P_1}}{1-2.5^{-1}}}\\
&-\sqrt{\frac{a^{2(k-k_2-1)}(1-(2.5a^{-2})^{k-k_2})}{1-2.5a^{-2}} \frac{\widetilde{P_2}}{1-2.5^{-1}}} )_+^2
+1
\end{align}
\end{proof}

In this lemma, the time-interval from $0$ to $k_1-1$ corresponds to the information-limited interval in Figure~\ref{fig:division}. The time-interval from $k_1$ to $k_2-1$ corresponds to the Witsenhausen's interval in Figure~\ref{fig:division}. The time-interval from $k_2$ to $k$ corresponds to the power-limited interval in Figure~\ref{fig:division}.

\begin{figure}[htbp]
\begin{center}
\includegraphics[width=4.6in]{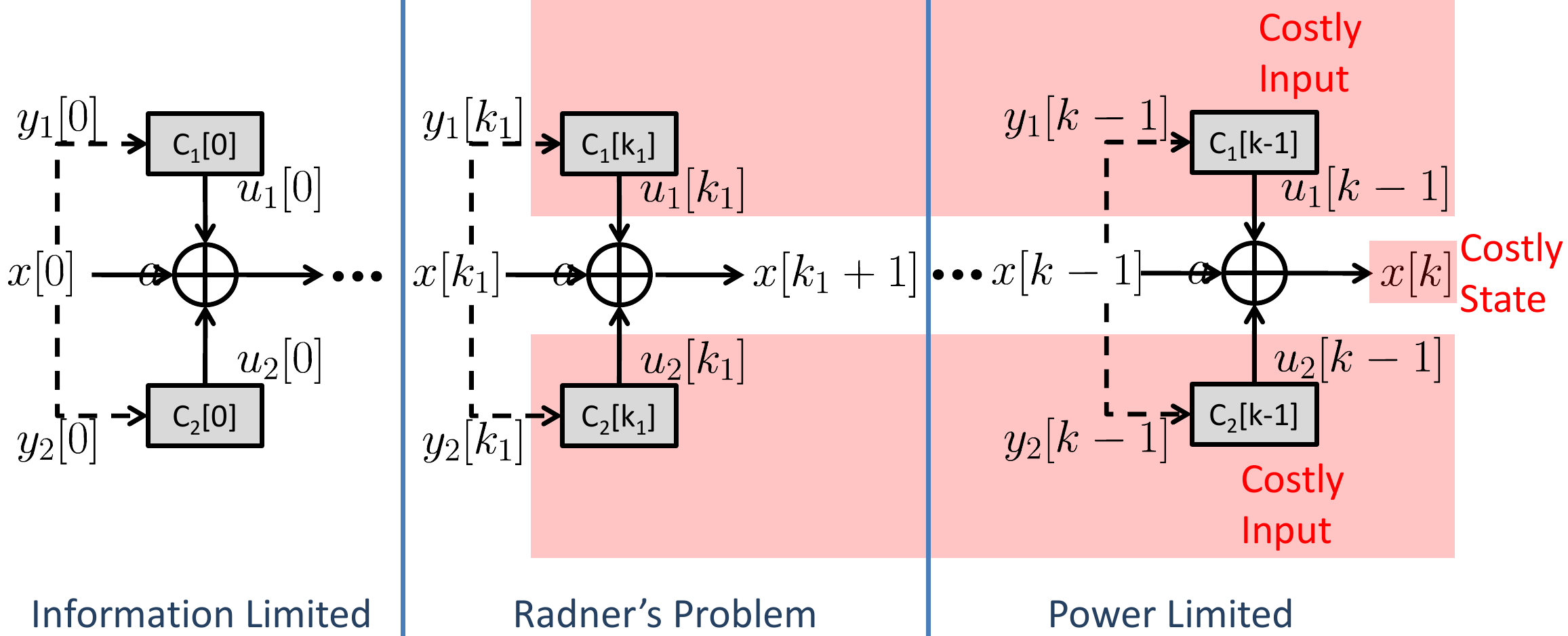}
\caption{The general finite-horizon problem structure to justify the infeasibility of $0$-stage signaling strategies. Like the one in Figure~\ref{fig:division}, the problem consists of three time intervals. However, unlike Figure~\ref{fig:division}, we can see Radner's problem between the information-limited and power-limited intervals.}
\label{fig:division2}
\end{center}
\end{figure}

\subsection{Lower bound on the optimal cost based on Radner's problem}
\label{sec:radnerlower}
As we discussed in Section~\ref{sec:caveat}, Radner's problem cannot be understood using the binary deterministic models and thereby is fundamentally different from Witsenhausen's counterexample. Essentially, it says the communication between controllers requires at least one step delay, and for the observations obtained at the same time step, nonlinear strategies do not improve the performance. Therefore, so-called `$0$-stage signaling' is impossible.

Sine Radner's problem is a sub-block of the infinite-horizon problem~\ref{fig:dyn}, we also need a lower bound based on Radner's problem to bound the infinite-horizon problem within a constant ratio. Figure~\ref{fig:division2} shows the general structure of the lower bound for the case. As we discussed in Figure~\ref{fig:division}, the information-limited interval from time step $0$ to $k_1$ is introduced due to the case $\sigma_{v1}^2 > 0$ and the power-limited interval from time step $k_1+1$ to $k$ is introduced due to the case $P_2 < \infty$.

However, between these two time intervals, we can see the difference. Even though the first controller has better observations and the second has worse observations, if this significant unbalance between two controllers lasts for only one time step, the implicit communication between two controllers is nearly impossible and nonlinear strategies cannot help that much. To capture this effect, we replace the MIMO Witsenhausen's problem with Radner's problem.

Like Lemma~\ref{cov:lemma1}, the following lemma gives a lower bound on the weighted average cost of Problem~\ref{prob:lqg} when Witsenhausen's interval is replaced by Radner's.

\begin{lemma}
Define a set $S_{L,2}$ as a set of $(k_1,k,\Sigma)$ such that
\begin{align}
&k_1, k \in \mathbb{N}, \Sigma \in \mathbb{R},\\
&k_1 \geq 1, k \geq k_1 + 1, \\
&0 \leq \Sigma \leq 
\left\{ 
\begin{array}{ll}
1& k_1 = 1 \\
\frac{a^{2(k_1-1)}\sigma_{v1}^2}{(1+\frac{\sigma_{v1}^2}{\sigma_{v2}^2})(\frac{a^{2(k_1-2)}(1-a^{-2(k_1-1)})}{1-a^{-2}})+\sigma_{v1}^2}
& k_1 \geq 2
\end{array}\right.
\end{align}
We also define $D_{L,2}(\widetilde{P_1},\widetilde{P_2};k_1,k,\Sigma)$ as follows:
\begin{align}
&D_{L,2}(\widetilde{P_1},\widetilde{P_2};k_1,k,\Sigma) \\
&:= \inf_{c_1, c_2 \in \mathbb{R}}(\sqrt{a^{2(k-k_1-1)}((a-c_1-c_2)^2 \Sigma + c_1^2 \sigma_{v1}^2 + c_2^2 \sigma_{v2}^2)}\\
&-\sqrt{\frac{a^{2(k-k_1-2)}(1-(2.5a^{-2})^{k-k_1-1})}{1-2.5a^{-2}} \frac{\widetilde{P_1}}{(1-2.5^{-1})2.5^{-1}}}-\sqrt{\frac{a^{2(k-k_1-2)}(1-(2.5a^{-2})^{k-k_1-1})}{1-2.5a^{-2}} \frac{\widetilde{P_2}}{(1-2.5^{-1})2.5^{-1}}})_+^2 +1 \\
&\mbox{s.t. } (1-2.5^{-1})c_1^2(\Sigma+\sigma_{v1}^2) \leq \widetilde{P_1} \\
&\quad (1-2.5^{-1})c_2^2(\Sigma+\sigma_{v2}^2) \leq \widetilde{P_2} 
\end{align}
Let $|a| \geq 2.5$. Then, for all $q, r_1, r_2 \geq 0$, the minimum cost \eqref{eqn:part11} of Problem~\ref{prob:lqg} is lower bounded as follows:
\begin{align}
&\inf_{u_1,u_2} \limsup_{N \rightarrow \infty} \frac{1}{N}
\sum_{0 \leq n < N} q \mathbb{E}[x^2[n]] + r_1 \mathbb{E}[u_1^2[n]] + r_2 \mathbb{E}[u_2^2[n]]\\
& \geq \sup_{(k_1,k,\Sigma) \in S_{L,2}}\min_{\widetilde{P_1},\widetilde{P_2} \geq 0} q D_{L,2}(\widetilde{P_1},\widetilde{P_2};k_1,k,\Sigma) + r_1 \widetilde{P_1} + r_2 \widetilde{P_2}.
\end{align}
For $k_1, k \in \mathbb{N}$, define $S_{L,3}$, $S_{L,4}$, $D_{L,3}(\widetilde{P_1},\widetilde{P_2};k_1)$ and $D_{L,4}(\widetilde{P_1},\widetilde{P_2},k)$ as follows:
\begin{align}
&S_{L,3} := \{k_1 \in \mathbb{N} \}\\
&S_{L,4} := \{k \in \mathbb{N} : k \geq 2 \}\\
&D_{L,3}(\widetilde{P_1},\widetilde{P_2};k_1) := \max(\frac{a^{2(k_1-1)}\sigma_{v1}^2}{(1+\frac{\sigma_{v1}^2}{\sigma_{v2}^2})(\frac{a^{2(k_1-2)}(1-a^{-2(k_1-1)})}{1-a^{-2}})+\sigma_{v1}^2},1) \label{eqn:lem:radner1}\\
&D_{L,4}(\widetilde{P_1},\widetilde{P_2};k) := ( \sqrt{a^{2(k-1)}} - \sqrt{\frac{a^{2(k-2)}}{1-2.5a^{-2}} \frac{\widetilde{P_1}}{1-2.5^{-1}}} - \sqrt{\frac{a^{2(k-2)}}{1-2.5a^{-2}} \frac{\widetilde{P_2}   }{1-2.5^{-1}}})_+^2. \label{eqn:lem:radner2}
\end{align}
Then, when $|a| \geq 2.5$, for all $q,r_1, r_2 \geq 0$, the minimum cost \eqref{eqn:part11} of Problem~\ref{prob:lqg} is also lower bounded as follows:
\begin{align}
&\inf_{u_1,u_2} \limsup_{N \rightarrow \infty} \frac{1}{N}
\sum_{0 \leq n < N} q \mathbb{E}[x^2[n]] + r_1 \mathbb{E}[u_1^2[n]] + r_2 \mathbb{E}[u_2^2[n]]\\
& \geq \sup_{k_1 \in S_{L,3}}\min_{\widetilde{P_1},\widetilde{P_2} \geq 0} q D_{L,3}(\widetilde{P_1},\widetilde{P_2};k_1) + r_1 \widetilde{P_1} + r_2 \widetilde{P_2}
\end{align}
and
\begin{align}
&\inf_{u_1,u_2} \limsup_{N \rightarrow \infty} \frac{1}{N}
\sum_{0 \leq n < N} q \mathbb{E}[x^2[n]] + r_1 \mathbb{E}[u_1^2[n]] + r_2 \mathbb{E}[u_2^2[n]]\\
& \geq \sup_{k \in S_{L,4}}\min_{\widetilde{P_1},\widetilde{P_2} \geq 0} q D_{L,4}(\widetilde{P_1},\widetilde{P_2};k) + r_1 \widetilde{P_1} + r_2 \widetilde{P_2}.
\end{align}
\label{cov:lemma2}
\end{lemma}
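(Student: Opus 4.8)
The plan is to prove $D_{L,2}$ as the main bound and to recover $D_{L,3}$ and $D_{L,4}$ as its two degenerate limits --- respectively, stopping right after the information-limited interval, and keeping only the power-limited interval after granting perfect observations. Every one of the three bounds opens, exactly as the proof of Lemma~\ref{cov:lemma3}, by applying the geometric-slicing Lemma~\ref{lem:geo} with discount $\alpha=2.5^{-1}$, which replaces the infinite-horizon average cost of Problem~\ref{prob:lqg} by a single finite-horizon cost $q\mathbb{E}[x^2[k]]+r_1\widetilde{P_1}+r_2\widetilde{P_2}$ with $\widetilde{P_1},\widetilde{P_2}$ the corresponding discounted input-power sums. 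The one structural change from Lemma~\ref{cov:lemma3} is that the ``interesting'' middle interval is now a single time step, so in place of the $s$-stage state-amplification reduction of Lemma~\ref{lem:reduced} the key ingredient is Radner's theorem~\cite{Radner_Team}: in a one-shot team problem with simultaneous noisy observations linear strategies are optimal, and this is exactly what turns the a-priori infinite-dimensional minimization over the time-$k_1$ controllers into the finite-dimensional $\inf_{c_1,c_2}$ in the definition of $D_{L,2}$.

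For $D_{L,2}$, after slicing (Problem~\ref{prob:slice} with its $k_2$ parameter set equal to our $k_1$) I would walk through the three intervals of Figure~\ref{fig:division2} in order. \textbf{(i) Information-limited interval $[1,k_1-1]$:} hand a genie every disturbance except $w[0]$ and allow free feedback/information sharing; via the orthogonal-decomposition bookkeeping of Lemma~\ref{cov:lemma3} (clean observations $y_i'[n]=a^{n-1}w[0]+v_i[n]$, and the fact that every control component relevant to $w[0]$ is a function of $y_1'[1:k_1-1],y_2'[1:k_1-1]$), the residual $W':=a^{k_1-1}w[0]-\mathbb{E}[a^{k_1-1}w[0]\mid y_1'[1:k_1-1],y_2'[1:k_1-1]]$ is orthogonal to all past-observable quantities, and by Lemma~\ref{cov:lemma1} (with its ``$k$'' equal to $k_1$ and its ``$k_1$'' equal to $k_1-1$) has variance equal to the displayed upper bound on $\Sigma$; replacing that variance by any smaller $\Sigma$ only weakens the bound. \textbf{(ii) Radner interval $\{k_1\}$:} the $w[0]$-relevant part of $x[k_1+1]$ equals $aW'+g_1(W'+v_1[k_1])+g_2(W'+v_2[k_1])+w[k_1]$ for some controller functions $g_1,g_2$ --- this is precisely Radner's team problem with source $W'$; folding the discounted budgets $(1-2.5^{-1})\mathbb{E}[u_i^2[k_1]]\le\widetilde{P_i}$ into the team cost via Lagrange multipliers, Radner's theorem gives that the optimal $g_i$ are linear, $g_i=-c_i(W'+v_i[k_1])$, so this part of the state has variance $\ge\inf_{c_1,c_2}\{(a-c_1-c_2)^2\Sigma+c_1^2\sigma_{v1}^2+c_2^2\sigma_{v2}^2\}$ subject to $(1-2.5^{-1})c_i^2(\Sigma+\sigma_{vi}^2)\le\widetilde{P_i}$. \textbf{(iii) Power-limited interval $[k_1+1,k]$:} expand $x[k]$, peel off $w[k-1]$ (orthogonal, worth $+1$), and use Lemma~\ref{ach:lemmacauchy} to isolate the amplified middle state from the two controllers' cumulative influences, bounding the influences by Lemma~\ref{lem:power} with $b=2.5^{-1}$ and discarding the $(1-(2.5a^{-2})^{m})$ prefactors (legitimate because $|a|\ge2.5$ makes $2.5a^{-2}<1$). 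Combining (i)--(iii) yields $\mathbb{E}[x^2[k]]\ge D_{L,2}(\widetilde{P_1},\widetilde{P_2};k_1,k,\Sigma)$, and Lagrangian duality between the power-constrained and weighted-cost formulations, together with $\sup_{(k_1,k,\Sigma)\in S_{L,2}}$, gives the stated inequality; as in Lemma~\ref{cov:lemma3} I would carry this out for the generic case $a\ge2.5$, $k_1\ge2$, $k>k_1+1$ and note that the boundary cases follow with minor modifications.

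For $D_{L,3}$, slice with $k:=k_1$ and $k_1=k_2=k$, so both input-cost weights vanish and the average cost is $\ge q\inf_{u_1,u_2}\mathbb{E}[x^2[k_1]]$ with unconstrained power; two independent genie arguments then produce the two arguments of the $\max$: $\mathbb{E}[x^2[k_1]]\ge\mathbb{E}[w^2[k_1-1]]=1$ because $w[k_1-1]$ is independent of everything the controllers see up to time $k_1-1$, and $\mathbb{E}[x^2[k_1]]\ge\mathbb{E}[W'^2]$ with $\mathbb{E}[W'^2]$ as in step (i), evaluated by Lemma~\ref{cov:lemma1} (the residual is the full variance $1$ when $k_1=1$, since there are then no prior observations). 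For $D_{L,4}$, first give both controllers perfect observations by putting $\sigma_{v1}^2=\sigma_{v2}^2=0$ (a valid relaxation, as removing observation noise can only help the controllers), slice with $k_1=k_2=1$, expand $x[k]$, keep only the $w[0]$ disturbance (variance $a^{2(k-1)}$, hence the $\sqrt{a^{2(k-1)}}$ in the Cauchy--Schwarz bound of Lemma~\ref{ach:lemmacauchy}), and bound the two controllers' influences exactly as in (iii) via Lemma~\ref{lem:power}; Lagrangian duality again converts to the weighted form. I expect the main obstacle to be step (ii) of the $D_{L,2}$ argument --- making the reduction to Radner's problem airtight. Three things need care: the $W'$-relevant part of the time-$k_1$ observation is entangled with genie-known and past-controllable terms, so the orthogonal decomposition used in Lemma~\ref{cov:lemma3} has to be pushed through carefully; Radner's theorem is normally stated with explicit quadratic input costs rather than a hard power cap, so a Lagrangian or limiting argument is needed to deduce linear optimality under the constraint; and the power accounting must stay consistent between the genie-known ``correction'' part of $u_i[k_1]$ and the geometric discount weights. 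A secondary hazard, inherited from Lemma~\ref{cov:lemma3}, is verifying that each relaxation (genie side information, free feedback, zeroed observation noise) is in the direction that preserves a \emph{lower} bound, and that the $\inf\sup\ge\sup\inf$ and $\limsup_N(N-k)/N=1$ steps from Lemma~\ref{lem:geo} transfer verbatim.
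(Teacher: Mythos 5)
Your proposal follows the paper's own proof essentially step by step: geometric slicing with $\alpha=2.5^{-1}$ and $k_2=k_1$, the same three-interval decomposition (genie/orthogonal-decomposition argument via Lemma~\ref{cov:lemma1} with the parameter substitution $k\mapsto k_1$, $k_1\mapsto k_1-1$; sufficient-statistic reduction to a one-shot Radner problem in $W_1'$; Cauchy--Schwarz plus Lemma~\ref{lem:power} for the tail), and the same degenerate-parameter choices $k=k_1$ for $D_{L,3}$ and direct Cauchy--Schwarz on the expanded state for $D_{L,4}$. Two small remarks: your ``give perfect observations'' relaxation for $D_{L,4}$ is harmless but unnecessary, since the paper's bound never touches $\sigma_{v1}^2$ or $\sigma_{v2}^2$; and the subtlety you flag about invoking Radner's theorem under a hard power cap rather than a weighted cost is real --- the paper asserts linear optimality directly without the Lagrangian bridge, so making that step explicit, as you propose, would actually tighten the exposition.
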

\begin{proof}
For simplicity, we assume $a \geq 2.5$, $k_1 \geq 2$, $k > k_1 +1$. The remaining cases when $a \leq -2.5$ or $k_1=1$ or $k = k_1 +1$ easily follow with minor modifications.

$\bullet$ Geometric Slicing: We apply the geometric slicing idea of Section~\ref{sec:lower:geo} to get a finite-horizon problem. By putting $\alpha=2.5^{-1}$ and $k_2=k_1$ to Lemma~\ref{lem:geo}, the average cost is lower bounded by
\begin{align}
&\inf_{u_1, u_2} (q \mathbb{E}[x^2[k]] \\
&+r_1 \underbrace{( (1-2.5^{-1}) \mathbb{E}[u_1^2[k_1]] + (1-2.5^{-1})2.5^{-1}\mathbb{E}[u_1^2[k_1+1]]+ \cdots+(1-2.5^{-1}) 2.5^{-k+1+k_1} \mathbb{E}[u_1^2[k-1]] )}_{:=\widetilde{P_1}}\\
&+r_2 \underbrace{( (1-2.5^{-1}) \mathbb{E}[u_2^2[k_1]] + (1-2.5^{-1})2.5^{-1}\mathbb{E}[u_2^2[k_1+1]]+ \cdots+(1-2.5^{-1}) 2.5^{-k+1+k_1} \mathbb{E}[u_2^2[k-1]] )}_{:=\widetilde{P_2}}
\end{align}
Like the proof of Lemma~\ref{cov:lemma2}, we denote the second and the third terms as $\widetilde{P_1}$ and $\widetilde{P_2}$ respectively.

$\bullet$ Power-Limited Interval: Denote
\begin{align}
&W_2 := a^{k-k_1-2} w[k_1+1]+\cdots+ a w[k-2]\\
&U_{12} := a^{k-k_1-2}u_1[k_1+1]+\cdots+u_1[k-1] \\
&U_{22} := a^{k-k_1-2}u_2[k_1+1]+\cdots+u_2[k-1] \\
\end{align}
Here, $U_{12}$ and $U_{22}$ correspond to the first and second controller's input in the power-limited intervals described in Figure~\ref{fig:division2}. We will first handle these power-limited inputs. Notice that
\begin{align}
x[k]&=a^{k-k_1-1}x[k_1+1]+a^{k-k_1-2}u_1[k_1+1]+\cdots+u_1[k-1]+a^{k-k_1-2}u_2[k_1+1]+\cdots+u_2[k-1]\\
&+a^{k-k_1-2}w[k_1+1]+\cdots+w[k-1].
\end{align}
Since $x[k_1+1]$ and $W_2$ are independent by causality, using Lemma~\ref{ach:lemmacauchy} we can lower bound $\mathbb{E}[x^2[k]]$ as
\begin{align}
&\mathbb{E}[x^2[k]]\\
&=\mathbb{E}[(a^{k-k_1-1}x[k_1+1]+U_{12}+U_{22}+W_2)^2]+1\\
&\geq ( \sqrt{\mathbb{E}[(a^{k-k_1-1}x[k_1+1]+W_2)^2]} - \sqrt{\mathbb{E}[U_{12}^2]} - \sqrt{\mathbb{E}[U_{22}^2]} )_+^2 +1\\
&= ( \sqrt{\mathbb{E}[(a^{k-k_1-1}x[k_1+1])^2] + \mathbb{E}[W_2^2]} - \sqrt{\mathbb{E}[U_{12}^2]} - \sqrt{\mathbb{E}[U_{22}^2]} )_+^2 +1\\
&\geq ( \sqrt{a^{2(k-k_1-1)}\mathbb{E}[x[k_1+1]^2]} - \sqrt{\mathbb{E}[U_1^2]} - \sqrt{\mathbb{E}[U_2^2]} )_+^2 +1. \label{eqn:radner1}
\end{align}
Here,  $\mathbb{E}[x[k_1+1]^2]$ is lower bounded as
\begin{align}
&\mathbb{E}[x[k_1+1]^2] = \mathbb{E}[(ax[k_1]+u_1[k_1]+u_2[k_1]+w[k_1])^2] \\
&=\mathbb{E}[(ax[k_1]+u_1[k_1]+u_2[k_1])^2] + \mathbb{E}[w[k_1]^2] \\
&\geq \mathbb{E}[(ax[k_1]+u_1[k_1]+u_2[k_1])^2]. \label{eqn:radner5}
\end{align}
In the last term, the effect of the power-limited inputs is separated out.

$\bullet$ Information-Limited Interval: Using Lemma~\ref{cov:lemma1}, we will bound the remaining uncertainty of the state after the information-limited interval. Since we will give all the disturbances except $w[0]$ as side-information, we denote the relevant observations as $y_1'[n]$ and $y_2'[n]$. Formally, denote
\begin{align}
&W_1 := a^{k_1-1}w[0] + \cdots + w[k_1-1]\\
&U_{11} := a^{k_1-1}u_1[0]+ \cdots + u_1[k_1-1]\\
&U_{21} := a^{k_1-1}u_2[0]+ \cdots + u_2[k_1-1]\\
&\overline{W} := (w[1],\cdots,w[k-1]) \\
&y_1'[n]:=a^{n-1}w[0]+v_1[n]\\
&y_2'[n]:=a^{n-1}w[0]+v_2[n]\\
&W_1' := W_1 - \mathbb{E}[W_1|y_1'[1:k_1-1],y_2'[1:k_1-1],\overline{W}] \\
&W_1'' := \mathbb{E}[W_1|y_1'[1:k_1-1],y_2'[1:k_1-1],\overline{W}] \\
&u_1'[k_1] := u_1[k_1] - \mathbb{E}[u_1[k_1]|y_1'[1:k_1-1],y_2'[1:k_1-1],\overline{W}]\\
&u_1''[k_1] := \mathbb{E}[u_1[k_1]|y_1'[1:k_1-1],y_2'[1:k_1-1],\overline{W}]\\
&u_2'[k_1] := u_2[k_1] - \mathbb{E}[u_2[k_1]|y_1'[1:k_1-1],y_2'[1:k_1-1],\overline{W}]\\
&u_2''[k_1] := \mathbb{E}[u_2[k_1]|y_1'[1:k_1-1],y_2'[1:k_1-1],\overline{W}].
\end{align}
Here, we have
\begin{align}
W_1' = a^{k_1-1}w[0]- \mathbb{E}[a^{k_1-1}w[0]|y_1'[1:k_1-1], y_2'[1:k_1-1]]
\end{align}
Since $w[0], y_1'[1:k_1-1], y_2'[1:k_1-1], \overline{W}$ are jointly Gaussian, $W_1'$ is independent from $ y_1'[1:k_1-1], y_2'[1:k_1-1], \overline{W}$. By Lemma~\ref{cov:lemma1} we have
\begin{align}
\mathbb{E}[W_1'^2] = \frac{a^{2(k_1-1)}\sigma_{v1}^2}{(1+\frac{\sigma_{v1}^2}{\sigma_{v2}^2})(\frac{a^{2(k_1-2)}(1-a^{-2(k_1-1)})}{1-a^{-2}})+\sigma_{v1}^2} \label{eqn:radner6}
\end{align}
This lower bounds the state disturbance due to $w[0]$ when it is observed by $y_1'[1:k_1-1]$ and $y_2'[1:k_1-1]$. Note that $y_1[1:k_1-1], y_2[1:k_1-1], \overline{W}$ is a function of $y_1'[1:k_1-1], y_2'[1:k_1-1], \overline{W}$. Therefore, $U_{11}$ and $U_{21}$ are also functions of $y_1'[1:k_1-1], y_2'[1:k_1-1], \overline{W}$ and \eqref{eqn:radner5} can be lower bounded as
\begin{align}
&\mathbb{E}[(ax[k_1]+u_1[k_1]+u_2[k_1])^2]\\
&=\mathbb{E}[(a(W_1+U_{11}+U_{12})+u_1[k_1]+u_2[k_1])^2]\\
&=\mathbb{E}[(aW_1' + u_1'[k_1] + u_2'[k_1])^2] + \mathbb{E}[(aW_1'' + aU_{11}+aU_{12} + u_1''[k_1] + u_2''[k_1])^2]\\
&\geq \mathbb{E}[(aW_1' + u_1'[k_1] + u_2'[k_1])^2]
\end{align}
In the last term, the effect of the information-limited inputs is separated out.

$\bullet$ Radner's Interval: Now we will reduce the last term to Radner's problem.
$u_1'[k_1]$ and $u_2'[k_1]$ are functions of $y_1[1:k_1],y_1'[1:k_1-1],y_2'[1:k_1-1],\overline{W}$ and $y_2[1:k_1],y_1'[1:k_1-1],y_2'[1:k_1-1],\overline{W}$ respectively. Here, $y_1'[1:k_1-1],y_2'[1:k_1-1],\overline{W}$ are independent from $W_1'$ and $y_1[1:k_1-1], y_2[1:k_1-1], \overline{W}$ is a function of $y_1'[1:k_1-1], y_2'[1:k_1-1], \overline{W}$. Therefore, only $y_1[k_1]$ at the first controller and $y_2[k_1]$ at the second controller are relevant to $W_1'$.
Therefore, by removing independent parts from $W_1'$ in $y_1[k_1]$, the sufficient statistic of $y_1[k_1]$ is
\begin{align}
&y_1[k_1]- (w[k_1-1]+aw[k_1-2]+\cdots+a^{k_1-2}w[1]) - \mathbb{E}[a^{k_1-1}w[0]|y_1'[1:k_1-1], y_2'[1:k_1-1]]\\
&-(u_1[k_1-1]+ a u_1[k_1-2]+ \cdots + a^{k_1-2}u_1[1])\\
&-(u_2[k_1-1]+ a u_2[k_1-2]+ \cdots + a^{k_1-2}u_2[1])\\
&=a^{k_1-1}w[0] - \mathbb{E}[a^{k_1-1}w[0]| y_1'[1:k_1-1], y_2'[1:k_1-1]] + v_1[k_1]\\
&= W_1' + v_1[k_1]
\end{align}
Likewise, $y_2[k_1]$ can be reduced to
\begin{align}
W_1' + v_2[k_1]
\end{align}
Therefore, by considering $W_1'$ as an initial state, $v_1[k_1]$ and $v_2[k_1]$ as observation noise of the first and second controller, we can map the problem with Radner's problem.
Here, we have the following power constraints on $u_1[k_1]$ and $u_2[k_1]$.
\begin{align}
&(1-2.5^{-1})\mathbb{E}[u_1^2[k_1]] \leq \widetilde{P_1}\\
&(1-2.5^{-1})\mathbb{E}[u_2^2[k_1]] \leq \widetilde{P_2}
\end{align}
Since a linear strategy is optimal in Radner's problem, by \eqref{eqn:radner6} we can conclude
\begin{align}
\mathbb{E}[(aW_1' + u_1'[k_1] + u_2'[k_1])^2] &\geq
\inf_{c_1, c_2 \in \mathbb{R}} (a-c_1-c_2)^2 \Sigma + c_1^2 \sigma_{v1}^2 + c_2^2 \sigma_{v2}^2 \label{eqn:radner2}\\
&\mbox{s.t. } (1-2.5^{-1})c_1^2(\Sigma+\sigma_{v1}^2) \leq \widetilde{P_1} \\
&\quad (1-2.5^{-1})c_2^2(\Sigma+\sigma_{v2}^2) \leq \widetilde{P_2}
\end{align}

$\bullet$ Final Lower bound:
Applying Lemma~\ref{lem:power} with paramters $a=a$ and $b=2.5^{-1}$, we can upper bound the power of the power-limited inputs.
\begin{align}
\mathbb{E}[U_1^2] &= \mathbb{E}[(a^{k-k_1-2}u_1[k_1+1]+\cdots + u_1[k-1])^2] \\
&\leq \frac{a^{2(k-k_1-2)}(1-(2.5a^{-2})^{k-k_1-1})}{1-2.5a^{-2}} \frac{\widetilde{P_1}}{(1-2.5^{-1})2.5^{-1}} \label{eqn:radner3}
\end{align}
and likewise
\begin{align}
\mathbb{E}[U_1^2]
&\leq \frac{a^{2(k-k_1-2)}(1-(2.5a^{-2})^{k-k_1-1})}{1-2.5a^{-2}} \frac{\widetilde{P_2}}{(1-2.5^{-1})2.5^{-1}} \label{eqn:radner4}
\end{align}
Finally, plugging \eqref{eqn:radner2}, \eqref{eqn:radner3}, \eqref{eqn:radner4} into \eqref{eqn:radner1} gives the first bound based on $D_{L,2}(\widetilde{P_1},\widetilde{P_2};k_1,k,\Sigma)$:
\begin{align}
\mathbb{E}[x^2[n]] &\geq \inf_{c_1, c_2}(\sqrt{a^{2(k-k_1-1)}((a-c_1-c_2)^2 \Sigma + c_1^2 \sigma_{v1}^2 + c_2^2 \sigma_{v2}^2)}\\
&-\sqrt{\frac{a^{2(k-k_1-2)}(1-(2.5a^{-2})^{k-k_1-1})}{1-2.5a^{-2}} \frac{\widetilde{P_1}}{(1-2.5^{-1})2.5^{-1}}}-\sqrt{\frac{a^{2(k-k_1-2)}(1-(2.5a^{-2})^{k-k_1-1})}{1-2.5a^{-2}} \frac{\widetilde{P_2}}{(1-2.5^{-1})2.5^{-1}}})_+^2 +1 \\
&\mbox{s.t. } (1-2.5^{-1})c_1^2(\Sigma+\sigma_{v1}^2) \leq \widetilde{P_1} \\
&\quad (1-2.5^{-1})c_2^2(\Sigma+\sigma_{v2}^2) \leq \widetilde{P_2}
\end{align}
The second bound based on $D_{L,3}(\widetilde{P_1},\widetilde{P_2};k_1)$ derived as follows. 
Since $\mathbb{E}[x^2[n]] \geq \mathbb{E}[w^2[n-1]]=1$, trivially $D_L(\widetilde{P_1},\widetilde{P_2}) \geq 1$.
Moreover, as justified above, we have
\begin{align}
\mathbb{E}[x^2[k_1]] \geq \mathbb{E}[(a^{k_1-1}w[0]- \mathbb{E}[a^{k_1-1}w[0]| y_1'[1:k_1-1], y_2'[1:k_1-1])^2] = \mathbb{E}[W_1'^2].
\end{align}
Therefore, by setting $k=k_1$ we get the second bound based on $D_{L,3}(\widetilde{P_1},\widetilde{P_2};k_1)$.

The last bound based on $D_{L,4}(\widetilde{P_1},\widetilde{P_2};k)$ of the lemma can be derived as follows.
\begin{align}
&\mathbb{E}[x^2[k]] \\
&\geq (\sqrt{\mathbb{E}[(a^{k-1}w[0]+ \cdots + w[k-1])^2]} - \sqrt{\mathbb{E}[(a^{k-1}u_1[0]+ \cdots + u_1[k-1])^2]} -\sqrt{\mathbb{E}[(a^{k-1}u_2[0]+ \cdots + u_2[k-1])^2]})_+^2 \\
&\geq ( \sqrt{a^{2(k-1)}} - \sqrt{\frac{a^{2(k-2)}}{1-2.5a^{-2}} \frac{\widetilde{P_1}}{1-2.5^{-1}}} - \sqrt{\frac{a^{2(k-2)}}{1-2.5a^{-2}} \frac{\widetilde{P_2}   }{1-2.5^{-1}}})_+^2
\end{align}
where the first inequality follows from Lemma~\ref{ach:lemmacauchy} and the second inequality follows from Lemma~\ref{lem:power}.
\end{proof}

In this lemma, the time-interval from $0$ to $k_1-1$ corresponds to the information-limited interval in Figure~\ref{fig:division2}. The time-interval from $k_1$ to $k_1+1$ corresponds to the Radner's interval in Figure~\ref{fig:division2}. The time-interval from $k_1+1$ to $k$ corresponds to the power-limited interval in Figure~\ref{fig:division2}.

\section{Constant Ratio Optimality}
\label{sec:proof:ratio}
Now, we have an upper and lower bound on $D(P_1, P_2)$. In this section, we will evaluate the bounds and prove Theorem~\ref{thm:1} which bounds the weighted average cost within a constant ratio. Even though the numerical evaluations are not elegant\footnote{The bounds can probably be improved and tightened. However, the main concern of this paper is not quantifying the exact cost, but qualitatively understanding the near-optimal strategies. The constant ratio optimality results are enough to justify our intuition behind the proposed strategies.}, these are enough to justify constant ratio optimality.

The upper bounds are written from the power-disturbance tradeoff perspective of Problem~\ref{prob:power} and denoted by $(D_U(P_1,P_2),P_1,P_2)$. The lower bounds in Lemma~\ref{cov:lemma3} and \ref{cov:lemma2} are given for the original weighted average-cost of Problem~\ref{prob:lqg},  which can be written as $(D_{L,i}(\widetilde{P_1},\widetilde{P_2}),\widetilde{P_1},\widetilde{P_2})$ from the power-disturbance perspective. The following lemma tells if these two tradeoff regions are within a constant ratio of each other as regions in $\mathbb{R}^3$, i.e. $\exists c\geq 1$ such that $(D_U(cP_1,cP_2),cP_1,cP_2) \leq c\cdot ( D_{L,i}(P_1,P_2),P_1,P_2)$, then the average cost can be characterized to within a constant ratio.

\begin{lemma}
For two functions $D_L(\widetilde{P_1}, \widetilde{P_2})$ and $D_U(P_1,P_2)$, let there exist $c \geq 1$ such that for all $x_1,x_2 \geq 0$
\begin{align}
D_U(c x_1, c x_2) \leq c \cdot D_L(x_1, x_2).
\end{align}
Then, for all $q, r_1 , r_2 \geq 0$, the following inequality holds.
\begin{align}
\min_{P_1, P_2 \geq 0} q D_U(P_1,P_2)+r_1 P_1 + r_2 P_2 \leq c(\min_{\widetilde{P_1}, \widetilde{P_2} \geq 0} q D_L(\widetilde{P_1},\widetilde{P_2}) + r_1 \widetilde{P_1} + r_2 \widetilde{P_2})
\end{align}
\label{rat:lemma1}
\end{lemma}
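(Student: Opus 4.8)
The plan is to exhibit, for every feasible point of the right-hand side minimization, a feasible point of the left-hand side whose objective value is at most $c$ times larger, and then pass to the infimum. Concretely, fix arbitrary $q, r_1, r_2 \geq 0$ and arbitrary $\widetilde{P_1}, \widetilde{P_2} \geq 0$. Since $c \geq 1$, the pair $(c\widetilde{P_1}, c\widetilde{P_2})$ is again a nonnegative pair, hence admissible in the left-hand side minimization over $(P_1, P_2)$. Choosing $P_1 = c\widetilde{P_1}$, $P_2 = c\widetilde{P_2}$ and using the hypothesis $D_U(c\widetilde{P_1}, c\widetilde{P_2}) \leq c \cdot D_L(\widetilde{P_1}, \widetilde{P_2})$ together with $q \geq 0$, I would bound
\begin{align}
\min_{P_1, P_2 \geq 0} q D_U(P_1,P_2) + r_1 P_1 + r_2 P_2
&\leq q D_U(c\widetilde{P_1}, c\widetilde{P_2}) + r_1 (c\widetilde{P_1}) + r_2 (c\widetilde{P_2}) \nonumber \\
&\leq q\, c\, D_L(\widetilde{P_1}, \widetilde{P_2}) + c\, r_1 \widetilde{P_1} + c\, r_2 \widetilde{P_2} \nonumber \\
&= c\left( q D_L(\widetilde{P_1}, \widetilde{P_2}) + r_1 \widetilde{P_1} + r_2 \widetilde{P_2} \right). \nonumber
\end{align}

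The key observation making the second inequality work is that the scaling of the power variables by $c$ scales the linear input-cost terms $r_1 P_1 + r_2 P_2$ by exactly $c$, while the hypothesis controls the disturbance term $q D_U$ by the same factor $c$; since all of $q, r_1, r_2$ are nonnegative, the combined objective scales by at most $c$. The left-hand side is bounded by the value at the specific admissible choice $(c\widetilde{P_1}, c\widetilde{P_2})$, which is where the first inequality comes from.

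Finally, the displayed chain holds for \emph{every} $\widetilde{P_1}, \widetilde{P_2} \geq 0$, so I would take the infimum (minimum) of the right-hand side over all such pairs to conclude
\begin{align}
\min_{P_1, P_2 \geq 0} q D_U(P_1,P_2) + r_1 P_1 + r_2 P_2
\leq c \left( \min_{\widetilde{P_1}, \widetilde{P_2} \geq 0} q D_L(\widetilde{P_1},\widetilde{P_2}) + r_1 \widetilde{P_1} + r_2 \widetilde{P_2} \right), \nonumber
\end{align}
which is the claim. There is essentially no substantive obstacle here; the only point requiring a word of care is verifying admissibility of the rescaled pair (guaranteed by $c \geq 1$) and noting that the argument is valid even if the minima are only infima not attained, since the bound is established pointwise before taking infima. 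If one wants to be fully rigorous about attainment, one can replace "$\min$" by "$\inf$" throughout the argument without any change.
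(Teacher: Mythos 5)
Your proof is correct and follows essentially the same approach as the paper: plug $(c\widetilde{P_1}, c\widetilde{P_2})$ into the left-hand side, invoke the hypothesis, and factor out $c$. The only (minor) difference is that the paper fixes a minimizer $(P_1^\star, P_2^\star)$ at the outset, whereas you argue pointwise and pass to the infimum at the end, which is slightly cleaner in case the minimum is not attained.
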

\begin{proof}
Let $P_1^\star$ and $P_2^\star$ achieve the minimum of the right term of the inequality, i.e.
\begin{align}
&\min_{\widetilde{P_1}, \widetilde{P_2} \geq 0} q D_L(\widetilde{P_1},\widetilde{P_2}) + r_1 \widetilde{P_1} + r_2 \widetilde{P_2} \\
&= q D_L(P_1^\star,P_2^\star) + r_1 P_1^\star + r_2 P_2^\star.
\end{align}
Then, we have
\begin{align}
&c (\min_{\widetilde{P_1}, \widetilde{P_2} \geq 0} q D_L(\widetilde{P_1},\widetilde{P_2}) + r_1 \widetilde{P_1} + r_2 \widetilde{P_2})\\
&=c\cdot(q D_L(P_1^\star,P_2^\star) + r_1 P_1^\star + r_2 P_2^\star) \\
&\geq q D_U(cP_1^\star,cP_2^\star) + r_1 (cP_1^\star)+ r_2 (cP_2^\star)\\
&\geq \min_{P_1,P_2} q D_U(P_1,P_2) + r_1 P_1 + r_2 P_2
\end{align}
where the first inequality comes from the assumption of the lemma. Thus, the lemma is proved.
\end{proof}

We will show that the proposed strategies of Defintion~\ref{def:lin} and \ref{def:sig} solve the weighted average cost problem of Problem~\ref{prob:lqg} to within a constant ratio. 
Let's call the case when $\sigma_{v2}^2 \leq \max(1,a^2 \sigma_{v1}^2)$ the \textit{weakly-degraded-observation} case since the gap between the two controllers' observation noises is not too huge and the second controller can observe what the first controller observed only after one-time step. Likewise, we will call the case when $\sigma_{v2}^2 > \max(1,a^2 \sigma_{v1}^2)$ the \textit{strongly-degraded-observation} case since the gap between the observation noises is larger.

The weakly-degraded-observation case will be discussed in Section~\ref{sec:proof:ratio1} and the strongly-degraded-observation case will be covered in Section~\ref{sec:proof:ratio2}.

\subsection{Weakly-Degraded-Observation case with $|a| \geq 2.5$}
\label{sec:proof:ratio1}
Let's first consider the weakly-degraded case when $\sigma_{v2}^2 \leq \max(1,a^2 \sigma_{v1}^2)$, which  corresponds to the left half plane of Figure~\ref{fig:region} of page~\pageref{fig:region}.  
In this case, the infeasibility of $0$-stage signaling discussed in Section ~\ref{sec:caveat} and \ref{sec:radnerlower} shows up and thus linear strategies are enough for constant-ratio optimality.

First, we evaluate the lower bound of Lemma~\ref{cov:lemma2} which involves Radner's problem.
\begin{corollary}
Let $|a| \geq 2.5$ and $\sigma_{v2}^2 \leq \max(1,a^2 \sigma_{v1}^2)$. Then, for all $q, r_1, r_2 \geq 0$, the minimum cost \eqref{eqn:part11} of Problem~\ref{prob:lqg} is lower bounded as follows:
\begin{align}
\inf_{u_1,u_2} \limsup_{N \rightarrow \infty}\frac{1}{N}
\sum_{0 \leq n < N} q \mathbb{E}[x^2[n]] + r_1 \mathbb{E}[u_1^2[n]] + r_2 \mathbb{E}[u_2^2[n]]
 \geq \min_{\widetilde{P_1},\widetilde{P_2} \geq 0} q D_L(\widetilde{P_1},\widetilde{P_2}) + r_1 \widetilde{P_1} + r_2 \widetilde{P_2}
\end{align}
where $D_L(\widetilde{P_1},\widetilde{P_2})$ satisfies the following conditions.\\
(a) If $\widetilde{P_1} \leq \frac{1}{400}a^2 \max(1,a^2 \sigma_{v1}^2)$ and $\widetilde{P_2} \leq \frac{1}{400}a^2 \max(1,a^2 \sigma_{v2}^2)$ then $D_L(\widetilde{P_1},\widetilde{P_2}) = \infty$.\\
(b) If $\widetilde{P_1} \leq \frac{1}{400}a^2 \max(1,a^2 \sigma_{v1}^2)$, for all $\widetilde{P_2}$, $D_L(\widetilde{P_1},\widetilde{P_2}) \geq  0.176 a^2 \sigma_{v2}^2+1$.\\
(c) For all $\widetilde{P_1}$ and $\widetilde{P_2}$, $D_L(\widetilde{P_1},\widetilde{P_2}) \geq 0.295  \cdot \max(1,a^2 \sigma_{v1}^2)$.
\label{rat:lemma2}
\end{corollary}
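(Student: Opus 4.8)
The plan is to take $D_L$ to be the pointwise maximum of the four lower bounds already available — $D_{L,1}$ from Lemma~\ref{cov:lemma3} and $D_{L,2},D_{L,3},D_{L,4}$ from Lemma~\ref{cov:lemma2}, each optimized over its horizon parameters $k_1,k$ and over $\Sigma$ — and to verify (a), (b), (c) by exhibiting in each case one constituent bound (with one parameter choice) that already beats the claimed value; only the Radner-type bounds of Lemma~\ref{cov:lemma2} will be needed. Throughout we use Assumption~(d) ($\sigma_{v1}^2\le\sigma_{v2}^2$), the hypothesis $|a|\ge2.5$ (so $a^{-2}\le0.16$, $1-2.5a^{-2}\ge0.6$, $1-2.5^{-1}=0.6$), and the weakly-degraded hypothesis $\sigma_{v2}^2\le\max(1,a^2\sigma_{v1}^2)$. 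For (c) I would use the bound $D_{L,3}$ from \eqref{eqn:lem:radner1} with $k_1\to\infty$: the fraction there is increasing in $k_1$ with limit $(a^2-1)\tfrac{\sigma_{v1}^2\sigma_{v2}^2}{\sigma_{v1}^2+\sigma_{v2}^2}$, so $\sup_{k_1}D_{L,3}=\max\!\big((a^2-1)\tfrac{\sigma_{v1}^2\sigma_{v2}^2}{\sigma_{v1}^2+\sigma_{v2}^2},1\big)$. If $a^2\sigma_{v1}^2\le1$ this is $\ge1\ge0.295\max(1,a^2\sigma_{v1}^2)$; if $a^2\sigma_{v1}^2>1$ then $\sigma_{v2}^2\ge\sigma_{v1}^2$ gives $\tfrac{\sigma_{v2}^2}{\sigma_{v1}^2+\sigma_{v2}^2}\ge\tfrac12$ and $\tfrac{a^2-1}{2a^2}\ge\tfrac12-0.08>0.295$, so the bound is $\ge0.295\,a^2\sigma_{v1}^2$ (taking $k_1$ finite but large absorbs the $\sup$–limit slack).

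For (b) I would use $D_{L,2}$ with $k=k_1+1$, so the power-limited interval is empty and the two subtracted square-root terms in $D_{L,2}$ disappear, leaving $D_{L,2}=\inf_{c_1,c_2}\big[(a-c_1-c_2)^2\Sigma+c_1^2\sigma_{v1}^2+c_2^2\sigma_{v2}^2\big]+1$ with the $c_2$-constraint (driven by $\widetilde{P_2}$) only shrinking the feasible set. Keeping the $\widetilde{P_1}$-constraint on $c_1$ explicit — it forces $c_1^2\le\widetilde{P_1}/((1-2.5^{-1})(\Sigma+\sigma_{v1}^2))$, which under the hypothesis on $\widetilde{P_1}$ is at most a small multiple of $a^2/(\Sigma+\sigma_{v1}^2)$ — and dropping $c_1^2\sigma_{v1}^2\ge0$, the infimum over $c_2$ evaluates to $\tfrac{(a-c_1^{\sharp})^2\Sigma\sigma_{v2}^2}{\Sigma+\sigma_{v2}^2}+1$ where $c_1^{\sharp}$ is the largest feasible $c_1$. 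When $a^2\sigma_{v1}^2\le1$ take $k_1=1,\Sigma=1$ and use $\sigma_{v2}^2\le1$: here $c_1^{\sharp}\le|a|/\sqrt{240}$ and the bound exceeds $0.437\,a^2\sigma_{v2}^2+1$. When $a^2\sigma_{v1}^2>1$ take $k_1\to\infty$ so $\Sigma\to(a^2-1)\tfrac{\sigma_{v1}^2\sigma_{v2}^2}{\sigma_{v1}^2+\sigma_{v2}^2}\ge\tfrac{a^2-1}{2}\sigma_{v1}^2$; then $\tfrac{\Sigma}{\Sigma+\sigma_{v2}^2}$ is bounded below (via $\sigma_{v2}^2\le a^2\sigma_{v1}^2$) by $\tfrac{a^2-1}{3a^2-1}\ge0.29$ and $c_1^{\sharp}$ is small since $\Sigma+\sigma_{v1}^2\ge\tfrac{a^2+1}{2}\sigma_{v1}^2$ while $\widetilde{P_1}\le\tfrac{a^4\sigma_{v1}^2}{400}$, so the bound exceeds $0.176\,a^2\sigma_{v2}^2+1$.

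For (a) I would force $D_L=+\infty$ by sending a horizon to infinity. When $\max(1,a^2\sigma_{v1}^2)$ is moderate, factoring $a^{2(k-1)}$ out of \eqref{eqn:lem:radner2} gives $D_{L,4}=a^{2(k-1)}\big(1-\tfrac{1}{|a|}\sqrt{\tfrac{\widetilde{P_1}}{(1-2.5a^{-2})(1-2.5^{-1})}}-\tfrac{1}{|a|}\sqrt{\tfrac{\widetilde{P_2}}{(1-2.5a^{-2})(1-2.5^{-1})}}\big)_+^2$, which diverges as $k\to\infty$ whenever the bracket is positive; the two subtracted terms are $O(|a|^{-1}\sqrt{\widetilde{P_i}})$, which the factor $\tfrac1{400}$ keeps well below $1$ in this regime. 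When $\sigma_{v1}^2$ is large I use $D_{L,2}$ with \emph{both} $k_1\to\infty$ and $k\to\infty$: the information-limited interval inflates $\Sigma$, hence (via the Cauchy--Schwarz ``resistor-network'' bound $\inf_{c_1,c_2}[(a-c_1-c_2)^2\Sigma+\cdots]\ge\tfrac{a^2}{1/\Sigma+1/\sigma_{v1}^2+1/\sigma_{v2}^2}$, which is $\ge\tfrac{a^2-1}{2}\sigma_{v1}^2$ in the limit) the Radner cost, to order $a^2\sigma_{v1}^2$, while the power-limited interval multiplies by $a^{2(k-k_1-1)}$; the subtracted power-limited terms are controlled by the $\tfrac1{400}$ hypothesis on $\widetilde{P_1},\widetilde{P_2}$ together with $\sigma_{v2}^2\le a^2\sigma_{v1}^2$, so the bracket stays positive and the bound diverges.

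The delicate step is the case analysis in (a): one must partition the weakly-degraded region finely enough that in each sub-case a single horizon choice makes the $(\cdot)_+^2$ bracket in $D_{L,2}$ or $D_{L,4}$ stay bounded away from $0$ while the power-limited subtraction terms — which carry extra powers of $|a|$ and are worst exactly when $\sigma_{v2}^2$ approaches the boundary $\max(1,a^2\sigma_{v1}^2)$ — remain dominated; making this consistent with the \emph{explicit} constant $\tfrac1{400}$ (rather than an unspecified one) is where the real care lies, and in the most stubborn sub-regime one may have to fall back on the Witsenhausen-type bound $D_{L,1}$ of Lemma~\ref{cov:lemma3}.
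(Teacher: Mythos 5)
Your overall plan is the paper's own: use only the Radner-type bounds $D_{L,2},D_{L,3},D_{L,4}$ from Lemma~\ref{cov:lemma2}, prove (c) via $D_{L,3}$, (b) via $D_{L,2}$ with $k=k_1+1$, and (a) by sending $k\to\infty$ in $D_{L,2}$ or $D_{L,4}$. Your closing hedge about falling back on the Witsenhausen-type bound $D_{L,1}$ can be dropped: the paper never needs it for this corollary. One minor error before the real gap: for (c) you assert the $D_{L,3}$ fraction is increasing in $k_1$ with supremum equal to the limit $(a^2-1)\sigma_{v1}^2\sigma_{v2}^2/(\sigma_{v1}^2+\sigma_{v2}^2)$. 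Writing the fraction as $\sigma_{v1}^2\big/\big(Aa^{-2}+(a^2\sigma_{v1}^2-A)a^{-2k_1}\big)$ with $A=(1+\sigma_{v1}^2/\sigma_{v2}^2)/(1-a^{-2})$, one sees it is \emph{decreasing} whenever $a^2\sigma_{v1}^2<A$ (e.g.\ $a=2.5$, $\sigma_{v1}^2=0.2$, $\sigma_{v2}^2=1$: $a^2\sigma_{v1}^2=1.25<A\approx1.43$). Your conclusion survives because the limit is still a valid lower bound on $\sup_{k_1}D_{L,3}$ and already exceeds $0.295\max(1,a^2\sigma_{v1}^2)$, but both the monotonicity claim and the formula for the supremum are false as stated.

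The genuine gap is in (a). You key the case split to $\max(1,a^2\sigma_{v1}^2)$: use $D_{L,4}$ when it is ``moderate,'' else $D_{L,2}$. But the $\widetilde{P_2}$ budget in (a) is $\tfrac{1}{400}a^2\max(1,a^2\sigma_{v2}^2)$, governed by $\sigma_{v2}$ --- a quantity your ``moderate $\sigma_{v1}$'' condition does not control. Take $\sigma_{v1}^2\leq a^{-2}$ (so $\max(1,a^2\sigma_{v1}^2)=1$, your $D_{L,4}$ branch) with $\sigma_{v2}^2$ near $1$, which the weakly-degraded hypothesis permits. Then $\widetilde{P_2}$ may be as large as $\tfrac{a^4\sigma_{v2}^2}{400}$, so the $\widetilde{P_2}$-term in your factored $D_{L,4}$ bracket is as large as $\tfrac{1}{12}\sqrt{a^2\sigma_{v2}^2}$, which alone exceeds $1$ once $a^2\sigma_{v2}^2\gtrsim144$ --- reachable inside your branch whenever $|a|\gtrsim12$, at which point $D_{L,4}$ degenerates to $0$ rather than diverging. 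Your penultimate sentence correctly diagnoses that the subtracted terms are ``worst exactly when $\sigma_{v2}^2$ approaches the boundary,'' but the resolution you sketch (partition on $\sigma_{v1}$) does not actually dominate them. The paper instead splits (a) on whether $\max(1,a^2\sigma_{v2}^2)$ equals $1$ or $a^2\sigma_{v2}^2$: $D_{L,4}$ only for the former (where both $\widetilde{P_i}\leq a^2/400$ keep each subtracted term $\leq\tfrac1{12}$), and $D_{L,2}$ with $k\to\infty$ for the latter, where after \eqref{eqn:cor2:2}--\eqref{eqn:cor2:3} both subtracted terms are at most $\sqrt{0.0174\,a^{2(k-k_1)}\sigma_{v2}^2}$ while the Radner term is at least $\sqrt{0.176\,a^{2(k-k_1)}\sigma_{v2}^2}$, so the bracket stays bounded away from $0$ uniformly in $|a|$.
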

\begin{proof}
See Appendix~\ref{sec:lemma2}.
\end{proof}

(a) and (b) tell what happens if the first controller has little power (i.e. it must follow something close to a zero-input strategy). (a) shows if the second controller does not have enough power, the system becomes unstable. (b) shows even if the second controller has enough power, the state variance is lower bounded by the second controller's observation noise. (c) shows even if the first controller has enough power to apply a zero-forcing strategy, the state variance is lower bounded by the first controller's observation noise.

The following lemma analyzes the achievable disturbance by the simple linear strategy of Definition~\ref{def:lin}.
\begin{lemma}
Consider a single-controller scalar system
\begin{align}
&x[n+1]=ax[n]+u[n]+w[n]   \\
&y[n]=x[n]+v[n]
\end{align}
where $w[n]$ is i.i.d. $\mathcal{N}(0,1)$ and $v[n]$  is i.i.d. $\mathcal{N}(0,\sigma_v^2)$. For a given control strategy, let $D(P):=\limsup_{n \rightarrow \infty}\frac{1}{N}\sum_{0 \leq n < N} \mathbb{E}[x^2[n]]$ and $P := \limsup_{n \rightarrow \infty} \frac{1}{N}\sum_{0 \leq n < N} \mathbb{E}[u^2[n]]$. Then,
\begin{align}
(D(P),P) \leq (a^2 \sigma_v^2 + 1, a^4 \sigma_v^2 + a^2 \sigma_v^2 + a^2)
\end{align}
is achievable by a linear bang-bang controller, $u[n]=-ay[n]$. Therefore, in Problem~\ref{prob:power} the following power-disturbance tradeoffs are achievable.
\begin{align}
&(D(P_1,P_2),P_1, P_2) \leq (a^2 \sigma_{v1}^2 + 1, a^4 \sigma_{v1}^2 + a^2 \sigma_{v1}^2 + a^2,0), \\
&(D(P_1,P_2),P_1, P_2) \leq (a^2 \sigma_{v2}^2 + 1, 0, a^4 \sigma_{v2}^2 + a^2 \sigma_{v2}^2 + a^2).
\end{align}
\label{ach:lemmabb}
\end{lemma}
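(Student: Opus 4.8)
This is a routine achievability argument — the plan is to analyze the steady-state behaviour of the memoryless linear ``bang-bang'' controller $u[n]=-ay[n]$ and bound the resulting state variance and input power by simple moment propagation, then translate to the two-controller setting by letting the other controller do nothing.

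\textbf{Step 1: Reduce to a single-controller system.} In Problem~\ref{prob:power}, pick controller $1$ to use $u_1[n]=-ay_1[n]$ and set $u_2[n]\equiv 0$ (and symmetrically for the second claimed tradeoff). Since $u_2\equiv 0$, the dynamics $x[n+1]=ax[n]+u_1[n]+u_2[n]+w[n]$ collapses to exactly the scalar single-controller system stated in the lemma with $v=v_1$, $\sigma_v^2=\sigma_{v1}^2$. So it suffices to prove the single-controller bound $(D(P),P)\le(a^2\sigma_v^2+1,\ a^4\sigma_v^2+a^2\sigma_v^2+a^2)$ and then read off the two displayed corollaries.

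\textbf{Step 2: Closed-loop recursion.} Substituting $u[n]=-ay[n]=-a(x[n]+v[n])$ gives $x[n+1]=ax[n]-ax[n]-av[n]+w[n]=w[n]-av[n]$. This is the key simplification: the bang-bang controller exactly cancels the $ax[n]$ term, so for every $n\ge 0$ (with the convention $x[0]$ handled by the transient, or noting $\sigma_0^2$ only appears through a transient that is amortized away in the $\limsup\frac1N\sum$), $x[n+1]$ is just the independent sum $w[n]-av[n]$. Hence $\mathbb{E}[x^2[n+1]]=\mathbb{E}[w^2[n]]+a^2\mathbb{E}[v^2[n]]=1+a^2\sigma_v^2$, which gives $D(P)\le a^2\sigma_v^2+1$ after taking the Cesàro average (the single possibly-larger term $\mathbb{E}[x^2[0]]$ washes out). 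For the input power, $u[n]=-a(x[n]+v[n])$, so for $n\ge1$, $\mathbb{E}[u^2[n]]=a^2\mathbb{E}[(x[n]+v[n])^2]\le a^2(2\mathbb{E}[x^2[n]]+2\mathbb{E}[v^2[n]])$ by Lemma~\ref{ach:lemmacauchy}, but more sharply, using $x[n]\perp v[n]$ is false in general ($x[n]$ depends on $v[n-1]$, not $v[n]$), so actually $x[n]=w[n-1]-av[n-1]$ is independent of $v[n]$, giving $\mathbb{E}[u^2[n]]=a^2(\mathbb{E}[x^2[n]]+\mathbb{E}[v^2[n]])=a^2(1+a^2\sigma_v^2+\sigma_v^2)=a^2+a^4\sigma_v^2+a^2\sigma_v^2$. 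Averaging gives $P\le a^4\sigma_v^2+a^2\sigma_v^2+a^2$.

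\textbf{Step 3: Assemble the two-controller tradeoffs.} Applying Step 2 with $(\sigma_v^2=\sigma_{v1}^2$, controller $1$ active, controller $2$ silent$)$ yields $(D(P_1,P_2),P_1,P_2)\le(a^2\sigma_{v1}^2+1,\ a^4\sigma_{v1}^2+a^2\sigma_{v1}^2+a^2,\ 0)$, and symmetrically with controller $2$ active gives the second displayed bound with $\sigma_{v2}^2$. This is exactly the statement, so the proof ends there. I do not anticipate a genuine obstacle; the only point requiring a little care is the treatment of the transient / initial condition $\sigma_0^2$ — strictly the recursion $x[n+1]=w[n]-av[n]$ holds for all $n\ge0$ regardless of $x[0]$ because the controller is memoryless and cancels $ax[n]$ exactly, so in fact even $\mathbb{E}[x^2[n]]$ for $n\ge1$ is already in steady state and no averaging subtlety arises; one just notes $n=0$ contributes $O(1/N)$ to the Cesàro mean. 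The other minor bookkeeping item is justifying the independence $x[n]\perp v[n]$ used to get the clean (rather than Cauchy-Schwarz-inflated) power bound, which is immediate from $x[n]=w[n-1]-av[n-1]$ and the independence of the noise sequences.
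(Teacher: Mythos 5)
Your proof is correct and follows the same route as the paper's: substitute $u[n]=-ay[n]$ to get the closed-loop identity $x[n+1]=w[n]-av[n]$, compute the two second moments using independence of $x[n]$ and $v[n]$, and read off the two tradeoffs by letting the idle controller apply zero input. The only blemish is the garbled sentence in Step~2 about ``$x[n]\perp v[n]$ is false in general'' followed immediately by the (correct) observation that they \emph{are} independent since $x[n]=w[n-1]-av[n-1]$; the conclusion you reach is right, but that sentence should be cleaned up.
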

\begin{proof}
Put $u[n]=-ay[n]$ into the system equation. Then, we have
\begin{align}
x[n+1]&=ax[n]-ax[n]-av[n]+w[n]\\
&=-av[n]+w[n]
\end{align}
Thus, we conclude for $n \geq 1$
\begin{align}
\mathbb{E}[x^2[n]]=a^2 \sigma_v^2 +1
\end{align}
and
\begin{align}
\mathbb{E}[u^2[n]]&= a^2 \mathbb{E}[(x[n]+v[n])^2] \\
&=a^2(a^2 \sigma_v^2 + 1 + \sigma_v^2)
\end{align}
\end{proof}

Using Lemma~\ref{rat:lemma1}, Corollary~\ref{rat:lemma2} and Lemma~\ref{ach:lemmabb}, we can compare the upper and lower bounds to prove linear strategies suffice to achieve constant-ratio optimality in this region of problem parameters.
\begin{proposition}
There exists $c \geq 1200$ such that for all $a, q, r_1, r_2, \sigma_0,\sigma_{v1}, \sigma_{v2}$ satisfying $|a| \geq 2.5$ and $\sigma_{v2}^2 \leq \max(1,a^2 \sigma_{v1}^2)$, the following inequality holds:
\begin{align}
\frac{\inf_{u_1,u_2 \in L_{lin,bb}} \limsup_{N \rightarrow \infty} \frac{1}{N}\sum_{0 \leq n < N} \mathbb{E}[q x^2 [n] + r_1 u_1^2[n] + r_2 u_2^2[n]]}{ \inf_{u_1,u_2} \limsup_{N \rightarrow \infty} \frac{1}{N}\sum_{0 \leq n < N} \mathbb{E}[q x^2 [n] + r_1 u_1^2[n] + r_2 u_2^2[n]]}  \leq c.
\end{align}
\label{prop:2}
\end{proposition}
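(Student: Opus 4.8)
The plan is to convert the weighted-cost ratio into a comparison of two power--disturbance tradeoff regions and then verify that comparison by hand. First I would invoke Lemma~\ref{rat:lemma1}: define $D_U(P_1,P_2)$ to equal $a^2\sigma_{v1}^2+1$ when $P_1\ge a^4\sigma_{v1}^2+a^2\sigma_{v1}^2+a^2$, to equal $a^2\sigma_{v2}^2+1$ when $P_2\ge a^4\sigma_{v2}^2+a^2\sigma_{v2}^2+a^2$ (the smaller value when both hold), and to be $+\infty$ otherwise. By Lemma~\ref{ach:lemmabb} every point of this region is achieved by one of the two controllers of Definition~\ref{def:lin}, and since $D_U=+\infty$ off the two feasibility half-spaces, $\min_{P_1,P_2} qD_U(P_1,P_2)+r_1P_1+r_2P_2$ equals exactly the numerator $\inf_{u_1,u_2\in L_{lin,bb}}(\cdots)$. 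On the other side, Corollary~\ref{rat:lemma2} gives a function $D_L(\widetilde P_1,\widetilde P_2)$ with $\inf_{u_1,u_2}(\cdots)\ge \min_{\widetilde P_1,\widetilde P_2} qD_L+r_1\widetilde P_1+r_2\widetilde P_2$. Hence it suffices to exhibit $c$ (I expect $c=1200$) such that $D_U(cx_1,cx_2)\le c\,D_L(x_1,x_2)$ for all $x_1,x_2\ge 0$, after which Lemma~\ref{rat:lemma1} yields numerator $\le c\cdot(\min qD_L+r_1\widetilde P_1+r_2\widetilde P_2)\le c\cdot$denominator.

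Next I would record the two elementary inequalities that drive the scaling, using only $|a|\ge 2.5$ (so $a^2\ge 1$): for $\sigma_v\in\{\sigma_{v1},\sigma_{v2}\}$, $a^4\sigma_v^2+a^2\sigma_v^2+a^2\le 3a^2\max(1,a^2\sigma_v^2)$ and $a^2\sigma_v^2+1\le 2\max(1,a^2\sigma_v^2)$. Write $t_1=\frac{1}{400}a^2\max(1,a^2\sigma_{v1}^2)$ and $t_2=\frac{1}{400}a^2\max(1,a^2\sigma_{v2}^2)$ for the thresholds of Corollary~\ref{rat:lemma2}, and split into three cases. If $x_1< t_1$ and $x_2< t_2$, part (a) gives $D_L=\infty$ and there is nothing to check. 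If $x_1\ge t_1$, then $cx_1\ge \frac{c}{400}a^2\max(1,a^2\sigma_{v1}^2)\ge 3a^2\max(1,a^2\sigma_{v1}^2)\ge a^4\sigma_{v1}^2+a^2\sigma_{v1}^2+a^2$ for $c\ge 1200$, so controller (i) of Definition~\ref{def:lin} is feasible and $D_U(cx_1,cx_2)\le a^2\sigma_{v1}^2+1\le 2\max(1,a^2\sigma_{v1}^2)\le \frac{2}{0.295}D_L(x_1,x_2)\le cD_L(x_1,x_2)$ by part (c). If instead $x_1< t_1$ and $x_2\ge t_2$, then the same scaling argument makes controller (ii) feasible, giving $D_U(cx_1,cx_2)\le a^2\sigma_{v2}^2+1\le \frac{1}{0.176}(0.176\,a^2\sigma_{v2}^2+1)\le \frac{1}{0.176}D_L(x_1,x_2)\le cD_L(x_1,x_2)$ by part (b). In every case $D_U(cx_1,cx_2)\le c\,D_L(x_1,x_2)$ with $c=1200$, and the hypothesis $\sigma_{v2}^2\le\max(1,a^2\sigma_{v1}^2)$ enters only to license the use of Corollary~\ref{rat:lemma2}.

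The hard part will be bookkeeping rather than conceptual: one must check that a single constant $c$ simultaneously makes the relevant bang--bang strategy \emph{feasible} at the scaled power budget in each case and makes the resulting disturbance within ratio $c$ of the lower bound \emph{precisely in the region where that strategy is the costly one}. The delicate alignment is that Corollary~\ref{rat:lemma2} only lower bounds $D_L$, and parts (a), (b), (c) are each strong exactly in the regime (respectively: both controllers underpowered; first controller underpowered; first controller zero-forcing) where the cheap linear policy is expensive, so the three-way case split must be matched to those three alternatives. A secondary point requiring care is the claim that $\min_{P_1,P_2}qD_U+r_1P_1+r_2P_2$ is genuinely the $L_{lin,bb}$ cost and not something strictly smaller; this is exactly why $D_U$ is defined to be $+\infty$ off the two feasibility half-spaces, so that the minimizing $(P_1,P_2)$ must sit on one of the two achievable corner points.
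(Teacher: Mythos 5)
Your proposal is correct and follows essentially the same route as the paper's Appendix proof: reduce via Lemma~\ref{rat:lemma1} to the componentwise inequality $D_U(c\widetilde{P_1},c\widetilde{P_2})\le c\,D_L(\widetilde{P_1},\widetilde{P_2})$, take the upper bound $D_U$ from the two bang--bang corner points of Lemma~\ref{ach:lemmabb}, take the lower bound $D_L$ from the three regimes of Corollary~\ref{rat:lemma2}, and split on whether $\widetilde{P_1}$ and $\widetilde{P_2}$ lie below or above the $\frac{1}{400}a^2\max(1,a^2\sigma_{v}^2)$ thresholds, with the binding constraint in each non-trivial case being the power-scaling factor $\frac{1200}{400}=3$ rather than the disturbance ratio ($\frac{2}{0.295}$ or $\frac{1}{0.176}$). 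Your explicit closed-form for $D_U$ (constant on each half-space, $+\infty$ off both) is a mild clarification of what the paper leaves implicit, but the case split, the two elementary inequalities $a^4\sigma_v^2+a^2\sigma_v^2+a^2\le 3a^2\max(1,a^2\sigma_v^2)$ and $a^2\sigma_v^2+1\le 2\max(1,a^2\sigma_v^2)$, and the resulting constant $c=1200$ all coincide with the paper's argument.
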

\begin{proof}
See Appendix~\ref{sec:prop:2}.
\end{proof}

\subsection{Strongly-Degraded-Observation case with $|a| \geq 2.5$}
\label{sec:proof:ratio2}
Let's consider the strongly-degraded-observation case when $\sigma_{v2}^2 > \max(1,a^2 \sigma_{v1}^2)$, which corresponds to the right half-plane of Figure~\ref{fig:division}. Since $|a| \geq 2.5$, we can find $s \in \mathbb{N}$ such that $a^{2(s-1)}\max(1,a^2 \sigma_{v1}^2) \leq \sigma_{v2}^2 \leq a^{2s}\max(1,a^2 \sigma_{v1}^2)$. We will show that the $s$-stage signaling strategy is required for constant-ratio optimality.


Since we need a matching lower bound to $s$-stage signaling strategies, we evaluate Lemma~\ref{cov:lemma2} which has a generalized Witsenhausen's counterexample in it.
\begin{corollary}
Let $|a| \geq 2.5$ and for some $s \in \mathbb{N}$, suppose
\begin{align}
a^{2(s-1)} \max(1,a^2 \sigma_{v1}^2) \leq \sigma_{v2}^2 \leq a^{2s} \max(1,a^2 \sigma_{v1}^2).
\end{align}
Then, for all $q, r_1, r_2 \geq 0$, the minimum cost \eqref{eqn:part11} of Problem~\ref{prob:lqg} is lower bounded as follows:
\begin{align}
\inf_{u_1,u_2} \limsup_{N \rightarrow \infty}\frac{1}{N}
\sum_{0 \leq n < N} q \mathbb{E}[x^2[n]] + r_1 \mathbb{E}[u_1^2[n]] + r_2 \mathbb{E}[u_2^2[n]]
 \geq \min_{\widetilde{P_1},\widetilde{P_2} \geq 0} q D_L(\widetilde{P_1},\widetilde{P_2}) + r_1 \widetilde{P_1} + r_2 \widetilde{P_2}.
\end{align}
where $D_L(\widetilde{P_1},\widetilde{P_2})$ satisfies the following conditions.\\
(a) When $\widetilde{P_1} \leq \frac{\sigma_{v2}^2}{70a^{2(s-1)}}$, then
$D_L(\widetilde{P_1},\widetilde{P_2}) \geq 0.008 a^2 \sigma_{v2}^2 + 1$.
\\
(b) When $\widetilde{P_1} \leq \frac{\sigma_{v2}^2}{70a^{2(s-1)}}$ and $\widetilde{P_2} \leq \frac{a^4 \sigma_{v2}^2}{28000}$, then $D_L(\widetilde{P_1},\widetilde{P_2})=\infty$.
\\
(c) When $\frac{\sigma_{v2}^2}{70a^{2(s-1)}} \leq \widetilde{P_1} \leq \frac{1}{20000}\max(a^2,a^4 \sigma_{v1}^2)$,\\
then $D_L(\widetilde{P_1},\widetilde{P_2}) \geq  0.2541 a^{2s}\widetilde{P_1}\exp(- \frac{50 a^{2(s-1)}\widetilde{P_1}}{\sigma_{v2}^2})+0.066 a^{2s}\max(1,a^2\sigma_{v1}^2) + 1$.
\\
(d) When $\frac{\sigma_{v2}^2}{70a^{2(s-1)}} \leq \widetilde{P_1} \leq \frac{1}{20000}\max(a^2,a^4 \sigma_{v1}^2)$ and
$\widetilde{P_2} \leq 0.0457a^{2(s+1)}\widetilde{P_1} \exp(-\frac{50 a^{2(s-1)}\widetilde{P_1}}{\sigma_{v2}^2}) + 0.0113 a^{2(s+1)} \max(1,a^2\sigma_{v1}^2) $, 
then $D_L(\widetilde{P_1},\widetilde{P_2})=\infty$.\\
(e) For all $\widetilde{P_1}$ and $\widetilde{P_2}$, $D_L(\widetilde{P_1},\widetilde{P_2}) \geq 0.295 \cdot \max(1,a^2 \sigma_{v1}^2)$.
\label{rat:lemma3}
\end{corollary}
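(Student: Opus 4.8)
The plan is to derive Corollary~\ref{rat:lemma3} as a specialization of Lemma~\ref{cov:lemma3} and Lemma~\ref{cov:lemma2} by making concrete choices of the free parameters $(k_1,k_2,k,\sigma_{v2}',\alpha,\Sigma)$ and $(k_1,k,\Sigma)$ that are tuned to the regime $a^{2(s-1)}\max(1,a^2\sigma_{v1}^2) \leq \sigma_{v2}^2 \leq a^{2s}\max(1,a^2\sigma_{v1}^2)$, and then bounding the resulting expressions from below. Since the lemmas already state that the infinite-horizon cost dominates $\sup$ over parameter choices of $\min_{\widetilde P_1,\widetilde P_2} q D_{L,i}+r_1\widetilde P_1+r_2\widetilde P_2$, for each of the five claims (a)--(e) I would exhibit one admissible parameter tuple whose $D_{L,i}$ gives the advertised bound on the relevant range of $\widetilde P_1,\widetilde P_2$. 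Concretely: for (a), (c) I would use Lemma~\ref{cov:lemma3} (the Witsenhausen-based bound $D_{L,1}$) with $k-k_1 = s$ or $s+1$, $k_2-k_1$ chosen so the MIMO-Witsenhausen interval has the right length, $\Sigma$ set to its maximal allowed value ($1$ when $k_1=1$, or the information-limited expression when $k_1\geq 2$, matching $\max(1,a^2\sigma_{v1}^2)$ up to a constant), and $\alpha, \sigma_{v2}'$ chosen to optimize the large-deviation exponent; for (e) I would use the crude bound $D_{L,3}$ from Lemma~\ref{cov:lemma2}, equation~\eqref{eqn:lem:radner1}, which directly gives $\max(1,a^2\sigma_{v1}^2)$ up to the constant $0.295$; for (b), (d) I would observe that when $\widetilde P_2$ is too small, the $(\cdot)_+^2$ terms in $D_{L,1}$ (or $D_{L,4}$) force the first factor to dominate so that $D_L \to \infty$ as $k\to\infty$, because the power-limited tail $a^{2(k-k_2-1)}\widetilde P_2$ cannot keep pace with the state growth $a^{2(k-k_1)}$.

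The key steps, in order, would be: (1) fix $s$ and translate the hypothesis into the scaling relations $\sigma_{v2}^2 \asymp a^{2s}\max(1,a^2\sigma_{v1}^2)$ and $a^{2(s-1)}\max(1,a^2\sigma_{v1}^2) \leq \sigma_{v2}^2$; (2) for claims (a) and (c), choose $k_1$ so that the information-limited interval realizes $\Sigma \asymp \max(1,a^2\sigma_{v1}^2)$ (taking $k_1=1,\Sigma=1$ when $a^2\sigma_{v1}^2\leq 1$ and $k_1\geq 2$ otherwise), choose $k_2-k_1-1 \asymp s-1$ or $s-2$ so the MIMO-Witsenhausen interval spans exactly the stages where the first controller's signal has not yet cleared the second controller's noise floor, let $k\to\infty$ so that $a^{2(k-k_1)}$ and $a^{2(k-k_2-1)}$ appear with the stated exponents, and pick $\alpha$ (either $0$ or $1$ depending on whether the $w[0]$ or $w[1]$ relaxation is tighter) and $\sigma_{v2}'$ to make the large-deviation constant $c = \frac{2\sigma_{v2}'}{\sqrt{2\pi}\sigma_{v2}}\exp(-\sigma_{v2}'^2/2\sigma_{v2}^2)$ and the exponents $I',I''$ yield the $0.2541\,a^{2s}\widetilde P_1\exp(-50a^{2(s-1)}\widetilde P_1/\sigma_{v2}^2)$ and $0.066\,a^{2s}\max(1,a^2\sigma_{v1}^2)$ terms; (3) for each claim, substitute the chosen tuple into $D_{L,i}$, use $|a|\geq 2.5$ to bound the geometric-series factors $\frac{1-(2.5a^{-2})^{m}}{1-2.5a^{-2}}$ by explicit constants, and verify the $(\cdot)_+^2$ expression collapses to the claimed lower bound on the stated range of $\widetilde P_1,\widetilde P_2$; (4) for (b) and (d), show the argument of $(\cdot)_+^2$ stays bounded below by a positive multiple of $a^{k-k_1}$ uniformly, hence sending $k\to\infty$ gives $D_L=\infty$.

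The main obstacle I expect is the bookkeeping in step (2)--(3): the bounds $D_{L,1}$ involve nested square roots, the mutual-information exponents $I',I''$ which are themselves logarithms of sums, and the large-deviation constant $c$, and one must track how these combine as $k\to\infty$ while keeping all the numerical constants ($0.2541$, $0.066$, $0.0457$, $0.0113$, $50$, $70$, $28000$, $20000$) consistent. In particular the coupling between the choice of $\sigma_{v2}'$ (which sets both the prefactor $c$ and the exponent via $I'$) and the requirement that the Witsenhausen-interval SNR be "too big to apply arithmetic-geometric" (forcing the use of Lemma~\ref{lem:reduced}(iii) rather than (i)) needs care: $\sigma_{v2}'^2$ must scale like $a^{2(s-1)}\widetilde P_1$ so that the exponent becomes $\Theta(a^{2(s-1)}\widetilde P_1/\sigma_{v2}^2)$, matching the $\exp(-50a^{2(s-1)}\widetilde P_1/\sigma_{v2}^2)$ in claim (c). Everything else---the admissibility of the parameter tuples, the reduction to the sub-lemmas, and the $k\to\infty$ limit---is routine once the scaling choices are pinned down. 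I would relegate the explicit constant-chasing to an appendix (as the paper does), and in the main text simply record the parameter choices and the structural reason each bound holds.
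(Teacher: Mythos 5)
Your high-level plan — specialize the free parameters of Lemma~\ref{cov:lemma3}/\ref{cov:lemma2} to the scaling $\sigma_{v2}^2 \asymp a^{2s}\max(1,a^2\sigma_{v1}^2)$ and then bound the resulting expressions — is the right one and matches the paper's route; the subdivisions you assign to each claim (Witsenhausen-based $D_{L,1}$ for (a)--(d), the crude $D_{L,3}$ for (e), and a ``$k\to\infty$ forces divergence'' argument for (b) and (d)) are also correct. However, two of the parameter choices you sketch would not actually produce the claimed bounds. First, for claims (a) and (c) you let $k\to\infty$. Those claims assert lower bounds that are uniform in $\widetilde P_2$, with \emph{no} hypothesis bounding $\widetilde P_2$ from above. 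If you take $k\to\infty$ with $k_2$ fixed, the $(\cdot)_+^2$ argument of $D_{L,1}$ contains the term $\sqrt{a^{2(k-k_2-1)}\widetilde P_2/(\ldots)}$, which grows without bound in $k$ when $\widetilde P_2 > 0$. For any fixed $\widetilde P_2$ large enough relative to the main term, $(\cdot)_+$ collapses to zero and the limit of $D_{L,1}$ is just $1$, not $0.008\,a^2\sigma_{v2}^2 + 1$. The paper's choice $k = k_2 = k_1+s+1$ is what makes the $U_{23}$-tail disappear identically, so the bound is $\widetilde P_2$-independent; $k\to\infty$ is used only in (b) and (d), where a smallness assumption on $\widetilde P_2$ protects against exactly this collapse.

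Second, for (c) you propose $\alpha\in\{0,1\}$, ``depending on whether the $w[0]$ or $w[1]$ relaxation is tighter.'' But the claimed bound in (c) is a \emph{sum} of two terms: $0.2541\,a^{2s}\widetilde P_1\exp(-50a^{2(s-1)}\widetilde P_1/\sigma_{v2}^2)$, which carries the large-deviation exponential and hence comes from the $w[0]$-relaxation (the $D'$ part, where $\sigma_{v2}'^2 = 100\,a^{2(s-1)}\widetilde P_1 \neq \sigma_{v2}^2$), plus $0.066\,a^{2s}\max(1,a^2\sigma_{v1}^2)$, which has no exponential and comes from the $w[1]$-relaxation (the $D''$ part). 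With $\alpha=1$ you get only the first, with $\alpha=0$ only the second; the paper uses $\alpha=\tfrac12$ precisely to retain both simultaneously, which is essential for the matching in Proposition~\ref{prop:3}. Relatedly, you also apply the large-deviation choice of $\sigma_{v2}'$ to both (a) and (c), whereas (a) and (b) are proved with $\sigma_{v2}'=\sigma_{v2}$ (so the prefactor $c=1$) and the large deviation is introduced only in (c), (d). These are not bookkeeping details: both the $k=k_2$ choice and the $\alpha=\tfrac12$ averaging are structural moves without which the stated inequalities fail on part of their asserted ranges.
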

\begin{proof}
See Appendix~\ref{sec:lemma3}.
\end{proof}

(a) and (b) tell what happens if the first controller has little power and thus is forced to be close to a zero-input strategy. Even if the second controller has enough power, the state variance is lower bounded by the second controller's observation noise. If the second controller does not have enough power to stabilize the system, the state diverges to infinity. (e) shows the opposite case when the first controller has enough power to apply zero-forcing strategy. However, even in this case, the state variance is lower bounded by the first controller's observation noise. (c) and (d) cover the case between these two extreme cases. (c) gives the lower bound that matches to the $s$-stage signaling strategy when the second controller has enough power. (d) shows that since the first controller does not stabilize the system with its signaling alone, the second controller's input power has to be large enough to stabilize the system.

Now, we evaluate the performance of the $s$-stage signaling analyzed in Lemma~\ref{ach:lemma9} of page~\pageref{ach:lemma9}.
\begin{corollary}
Consider Problem~\ref{prob:power} of page~\pageref{prob:power}, and let $|a| \geq 2.5$ and suppose  $a^{2(s-1)} \max(1,a^2 \sigma_{v1}^2) \leq \sigma_{v2}^2 \leq a^{2s}\max(1,a^2 \sigma_{v1}^2)$ for some $s \in \mathbb{N}$. Then, there exists an upper bound $D_U(P_1,P_2)$ on $D(P_1,P_2)$ i.e. $D(P_1,P_2) \leq D_U(P_1,P_2)$ for all $P_1, P_2 \geq 0$ satisfying the following:
\begin{align}
(D_U(P_1,P_2),P_1,P_2) \leq &( 832 a^{2s}P \exp(-\frac{50a^{2(s-1)}P}{\sigma_{v2}^2})+63a^{2s} \max(1,a^2 \sigma_{v1}^2), \\
& 80000P, 6656 a^{2(s+1)}P \exp(-\frac{50a^{2(s-1)}P}{\sigma_{v2}^2})+564 a^{2(s+1)} \max(1,a^2 \sigma_{v1}^2))
\end{align}
for $\frac{\sigma_{v2}^2}{70a^{2(s-1)}} \leq P \leq \frac{1}{20000}\max(a^2, a^4 \sigma_{v1}^2)$.
\label{rat:ach}
\end{corollary}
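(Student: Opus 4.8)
The plan is to deduce Corollary~\ref{rat:ach} directly from Lemma~\ref{ach:lemma9} by choosing its two free parameters $d,w_1$ explicitly as functions of $P$ and the problem data, and then bounding the individual pieces of $D_{U,1}(d,w_1)$. A natural choice is to spend the first controller's power budget exactly, $\frac{a^2 d^2}{4}=80000\,P$, so that $d=\frac{\sqrt{320000\,P}}{|a|}$ and $P_1=80000\,P$ comes for free, together with $w_1=c_1|a|^s d$ for a suitable absolute constant $c_1$ of order $10^{-1}$ (a small multiple of the lattice spacing $|a|^s d$). With $d,w_1$ fixed, everything reduces to (i) bounding $D_{U,1}(d,w_1)$ by the claimed $D_U(P_1,P_2):=832\,a^{2s}P\exp(-\tfrac{50a^{2(s-1)}P}{\sigma_{v2}^2})+63\,a^{2s}\max(1,a^2\sigma_{v1}^2)$, and (ii) propagating that bound to the third coordinate $8a^2 D_{U,1}(d,w_1)+\tfrac{7}{2}a^{2(s+1)}d^2+4a^2\sigma_{v2}^2$ of Lemma~\ref{ach:lemma9} to get the stated $P_2$ bound.

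First I would check $(d,w_1)\in S_{U,1}$: the condition $|a|^s d-(|a|^{s-1}d\tfrac{|a|}{|a|-1}+w_1)>0$ becomes $\tfrac{1}{|a|-1}+c_1<1$, which holds for small $c_1$ since $|a|\ge 2.5$. Next, the polynomial pieces of $D_{U,1}$ — the leading term $2a^{2s}\bigl(2(\tfrac d2)^2(\tfrac{1}{1-1/|a|})^2+2(\tfrac{1}{1-1/a^2})+2a^2\sigma_{v1}^2\bigr)$ and the trailing $2(a^2(\tfrac d2)^2)+1$ — are handled by using $|a|\ge 2.5$ to bound $\tfrac{1}{1-1/|a|}$ and $\tfrac{1}{1-1/a^2}$ by absolute constants, and by the two structural inequalities $a^{2s}d^2=320000\,a^{2(s-1)}P\le 16\,a^{2s}\max(1,a^2\sigma_{v1}^2)$ (using the upper constraint $P\le\tfrac{1}{20000}\max(a^2,a^4\sigma_{v1}^2)$) and $a^2\sigma_{v1}^2\le\max(1,a^2\sigma_{v1}^2)$. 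These collapse the polynomial part into a constant multiple of $a^{2s}\max(1,a^2\sigma_{v1}^2)$, which is absorbed into the $63\,a^{2s}\max(1,a^2\sigma_{v1}^2)$ term.

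The substance is the two infinite $Q$-sums. Write $u:=\tfrac{a^{2(s-1)}P}{\sigma_{v2}^2}$; the constraints give $u\ge\tfrac1{70}$ and the definition of $s$ in the strongly-degraded regime gives $\sigma_{v2}^2\ge a^{2(s-1)}\max(1,a^2\sigma_{v1}^2)$. By the choice of $d$ and the validity margin above, the argument $\tfrac{(2i-1)|a|^s d-w}{2\sigma_{v2}}$ of the first sum is at least a large absolute constant times $(2i-1)\sqrt{u}$, so Lemma~\ref{ach:lemma5}'s estimate $Q(x)\le\tfrac12 e^{-x^2/2}$ yields $Q(\cdot)\le\tfrac12\exp(-\beta(2i-1)^2 u)$ with $\beta\gg 50$; since the prefactors $(i|a|^s d+\tfrac w2)^2$ are $O(i^2 a^{2s}d^2)=O(i^2 a^{2s}P)$, the whole sum is $\le C\,a^{2s}P\sum_i i^2 e^{-\beta(2i-1)^2 u}\le C'\,a^{2s}P\,e^{-\beta u}$, and using $u\ge\tfrac1{70}$ to trade the rate $\beta$ down to $50$ (losing only the constant $e^{-(\beta-50)/70}$) this is well below $832\,a^{2s}P\,e^{-50u}$. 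The second sum is similar: its outage prefactor $2Q\!\bigl(\tfrac{w_1}{2\sqrt{a^{2(s-1)}(\tfrac{a^2}{a^2-1}+a^2\sigma_{v1}^2)}}\bigr)$ decays like $\tfrac12\exp(-\gamma u)$ — here the bound $\tfrac{a^2}{a^2-1}+a^2\sigma_{v1}^2\le 3\max(1,a^2\sigma_{v1}^2)\le 3\sigma_{v2}^2/a^{2(s-1)}$ gives $\gamma\ge 50$ for the chosen $c_1$ — while the residual series $\sum_i(i|a|^s d+\tfrac{|a|^s d}{2})^2 Q(\tfrac{(i-1)|a|^s d}{\sigma_{v2}})$ is dominated by its $i=1$ term of size $O(a^{2s}P)$ because $\tfrac{|a|^s d}{\sigma_{v2}}$ is a large constant. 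Adding everything gives $D_{U,1}(d,w_1)\le D_U$, and then $8a^2 D_{U,1}\le 6656\,a^{2(s+1)}P\,e^{-50u}+504\,a^{2(s+1)}\max(1,a^2\sigma_{v1}^2)$, while $\tfrac72 a^{2(s+1)}d^2=\tfrac72\cdot320000\,a^{2s}P\le 56\,a^{2(s+1)}\max(1,a^2\sigma_{v1}^2)$ and $4a^2\sigma_{v2}^2\le 4\,a^{2(s+1)}\max(1,a^2\sigma_{v1}^2)$, summing to the claimed $P_2$ bound.

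The main obstacle is the choice of $w_1$ (equivalently $c_1$): it must be small enough to keep $(d,w_1)\in S_{U,1}$ — i.e.\ the lattice remains resolvable — yet large enough that the outage probability decays at rate at least $50u$; the existence of a single constant $c_1$ doing both is not automatic and rests precisely on the two regime hypotheses $\sigma_{v2}^2\ge a^{2(s-1)}\max(1,a^2\sigma_{v1}^2)$ and $P\le\tfrac1{20000}\max(a^2,a^4\sigma_{v1}^2)$. The secondary nuisance is the bookkeeping for the infinite $Q$-sums, namely verifying they are geometrically dominated by their $i=1$ terms and that the polynomial prefactors against the true (much faster) exponential rate can be reabsorbed into $832\,a^{2s}P\,e^{-50u}$; the generous constants in the statement leave ample room for this.
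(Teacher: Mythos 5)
Your proposal is correct and matches the paper's own proof essentially step for step: the paper uses exactly your choice $d=\sqrt{320000P}/|a|$ and takes $w_1=\tfrac{a^s d}{6}$ (your constant $c_1=1/6$), verifies $(d,w_1)\in S_{U,1}$ and $\tfrac{a^s d}{\sigma_{v2}}\ge 13$, bounds the two infinite $Q$-sums by showing geometric domination by their first terms, and then propagates through to the $P_2$ coordinate exactly as you describe. The only difference is that the paper hard-codes the explicit constants and geometric-ratio estimates ($10^{-42}$, $10^{-37}$) that you leave implicit, but the structure and all the key inequalities are the same.
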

\begin{proof}
See Appendix~\ref{sec:rat:ach}
\end{proof}
Here we can notice that the performance is matching that of Corollary~\ref{rat:lemma3} (c), (d) in that the bounds on the state disturbance take the same form of a function on $P_1$ and system parameters.

Now, we can compare these two bounds to prove constant-ratio optimality.
\begin{proposition}
There exists $c \leq 1.5 \times 10^5$ such that for all $a, q, r_1, r_2, \sigma_0,\sigma_{v1}, \sigma_{v2}$ satisfying $|a| \geq 2.5$ and 
\begin{align}
a^{2(s-1)} \max(1,a^2 \sigma_{v1}^2) \leq \sigma_{v2}^2 \leq a^{2s} \max(1,a^2 \sigma_{v1}^2).
\end{align}
for some $s \in \mathbb{N}$, the following inequality holds:
\begin{align}
\frac{\inf_{u_1,u_2 \in L_{lin,bb} \cup L_{sig,s}} \limsup_{N \rightarrow \infty} \frac{1}{N}\sum_{0 \leq n < N} \mathbb{E}[q x^2 [n] + r_1 u_1^2[n] + r_2 u_2^2[n]]}{ \inf_{u_1,u_2} \limsup_{N \rightarrow \infty} \frac{1}{N}\sum_{0 \leq n < N} \mathbb{E}[q x^2 [n] + r_1 u_1^2[n] + r_2 u_2^2[n]]}  \leq c.
\end{align}
\label{prop:3}
\end{proposition}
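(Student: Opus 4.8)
\textbf{Proof plan for Proposition~\ref{prop:3}.}
The plan is to reduce the weighted-average-cost ratio to a comparison of power--disturbance tradeoff \emph{regions} in $\mathbb{R}^3$, exactly as Lemma~\ref{rat:lemma1} permits. Concretely, I would let $D_U(P_1,P_2)$ denote the best of the upper bounds furnished by $L_{lin,bb}$ (Lemma~\ref{ach:lemmabb}) and $L_{sig,s}$ (Corollary~\ref{rat:ach}), where $s$ is chosen by the ceiling formula so that $a^{2(s-1)}\max(1,a^2\sigma_{v1}^2)\le \sigma_{v2}^2\le a^{2s}\max(1,a^2\sigma_{v1}^2)$, and let $D_L(\widetilde{P_1},\widetilde{P_2})$ be the maximum of the lower bounds of Corollary~\ref{rat:lemma3}~(a)--(e). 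By Lemma~\ref{rat:lemma1} it suffices to exhibit a universal constant $c\le 1.5\times 10^5$ with
\begin{align}
(D_U(cP_1,cP_2),\,cP_1,\,cP_2)\ \le\ c\,\big(D_L(P_1,P_2),\,P_1,\,P_2\big)\qquad\text{for all }P_1,P_2\ge 0. \nonumber
\end{align}

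Next I would partition the first-controller power axis into the three regimes visible in Figure~\ref{fig:region}: (i) the zero-input regime $P_1\lesssim \sigma_{v2}^2/a^{2(s-1)}$, (ii) the $s$-stage signaling regime $\sigma_{v2}^2/a^{2(s-1)}\lesssim P_1\lesssim \max(a^2,a^4\sigma_{v1}^2)$, and (iii) the zero-forcing regime $P_1\gtrsim \max(a^2,a^4\sigma_{v1}^2)$, and in each regime compare the matching achievable strategy against the matching case of Corollary~\ref{rat:lemma3}. In regime (iii) the bang-bang controller of Lemma~\ref{ach:lemmabb} (the better of its two variants) gives $D_U\lesssim a^2\sigma_{v1}^2+1$ using power $P_1\lesssim a^4\sigma_{v1}^2+a^2$, which is matched up to constants by Corollary~\ref{rat:lemma3}~(e). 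In regime (i), with the first controller using zero input and the second controller zero-forcing, $D_U\lesssim a^2\sigma_{v2}^2+1$ at $P_2\lesssim a^4\sigma_{v2}^2$; Corollary~\ref{rat:lemma3}~(a)--(b) give the matching $\Omega(a^2\sigma_{v2}^2)$ lower bound and the $D_L=\infty$ blow-up when $P_2$ is too small. In regime (ii), Corollary~\ref{rat:ach} gives
$D_U\lesssim a^{2s}P_1\exp(-50a^{2(s-1)}P_1/\sigma_{v2}^2)+a^{2s}\max(1,a^2\sigma_{v1}^2)$ with $P_2\lesssim a^{2(s+1)}P_1\exp(-50a^{2(s-1)}P_1/\sigma_{v2}^2)+a^{2(s+1)}\max(1,a^2\sigma_{v1}^2)$, and Corollary~\ref{rat:lemma3}~(c)--(d) provide lower bounds of \emph{exactly the same functional form} in $P_1$ and the system parameters, so the inclusion holds after absorbing the gap between the numerical constants ($832,63,80000,6656,564$ on the achievable side against $0.2541,0.066,0.0457,0.0113$ etc.\ on the converse side) into $c$.

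I would then stitch the regimes together, checking that at each boundary of Figure~\ref{fig:region} both descriptions of the tradeoff agree up to the universal constant; here the rescaling $P_1\mapsto cP_1$ is what lets a point in regime boundary $i$ for the converse be dominated by an achievable point in the adjacent regime. Handling the $P_2$ axis amounts to noting that every achievable strategy in regimes (i)--(ii) needs only $P_2$ essentially proportional to $a^2D_U$, while the converse bounds (a)--(d) declare $D_L=\infty$ below thresholds also proportional (up to constants) to $a^2D_L(\cdot,\infty)$, so the three-dimensional region inclusion follows. Finally I would take the maximum of all the ratio constants appearing in the three regimes to conclude $c\le 1.5\times 10^5$. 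The main obstacle I anticipate is the signaling regime~(ii): one must verify that the non-monotone Gaussian-tail factor $P_1\exp(-50a^{2(s-1)}P_1/\sigma_{v2}^2)$ --- which carries the finite-dimensionality/large-deviation effect --- is reproduced by both Corollary~\ref{rat:ach} and Corollary~\ref{rat:lemma3} with \emph{compatible} exponents and prefactors after the $c$-scaling of $P_1$, and that this compatibility persists uniformly up to the two regime boundaries, where the exponential either saturates to a constant ($P_1\approx \sigma_{v2}^2/a^{2(s-1)}$) or becomes negligible ($P_1\approx \max(a^2,a^4\sigma_{v1}^2)$); tracking the $P_2$ constraint simultaneously across this regime is the most delicate bookkeeping.
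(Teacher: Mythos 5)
Your proposal follows essentially the same route as the paper: apply Lemma~\ref{rat:lemma1} to reduce the weighted-cost ratio to a comparison of the power--disturbance tradeoffs, split the $(\widetilde{P_1},\widetilde{P_2})$ plane into the zero-input, $s$-stage signaling, and zero-forcing regimes (with sub-cases on $\widetilde{P_2}$), and in each case pair the achievable bound from Lemma~\ref{ach:lemmabb} or Corollary~\ref{rat:ach} against the matching item (a)--(e) of Corollary~\ref{rat:lemma3}, taking $c$ as the worst component-wise ratio. The only cosmetic difference is that you describe the argument as a $\mathbb{R}^3$ region inclusion with ``boundary stitching,'' whereas the paper simply checks the ratio independently in each of five exhaustive $(\widetilde{P_1},\widetilde{P_2})$ cells (relying implicitly on monotonicity of $D_U$ in its arguments), so no separate boundary argument is needed.
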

\begin{proof}
See Appendix~\ref{sec:prop:3}
\end{proof}

Now, Theorem~\ref{thm:1} immediately follows from Propositions~\ref{prop:2} and \ref{prop:3}.

\begin{proof}
[Proof of Theorem~\ref{thm:1}]
Propostion~\ref{prop:2} covers the case when $\sigma_{v2}^2 \leq \max(1,a^2\sigma_{v1}^2)$.
Propostion~\ref{prop:3} covers the case when $\sigma_{v2}^2 > \max(1,a^2\sigma_{v1}^2)$, since in this case there exists $s \in \mathbb{N}$ such that $a^{2(s-1)} \max(1,a^2 \sigma_{v1}^2) \leq \sigma_{v2}^2 \leq a^{2s} \max(1,a^2 \sigma_{v1}^2)$.
\end{proof}

\section{Connection to Wireless Communication Theory}
\label{sec:wireless}
\begin{figure*}[htbp]
\begin{center}
        \begin{subfigure}[b]{0.7\textwidth}
                \centering
                \includegraphics[width=\textwidth]{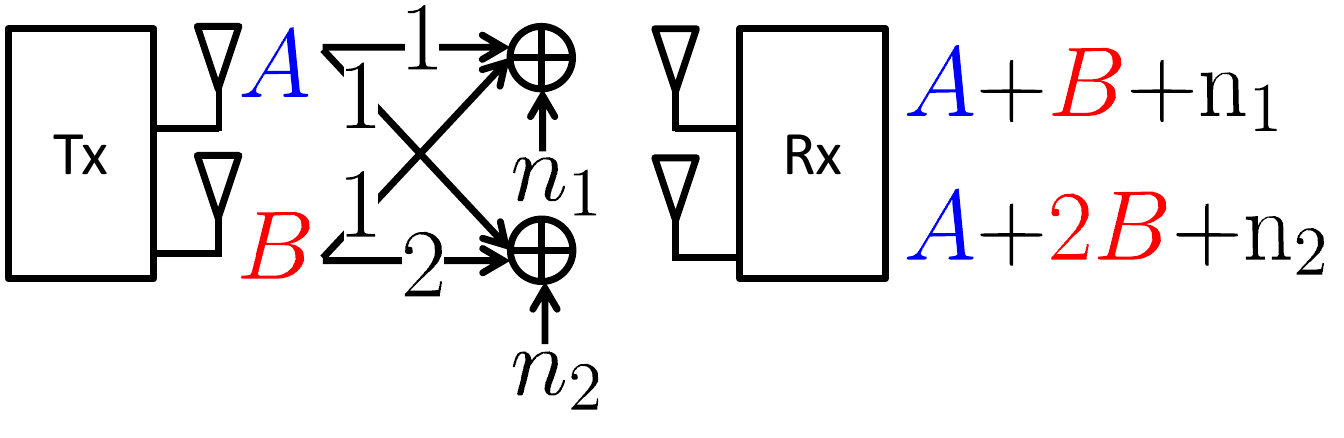}
                \caption{By transmitting different signals across the antennas, we can achieve `d.o.f. gain'. Generally, this scheme performs well in high-SNR.}
                \label{fig:MIMO_multi}
        \end{subfigure}
        \begin{subfigure}[b]{0.7\textwidth}
                \centering
                \includegraphics[width=\textwidth]{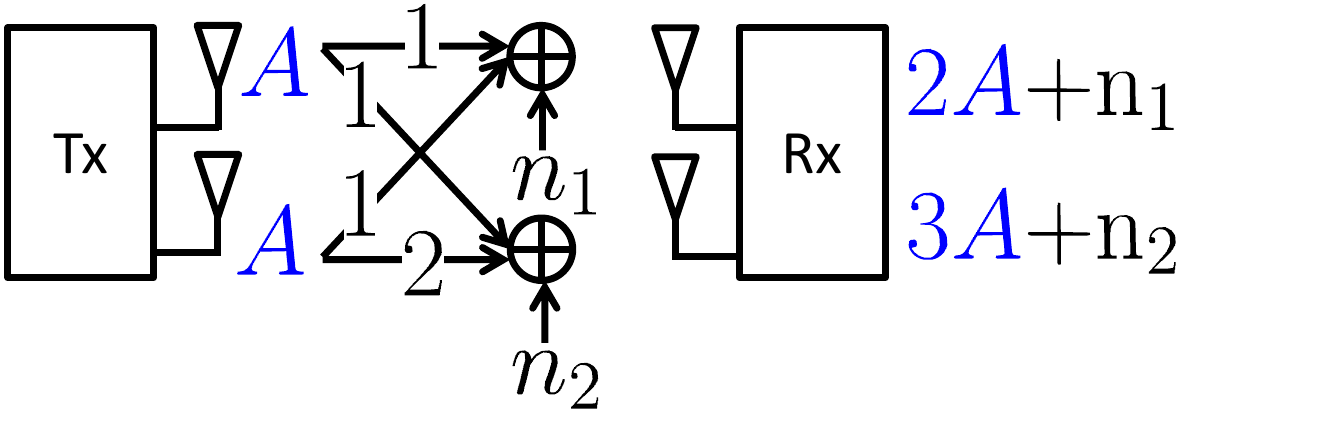}
                \caption{By transmitting the same signal across the antennas, we can achieve `power gain'. Generally, this scheme performs well in low-SNR.}
                \label{fig:MIMO_power}
        \end{subfigure}
\caption{MIMO Wireless Communication Problem}
\label{fig:MIMO}
\end{center}
\end{figure*}
Throughout the discussion, we have observed a lot of similarity between wireless communication and the decentralized LQG control problems considered in this paper. In this section, we will explore this point in more detail. At first glance, decentralized LQG control and wireless communication seem pretty distinct from each other. But the main result in this paper is actually a manifestation of a deeper connection.

The essence of wireless communication problems~\cite{Tse} can be summarized as follows: First, unlike wired communication, wireless communication systems share a common channel and as a result the signals from different transmitting antennas can interact with each other. Second, wireless communication systems involve uncertainties or randomness that come from channel fading or thermal noise in circuits. Third, to extend battery life and minimize interference to other transceivers, each transmitting antenna has a power constraint.

The way that wireless communication theory models capture this nature is very similar to stochastic control theory. First, the interaction between the signals is modeled by linear operations. Second, the uncertainty in the system is modeled by Gaussian random variables. Third, the power of the transmitting antennas is measured by a quadratic cost. If we remember that wireless communication systems are by nature distributed, wireless communication problems are essentially a special case of decentralized LQG control problems, except that wireless communication problems have the special objective of communication.

Like decentralized LQG problems, wireless network communication problems are still open~\cite{Etkin_Interference, Salman_Wireless} or nonconvex~\cite{yu2006dual}. However, wireless communication theorists found that it is helpful to divide cases  according to the SNR(Signal-to-Noise Ratio).  For a given communication scheme, the capacity of a channel is usually given as $\log(1+c_1 SNR+ \cdots + c_k SNR^{k})$. Therefore, when SNR is large (high-SNR case), the capacity is approximately $k \log SNR$ (where $k$ turns out to be the `d.o.f. gain' of the scheme). When SNR is small (low-SNR case), the capacity is approximately $c_1 SNR$ (where $c_1$ turns out to be the `power gain' of the scheme). Therefore, depending on the SNR the capacity of communication schemes are very different. Thus, wireless communication theory usually divides into two cases: (1) high-SNR (2) low-SNR.

Let's consider a $2 \times 2$ MIMO communication problem of Figure~\ref{fig:MIMO}. We can think of two basic ways of exploiting these two antennas. The first way is transmitting different signals across different antennas. As we can see in Fig.~\ref{fig:MIMO_multi}, in this case the receiver will have two variables and two (noisy) equations, and we can expect `MIMO gain' by solving for multiple variables. In wireless communication theory, this gain is called the `d.o.f.(degree-of-freedom) gain' and the scheme of Figure~\ref{fig:MIMO_multi} succeeds in increasing $k$ in the capacity formula.

As we mentioned in Section~\ref{sec:generalizeddof}, this concept can be extended to \textit{generalized d.o.f.} by allowing the transmitting powers of different antennas to scale differently~\cite{Etkin_Interference}. When the transmitting powers of different antennas scale differently, we can further divide a single receiving antenna according to ``signal levels". For the small signal level, all tranmitting antennas can affect it, but for the large signal level, only the few transmitting antennas with large power can affect it. In~\cite{Salman_Wireless}, binary deterministic models were proposed to capture this phenomenon by conceptualizing different bit-levels like different antennas, which we used in Section~\ref{sec:intui}.

The second way of using two antennas is transmitting the same signal across different antennas as shown in Fig.~\ref{fig:MIMO_power}. In this scheme, the receiver will have only one variable and we cannot expect the d.o.f. gain of solving for multiple variables. However, there is a gain to be had from aligning the signals. Let's assume all random variables, $A, B, n_1, n_2$, are Gaussian random variables with zero mean and unit variance, and compute the signal-to-noise ratio at the receiving antennas. The SNR of the first receive antenna in Fig.~\ref{fig:MIMO_multi} is $\frac{\mathbb{E}[(A+B)^2]}{\mathbb{E}[n_1^2]}=2$. On the other hand, the SNR of the first antenna in Fig.~\ref{fig:MIMO_power} is $\frac{\mathbb{E}[(2A)^2]}{\mathbb{E}[n_1^2]}=4$. Therefore, by transmitting the same signal over different antennas we can increase SNR of the received signals. This gain is known as `power gain' in wireless communication theory and the proposed scheme is good for increasing $c_1$ in the capacity formula. To exploit the power gain, the receiver has to introduce maximum-ratio combining~\cite{Tse}.

\begin{figure*}[htbp]
\begin{center}
\includegraphics[width=2.5in]{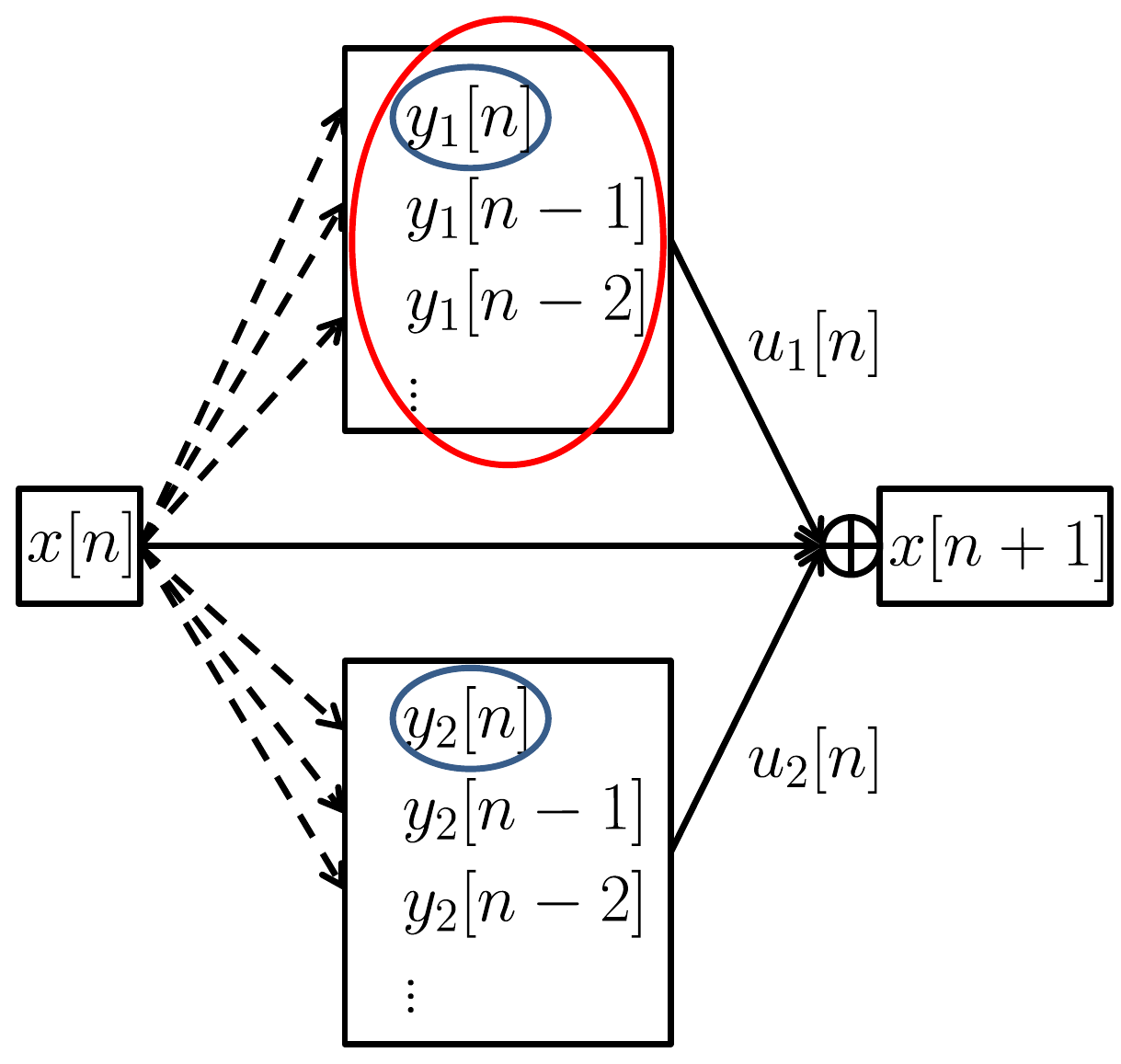}
\caption{As indicated by the blue circles, in the fast dynamics case the proposed scheme exploits  ``information flow" from both controllers but ignores all past observations. By contrast, as indicated by the red circle, in the slow dynamics case the proposed scheme exploits the information of only one controller but takes into account all past observations.}
\label{fig:MIMO_LQG}
\end{center}
\end{figure*}

How is this relevant for scalar decentralized LQG control problems? To control a plant we first have to gain information about its state. The quantitative behavior of this information flow (from a plant to controllers and finally back to the plant~\cite{Park_Equivalence}) is very similar to that in wireless communication systems. More precisely, according to the \textbf{eigenvalue} of the system, the information flow in the system shows a very different behavior. The system is deemed to be fast-dynamics when the eigenvalue is large ($|a| \geq 2.5$) and slow-dynamics when the eigenvalue is small ($|a| < 2.5$).

The main reason for this division is the relationship between the eigenvalue of the system and the SNR of the information flow for control. The discussion of Section~\ref{sec:intui} reveals that the SNR of implicit communication between two controllers will be bounded\footnote{Since the second controller can cancel all bits above its noise level at the next time step, the new information of the state cannot be amplified more than $|a|^2$ above the second controller's noise level. Thus, the SNR measured at the second controller is always bounded by $|a|^2$.} by the eigenvalue squared ($|a|^2$).
Therefore, when the eigenvalue is large, the SNR for implicit communication is also large. Therefore, from wireless communication theory we can expect that the (generalized) d.o.f. gain of the implicit communication is crucial. Likewise, when the eigenvalue is small, the SNR for implicit communication is also small and the power gain of the implicit communication is crucial. This is the slow dynamics case. To harness the power gain, we have to use Kalman filtering which corresponds to maximum-ratio combining in wireless communication~\cite{Tse}.

In short, even if the system is the simplest scalar system, we can think of two ways of sending information. One way is across different bit-levels and the other way is across different time-slots. Moreover, these multiple bit-levels and multiple time-slots can be deemed as MIMO antennas in wireless communication theory. In fast-dynamics, the MIMO antenna gain of multiple bit-levels dominates that of multiple time-slots. The proposed signaling strategies exploit the d.o.f. gain of the MIMO antennas over multiple bit-levels.

On the other hand, in slow-dynamics, the MIMO antenna gain of multiple time-slots is much more crucial. In \cite{Park_Approximation_Journal_Partii}, Kalman filtering is used to exploit the power gain of the MIMO antennas over multiple time-slots. 

Figure~\ref{fig:MIMO_LQG} visualizes the discussion so far. In fast-dynamics, the state is quickly changing and the SNR of implicit communication is high. Thus, the information from previous time steps is much less important than that of the current time step. However, to fully exploit the MIMO antenna gain of different bit-levels, the observations from both controllers has to be used.\footnote{Even though the strategy in Definition~\ref{def:sig} relies on the past controller inputs (therefore, the past observations), the role of the past inputs in the strategy is just to cancel their influence on the current time step, not providing information about the state.} On the other hand, in slow-dynamics, the state changes slowly and the SNR of implicit communication is low. Therefore, there is no huge incentive for implicit communication between controllers, and a strategy which fully exploits the observations of either one controller is enough to achieve constant-ratio optimality. However, the power gain from the past observations cannot be ignored and so Kalman filtering has to be used~\cite{Park_Approximation_Journal_Partii}.

It is worth mentioning that this fundamental difference between fast and slow dynamics was conjectured as early as the 1970s~\cite{Sandell_Survey} but it remains vague: \textit{``The
development of systematic procedures for appropriately
modeling large scale systems with slow and fast dynamics
has not received the attention it deserves. $\cdots$ one should look for
time scale separation (fast and slow dynamics)."} This paper is the first that has used this quantitatively. 

The division of fast and slow dynamics base on $2.5$ is somewhat surprising if we remind that Witsenhausen's counterexample corresponds to $a=1$ case in the infinite horizon problem.
In \cite{Pulkit_Witsen} it was shown that we need nonlinear strategy to achieve a constant ratio optimality in Witsenhasuen's counterexample, while in \cite{Park_Approximation_Journal_Partii} it is shown that in the slow dynamics case including $a=1$ linear strategies are enough for a constant ratio optimality. The main reason for this is that the infinite horizon problem is a sequential problem. Since the problem is sequential, we can think the infinite-horizon problem as an interlocking of a series of Witsenhausen's counterexamples. When $a=1$, the interference from the previous Witsenhausen's problem is too strong and we do not have to solve the current Witsenhausen's problem optimally.

\section{Discussion and Further Research}
\label{sec:discuss}
\begin{figure}[htbp]
\begin{center}
\includegraphics[width=1.6in]{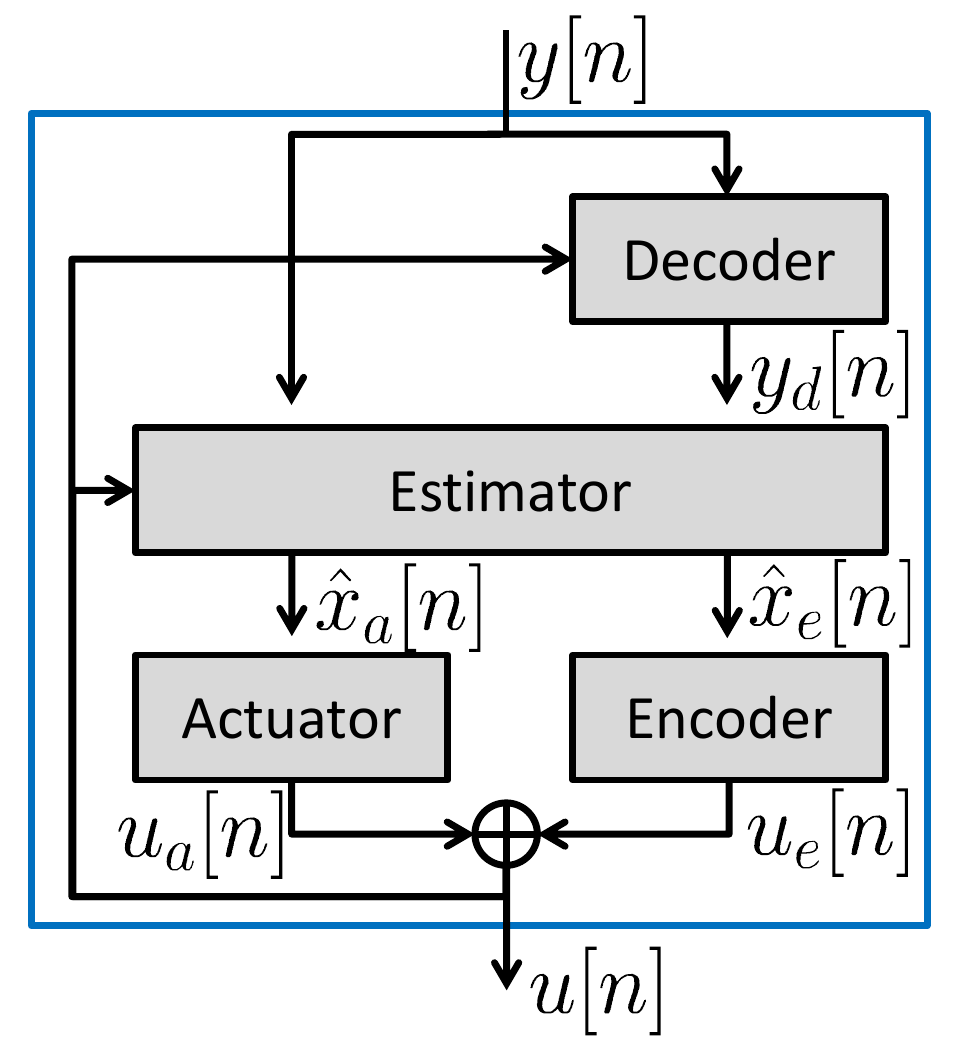}
\caption{Communication-Estimation-Control Separation Controllers}
\label{fig:str}
\end{center}
\end{figure}


\begin{figure}[htbp]
\begin{center}
\includegraphics[width=3in]{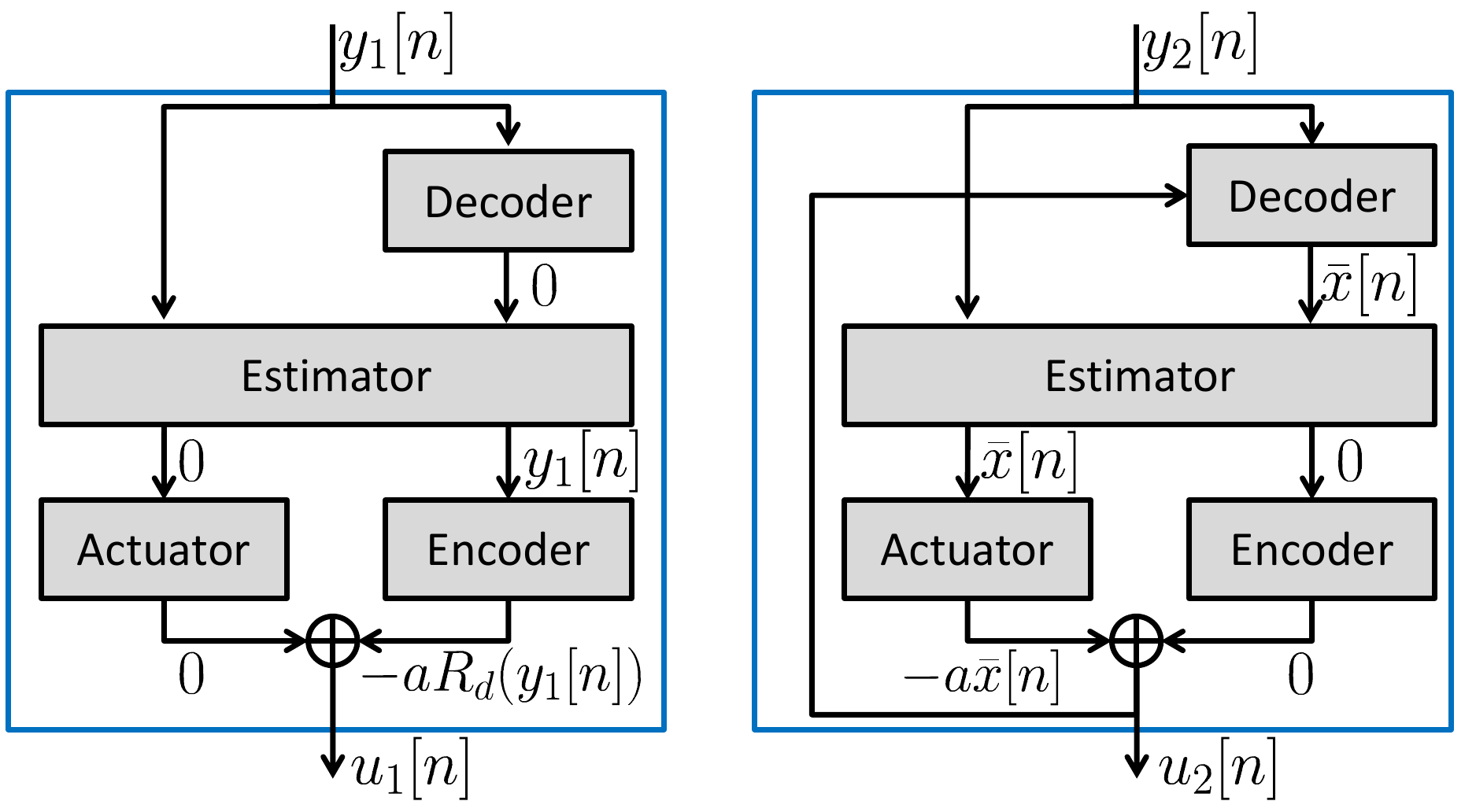}
\caption{Communication-Estimation-Control separation controller interpretation of $L_{sig,s}$ (Here,
$\bar{x}[n]=Q_{a^s d}(y_2[n]-R_{a^s d}(\sum_{1 \leq i \leq s} a^{i-1}u_2[n-i]))+R_{a^s d}(\sum_{1 \leq i \leq s} a^{i-1}u_2[n-i])$)}
\label{fig:str2}
\end{center}
\end{figure}

In the beginning of the paper, we summarized the two main contributions of the classical centralized LQG result. The first was linear controller optimality which narrows the search for the optimal strategy from the infinite-dimensional strategy space to the finite-dimensional linear strategy space. In Theorem~\ref{thm:1}, we gave the corresponding result for scalar decentralized LQG problems in an approximation sense by proposing a finite-dimensional strategy space.

The second contribution was more philosophical. Centralized LQG gave us the
separation principle for estimation and control. Therefore, a natural question is whether we can interpret the result of Theorem~\ref{thm:1} in terms of estimation-control separation, or if there is a conceptual missing block. The authors believe there is a missing fundamental design block, the ``communication" block.

Figure~\ref{fig:str} shows the proposed
communication-estimation-control separation controller, which we
believe, is \textbf{approximately} optimal. First, the controller
observes $y[n]$. Unlike the centralized case, $y[n]$ may contain
transmitted ``signals" from the other controllers. The decoder block
extracts such information and generates a new observation
$y_d[n]$. Based on both $y[n]$ and $y_d[n]$, the estimator block tries
to estimate the states. After the estimation, the controller can
either control the states\footnote{Even a single scalar state can be viewed
  as a collection of bits-positions.} by itself, or relay information
to the other controllers and let them control. $\hat{x}_a[n]$ is the
states that the controller wants to control by itself. Based on
$\hat{x}_a[n]$, the actuator generates the control action
$u_a[n]$. $\hat{x}_e[n]$ is the state that the controller wants to
encode for the other controllers. Based on $\hat{x}_e[n]$, the encoder generates the encoded signal $u_e[n]$. Finally, the control output is the superposition of $u_a[n]$ and $u_e[n]$.

Figure~\ref{fig:str2} interprets the strategy $L_{sig,s}$ based on the proposed controller structure. The strategy exploits the fact that the controller $1$ has a better observation than the controller $2$. Since the controller $1$'s control signal is expensive, it ``relays" its observation through the encoder rather than trying to control the state by itself. Then, the controller $2$ extracts the relayed information in the decoder block, and takes action based on it. We can notice that only encoders and decoders are nonlinear, while estimators and actuators are linear. Therefore, this structure fits the intuition that the essential nonlinearity comes from communication.


An extensive relationship between the implicit information flow for control and wireless information flow was discussed in Section~\ref{sec:wireless}. Some unique features of information flows for control was also noticed. We also found the counterpart of the classical notion of information theoretic cutset bounds~\cite{Cover} in dynamic programming context. The geometric slicing idea discussed in Figure~\ref{fig:geometric} can be thought as a cutset bound in a sense that it finds the informational bottleneck of the system. However, unlike traditional information theoretic cutsets, the geometric slicing idea divides the nodes by a weighted cut rather than a simple partitioning.


Even if this paper focused on the simplest toy scalar LQG problem with two controllers, the essential difficulty of decentralized problems --- nonconvex optimization over infinite-dimensional space --- was still there and we could finesse this difficulty by taking an approximation approach.
We believe the approaches and techniques developed in this paper will also be useful in more 
general problems with vector states and multiple controllers.
Moreover, in the process of such generalization, we will find more close relationship and parallelism between wireless information flows and control information flows. For example, the notion of the computation over communication channels~\cite{Bobak_Computation} or interference alignment~\cite{maddah2008communication, cadambe2008jafar} has to be properly understood in control contexts. Above all, by solving the problems only approximately, we ``may'' be able to make a breakthrough in this long-standing open problem, the decentralized LQG problem. 

\section{Acknowledgements}
The first author thanks Samsung Scholarship for the generous support during his graduate study. The authors would like to thank the CDC reviewers for their comments and NSF for grants NSF-07291222, NSF-0917212, and NSF-0932410.
\bibliographystyle{IEEEtran}
\bibliography{IEEEabrv,seyongbib}

\begin{thebibliography}{10}
\providecommand{\url}[1]{#1}
\csname url@rmstyle\endcsname
\providecommand{\newblock}{\relax}
\providecommand{\bibinfo}[2]{#2}
\providecommand\BIBentrySTDinterwordspacing{\spaceskip=0pt\relax}
\providecommand\BIBentryALTinterwordstretchfactor{4}
\providecommand\BIBentryALTinterwordspacing{\spaceskip=\fontdimen2\font plus
\BIBentryALTinterwordstretchfactor\fontdimen3\font minus
  \fontdimen4\font\relax}
\providecommand\BIBforeignlanguage[2]{{%
\expandafter\ifx\csname l@#1\endcsname\relax
\typeout{** WARNING: IEEEtran.bst: No hyphenation pattern has been}%
\typeout{** loaded for the language `#1'. Using the pattern for}%
\typeout{** the default language instead.}%
\else
\language=\csname l@#1\endcsname
\fi
#2}}

\bibitem{Park_easier}
S.~Y. Park and A.~Sahai, ``It may be easier to approximate decentralized
  infinite-horizon {LQG} problems,'' in \emph{Proceedings of the 51st IEEE
  Conference on Decision and Control (CDC)}, dec. 2012, pp. 2250--2255.

\bibitem{KumarVaraiya}
P.~R. Kumar and P.~Varaiya, \emph{Stochastic Systems: Estimation,
  Identification, and Adaptive Control}.\hskip 1em plus 0.5em minus 0.4em\relax
  New Jersey: Prentice Hall, 1986.

\bibitem{Witsenhausen_Counterexample}
H.~S. Witsenhausen, ``A counterexample in stochastic optimum control,''
  \emph{{SIAM} Journal on Control}, vol.~6, no.~1, pp. 131--147, Jan. 1968.

\bibitem{Ho_Teams}
Y.-C. Ho, M.~P. Kastner, and E.~Wong, ``Teams, signaling, and information
  theory,'' \emph{{IEEE} Trans. Automat. Contr.}, vol.~23, no.~2, Apr. 1978.

\bibitem{Sandell_Survey}
J.~Sandell, N., P.~Varaiya, M.~Athans, and M.~Safonov, ``Survey of
  decentralized control methods for large scale systems,'' \emph{Automatic
  Control, IEEE Transactions on}, vol.~23, no.~2, pp. 108 -- 128, apr 1978.

\bibitem{Radner_Team}
R.~Radner, ``Team decision problems,'' \emph{The Annals of Mathematical
  Statistics}, vol.~33, no.~3, pp. pp. 857--881.

\bibitem{Wiener}
N.~Wiener, \emph{Cybernetics, or Communication and Control in the Animal and
  the Machine}.\hskip 1em plus 0.5em minus 0.4em\relax Cambridge MIT Press,
  1948.

\bibitem{Witsenhausen_Separation}
H.~S. Witsenhausen, ``Separation of estimation and control for discrete time
  systems,'' \emph{Proceedings of the {IEEE}}, vol.~59, no.~11, pp. 1557--1566,
  Nov. 1971.

\bibitem{Yuksel_Nested}
S.~Yuksel, ``Stochastic nestedness and the belief sharing information
  pattern,'' \emph{Automatic Control, IEEE Transactions on}, vol.~54, no.~12,
  pp. 2773 --2786, dec. 2009.

\bibitem{Rotkowitz_characterization}
M.~Rotkowitz and S.~Lall, ``A characterization of convex problems in
  decentralized control,'' \emph{Automatic Control, IEEE Transactions on},
  vol.~50, no.~12, pp. 1984 --1996, dec. 2005.

\bibitem{Youla}
D.~Youla, H.~Jabr, and J.~Bongiorno, J., ``Modern {Wiener-Hopf} design of
  optimal controllers--part ii: The multivariable case,'' \emph{Automatic
  Control, IEEE Transactions on}, vol.~21, no.~3, pp. 319 -- 338, jun 1976.

\bibitem{Shah_Partial}
P.~Shah and P.~Parrilo, ``A partial order approach to decentralized control,''
  in \emph{Decision and Control, 2008. CDC 2008. 47th IEEE Conference on}, dec.
  2008, pp. 4351 --4356.

\bibitem{Lessard_state}
L.~Lessard and S.~Lall, ``A state-space solution to the two-player
  decentralized optimal control problem,'' in \emph{Communication, Control, and
  Computing (Allerton), 2011 49th Annual Allerton Conference on}, sept. 2011,
  pp. 1559 --1564.

\bibitem{Rotkowitz_information}
M.~Rotkowitz, ``On information structures, convexity, and linear optimality,''
  in \emph{Decision and Control, 2008. CDC 2008. 47th IEEE Conference on}, dec.
  2008, pp. 1642 --1647.

\bibitem{Bansal_affine}
R.~Bansal and T.~Basar, ``Stochastic teams with nonclassical information
  revisited: When is an affine law optimal?'' \emph{Automatic Control, IEEE
  Transactions on}, vol.~32, no.~6, pp. 554 -- 559, jun 1987.

\bibitem{Gastpar_Tocode}
M.~Gastpar, B.~Rimoldi, and M.~Vetterli, ``To code, or not to code: lossy
  source-channel communication revisited,'' \emph{Information Theory, IEEE
  Transactions on}, vol.~49, no.~5, pp. 1147 -- 1158, may 2003.

\bibitem{Wu_Witsenhausen}
Y.~Wu and S.~Verdu, ``Witsenhausen's counterexample: A view from optimal
  transport theory,'' in \emph{Decision and Control and European Control
  Conference (CDC-ECC), 2011 50th IEEE Conference on}, dec. 2011, pp. 5732
  --5737.

\bibitem{Sahai_Witsenhausen}
S.~K. Mitter and A.~Sahai, ``Information and control: Witsenhausen revisited,''
  \emph{Learning, Control and Hybrid Systems: Lecture Notes in Control and
  Information Sciences 241}, pp. 281--293, 1999.

\bibitem{Ho_Witsenhausen}
J.~Lee, E.~Lau, and Y.-C. Ho, ``The {Witsenhausen} counterexample: a
  hierarchical search approach for nonconvex optimization problems,''
  \emph{Automatic Control, IEEE Transactions on}, vol.~46, no.~3, pp. 382
  --397, mar 2001.

\bibitem{Baglietto_Numerical}
Baglietto, M.~Parisini, and R.~T.~Zoppoli, ``Numerical solutions to the
  {Witsenhausen} counterexample by approximating networks,'' \emph{{IEEE}
  Trans. Automat. Contr.}, vol.~46, no.~9, 2001.

\bibitem{Shamma_Learning}
N.~Li, J.~Marden, and J.~Shamma, ``Learning approaches to the {Witsenhausen}
  counterexample from a view of potential games,'' in \emph{Decision and
  Control, 2009 held jointly with the 2009 28th Chinese Control Conference.
  CDC/CCC 2009. Proceedings of the 48th IEEE Conference on}, dec. 2009, pp. 157
  --162.

\bibitem{Karlsson_Iterative}
J.~Karlsson, A.~Gattami, T.~Oechtering, and M.~Skoglund, ``Iterative
  source-channel coding approach to {Witsenhausen's} counterexample,'' in
  \emph{American Control Conference (ACC), 2011}, 29 2011-july 1 2011, pp. 5348
  --5353.

\bibitem{Pulkit_Witsen}
P.~Grover, S.~Y. Park, and A.~Sahai, ``The finite-dimensional {Witsenhausen}
  counterexample,'' \emph{{IEEE} Trans. Automat. Contr.}, To appear.

\bibitem{Tatikonda_Control}
S.~Tatikonda and S.~K. Mitter, ``Control under communication constraints,''
  \emph{{IEEE} Trans. Automat. Contr.}, vol.~49, no.~7, pp. 1056--1068, July
  2004.

\bibitem{Sahai_Anytime}
A.~Sahai and S.~Mitter, ``The necessity and sufficiency of anytime capacity for
  stabilization of a linear system over a noisy communication link - {Part I}:
  Scalar system,'' \emph{{IEEE} Trans. Inform. Theory}, vol.~52, no.~8, pp.
  3369--3395, Aug. 2006.

\bibitem{Yuksel_Optimal}
S.~Yuksel and T.~Basar, ``Optimal signaling policies for decentralized
  multicontroller stabilizability over communication channels,''
  \emph{Automatic Control, IEEE Transactions on}, vol.~52, no.~10, pp. 1969
  --1974, oct. 2007.

\bibitem{Elia_Bode}
N.~Elia, ``When {Bode} meets {Shannon}: Control-oriented feedback communication
  schemes,'' \emph{{IEEE} Trans. Automat. Contr.}

\bibitem{Nair_Communication}
G.~N. Nair and R.~J. Evans, ``Communication-limited stabilization of linear
  systems,'' in \emph{Proceedings of the 39th {IEEE} Conference on Decision and
  control}, Sydney, Australia, Dec. 2000, pp. 1005--1010.

\bibitem{Minero_Stabilization}
P.~Minero, M.~Franceschetti, S.~Dey, and G.~Nair, ``Data rate theorem for
  stabilization over time-varying feedback channels,'' \emph{Automatic Control,
  IEEE Transactions on}, vol.~54, no.~2, pp. 243 --255, feb. 2009.

\bibitem{Martins_Feedback}
N.~Martins and M.~Dahleh, ``Feedback control in the presence of noisy channels:
  Bode-like fundamental limitations of performance,'' \emph{Automatic Control,
  IEEE Transactions on}, vol.~53, no.~7, pp. 1604 --1615, aug. 2008.

\bibitem{Salman_Wireless}
S.~Avestimehr, S.~Diggavi, and D.~Tse, ``Wireless network information flow: A
  deterministic approach,'' \emph{{IEEE} Trans. Inform. Theory}, vol.~57,
  no.~4, pp. 1872 --1905, Apr. 2011.

\bibitem{Cover}
T.~M. Cover and J.~A. Thomas, \emph{Elements of Information Theory}.\hskip 1em
  plus 0.5em minus 0.4em\relax Wiley, 2006.

\bibitem{Ahlswede_Network}
R.~Ahlswede, N.~Cai, S.-Y. Li, and R.~Yeung, ``Network information flow,''
  \emph{IEEE Transactions on Information Theory}, vol.~46, no.~4, pp.
  1204--1216, July 2000.

\bibitem{Park_Approximation_Journal_Partii}
S.~Y. Park and A.~Sahai, ``An approximate solution to the decentralized
  two-controller infinite-horizon scalar {LQG} problem - part ii: Slow
  dynamics,'' In preparation.

\bibitem{Ranade_implicit}
G.~Ranade and A.~Sahai, ``Implicit communication in multiple-access settings,''
  in \emph{Information Theory Proceedings (ISIT), 2011 IEEE International
  Symposium on}, 31 2011-aug. 5 2011, pp. 998 --1002.

\bibitem{Park_Thesis}
S.~Y. Park, ``Information flows in linear systems,'' Ph.D. dissertation,
  University of California, Berkeley, Berkeley, CA, 2013.

\bibitem{Grover_distributed}
P.~Grover, A.~Wagner, and A.~Sahai, ``Information embedding meets distributed
  control,'' in \emph{Information Theory Workshop (ITW), 2010 IEEE}, jan. 2010,
  pp. 1 --5.

\bibitem{Choudhuri_witsenhausen}
C.~Choudhuri and U.~Mitra, ``On witsenhausen's counterexample: The asymptotic
  vector case,'' in \emph{Information Theory Workshop (ITW), 2012 IEEE}, sept.
  2012, pp. 162 --166.

\bibitem{Kim_amplification}
Y.-H. Kim, A.~Sutivong, and T.~Cover, ``State amplification,''
  \emph{Information Theory, IEEE Transactions on}, vol.~54, no.~5, pp. 1850
  --1859, may 2008.

\bibitem{Etkin_Interference}
R.~Etkin, D.~Tse, and H.~Wang, ``Gaussian interference channel capacity to
  within one bit,'' \emph{Information Theory, IEEE Transactions on}, vol.~54,
  no.~12, pp. 5534 --5562, dec. 2008.

\bibitem{Feller}
W.~Feller, \emph{An introduction to probability theory and its applications},
  6th~ed.\hskip 1em plus 0.5em minus 0.4em\relax New York, NY: Wiley, 1957.

\bibitem{Park_constant}
S.~Y. Park, P.~Grover, and A.~Sahai, ``A constant-factor approximately optimal
  solution to the witsenhausen counterexample,'' in \emph{Decision and Control,
  2009 held jointly with the 2009 28th Chinese Control Conference. CDC/CCC
  2009. Proceedings of the 48th IEEE Conference on}, dec. 2009, pp. 2881
  --2886.

\bibitem{Bertsekas}
D.~Bertsekas, \emph{Dynamic Programming and Optimal Control}, 3rd~ed.\hskip 1em
  plus 0.5em minus 0.4em\relax Athena Scientific, 2005.

\bibitem{Tse}
D.~Tse and P.~Viswanath, \emph{Fundamentals of Wireless Communication}.\hskip
  1em plus 0.5em minus 0.4em\relax Cambridge University Press, 2005.

\bibitem{Cover_feedback}
T.~Cover and S.~Pombra, ``Gaussian feedback capacity,'' \emph{Information
  Theory, IEEE Transactions on}, vol.~35, no.~1, pp. 37 --43, jan 1989.

\bibitem{yu2006dual}
W.~Yu and R.~Lui, ``Dual methods for nonconvex spectrum optimization of
  multicarrier systems,'' \emph{Communications, IEEE Transactions on}, vol.~54,
  no.~7, pp. 1310--1322, 2006.

\bibitem{Park_Equivalence}
S.~Y. Park and A.~Sahai, ``Network coding meets decentralized control:
  Capacity-stabilizability equivalence,'' In preparation.

\bibitem{Bobak_Computation}
B.~Nazer and M.~Gastpar, ``Computation over multiple-access channels,''
  \emph{{IEEE} Trans. Inform. Theory}, vol.~53, no.~10, pp. 3498--3516, Oct.
  2007.

\bibitem{maddah2008communication}
M.~A. Maddah-Ali, A.~S. Motahari, and A.~K. Khandani, ``Communication over mimo
  x channels: Interference alignment, decomposition, and performance
  analysis,'' \emph{Information Theory, IEEE Transactions on}, vol.~54, no.~8,
  pp. 3457--3470, 2008.

\bibitem{cadambe2008jafar}
V.~Cadambe and S.~Jafar, ``Interference alignment and degrees of freedom of the
  k-user interference channel,'' \emph{Information Theory, IEEE Transactions
  on}, vol.~54, no.~8, pp. 3425--3441, 2008.

\end{thebibliography}

\appendix

\subsection{Proof of Corollary~\ref{rat:lemma2} of Page~\pageref{rat:lemma2}}
\label{sec:lemma2}
Proof of (c):

Let's first consider when $\max(1,a^2 \sigma_{v1}^2)=a^2 \sigma_{v1}^2$. Since $a^2 \sigma_{v1}^2 \geq 1$, there exists $k_1 \geq 2$ such that 
\begin{align}
a^{2(k_1-2)} \leq a^2 \sigma_{v1}^2 < a^{2(k_1-1)},
\end{align}
and we choose such a $k_1$ as $k_1$ in Lemma~\ref{cov:lemma2}. Then, by \eqref{eqn:lem:radner1} of Lemma~\ref{cov:lemma2} we have
\begin{align}
&D_{L,3}(\widetilde{P_1},\widetilde{P_2};k_1) \\
&\geq \frac{a^{2(k_1-1)}\sigma_{v1}^2}{(1+\frac{\sigma_{v1}^2}{\sigma_{v2}^2})(\frac{a^{2(k_1-2)}}{1-a^{-2}})+\sigma_{v1}^2}\\
&\overset{(A)}{\geq} \frac{a^{2(k_1-1)}\sigma_{v1}^2}{\frac{2}{1-2.5^{-2}}a^{2(k_1-2)}+\sigma_{v1}^2}\\
&= \frac{a^2 \sigma_{v1}^2}{\frac{2}{1-2.5^{-2}}+\frac{\sigma_{v1}^2}{a^{2(k_1-2)}}}\\
&\overset{(B)}{\geq} \frac{a^2 \sigma_{v1}^2}{\frac{2}{1-2.5^{-2}}+1} \\
&\geq 0.295775... a^2 \sigma_{v1}^2 \\
&\geq 0.295 a^2 \sigma_{v1}^2. \label{eqn:ratio1}
\end{align}
(A): $\sigma_{v1}^2 \leq \sigma_{v2}^2$ and $|a| \geq 2.5$.\\
(B): $a^2 \sigma_{v1}^2 < a^{2(k_1-1)}$.

When $\max(1,a^2 \sigma_{v1}^2)=1$, by \eqref{eqn:lem:radner1} of Lemma~\ref{cov:lemma2} we have $D_{L,3}(\widetilde{P_1},\widetilde{P_2};k_1) \geq 1 \geq 0.295$.
\\

Proof of (b):

In Lemma~\ref{cov:lemma2}, choose $k_1$ in the same way as (c) and let $k=k_1 + 1$. Inspired by the proof of (c), we can safely choose $\Sigma = 0.295 \max(1,a^2 \sigma_{v1}^2)$. Then, by Lemma~\ref{cov:lemma2}, we notice that since $k-k_1-1=0$, the second and third square-root terms in $D_{L,2}(\widetilde{P_1},\widetilde{P_2};k_1,k,\Sigma)$ goes away and the bound reduces to
\begin{align}
D_{L,2}(\widetilde{P_1},\widetilde{P_2};k_1,k,\Sigma) &\geq \inf_{c_1, c_2} ( \sqrt{(a-c_1 -c_2)^2 \Sigma + c_1^2 \sigma_{v1}^2 + c_2^2 \sigma_{v2}^2})_+^2  + 1\\
&\mbox{s.t. }  (1-2.5^{-1}) c_1^2 (\Sigma+ \sigma_{v1}^2) \leq \widetilde{P_1} 
\end{align}
where $c_2$ can be chosen arbitrarily.

Here, since we assumed $\widetilde{P_1} \leq \frac{1}{400} a^2 \max(1,a^2 \sigma_{v1})$, we have
\begin{align}
&(1-2.5^{-1})c_1^2 (\Sigma + \sigma_{v1}^2) \leq \widetilde{P_1} \leq \frac{1}{400}a^2 \max(1, a^2\sigma_{v1}^2)\\
&(\Rightarrow) c_1^2(0.295 \max(1,a^2 \sigma_{v1}^2) + \sigma_{v1}^2) \leq  \frac{1}{400(1-2.5^{-1})} a^2 \max(1, a^2\sigma_{v1}^2) \\
&(\Rightarrow) c_1^2 \leq \frac{a^2 \max(1,a^2 \sigma_{v1}^2)}{400(1-2.5^{-1})(0.295 \max(1,a^2\sigma_{v1}^2) + \sigma_{v1}^2)} \leq \frac{a^2}{400 (1-2.5^{-1}) \cdot 0.295} \\
&(\Rightarrow) |c_1| \leq  0.118846...|a| \leq 0.119 |a|.
\end{align}
Therefore, 
\begin{align}
D_{L,2}(\widetilde{P_1},\widetilde{P_2};k_1,k,\Sigma)& \geq \inf_{c_1, c_2} ( \sqrt{(a-c_1 -c_2)^2 0.295 \max(1,a^2 \sigma_{v1}^2) + c_1^2 \sigma_{v1}^2 + c_2^2 \sigma_{v2}^2})_+^2  + 1\\
&\mbox{s.t. }  |c_1| \leq 0.119 |a| \\
&\geq \inf_{c_2} (a-0.119a -c_2)^2 0.295 \max(1,a^2 \sigma_{v1}^2) + c_2^2 \sigma_{v2}^2 + 1 \\
&\overset{(A)}{\geq} \inf_{c_2} (a-0.119a -c_2)^2 0.295  \sigma_{v2}^2 + c_2^2 \sigma_{v2}^2 + 1 \\
&= \inf_{\widetilde{c_2}} ( \sqrt{ (1-0.119-\widetilde{c_2})^2  0.295 + \widetilde{c_2}^2})^2 a^2 \sigma_{v2}^2 +1\\
&\overset{(B)}{=} 0.176808... a^2 \sigma_{v2}^2 + 1 \\
&\geq 0.176 a^2 \sigma_{v2}^2 +1.
\end{align}
(A): By the assumption $\max(1,a^2 \sigma_{v1}^2) \geq \sigma_{v2}^2$.\\
(B): By the numerical optimization of the quadratic function.
\\

Proof of (a):

(i) When $\max(1,a^2\sigma_{v2}^2)=a^2 \sigma_{v2}^2$

In Lemma~\ref{cov:lemma2}, we will choose $k_1$ in the same way as (c) and $k$ arbitrarily large. As above, we can safely choose $\Sigma = 0.295 a^2 \sigma_{v1}^2$. Applying the same arguments as (b), (c) to Lemma~\ref{cov:lemma2} gives 
\begin{align}
D_{L,2}(\widetilde{P_1},\widetilde{P_2};k_1,k,\Sigma) &\geq \inf_{c_1, c_2}(\sqrt{a^{2(k-k_1-1)}((a-c_1-c_2)^2 \Sigma + c_1^2 \sigma_{v1}^2 + c_2^2 \sigma_{v2}^2)}\\
&-\sqrt{\frac{a^{2(k-k_1-2)}(1-(2.5a^{-2})^{k-k_1-1})}{1-2.5a^{-2}} \frac{\widetilde{P_1}}{(1-2.5^{-1})2.5^{-1}}} \\
&-\sqrt{\frac{a^{2(k-k_1-2)}(1-(2.5a^{-2})^{k-k_1-1})}{1-2.5a^{-2}}  \frac{\widetilde{P_2}}{(1-2.5^{-1})2.5^{-1}}})_+^2 +1 \\
&\mbox{s.t. } (1-2.5^{-1})c_1^2(\Sigma+\sigma_{v1}^2) \leq \widetilde{P_1} \\
&\quad (1-2.5^{-1})c_2^2(\Sigma+\sigma_{v2}^2) \leq \widetilde{P_2} \\
&\geq
(\sqrt{a^{2(k-k_1-1)} 0.176 a^2 \sigma_{v2}^2 }\\
&-\sqrt{\frac{a^{2(k-k_1-2)}(1-(2.5a^{-2})^{k-k_1-1})}{1-2.5a^{-2}} \frac{\widetilde{P_1}}{(1-2.5^{-1})2.5^{-1}}} \\
&-\sqrt{\frac{a^{2(k-k_1-2)}(1-(2.5a^{-2})^{k-k_1-1})}{1-2.5a^{-2}}  \frac{\widetilde{P_2}}{(1-2.5^{-1})2.5^{-1}}})_+^2 +1.
\label{eqn:cor2:1}
\end{align}
Moreover, we have
\begin{align}
&\frac{a^{2(k-k_1-2)}(1-(2.5a^{-2})^{k-k_1-1})}{1-2.5a^{-2}} \frac{\widetilde{P_1}}{(1-2.5^{-1})2.5^{-1}}\\
&\overset{(A)}{\leq} \frac{a^{2(k-k_1-2)}}{1-2.5^{-1}} \frac{\widetilde{P_1}}{(1-2.5^{-1})2.5^{-1}} \\
&\overset{(B)}{\leq} \frac{a^{2(k-k_1-2)}}{(1-2.5^{-1})^2 2.5^{-1}} \frac{1}{400} a^2 \max(1,a^2 \sigma_{v1}^2) \\
&\leq 0.01736111... a^{2(k-k_1)} \sigma_{v2}^2 \\
&\leq 0.0174 a^{2(k-k_1)} \sigma_{v2}^2. \label{eqn:cor2:2}
\end{align}
(A): $|a| \geq 2.5$.\\
(B): We assumed $\widetilde{P_1} \leq \frac{1}{400}a^2 \max(1,a^2 \sigma_{v1}^2)=\frac{1}{400} a^4 \sigma_{v1}^2 \leq a^2 \sigma_{v2}^2$.

Likewise, we also have
\begin{align}
&\frac{a^{2(k-k_1-2)}(1-(2.5a^{-2})^{k-k_1-1})}{1-2.5a^{-2}} \frac{\widetilde{P_2}}{(1-2.5^{-1})2.5^{-1}}\\
&\leq 0.0174 a^{2(k-k_1)} \sigma_{v2}^2. \label{eqn:cor2:3}
\end{align}
Therefore, by plugging \eqref{eqn:cor2:2} and \eqref{eqn:cor2:3} into \eqref{eqn:cor2:1}, we get
\begin{align}
D_{L,2}(\widetilde{P_1},\widetilde{P_2};k_1,k,\Sigma) &\geq (\sqrt{0.176}-\sqrt{0.0174}-\sqrt{0.0174})_+^2 a^{2(k-k_1)} \sigma_{v2}^2 +1\\
&\geq 0.02 a^{2(k-k_1)} \sigma_{v2}^2 + 1.
\end{align}
Since $k$ can be chosen arbitrarily large and $|a| > 1$, $\lim_{k \rightarrow \infty} D_{L,2}(\widetilde{P_1},\widetilde{P_2};k_1,k,\Sigma)=\infty$.

(ii) When $\max(1,a^2\sigma_{v2}^2)=1$

We will choose $k$ arbitrarily large in \eqref{eqn:lem:radner2} of Lemma~\ref{cov:lemma2}. 
Here the assumptions $\widetilde{P_1} \leq \frac{1}{400} a^2 \max(1,a^2 \sigma_{v1}^2)$ and $\widetilde{P_2} \leq \frac{1}{400} a^2 \max(1,a^2 \sigma_{v2}^2)$ reduce to $\widetilde{P_1} \leq \frac{a^2}{400}$ and $\widetilde{P_2} \leq \frac{a^2}{400}$.

Therefore, by \eqref{eqn:lem:radner2} of Lemma~\ref{cov:lemma2} and $|a| \geq 2.5$, for all $k$ we have
\begin{align}
D_{L,4}(\widetilde{P_1},\widetilde{P_2};k) &\geq (\sqrt{a^{2(k-1)}} - \sqrt{ \frac{a^{2(k-2)}}{(1-2.5^{-1})^2} \frac{a^2}{400}} - \sqrt{\frac{a^{2(k-2)}}{(1-2.5^{-1})^2} \frac{a^2}{400}} )_+^2\\
& =(1 - \sqrt{\frac{1}{400(1-2.5^{-1})^2}} - \sqrt{\frac{1}{400(1-2.5^{-1})^2}})_+^2 a^{2(k-1)}\\
& \geq 0.6 a^{2(k-1)}
\end{align}
Since $k$ can be chosen arbitrarily large, $\lim_{k \rightarrow \infty} D_{L,4}(\widetilde{P_1},\widetilde{P_2};k)=\infty$.

\subsection{Proof of Proposition~\ref{prop:2}}
\label{sec:prop:2}

By Lemma~\ref{rat:lemma1}, if there exists $c \geq 1$ such that for all $\widetilde{P_1}, \widetilde{P_2} \geq 0$, $D_U(c \widetilde{P_1}, c\widetilde{P_2}) \leq c \cdot D_L(\widetilde{P_1},\widetilde{P_2})$, then for all $q, r_1, r_2 \geq 0$ we have
\begin{align}
\frac{\min_{P_1, P_2 \geq 0} q D_U(P_1,P_2)+r_1 P_1 + r_2 P_2}{\min_{\widetilde{P_1}, \widetilde{P_2} \geq 0} q D_L(\widetilde{P_1},\widetilde{P_2}) + r_1 \widetilde{P_1} + r_2 \widetilde{P_2}} \leq c
\end{align}
which finishes the proof. Therefore, we will only prove that such $c$ exists.

(i) When $\widetilde{P_1} \leq \frac{1}{400}a^2 \max(1,a^2 \sigma_{v1}^2)$ and $\widetilde{P_2} \leq \frac{1}{400}a^2\max(1,a^2 \sigma_{v2}^2)$\\
Lower bound: By Corollary~\ref{rat:lemma2} (a),
\begin{align}
D_L( \widetilde{P_1},\widetilde{P_2}) = \infty
\end{align}
Therefore, we do not need the corresponding upper bound.

(ii) When $\widetilde{P_1} \leq \frac{1}{400}a^2 \max(1,a^2 \sigma_{v1}^2)$ and $\widetilde{P_2} \geq \frac{1}{400}a^2 \max(1, a^2 \sigma_{v2}^2)$

Lower bound: By Corollary~\ref{rat:lemma2} (b),
\begin{align}
D_L(\widetilde{P_1},\widetilde{P_2}) \geq 0.176 a^2 \sigma_{v2}^2+1.
\end{align}
Upper bound: By Lemma~\ref{ach:lemmabb},
\begin{align}
(D_U(P_1,P_2),P_1,P_2) &\leq (a^2 \sigma_{v2}^2 +1,0, a^4\sigma_{v2}^2+ a^2 \sigma_{v2}^2 + a^2)\\
&\leq (a^2 \sigma_{v2}^2+1, 0, 3 a^2 \max(1,a^2 \sigma_{v2}^2)).
\end{align}
Ratio: Thus, $c$ is upper bounded by
\begin{align}
c \leq \max( \frac{1}{0.176}, \frac{3}{\frac{1}{400}})=1200.
\end{align}

(iii) When $\widetilde{P_1} \geq \frac{1}{400}a^2 \max(1,a^2 \sigma_{v1}^2)$\\
Lower bound: By Corollary~\ref{rat:lemma2} (c),
\begin{align}
D_L(\widetilde{P_1},\widetilde{P_2}) \geq 0.295 \max(1,a^2 \sigma_{v1}^2)
\end{align}
Upper bound: By Lemma~\ref{ach:lemmabb},
\begin{align}
(D_U(P_1,P_2),P_1,P_2) &\leq (a^2 \sigma_{v1}^2 +1, a^4\sigma_{v1}^2+ a^2 \sigma_{v1}^2 + a^2, 0) \\
&\leq (2 \max(1, a^2 \sigma_{v1}^2), 3a^2 \max(1,a^2 \sigma_{v1}^2) , 0)
\end{align}
Ratio: $c$ is upper bounded by
\begin{align}
c \leq \max( \frac{2}{0.295}, \frac{3}{\frac{1}{400}})=1200.
\end{align}
Therefore, by (i), (ii), (iii), the lemma is true and $c \leq 1200$.

\subsection{Proof of Corollary~\ref{rat:lemma3}}
\label{sec:lemma3}
For simplicity, we will only prove for the case when $\max(1,a^2 \sigma_{v1}^2)=a^2 \sigma_{v1}^2$. To prove the case when $\max(1,a^2 \sigma_{v1}^2)=1$, we can simply repeat the following proofs with parameters $k_1=1$ and $\Sigma=0.295$. We will also abbreviate $D_{L,1}(\widetilde{P_1},\widetilde{P_2};k_1,k_2,k,\sigma_{v2}',\alpha,\Sigma)$ to $D_{L,1}(\widetilde{P_1},\widetilde{P_2})$.
\\

Proof of (a):

Since $a^2 \sigma_{v1}^2 \geq 1$, there exists $k_1 \geq 2$ such that
\begin{align}
a^{2(k_1-2)} \leq a^2 \sigma_{v1}^2 < a^{2(k_1-1)},
\end{align}
and we will use such a $k_1$ in Lemma~\ref{cov:lemma3}.

Since the selection of $k_1$ is the same as in the proof of Corollary~\ref{rat:lemma2} (c), by \eqref{eqn:ratio1} we can select $\Sigma = 0.295 a^2 \sigma_{v1}^2$ in Lemma~\ref{cov:lemma3}. 

Let's further choose $k_2=k=k_1+s+1$, $\alpha=1$ and $\sigma_{v2}=\sigma_{v2}'$ in Lemma~\ref{cov:lemma3}. Then, since \eqref{eqn:witsenlower1}, \eqref{eqn:witsenlower2}, \eqref{eqn:witsenlower3} disappear, $D_{L,1}$ in Lemma~\ref{cov:lemma3} reduces to
\begin{align}
D_{L,1}(\widetilde{P_1},\widetilde{P_2}) \geq ( \sqrt{ \frac{a^{2(k_2-k_1)}\Sigma}{2^{2I'(\widetilde{P_1})}} } - \sqrt{ \frac{a^{2(k_2-k_1-1)}\widetilde{P_1}}{(1-2.5^{-1})^2} } )_+^2 +1. \label{eqn:lemma3:1}
\end{align}
We can bound $I''(\widetilde{P_1})$ as
\begin{align}
&I''(\widetilde{P_1}) \overset{(A)}{\leq}
\frac{1}{2} \log(1+ \frac{1}{(k_2-k_1-1)\sigma_{v2}^2}(
\frac{2a^{2(k_2-2-k_1)}}{1-2.5^{-2}} 0.295a^2 \sigma_{v1}^2
+
\frac{2a^{2(k_2-3-k_1)}}{1-2.5^{-2}} \frac{2.5\frac{1}{70}\frac{\sigma_{v2}^2}{a^{2(s-1)}}}{(1-2.5^{-1})^2}
))^{k_2-k_1-1}  \\
&=\frac{1}{2} \log(1+ \frac{1}{(k_2-k_1-1)\sigma_{v2}^2}(
\frac{2}{1-2.5^{-2}} 0.295a^{2s} \sigma_{v1}^2
+
\frac{2}{1-2.5^{-2}} \frac{2.5 \frac{1}{70}}{(1-2.5^{-1})^2} a^{-2} \sigma_{v2}^2 ))^{k_2-k_1-1}\\
&\overset{(B)}{\leq}
\frac{1}{2}\log(1+\frac{1}{k_2-k_1-1}(\frac{2}{1-2.5^{-2}}0.295+\frac{2}{1-2.5^{-2}}\frac{\frac{1}{70}\frac{1}{2.5}}{(1-2.5^{-1})^{2}}))^{k_2-k_1-1} \\
&=
\frac{1}{2}\log(1+\frac{0.7401738...}{k_2-k_1-1})^{k_2-k_1-1} \\
&\leq
\frac{1}{2}\log(1+\frac{0.7402}{k_2-k_1-1})^{k_2-k_1-1} \\
&\leq
\frac{1}{2}\log(e^{0.7402}).
\end{align}
(A): Assumptions $\widetilde{P_1} \leq \frac{\sigma_{v2}^2}{70a^{2(s-1)}}$ and $|a| \geq 2.5$.\\
(B): Assumptions $a^{2s}\sigma_{v1}^2 \leq \sigma_{v2}^2$ and $|a| \geq 2.5$.

Likewise, $I'(\widetilde{P_1})$ is upper bounded as
\begin{align}
&I'(\widetilde{P_1}) \leq \frac{1}{2}\log(e^{0.7402}) +
\frac{1}{2} \log(1+\frac{1}{\sigma_{v2}^2}(2a^{2(k_2-1-k_1)}\Sigma+2\frac{a^{2(k_2-2-k_1)}\widetilde{P_1}}{(1-2.5a^{-2})(1-2.5^{-1})})) \\
&\overset{(A)}{\leq}  \frac{1}{2}\log(e^{0.7402}) +  \frac{1}{2} \log(1+\frac{1}{\sigma_{v2}^2}(2a^{2s}0.295a^2 \sigma_{v1}^2+2 \frac{a^{2(s-1)}\frac{1}{70}\frac{\sigma_{v2}^2}{a^{2(s-1)}}}{(1-2.5^{-1})^2})) \\
&=  \frac{1}{2}\log(e^{0.7402}) + \frac{1}{2} \log( \frac{1}{\sigma_{v2}^2}( \sigma_{v2}^2 + 2 \times 0.295 a^{2(s+1)}\sigma_{v1}^2  + \frac{1}{35(1-2.5^{-1})^2}\sigma_{v2}^2) )\\
&\overset{(B)}{\leq}  \frac{1}{2}\log(e^{0.7402}) + \frac{1}{2} \log( \frac{1}{\sigma_{v2}^2}( a^{2(s+1)}\sigma_{v1}^2 + 2 \times 0.295 a^{2(s+1)}\sigma_{v1}^2  + \frac{1}{35(1-2.5^{-1})^2}a^{2(s+1)}\sigma_{v1}^2) ) \\
&= \frac{1}{2}\log(e^{0.7402}) +
\frac{1}{2} \log( \frac{a^{2(s+1)}\sigma_{v1}^2}{\sigma_{v2}^2}( 1 + 2 \times 0.295   + \frac{1}{35(1-2.5^{-1})^2}) ) \\
&= \frac{1}{2}\log(e^{0.7402}) +
\frac{1}{2} \log( \frac{a^{2(s+1)}\sigma_{v1}^2}{\sigma_{v2}^2} 1.669365...)\\
&\leq \frac{1}{2}\log(e^{0.7402}) +
\frac{1}{2} \log( \frac{a^{2(s+1)}\sigma_{v1}^2}{\sigma_{v2}^2} 1.6694). \label{eqn:lemma3:2}
\end{align}
(A): Assumptions $\widetilde{P_1} \leq \frac{\sigma_{v2}^2}{70a^{2(s-1)}}$ and $|a| \geq 2.5$.\\
(B): Assumption $\sigma_{v2}^2 \leq a^{2(s+1)} \sigma_{v1}^2$.

Moreover, since $\widetilde{P_1} \leq \frac{\sigma_{v2}^2}{70a^{2(s-1)}}$, we have
\begin{align}
\frac{a^{2(k_2-k_1-1)}\widetilde{P_1}}{(1-2.5^{-1})^2} \leq \frac{a^{2s}}{(1-2.5^{-1})^2} \frac{\sigma_{v2}^2}{70 a^{2(s-1)}} \leq \frac{a^2 \sigma_{v2}^2}{(1-2.5^{-1})^2 70}.\label{eqn:lemma3:3}
\end{align}
Therefore, by plugging \eqref{eqn:lemma3:2} and \eqref{eqn:lemma3:3} into \eqref{eqn:lemma3:1}, we have
\begin{align}
D_{L,1}(\widetilde{P_1},\widetilde{P_2}) &\geq ( \sqrt{\frac{a^{2(s+1)}0.295 a^2 \sigma_{v1}^2 }{e^{0.7402} \frac{a^{2(s+1)}\sigma_{v1}^2}{\sigma_{v2}^2} 1.6694}} - \sqrt{\frac{a^2 \sigma_{v2}^2}{(1-2.5^{-1})^270}} )_+^2 +1 \\
&\geq 0.008 a^2 \sigma_{v2}^2 +1.
\end{align}
\\

Proof of (b):

We choose $k_1, \Sigma, k_2, \alpha, \sigma_{v2}$ of Lemma~\ref{cov:lemma3} in the same way as the proof of (a) except $k$. Then, we will increase $k$ arbitrarily large. Then, Lemma~\ref{cov:lemma3} reduces to 
\begin{align}
D_{L,1}(\widetilde{P_1},\widetilde{P_2}) \geq (\sqrt{\frac{a^{2(k-k_1)}\Sigma}{2^{2I'(\widetilde{P_1})}}} - \sqrt{\frac{a^{2(k-k_1-1)}\widetilde{P_1}}{(1-2.5^{-1})^2}}
- \sqrt{\frac{a^{2(k-k_2-1)} 2.5^{k_2-k_1} \widetilde{P_1}}{(1-2.5^{-1})^2}}
- \sqrt{\frac{a^{2(k-k_2-1)} \widetilde{P_2}}{(1-2.5^{-1})^2}})_+^2 + 1. \label{eqn:lemma3:6}
\end{align}

Since the relevant parameters are the same, \eqref{eqn:lemma3:2} and \eqref{eqn:lemma3:3} in the proof of (a) still hold. Since $|a| \geq 2.5$ and $\widetilde{P_2} \leq \frac{a^4 \sigma_{v2}^2}{28000}$, we also have
\begin{align}
&\frac{a^{2(k-k_2-1)}2.5^{k_2-k_1}\widetilde{P_1}}{(1-2.5^{-1})^2}\\
&= (\frac{2.5}{a^2})^{k_2-k_1} \frac{a^{2(k-k_1-1)}\widetilde{P_1}}{(1-2.5^{-1})^2}\\
&\leq (\frac{1}{2.5})^2 \frac{a^{2(k-k_1-1)}\widetilde{P_1}}{(1-2.5^{-1})^2} \label{eqn:lemma3:4}
\end{align}
and
\begin{align}
\frac{a^{2(k-k_2-1)} \widetilde{P_2}}{(1-2.5^{-1})^2} \leq \frac{a^{2(k-k_2-1)}}{(1-2.5^{-1})^2} \frac{a^4 \sigma_{v2}^2}{28000}. \label{eqn:lemma3:5}
\end{align}
Therefore, by plugging \eqref{eqn:lemma3:2}, \eqref{eqn:lemma3:3}, \eqref{eqn:lemma3:4}, \eqref{eqn:lemma3:5} into \eqref{eqn:lemma3:6}, we have
\begin{align}
D_{L,1}(\widetilde{P_1},\widetilde{P_2}) & \geq ( \sqrt{\frac{a^{2(s+1)}0.295 a^2 \sigma_{v1}^2 }{e^{0.7402} \frac{a^{2(s+1)}\sigma_{v1}^2}{\sigma_{v2}^2} 1.6694}} - (1+ \frac{1}{2.5})\sqrt{\frac{a^2 \sigma_{v2}^2}{(1-2.5^{-1})^270}} - \sqrt{\frac{a^2 \sigma_{v2}^2}{(1-2.5^{-1})^2 28000}} )_+^2 a^{2(k-k_2)} + 1\\
& \geq 10^{-6} a^{2(k-k_2+1)} \sigma_{v2}^2 +1.
\end{align}
Since $k$ can be chosen arbitrarily large, $\lim_{k \rightarrow \infty}D_{L,1}(\widetilde{P_1},\widetilde{P_2}) = \infty$.
\\

Proof of (c):

We choose $k$, $k_1$, $k_2$, $\Sigma$ of Lemma~\ref{cov:lemma3} in the same way as the proof of (a), i.e. $\Sigma=0.295a^2 \sigma_{v1}^2$ and $k_2=k=k_1+s+1$. We put the remaining parameters $\alpha$ and $\sigma_{v2}'$ as $\alpha=\frac{1}{2}$, $\sigma_{v2}'^2=100a^{2(s-1)}\widetilde{P_1}$.
Then, Lemma~\ref{cov:lemma3} reduces to
\begin{align}
D_{L,1}(\widetilde{P_1},\widetilde{P_2}) \geq  \frac{1}{2}D'+\frac{1}{2}D''+1
\end{align}
where
\begin{align}
&D'=( \sqrt{c \frac{a^{2(k_2-k_1)}\Sigma}{2^{2I'(\widetilde{P_1})}}} - \sqrt{c \frac{a^{2(k_2-k_1-1)}\widetilde{P_1}}{(1-2.5^{-1})^2}} )_+^2  \label{eqn:lemma3:10} \\
&D''=( \sqrt{\frac{a^{2(k_2-k_1-1)} \Sigma}{2^{2I''(\widetilde{P_1})}}} - \sqrt{\frac{ a^{2(k_2-k_1-2)} 2.5  \widetilde{P_1} }{(1-2.5^{-1})^2}})_+^2. \label{eqn:lemma3:11}
\end{align}

Here, $I''(\widetilde{P_1})$ is upper bounded as:
\begin{align}
&I''(\widetilde{P_1}) =\frac{1}{2} \log(1+\frac{1}{(k_2-k_1-1)\sigma_{v2}^2}(\frac{2a^{2(k_2-2-k_1)}}{1-a^{-2}}\Sigma + \frac{2a^{2(k_2-3-k_1)} }{1-a^{-2}} \frac{2.5\widetilde{P_1}}{(1-2.5a^{-2})(1-2.5^{-1})}))^{k_2-k_1-1}\\
&\overset{(A)}{\leq} \frac{1}{2} \log(1+ \frac{1}{(k_2-k_1-1)\sigma_{v2}^2}(
\frac{2a^{2(k_2-2-k_1)}}{1-2.5^{-2}}0.295 a^2 \sigma_{v1}^2 + \frac{2a^{2(k_2-3-k_1)}}{1-2.5^{-2}}
\frac{2.5\frac{1}{20000}a^4 \sigma_{v1}^2}{(1-2.5^{-1})^2}
))^{k_2-k_1-1} \\
&= \frac{1}{2} \log(1+ \frac{1}{(k_2-k_1-1)\sigma_{v2}^2}(
\frac{2}{1-2.5^{-2}}0.295  + \frac{2}{1-2.5^{-2}}
\frac{2.5\frac{1}{20000}}{(1-2.5^{-1})^2}
)a^{2s}\sigma_{v1}^2)^{k_2-k_1-1} \\
&\overset{(B)}{\leq} \frac{1}{2} \log(1+ \frac{1}{k_2-k_1-1}(
\frac{2}{1-2.5^{-2}}0.295  + \frac{2}{1-2.5^{-2}}
\frac{2.5\frac{1}{20000}}{(1-2.5^{-1})^2}
))^{k_2-k_1-1} \\
&= \frac{1}{2} \log(1 + \frac{1}{k_2-k_1-1} 0.703207...)^{k_2-k_1-1} \\
&\leq  \frac{1}{2} \log(1 + \frac{1}{k_2-k_1-1}0.7033)^{k_2-k_1-1} \\
&\leq \frac{1}{2} \log(e^{0.7033}). \label{eqn:lemma3:12}
\end{align}
(A): $\widetilde{P_1} \leq \frac{1}{20000}a^4 \sigma_{v1}^2$ and $|a| \geq 2.5$.\\
(B): $a^{2s}\sigma_{v1}^2 \leq \sigma_{v2}^2$.

Likewise, $I'(\widetilde{P_1})$ is upper bounded as:
\begin{align}
&I'(\widetilde{P_1}) \leq  \frac{1}{2} \log(1+ \frac{1}{\sigma_{v2}'^2}(2a^{2(k_2-1-k_1)}\Sigma + 2 \frac{a^{2(k_2-2-k_1)}\widetilde{P_1}}{(1-2.5a^{-2})(1-2.5^{-1})})) + \frac{1}{2} \log(e^{0.7033})+ \frac{1}{2} \log(\frac{2 \pi e}{4})\\
&\leq \frac{1}{2} \log(1+ \frac{1}{\sigma_{v2}'^2}(2a^{2s}\Sigma+ 2 \frac{a^{2(s-1)}\widetilde{P_1}}{(1-2.5^{-1})^2}))+ \frac{1}{2} \log(e^{0.7033})+ \frac{1}{2} \log(\frac{2 \pi e}{4}) \\
&= \frac{1}{2} \log(1+ \frac{1}{\sigma_{v2}'^2}(2a^{2s}(0.295a^2\sigma_{v1}^2)+ 2 \frac{a^{2(s-1)}\widetilde{P_1}}{(1-2.5^{-1})^2})) + \frac{1}{2} \log(e^{0.7033})+ \frac{1}{2} \log(\frac{2 \pi e}{4})\\
&= \frac{1}{2} \log(\frac{1}{100a^{2(s-1)}\widetilde{P_1}}(100a^{2(s-1)}\widetilde{P_1}+2\cdot 0.295 a^{2(s+1)}\sigma_{v1}^2+ 2 \frac{a^{2(s-1)}\widetilde{P_1}}{(1-2.5^{-1})^2})) + \frac{1}{2} \log(e^{0.7033})+ \frac{1}{2} \log(\frac{2 \pi e}{4})\\
&\overset{(A)}{\leq} \frac{1}{2} \log(\frac{1}{100a^{2(s-1)}\widetilde{P_1}}(\frac{100a^{2(s+1)}\sigma_{v1}^2}{20000}+2\cdot 0.295 a^{2(s+1)}\sigma_{v1}^2+ 2 \frac{a^{2(s+1)}\sigma_{v1}^2}{20000(1-2.5^{-1})^2 })) + \frac{1}{2} \log(e^{0.7033})+ \frac{1}{2} \log(\frac{2 \pi e}{4})\\
&= \frac{1}{2}\log(\frac{1}{100}(0.595277...)\frac{a^{2(s+1)}\sigma_{v1}^2}{a^{2(s-1)}\widetilde{P_1}})+ \frac{1}{2} \log(e^{0.7033})+ \frac{1}{2} \log(\frac{2 \pi e}{4})\\
&\leq \frac{1}{2}\log(\frac{1}{100}(0.5953)\frac{a^{2(s+1)}\sigma_{v1}^2}{a^{2(s-1)}\widetilde{P_1}})+ \frac{1}{2} \log(e^{0.7033})+ \frac{1}{2} \log(\frac{2 \pi e}{4}). \label{eqn:lemma3:13}
\end{align}
(A): Assumption $\widetilde{P_1} \leq \frac{a^4 \sigma_{v1}^2}{20000}$.

Therefore, by plugging \eqref{eqn:lemma3:13} into \eqref{eqn:lemma3:10}, we get the following lower bound on $D'$:
\begin{align}
D' & \geq c(
\sqrt{\frac{0.295a^{2(s+2)}\sigma_{v1}^2}{e^{0.7033} \frac{2 \pi e}{4} \frac{1}{100} 0.5953 \frac{a^{2(s+1)}\sigma_{v1}^2}{a^{2(s-1)}\widetilde{P_1}} }}
- \sqrt{\frac{a^{2s}\widetilde{P_1}}{(1-2.5^{-1})^2}})_+^2\\
& = c(\sqrt{
\frac{0.295}{e^{0.7033}\frac{2 \pi e}{4} \frac{1}{100} 0.5953}
} -
\sqrt{\frac{1}{(1-2.5^{-1})^2}})_+^2 a^{2s} \widetilde{P_1}  \\
& = 0.532969... c a^{2s} \widetilde{P_1}\\
& \geq 0.5329 c a^{2s} \widetilde{P_1} \\
& = 0.5329 \frac{2 \cdot 10 a^{s-1}\sqrt{\widetilde{P_1}}}{\sqrt{2\pi}\sigma_{v2}} \exp(- \frac{100 a^{2(s-1)}\widetilde{P_1}}{2\sigma_{v2}^2}) a^{2s} \widetilde{P_1} \\
& \overset{(A)}{\geq}  0.5329 \frac{2 \cdot 10 }{\sqrt{2\pi}\sqrt{70}} \exp(- \frac{100 a^{2(s-1)}\widetilde{P_1}}{2\sigma_{v2}^2}) a^{2s} \widetilde{P_1}\\
& = 0.508202... a^{2s}\widetilde{P_1}\exp(- \frac{50 a^{2(s-1)}\widetilde{P_1}}{\sigma_{v2}^2}) \\
& \geq 0.5082a^{2s}\widetilde{P_1}\exp(- \frac{50 a^{2(s-1)}\widetilde{P_1}}{\sigma_{v2}^2}).
\end{align}
(A): Assumption $\widetilde{P_1} \geq \frac{\sigma_{v2}^2}{70a^{2(s-1)}}$.

Since $\widetilde{P_1} \leq \frac{a^4 \sigma_{v1}^2}{20000}$, we also have
\begin{align}
&\frac{a^{2(k_2-k_1-2)}2.5 \widetilde{P_1}}{(1-2.5^{-1})^2}  \\
&\leq \frac{a^{2(s-1)}2.5 \frac{a^4 \sigma_{v1}^2}{20000}}{(1-2.5^{-1})^2} \\
&= \frac{2.5a^{2(s+1)} \sigma_{v1}^2}{20000(1-2.5^{-1})^2} \label{eqn:lemma3:14}
\end{align}
Therefore, by plugging \eqref{eqn:lemma3:12} and \eqref{eqn:lemma3:14} into \eqref{eqn:lemma3:11}, $D''$ is lower bounded as:
\begin{align}
D'' &\geq ( \sqrt{ \frac{0.295a^{2(s+1)} \sigma_{v1}^2}{e^{0.7033}}} - \sqrt{ \frac{2.5 a^{2(s+1)}\sigma_{v1}^2}{20000(1-2.5^{-1})^2} })_+^2 \\
&= 0.132117... a^{2(s+1)} \sigma_{v1}^2\\
&\geq 0.1321 a^{2(s+1)} \sigma_{v1}^2
\end{align}
Finally, $D_{L,1}(\widetilde{P_1},\widetilde{P_2})$ is lower bounded as:
\begin{align}
D_{L,1}(\widetilde{P_1},\widetilde{P_2}) \geq 0.2541 a^{2s}\widetilde{P_1}\exp(- \frac{50 a^{2(s-1)}\widetilde{P_1}}{\sigma_{v2}^2}) + 0.066 a^{2(s+1)}\sigma_{v1}^2 + 1
\end{align}
\\

Proof of (d):

We choose $k_1, k_2, \Sigma, \alpha, \sigma_{v2}'$ of Lemma~\ref{cov:lemma3} in the same way as the proof of (c) except $k$. $k$ will be chosen arbitrarily large. Lemma~\ref{cov:lemma3} reduces to
\begin{align}
D_{L,1}(\widetilde{P_1},\widetilde{P_2}) \geq \frac{1}{2}a^{2(k-k_2)}D' + \frac{1}{2}a^{2(k-k_2)}D'' +1 \label{eqn:lemma3:44}
\end{align}
where
\begin{align}
D'&= (\sqrt{c \frac{a^{2(k_2-k_1)}\Sigma}{2^{2I'(\widetilde{P_1})}}} - \sqrt{ c \frac{a^{2(k_2-k_1-1)}\widetilde{P_1}}{(1-2.5^{-1})^2}}
- \sqrt{\frac{a^{-2}2.5^{k_2-k_1}\widetilde{P_1}}{(1-2.5^{-1})^2}}
- \sqrt{\frac{a^{-2}\widetilde{P_2}}{(1-2.5^{-1})^2}})_+^2 \\
D''&=( \sqrt{\frac{a^{2(k_2-k_1-1)} \Sigma}{2^{2I''(\widetilde{P_1})}}} - \sqrt{\frac{ a^{2(k_2-k_1-2)} 2.5  \widetilde{P_1} }{(1-2.5^{-1})^2}} - \sqrt{\frac{a^{-2}\widetilde{P_2}}{(1-2.5^{-1})^2}} )_+^2.
\end{align}
Denote $P':=\sqrt{\frac{a^{-2}2.5^{k_2-k_1}\widetilde{P_1}}{(1-2.5^{-1})^2}} +  \sqrt{\frac{a^{-2}\widetilde{P_2}}{(1-2.5^{-1})^2}}$. 
Then, following the same steps of the proof of (c), we can lower bound $D'$ and $D''$ as follows:
\begin{align}
D' &\geq (
\sqrt{0.5082a^{2s}\widetilde{P_1}\exp(- \frac{50 a^{2(s-1)}\widetilde{P_1}}{\sigma_{v2}^2}) }
 - \sqrt{P'}
)_+^2 \label{eqn:lemma3:40}\\
D'' &\geq (
\sqrt{0.1321 a^{2(s+1)} \sigma_{v1}^2}
-\sqrt{\frac{a^{-2}\widetilde{P_2}}{(1-2.5^{-1})^2}}
)_+^2 \\
&\geq ( \sqrt{0.1321 a^{2(s+1)} \sigma_{v1}^2} - \sqrt{P'})_+^2 \label{eqn:lemma3:41}
\end{align}

Here, we have
\begin{align}
&\frac{a^{-2}2.5^{k_2-k_1}\widetilde{P_1}}{(1-2.5^{-1})^2}\\
&\leq \frac{a^{-2}|a|^{s+1}\widetilde{P_1}}{(1-2.5^{-1})^2} \  (\because |a| \geq 2.5) \\
&\leq \frac{a^{-2}a^{2s}\widetilde{P_1}}{(1-2.5^{-1})^2} \  (\because s \geq 1) \\
&\leq \frac{a^{2s}\max(1,a^2\sigma_{v1}^2)}{20000(1-2.5^{-1})^2} \  (\because \widetilde{P_1} \leq \frac{\max(a^2,a^4 \sigma_{v1}^2)}{20000})
\end{align}
Thus,
\begin{align}
P'&\overset{(A)}{\leq} \sqrt{2(\frac{a^{-2}2.5^{k_2-k_1}\widetilde{P_1}}{(1-2.5^{-1})^2} + \frac{a^{-2}\widetilde{P_2}}{(1-2.5^{-1})^2})}\\
&\overset{(B)}{\leq} \sqrt{ \frac{a^{2s}\max(1,a^2\sigma_{v1}^2)}{10000(1-2.5^{-1})^2} + \frac{2}{(1-2.5^{-1})^2}(0.0457a^{2s}\widetilde{P_1} \exp(-\frac{50 a^{2(s-1)}\widetilde{P_1}}{\sigma_{v2}^2}) + 0.0113 a^{2s} \max(1,a^2\sigma_{v1}^2) ) }\\
&\leq \sqrt{ 0.253889a^{2s}\widetilde{P_1} \exp(-\frac{50 a^{2(s-1)}\widetilde{P_1}}{\sigma_{v2}^2})+ 0.0625 a^{2s} \max(1,a^2\sigma_{v1}^2)} \label{eqn:lemma3:42}
\end{align}
(A): Cauchy-Schwarz inequaility\\
(B): Assumptions on $\widetilde{P_1}$ and $\widetilde{P_2}$

By comparing \eqref{eqn:lemma3:42} with \eqref{eqn:lemma3:40} and \eqref{eqn:lemma3:41}, we can conclude that either $D'$ or $D''$ has to be greater than $0$. Moreover, since we can choose $k$ arbitrarily large in \eqref{eqn:lemma3:44}, $\lim_{k \rightarrow \infty} D_{L,1}(\widetilde{P_1},\widetilde{P_2})=\infty$.
\\

Proof of (e):

The same as (a) of Corollary~\ref{rat:lemma2}.

\subsection{Proof of Corollary~\ref{rat:ach}}
\label{sec:rat:ach}

For simplicity, we will only prove the case when $a \geq 2.5$. The proof for $a \leq -2.5$ easily follows by replacing $a$ with $|a|$.

We will evaluate Lemma~\ref{ach:lemma9} with the parameters $w_1 = \frac{a^s d}{6}$ and $d = \sqrt{\frac{320000P}{a^2}}$. Then, we can easily see that $(d,w_1) \in S_{U,1}$. Furthermore, $\frac{a^s d}{\sigma_{v2}}\geq 13$ since
\begin{align}
&\frac{a^s d}{ \sigma_{v2}} =\frac{a^{s-1}\sqrt{320000 P}}{\sigma_{v2}} \\
&\geq \frac{a^{s-1}}{\sigma_{v2}} \sqrt{4 \cdot 80000 \frac{\sigma_{v2}^2}{70a^{2(s-1)}}} \ (\because P \geq \frac{\sigma_{v2}^2}{70a^{2(s-1)}})\\
&= \sqrt{\frac{4 \cdot 80000}{70}} \geq 13.
\end{align}
Then, we will upper bound $D_{U,1}(d,w_1)$. First, let's bound the second term of $D_{U,1}(d,w_1)$ in \eqref{eqn:lemma9:2}. The second term is upper bounded as
\begin{align}
&\sum_{1 \leq i \leq \infty} 4 a^2 ( i a^s d + \frac{a^{s-1}d \frac{a}{a-1}+w_1}{2})^2 Q(\frac{(2i-1)a^s d - (a^{s-1}d \frac{a}{a-1}+w_1)}{2\sigma_{v2}})\\
&=4a^2 (a^s d + \frac{a^{s-1}d \frac{a}{a-1}+w_1}{2})^2 Q(\frac{a^s d - (a^{s-1}d \frac{a}{a-1}+w_1)}{2\sigma_{v2}})\\
 &+4a^2 (2 a^s d + \frac{a^{s-1}d \frac{a}{a-1}+w_1}{2})^2 Q(\frac{3 a^s d - (a^{s-1}d \frac{a}{a-1}+w_1)}{2\sigma_{v2}}) + \cdots \\
&=
4a^2 (a^s d + \frac{a^{s-1}d \frac{a}{a-1}+\frac{a^s d}{6}}{2})^2 Q(\frac{a^s d - (a^{s-1}d \frac{a}{a-1}+\frac{a^s d}{6})}{2\sigma_{v2}}) \\
&+ 4a^2 (2 a^s d + \frac{a^{s-1}d \frac{a}{a-1}+\frac{a^s d}{6}}{2})^2 Q(\frac{3 a^s d - (a^{s-1}d \frac{a}{a-1}+\frac{a^s d}{6})}{2\sigma_{v2}}) + \cdots \\
&\leq
4a^2 (a^s d + \frac{1}{2(2.5-1)}a^s d + \frac{1}{12}a^s d )^2 Q(\frac{1}{2\sigma_{v2}}(1-\frac{1}{2.5-1}-\frac{1}{6})a^s d) \\
&+
4a^2 (2 a^s d + \frac{1}{2(2.5-1)}a^s d + \frac{1}{12}a^s d )^2 Q(\frac{1}{2\sigma_{v2}}(3-\frac{1}{2.5-1}-\frac{1}{6})a^s d)+ \cdots \\
&=
4a^2 (a^s d)^2 (1+\frac{5}{12})^2 Q(\frac{1}{2\sigma_{v2}}(1-\frac{5}{6})a^s d) \\
&+4a^2 (a^s d)^2 (2+\frac{5}{12})^2 Q(\frac{1}{2\sigma_{v2}}(3-\frac{5}{6})a^s d) + \cdots  \label{eqn:ageq44}
\end{align}
Denote $k:=\frac{a^s d}{\sigma_{v2}}$. Since we already know $k \geq 13$, for all $n \geq 1$ we have 
\begin{align}
&\frac{(n+\frac{5}{12})^2 Q(\frac{1}{2 \sigma_{v2}}(2n-1-\frac{5}{6})a^s d) }{(n+1+\frac{5}{12})^2 Q(\frac{1}{2 \sigma_{v2}}(2n+1-\frac{5}{6})a^s d)} \\
&\geq
\frac{(n+\frac{5}{12})^2 ( \frac{1}{\frac{1}{2 \sigma_{v2}}(2n-1-\frac{5}{6})a^s d} - \frac{1}{(\frac{1}{2 \sigma_{v2}}(2n-1-\frac{5}{6})a^s d)^3} ) \exp(-\frac{(\frac{1}{2 \sigma_{v2}}(2n-1-\frac{5}{6})a^s d)^2}{2})}{(n+1+\frac{5}{12})^2 (\frac{1}{\frac{1}{2 \sigma_{v2}}(2n+1-\frac{5}{6})a^s d}) \exp(-\frac{(\frac{1}{2 \sigma_{v2}}(2n+1-\frac{5}{6})a^s d)^2}{2})} \ (\because Lemma~\ref{ach:lemma4})\\
&=
\frac{(n+\frac{5}{12})^2 ( \frac{1}{\frac{1}{2}(2n-1-\frac{5}{6}) k} - \frac{1}{(\frac{1}{2}(2n-1-\frac{5}{6})k)^3} ) \exp(-\frac{(\frac{1}{2}(2n-1-\frac{5}{6}) k)^2}{2})}{(n+1+\frac{5}{12})^2 (\frac{1}{\frac{1}{2}(2n+1-\frac{5}{6})k}) \exp(-\frac{(\frac{1}{2}(2n+1-\frac{5}{6})k)^2}{2})}\\
&\geq
(\frac{1+\frac{5}{12}}{2+\frac{5}{12}})^2
\frac{( \frac{1}{\frac{1}{2}(2n-1-\frac{5}{6}) k} - \frac{1}{(\frac{1}{2}(2n-1-\frac{5}{6})k)^3} ) }
{(\frac{1}{\frac{1}{2}(2n+1-\frac{5}{6})k})}
\frac{\exp(-\frac{( \frac{1}{12}k)^2}{2})}{\exp(-\frac{(\frac{13}{12}k)^2}{2})} \ (\because n \geq 1)\\
&\geq
(\frac{1+\frac{5}{12}}{2+\frac{5}{12}})^2
\exp(\frac{7}{12}k^2)
(\frac{\frac{1}{2}(2n+1-\frac{5}{6})}{\frac{1}{2}(2n-1-\frac{5}{6})}- \frac{\frac{1}{2}(2n+1-\frac{5}{6})}{(\frac{1}{2}(2n-1-\frac{5}{6}))^3 13^2}) \ (\because k \geq 13)\\
&\overset{(A)}{\geq} (\frac{1+\frac{5}{12}}{2+\frac{5}{12}})^2 \exp(\frac{7}{12}k^2) 0.99\\
&\geq 10^{42}\ (\because k \geq 13)
\end{align}
(A): When $n=1$, we can check the inequality by computation. When $n \geq 2$, we have
\begin{align}
&\frac{\frac{1}{2}(2n+1-\frac{5}{6})}{\frac{1}{2}(2n-1-\frac{5}{6})}- \frac{\frac{1}{2}(2n+1-\frac{5}{6})}{(\frac{1}{2}(2n-1-\frac{5}{6}))^3 13^2}\\
&\geq 1 - \frac{\frac{1}{2}(4+1-\frac{5}{6})}{(\frac{1}{2}(4-1-\frac{5}{6}))^3 13^2} \geq 0.99.
\end{align}

Thus, the terms in \eqref{eqn:ageq44} decrease faster than a geometric sequence with ratio $10^{-42}$ and thus can be upper bounded as
\begin{align}
\eqref{eqn:ageq44} \leq 4 a^2 (a^s d)^2 (1+\frac{5}{12})^2  Q(\frac{1}{2\sigma_{v2}}(1-\frac{5}{6})a^s d) \frac{1}{1-10^{-42}}. \label{eqn:ach:geo1}
\end{align}
The third term of $D_{U,1}(d,w_1)$ in \eqref{eqn:lemma9:2} can also be bounded similarly. We have 
\begin{align}
&\frac{(a^s d + \frac{a^s d}{2})^2 \frac{1}{2}}{(2 a^s d  + \frac{a^s d}{2})^2  Q(\frac{a^s d}{\sigma_{v2}})}
\geq \frac{(1+\frac{1}{2})^2}{(2+\frac{1}{2})^2 2 Q(13)}\ (\because \frac{a^s d}{\sigma_{v2}}\geq 13)\\
&\geq 10^{37}
\end{align}
and for $n \geq 2$
\begin{align}
&\frac{(n+\frac{1}{2})^2 Q(\frac{(n-1)ad}{\sigma_{v2}})}{((n+1)+\frac{1}{2})^2 Q(\frac{nad}{\sigma_{v2}})}\\
&\geq \frac{(n+\frac{1}{2})^2 ( \frac{1}{\frac{(n-1)ad}{\sigma_{v2}}} - \frac{1}{(\frac{(n-1)ad}{\sigma_{v2}})^3} )\exp(- \frac{(\frac{(n-1)ad}{\sigma_{v2}})^2}{2})}{((n+1)+\frac{1}{2})^2 (\frac{1}{\frac{n ad}{\sigma_{v2}}})  \exp(-\frac{(\frac{nad}{\sigma_{v2}})^2}{2})} \ (\because Lemma~\ref{ach:lemma4})\\
&= \exp(\frac{2n-1}{2}k^2) \frac{(n+\frac{1}{2})^2 (\frac{1}{(n-1)k} - \frac{1}{(n-1)^3 k^3}) }{((n+1)+\frac{1}{2})^2 (\frac{1}{nk})}\\
&\geq \exp(\frac{3}{2}13^2) \frac{(2+\frac{1}{2})^2}{(3+\frac{1}{2})^2}(\frac{n}{n-1}-\frac{n}{(n-1)^3 13^2}) 
\ (\because n \geq 2, k \geq 13)\\
&\overset{(A)}{\geq} \exp(\frac{3}{2}13^2)\frac{(2+\frac{1}{2})^2}{(3+\frac{1}{2})^2} 0.98\\
&\geq 10^{109}.
\end{align}
(A): Since $n \geq 2$, we have
\begin{align}
\frac{n}{n-1}-\frac{n}{(n-1)^3 13^2} \geq 1 - \frac{2}{(2-1)^3 13^2} \geq 0.98.
\end{align}

Therefore, the third term of $D_{U,1}(d,w_1)$ in \eqref{eqn:lemma9:2} is upper bounded by
\begin{align}
4 a^2 Q(\frac{\frac{a^s d}{6}}{2 \sqrt{a^{2(s-1)}\frac{a^2}{a^2-1}+a^{2s}\sigma_{v1}^2}})
 (a^s d + \frac{a^s d}{2})^2  \frac{1}{1-10^{-37}}. \label{eqn:ach:geo2}
\end{align}
By plugging \eqref{eqn:ach:geo1} and \eqref{eqn:ach:geo2} into \eqref{eqn:lemma9:2}, we can bound $D_{U,1}(d,w_1)$ of Lemma~\ref{ach:lemma9} as follows.
\begin{align}
D_{U,1}(d,w_1)
& \leq 2 a^{2s}(2 (\frac{d}{2})^2 (\frac{1}{1-\frac{1}{a}})^2 + 2 (\frac{1}{1-\frac{1}{a^2}}) + 2a^2 \sigma_{v1}^2 ) \\
&+ 4 a^2 (a^s d)^2 (1+\frac{5}{12})^2  Q(\frac{1}{2\sigma_{v2}}(1-\frac{5}{6})a^s d) \frac{1}{1-10^{-42}} \\
&+4 a^2 Q(\frac{\frac{a^s d}{6}}{2 \sqrt{a^{2(s-1)}\frac{a^2}{a^2-1}+a^{2s}\sigma_{v1}^2}})
 (a^s d + \frac{a^s d}{2})^2  \frac{1}{1-10^{-37}} \\
& + 2(a^2 (\frac{d}{2})^2)+1 \\
& \overset{(A)}{\leq} 2 a^{2s} (2 (\frac{d}{2})^2 (\frac{5}{3})^2 + \frac{50}{21}+2a^2 \sigma_{v1}^2)\\
&+4 a^2 (a^s d)^2 (\frac{17}{12})^2  Q(\frac{a^s d}{12 \sigma_{v2}}) \frac{1}{1-10^{-42}} \\
&+4 a^2 Q(\frac{\frac{a^s d}{6}}{2 \sqrt{a^{2(s-1)}\frac{25}{21}+a^{2s}\sigma_{v1}^2}})(a^sd+\frac{a^sd}{2})^2 \frac{1}{1-10^{-37}}\\
&+2(a^2 (\frac{d}{2})^2) + 1 \\
& \overset{(B)}{\leq} 2 a^{2s} (2 (\frac{d}{2})^2 (\frac{5}{3})^2 + \frac{50}{21}+2a^2 \sigma_{v1}^2)\\
&+4 a^2 (a^s d)^2 (\frac{17}{12})^2  Q(\frac{a^s d}{12 \sigma_{v2}}) \frac{1}{1-10^{-42}} \\
&+4 a^2 (a^sd+\frac{a^sd}{2})^2  Q(\frac{a^s d}{12 \sqrt{\frac{46}{21}} \sigma_{v2}}) \frac{1}{1-10^{-37}}\\
&+2(a^2 (\frac{d}{2})^2) + 1 \\
& \leq 2 a^{2s} (2 (\frac{d}{2})^2 (\frac{5}{3})^2 + \frac{50}{21}+2a^2 \sigma_{v1}^2)\\
&+4 a^2 (a^s d)^2 (\frac{17}{12})^2 Q(\frac{a^s d}{12 \sqrt{\frac{46}{21}} \sigma_{v2}}) \frac{1}{1-10^{-42}} \\
&+4 a^2 (a^sd+\frac{a^sd}{2})^2  Q(\frac{a^s d}{12 \sqrt{\frac{46}{21}} \sigma_{v2}}) \frac{1}{1-10^{-37}}\\
&+2(a^2 (\frac{d}{2})^2) + 1 \\
&= 1 + \frac{100}{21}a^{2s} + \frac{25}{9} a^{2s} d^2 + \frac{a^2 d^2}{2} + 4a^{2(s+1)}\sigma_{v1}^2 \\
&+
(4(\frac{17}{12})^2 \frac{1}{1-10^{-42}} + 9 \frac{1}{1-10^{-37}}) a^{2(s+1)}d^2
Q(\frac{a^s d}{12\sqrt{\frac{46}{21}}\sigma_{v2}}) \\
&\leq
1 + \frac{100}{21}a^{2s} + \frac{25}{9} a^{2s} d^2 + \frac{a^2 d^2}{2} + 4a^{2(s+1)}\sigma_{v1}^2 +
17.03 a^{2(s+1)}d^2
Q(\frac{a^s d}{12\sqrt{\frac{46}{21}}\sigma_{v2}}) 
\end{align}
\begin{align}
&= 1 + \frac{100}{21}a^{2s} + \frac{25}{9} 4 \cdot 80000 a^{2(s-1)}P + 2 \cdot 80000 P + 4a^{2(s+1)}\sigma_{v1}^2 +
17.03 \cdot 4 \cdot 80000 a^{2s} P \cdot
Q(\frac{a^{s-1} \sqrt{4 \cdot 80000 P}}{12\sqrt{\frac{46}{21}}\sigma_{v2}}) \\
&\overset{(C)}{\leq} 1 + \frac{100}{21}a^{2s} + \frac{25}{9} 16 a^{2(s-1)} \max(a^2,a^4 \sigma_{v1}^2) + 8 \max(a^2 ,a^4 \sigma_{v1}^2) + 4 a^{2(s+1)}\sigma_{v1}^2\\
&+17.03 \cdot 4 \cdot 80000 a^{2s}P \frac{1}{\sqrt{2 \pi} \frac{a^{s-1}\sqrt{4 \cdot 80000P}}{12 \sqrt{\frac{46}{21}} \sigma_{v2}}} \exp(- \frac{1}{2} \frac{a^{2(s-1)} 4 \cdot 80000P}{144 \cdot \frac{46}{21} \sigma_{v2}^2}) \\
&= 1 + \frac{100}{21}a^{2s} + \frac{25}{9} 16 a^{2(s-1)} \max(a^2,a^4 \sigma_{v1}^2) + 8 \max(a^2 ,a^4 \sigma_{v1}^2) + 4 a^{2(s+1)}\sigma_{v1}^2\\
&+17.03 \cdot 4 \cdot 80000  \frac{\sqrt{70}}{\sqrt{2 \pi} \frac{\sqrt{4 \cdot 80000}}{12 \sqrt{\frac{46}{21}} }} \exp(- (\frac{1}{2} \frac{ 4 \cdot 80000}{144 \cdot \frac{46}{21} }-50) \frac{a^{2(s-1)}P}{\sigma_{v2}^2}) a^{2s}P \exp(-\frac{50a^{2(s-1)}P}{\sigma_{v2}^2}) \\
&\overset{(D)}{\leq} 1 + \frac{100}{21}a^{2s} + \frac{25}{9} 16 a^{2(s-1)} \max(a^2,a^4 \sigma_{v1}^2) + 8 \max(a^2 ,a^4 \sigma_{v1}^2) + 4 a^{2(s+1)}\sigma_{v1}^2\\
&+17.03 \cdot 4 \cdot 80000  \frac{\sqrt{70}}{\sqrt{2 \pi} \frac{\sqrt{4 \cdot 80000}}{12 \sqrt{\frac{46}{21}} }} \exp(- (\frac{1}{2} \frac{ 4 \cdot 80000}{144 \cdot \frac{46}{21} }-50) \frac{1}{70})a^{2s}P \exp(-\frac{50a^{2(s-1)}P}{\sigma_{v2}^2}) \\
&\leq 1 + \frac{100}{21}a^{2s} + \frac{25}{9} 16 a^{2(s-1)} \max(a^2,a^4 \sigma_{v1}^2) + 8 \max(a^2 ,a^4 \sigma_{v1}^2) + 4 a^{2(s+1)}\sigma_{v1}^2\\
&+831.473...a^{2s}P \exp(-\frac{50a^{2(s-1)}P}{\sigma_{v2}^2}) \\
&\leq
1 + (\frac{100}{21} + \frac{25}{9} 16  + 8  + 4 ) a^{2s} \max(1,a^2 \sigma_{v1}^2)+831.473...a^{2s}P \exp(-\frac{50a^{2(s-1)}P}{\sigma_{v2}^2}) \\
&\leq
1 + 61.206... a^{2s} \max(1,a^2 \sigma_{v1}^2)+831.473...a^{2s}P \exp(-\frac{50a^{2(s-1)}P}{\sigma_{v2}^2}) \\
&\leq 1 + 62 a^{2s} \max(1,a^2 \sigma_{v1}^2)+832 a^{2s}P \exp(-\frac{50a^{2(s-1)}P}{\sigma_{v2}^2}) \\
&\leq 832 a^{2s}P \exp(-\frac{50a^{2(s-1)}P}{\sigma_{v2}^2}) + 63a^{2s} \max(1,a^2 \sigma_{v1}^2)
\end{align}
(A): $a \geq 2.5$.\\
(B): From the assumption $a^{2(s-1)}\max(1,a^2 \sigma_{v1}^2) \leq \sigma_{v2}^2 \leq a^{2s}\max(1,a^2 \sigma_{v1}^2)$, we have 
\begin{align}
a^{2(s-1)}\frac{25}{21}+ a^{2s} \sigma_{v1}^2 \leq \frac{46}{21}\max(a^{2(s-1)},a^{2s}\sigma_{v1}^2) \leq \frac{46}{21}\sigma_{v2}^2.
\end{align}
(C): $P \leq \frac{\max(a^2 ,a^4 \sigma_{v1}^2)}{20000}$ and Lemma~\ref{ach:lemma4}.\\
(D): $P \geq \frac{\sigma_{v2}^2}{70a^{2(s-1)}}$.

This justifies the upper bound on $D_U(P_1,P_2)$. By the definition of $d$ and Lemma~\ref{ach:lemma9}, $P_1$ is upper bounded by $80000P$. $P_2$ of Lemma~\ref{ach:lemma9} can be upper bounded as
\begin{align}
&P_2 \leq 8a^2 D_{U,1}(d,w_1) + \frac{7}{2}a^{2(s+1)}d^2+4a^2 \sigma_{v2}^2\\
&= 8a^2 D_{U,1}(d,w_1) + \frac{7}{2} 4 \cdot 80000a^{2s} P + 4 a^2 \sigma_{v2}^2 \\
&\leq 8a^2 (832 a^{2s}P \exp(-\frac{50a^{2(s-1)}P}{\sigma_{v2}^2})+63 a^{2s} \max(1,a^2 \sigma_{v1}^2)) \\
&+ \frac{7}{2} 4 \cdot 4 a^{2s}\max(a^2 ,a^4 \sigma_{v1}^2) + 4 a^2 a^{2s}\max(1,a^2 \sigma_{v1}^2) \\
&=  8a^2 (832 a^{2s}P \exp(-\frac{50a^{2(s-1)}P}{\sigma_{v2}^2}) + 70.5 a^{2s} \max(1,a^2 \sigma_{v1}^2)) \\
&= 6656 a^{2(s+1)}P \exp(-\frac{50a^{2(s-1)}P}{\sigma_{v2}^2}) + 564 a^{2(s+1)} \max(1,a^2 \sigma_{v1}^2).
\end{align}
where the inequality comes from the assumptions $P \leq \frac{\max(a^2 , a^4 \sigma_{v1}^2)}{20000} $ and $\sigma_{v2}^2 \leq a^{2s}\max(1,a^2 \sigma_{v1}^2)$. This finishes the proof.

\subsection{Proof of Proposition~\ref{prop:3}}
\label{sec:prop:3}
As the proof of Proposition~\ref{prop:2}, by Lemma~\ref{rat:lemma1} it is enough to show that there exists $c \geq 1$ such that $D_U(c \widetilde{P_1},c \widetilde{P_2}) \leq c \cdot D_L(\widetilde{P_1},\widetilde{P_2})$.

(i) When $\widetilde{P_1} \leq \frac{\sigma_{v2}^2}{70 a^{2(s-1)}}$ and $\widetilde{P_2} \leq \frac{a^4 \sigma_{v2}^2}{28000}$\\
Lower bound: By Corollary~\ref{rat:lemma3} (b)
\begin{align}
D_L(\widetilde{P_1},\widetilde{P_2})= \infty.
\end{align}
Therefore, we do not need the corresponding upper bound.

(ii) When $\widetilde{P_1} \leq \frac{\sigma_{v2}^2}{70 a^{2(s-1)}}$ and $\widetilde{P_2} \geq \frac{a^4 \sigma_{v2}^2}{28000}$\\
Lower bound: By Corollary~\ref{rat:lemma3} (a)
\begin{align}
D_L(\widetilde{P_1},\widetilde{P_2}) \geq 0.008 a^2 \sigma_{v2}^2 +1.
\end{align}
Upper bound: By Lemma~\ref{ach:lemmabb}
\begin{align}
(D_U(P_1,P_2),P_1,P_2) &\leq (a^2 \sigma_{v2}^2 +1,0, a^4\sigma_{v2}^2+ a^2 \sigma_{v2}^2 + a^2)\\
&\leq (a^2 \sigma_{v2}^2 +1,0, a^4\sigma_{v2}^2+ a^2 \sigma_{v2}^2 + a^2 \sigma_{v2}^2)\\
&\leq (a^2 \sigma_{v2}^2 +1,0, (1+\frac{2}{2.5^2})a^4\sigma_{v2}^2)\\
&\leq (a^2 \sigma_{v2}^2 +1,0, 1.32 a^4\sigma_{v2}^2).
\end{align}
Ratio: Thus, $c$ is upper bounded by
\begin{align}
c \leq \max(\frac{1}{0.008}, \frac{1.32}{\frac{1}{28000}}).
\end{align}

(iii) When $\frac{\sigma_{v2}^2}{70 a^{2(s-1)}} \leq \widetilde{P_1} \leq \frac{1}{20000}\max(a^2,a^4 \sigma_{v1}^2)$ and
$\widetilde{P_2} \leq 0.0457a^{2(s+1)}\widetilde{P_1} \exp(-\frac{50 a^{2(s-1)}\widetilde{P_1}}{\sigma_{v2}^2}) + 0.0113 a^{2(s+1)} \max(1,a^2\sigma_{v1}^2)$\\
Lower bound: By Corollary~\ref{rat:lemma3} (d)
\begin{align}
D_L(\widetilde{P_1},\widetilde{P_2}) = \infty
\end{align}
Therefore, we do not need the corresponding upper bound.

(iv) When $\frac{\sigma_{v2}^2}{70 a^{2(s-1)}} \leq \widetilde{P_1} \leq \frac{1}{20000}\max(a^2,a^4 \sigma_{v1}^2)$ and
$\widetilde{P_2} \geq 0.0457a^{2(s+1)} \widetilde{P_1} \exp(-\frac{50 a^{2(s-1)}\widetilde{P_1}}{\sigma_{v2}^2}) + 0.0113 a^{2(s+1)} \max(1,a^2\sigma_{v1}^2)$

Lower bound: By Corollary~\ref{rat:lemma3} (c)
\begin{align}
D_L(\widetilde{P_1},\widetilde{P_2}) \geq 0.2541 a^{2s} \widetilde{P_1}\exp(- \frac{50 a^{2(s-1)}\widetilde{P_1}}{\sigma_{v2}^2})+0.066 a^{2s}\max(1,a^2\sigma_{v1}^2) + 1
\end{align}
Upper bound: By Corollary~\ref{rat:ach}
\begin{align}
(D_U(P_1,P_2),P_1,P_2 ) \leq &( 63a^{2s} \max(1,a^2 \sigma_{v1}^2)+832 a^{2s}\widetilde{P_1} \exp(-\frac{50a^{2(s-1)}\widetilde{P_1}}{\sigma_{v2}^2}) ,80000\widetilde{P_1} \\
&, 6656 a^{2(s+1)}\widetilde{P_1} \exp(-\frac{50a^{2(s-1)}\widetilde{P_1}}{\sigma_{v2}^2}) + 564 a^{2(s+1)} \max(1,a^2 \sigma_{v1}^2))
\end{align}
Ratio: $c$ is upper bounded  by
\begin{align}
c \leq \max(\frac{832}{0.2541},\frac{63}{0.066}, 80000, \frac{6656}{0.0457}, \frac{564}{0.0113})
\end{align}

(v) When $\widetilde{P_1} \geq \frac{1}{20000}\max(a^2 ,a^4 \sigma_{v1}^2)$\\
Lower bound: By Corollary~\ref{rat:lemma3} (e)
\begin{align}
D_L(\widetilde{P_1},\widetilde{P_2}) \geq 0.295 \cdot \max(1,a^2 \sigma_{v1}^2)
\end{align}
Upper bound: By Lemma~\ref{ach:lemmabb}
\begin{align}
(D_U(P_1,P_2),P_1,P_2) &\leq (a^2 \sigma_{v1}^2 +1, a^4\sigma_{v1}^2+ a^2 \sigma_{v1}^2 + a^2, 0) \\
&\leq (a^2 \sigma_{v1}^2 +1, 2a^4\sigma_{v1}^2+ a^2, 0) \\
&\leq (2 \max(1, a^2 \sigma_{v1}^2), 3\max(a^2,a^4 \sigma_{v1}^2) , 0)
\end{align}
Ratio: $c$ is upper bounded by
\begin{align}
c \leq \max(\frac{2}{0.295}, \frac{3}{\frac{1}{20000}})
\end{align}

Therefore, by (i), (ii), (iii), (iv), (v), the lemma is true and $c \leq 1.5 \times 10^5$.

\subsection{Proof of Proposition~\ref{prop:1}}
\label{app:proposition}
Since $a$ goes to infinity, let $a \geq 10000$.  We will first show the best linear strategy performance is $\Theta(a^3)$.

$\bullet$ The best linear strategy performance is $\Theta(a^3)$: Following the same steps in the proof of Lemma~\ref{lem:geo}, we can lower bound the average cost as follows:
\begin{align}
&\inf_{u_1,u_2 \in L_{lin}'} \frac{1}{N} \sum_{0 \leq n < N}  \mathbb{E}[qx^2[n]+r_1 u_1^2[n]]
\\
&=\inf_{u_1, u_2 \in L_{lin}'} \frac{1}{N}
((
\frac{1}{2}r_1 \mathbb{E}[u_1^2[1]] + \frac{1}{2}r_1 \mathbb{E}[u_1^2[2]] + q\mathbb{E}[x^2[3]]
)+ (
\frac{1}{2}r_1 \mathbb{E}[u_1^2[2]] + \frac{1}{2}r_1 \mathbb{E}[u_1^2[3]] + q\mathbb{E}[x^2[4]]
) + \cdots \\
&+(
\frac{1}{2}r_1 \mathbb{E}[u_1^2[N-3]] + \frac{1}{2}r_1 \mathbb{E}[u_1^2[N-2]] + q\mathbb{E}[x^2[N-1]]
)
)\\
&\geq \frac{N-3}{N} \inf_{u_1,u_2 \in L_{lin}'} (\frac{1}{2}r_1 \mathbb{E}[u_1^2[1]] + \frac{1}{2}r_1 \mathbb{E}[u_1^2[2]]
+q \mathbb{E}[x^2[3]]
)\\
&= \frac{N-3}{N} \inf_{u_1,u_2 \in L_{lin}'} (\frac{1}{2} a \mathbb{E}[u_1^2[1]] + \frac{1}{2} a \mathbb{E}[u_1^2[2]]
+ \mathbb{E}[x^2[3]]
).
\end{align}
In the similar way of Proposition~\ref{prop:genie}, we can further justify that setting $w[1]=0$, $w[2]=0$ only decrease the quadratic cost. Then, at time $1$ we have
\begin{align}
&x[1]=w[0] \\
&y_1[1]=w[0] \\
&y_2[1]=w[0]+v_2[1]
\end{align}
Let
\begin{align}
&u_1[1]=k_{11} w[0]\\
&u_2[1]=k_{21}(w[0]+v_2[1])
\end{align}
At time $2$ we have
\begin{align}
&x[2]=ax[1]+u_1[1]+u_2[1]\\
&\quad\quad=aw[0]+k_{11}w[0]+k_{21}(w[0]+v_2[1])\\
&y_1[1]=aw[0]+k_{11}w[0]+k_{21}(w[0]+v_2[1])\\
&y_2[1]=aw[0]+k_{11}w[0]+k_{21}(w[0]+v_2[1])+v_2[2]
\end{align}
Therefore, we can put
\begin{align}
&u_1[2]= k_{12} w[0] + k_{13} v_2[1]\\
&u_2[3]= k_{22} (w[0]+v_2[1]) + k_{23}(aw[0]+k_{11}w[0]+v_2[2])
\end{align}
At time $3$ we have
\begin{align}
x[3]&=ax[2]+u_1[2]+u_2[2]\\
&=a^2 w[0] + a k_{11}w[0]+ak_{21}(w[0]+v_2[1])+k_{12} w[0] + k_{13}v_2[1] \\
&+ k_{22}(w[0]+v_2[1]) + k_{23}(aw[0]+k_{11}w[0]+v_2[2])\\
&=(a^2 + a k_{11} + k_{12}) w[0] + k_{13}v_2[1] + (ak_{21}+k_{22})(w[0]+v_2[1]) + k_{23}(aw[0]+k_{11}w[0]+v_2[2])\\
&=(a^2 + a k_{11} + k_{12} - k_{13}) w[0] + (ak_{21}+k_{22}+k_{13})(w[0]+v_2[1]) + k_{23}(aw[0]+k_{11}w[0]+v_2[2])
\end{align}
Therefore,
\begin{align}
\mathbb{E}[x^2[3]] \geq (a^2+ak_{11}+k_{12}-k_{13})^2 MMSE[w[0]|w[0]+v_2[1],aw[0]+k_{11}w[0]+v_2[2]]
\end{align}

(i) When $\mathbb{E}[u_1^2[1]]+\mathbb{E}[u_1^2[2]] \leq \frac{1}{16}a^2$

The condition implies
\begin{align}
&\mathbb{E}[(k_{11}w[0])^2] + \mathbb{E}[(k_{12}w[0]+k_{13}v_2[1])^2] \\
&=k_{11}^2 + k_{12}^2 + k_{13}^2 a  \leq \frac{1}{16}a^2
\end{align}
Thus,
\begin{align}
&|k_{11}| \leq \frac{1}{4}a\\
&|k_{12}| \leq \frac{1}{4}a\\
&|k_{13}| \leq \frac{1}{4} \sqrt{a}
\end{align}
Since $a \geq 10000$ we have
\begin{align}
a^2+ak_{11}+k_{12}-k_{13} &\geq a^2 - \frac{a^2}{4} - \frac{a^2}{4} - \frac{a^2}{4}=\frac{a^2}{4}
\end{align}
Moreover, we also have
\begin{align}
&MMSE[w[0]| w[0]+v_2[1], aw[0]+k_{11}w[0]+v_2[2]] \\
&\geq MMSE[w[0] | w[0]+v_2[1], \frac{5a}{4}w[0]+v_2[2]] \\
&\geq MMSE[w[0] | \frac{5a}{4}w[0]+v_2[1], \frac{5a}{4}w[0]+v_2[2]] \\
&= MMSE[w[0] | \frac{10a}{4}w[0]+v_2[1]+v_2[2]] \\
&= 1 - \frac{(\frac{10a}{4})^2}{(\frac{10a}{4})^2+2a}\\
&= \frac{2a}{(\frac{10a}{4})^2 + 2a} \\
&\geq \frac{2}{(\frac{10}{4})^2+2} \frac{1}{a} = \frac{8}{33a}
\end{align}
Therefore, in this case,
\begin{align}
\inf_{u_1,u_2 \in L_{lin}'} \frac{1}{2} a \mathbb{E}[u_1^2[1]] + \frac{1}{2} a \mathbb{E}[u_1^2[2]]
+ \mathbb{E}[x^2[3]] \geq \frac{1}{16} \cdot \frac{8}{33} a^3
\end{align}

(ii) When $\mathbb{E}[u_1^2[1]]+\mathbb{E}[u_1^2[2]] \geq \frac{1}{16}a^2$
In this case
\begin{align}
\inf_{u_1,u_2 \in L_{lin}'} \frac{1}{2} a \mathbb{E}[u_1^2[1]] + \frac{1}{2} a \mathbb{E}[u_1^2[2]]
+ \mathbb{E}[x^2[3]] \geq \frac{1}{32} a^3
\end{align}

Therefore, by (i),(ii),
\begin{align}
\inf_{u_1,u_2 \in L_{lin}'} \frac{1}{2} a \mathbb{E}[u_1^2[1]] + \frac{1}{2} a \mathbb{E}[u_1^2[2]]
+ \mathbb{E}[x^2[3]] \geq \frac{1}{66} a^3 \label{eqn:prop1}
\end{align}

$\bullet$ The optimal average cost is $O(a^2 \log a)$: Now, we will show the average cost, $O(a^2 \log a)$, is achievable by the nonlinear $1$-stage signaling strategy. Let $a \geq 20000$.
Since $\max(1,a^2\sigma_{v1}^2) \leq \sigma_{v2}^2 \leq a^2 \max(1,a^2 \sigma_{v1}^2)$ and $\frac{a}{70} \leq \frac{a \log a}{25} \leq \frac{1}{20000} a^2$, we can set $s=1$ and $P=\frac{a \log a}{25}$ in Corollary~\ref{rat:lemma3}.
%
Then, by Corollary~\ref{rat:lemma3}, the average cost is upper bounded as follows.
\begin{align}
&\inf_{u_1,u_2 \in L_{sig,1}} \frac{1}{N} \sum_{0 \leq n < N} \mathbb{E}[qx^2[n]+r_1 u_1^2[n]] \\
&\leq 832 a^2 \frac{a \log a}{25} \exp(- \frac{50 \frac{a \log a}{25}}{a})+ 63a^2 +  a \frac{80000a \log a}{25}\\
&\leq  832 \frac{a \log a}{25} +63a^2 + \frac{80000}{25}a^2 \log a\\
&\leq 3297 a^2 \log a \label{eqn:prop2}
\end{align}

In short, by \eqref{eqn:prop1} the optimal linear strategy cost is lower bounded by $\Omega(a^3)$ . By \eqref{eqn:prop2}, the nonlinear $1$-stage signaling strategy can achieve $O(a^2 \log a)$. Thus, their ratio diverges as $a$ goes to infinity, which finishes the proof.
\end{document}